\documentclass[11pt]{article}

\usepackage{amsfonts,amssymb,amsopn,amsmath,mathrsfs,theorem,bbm}
\usepackage[round]{natbib}
\usepackage{hyperref}
\usepackage{graphicx}
\usepackage{verbatim}
\usepackage{authblk}
\usepackage{color}
\usepackage{enumitem}
\usepackage[font=small]{caption}
\usepackage{graphicx}
\usepackage{xcolor}

\hyphenation{non-empty}

\newcounter{thm_counter}
\setcounter{thm_counter}{1}
\numberwithin{thm_counter}{subsection}
\numberwithin{figure}{subsection}

\newtheorem{lemma}[thm_counter]{Lemma}
\newtheorem{prop}[thm_counter]{Proposition}
\newtheorem{theorem}[thm_counter]{Theorem}

\theorembodyfont{\upshape}
\newtheorem{remark}[thm_counter]{Remark}
\newtheorem{defn}[thm_counter]{Definition}


\newenvironment{proof}{\vspace{1ex}\noindent{\textsc{Proof:}}\hspace{0.5em}}{\hfill\qed\vspace{1ex}}

\numberwithin{equation}{section} 

\setlength\topmargin{-0.7in} 
\setlength\oddsidemargin{0in}
\setlength\evensidemargin{0in}
\setlength\textheight{9.2in} 
\setlength\textwidth{6.3in} 


\DeclareMathOperator{\dist}{dist}

\DeclareMathOperator{\flow}{flow}
\DeclareMathOperator{\web}{web}
\DeclareMathOperator{\ram}{ram}




\newcommand{\noi}{\noindent}

\newcommand{\quand}{\quad\mbox{and}\quad}


\newcommand{\de}{\delta}

\newcommand{\eps}{\epsilon}

\newcommand{\sig}{\sigma}


\newcommand{\Ai}{{\cal A}}
\newcommand{\Bi}{{\cal B}}

\newcommand{\Ki}{{\cal K}}


\newcommand{\up}{\uparrow}
\newcommand{\down}{\downarrow}
\newcommand{\updo}{\updownarrow}
\newcommand{\sub}{\subset}


\newcommand{\ov}{\overline}



   \setcounter{topnumber}{2}
   \setcounter{bottomnumber}{2}
   \setcounter{totalnumber}{4}     
   \setcounter{dbltopnumber}{2}    


\def\ra{\Rightarrow} 
\def\to{\rightarrow} 
\def\iff{\Leftrightarrow}
\def\sw{\subseteq} 
\def\mc{\mathcal} 
\def\mb{\mathbb} 
\def\mf{\mathfrak}
\def\wh{\widehat}
\def\sc{\setminus} 
\def\sr{\stackrel}
\def\vec{\boldsymbol} 
\def\E{\mb{E}} 
\def\P{\mb{P}}
\def\R{\mb{R}} 
\def\N{\mb{N}}
\def\Q{\mb{Q}}
\def\~{\sim}
\def\-{\,;\,} 

\def\qed{$\blacksquare$}
\def\1{\mathbbm{1}}
\def\cadlag{c\`{a}dl\`{a}g}

\def\l{\left}
\def\r{\right}

\def\toas{\sr{\rm a.s}{\to}}

\def\tod{\sr{\rm d}{\to}}
\def\eqas{\sr{\rm a.s.}{=}}
\def\eqd{\sr{\rm d}{=}}

\def\s{\sigma}
\def\t{\tau}
\def\lhd{\vartriangleleft}
\def\nlhd{\ntriangleleft}

\def\Rc{\mathbb{R}^2_{\rm c}}
\def\Wb{\mc{W}_{\rm b}}
\def\Fb{\mc{F}_{\rm b}}
\def\mfs{\mf{s}}

\def\preceqd{\preceq_{\rm d}}
\def\precd{\prec_{\rm d}}

\fontsize{11pt}{16.0pt}
\selectfont


\newcommand{\rot}{\circlearrowright}
\newcommand{\Gr}{G}
\newcommand{\Hr}{H}

\newcommand{\nmr}[2]{\stackrel{\scriptscriptstyle\rm{#1}}{#2}}

\begin{document}

\makeatletter\@addtoreset{equation}{section}
\makeatother\def\theequation{\thesection.\arabic{equation}} 


\allowdisplaybreaks

\author[1]{Nic Freeman\thanks{n.p.freeman@sheffield.ac.uk}}
\author[2]{Jan Swart\thanks{swart@utia.cas.cz}}
\affil[1]{School of Mathematics and Statistics, University of Sheffield}
\affil[2]{The Czech Academy of Sciences, Institute of Information Theory and Automation.}

\title{Weaves, webs and flows}

\date{\today}
\maketitle

\begin{abstract}
We introduce \textit{weaves}, which are random sets of non-crossing {\cadlag} paths that cover 
space-time $\ov{\R}\times\ov{\R}$. 
The Brownian web is one example of a weave,
but a key feature of our work is that we do 
not assume that particle motions have any particular distribution.
Rather, we present a general theory of the structure, characterization and weak convergence of weaves.

We show that the space of weaves has a particularly appealing geometry, 
involving a partition into equivalence classes under which 
each equivalence class contains a pair of distinguished objects known as a \textit{web} and a \textit{flow}. 
Webs are natural generalizations of the Brownian web 
and the 
flows provide a pathwise representations of stochastic flows. 
Moreover, there is a natural partial order on the space of weaves,
characterizing the efficiency with which paths cover space-time, 
under which webs are precisely minimal weaves
and flows are precisely maximal weaves. 
This structure 
is key to establishing weak convergence criteria for general weaves,
based on weak convergence of finite collections of particle motions.
\end{abstract}

\noi
{\it MSC 2010.} Primary: 60D05. Secondary: 60K99.\\
{\it Acknowledgement.}  Work sponsored by GA\v{C}R grant 22-12790S. 
This work was initiated during the program Genealogies of
Interacting Particle Systems at the IMS, NUS, Singapore.

\thispagestyle{empty}
\newpage

\setcounter{tocdepth}{2}
\tableofcontents

\thispagestyle{empty}
\newpage
\setcounter{page}{1}

\section{Introduction}\label{sec:intro}

In this article we introduce a rich and natural class of objects that generalize the Brownian web.
We call these objects \textit{weaves}.
Informally, a weave is a random set of non-crossing {\cadlag} paths,
such that each point of space-time is almost surely touched by at least one path.
The paths take values in $\ov{\R}$ and each path runs until time $+\infty$,
but paths may begin at any point of space-time.
It is important to note what is missing:
we do \textit{not} require that the paths follow any particular distribution
(in the example of the Brownian web, they follow coalescing Brownian motions).

We will establish a framework for weak convergence (i.e.~in law) of general weaves,
akin to the modern theory of weak convergence for real valued stochastic processes.
As the example of the Brownian web shows,
individual weaves may display a rich internal geometry.
We will see that
\textit{space of} weaves
also has an interesting structure in its own right.
This structure has major implications for the characterization of weaves,
thus also for weak convergence.

We use the term \textit{half-infinite}
for paths that, after beginning anywhere within space-time, continue until time $+\infty$.
If such a path begins at time $-\infty$ then it is said to be \textit{bi-infinite}.
Weaves consisting exclusively of bi-infinite paths provide natural pathwise representations of 
(sufficiently regular) stochastic flows,
but can also represent more complicated structures of branching-coalescing paths.
We refer to weaves of bi-infinite paths as \textit{flows},
although formally we will first give a different definition 
and later show equivalence to this.

Stochastic flows have been studied for many decades,
as detailed in the book of \cite{Kunita1997}.
It is remarkable that, despite their long history, 
stochastic flows have struggled to give rise to a viable theory of their own weak convergence.
A key problem is that
stochastic flows have traditionally been given a `pointwise' representation,
where for each pair of times $-\infty<s<t<\infty$ a random function $X_{s,t}:\R\to\R$ represents 
the movement of particles during $[s,t]$.
More precisely, $X_{s,t}(x)$ denotes the position at time $t$ of the particle that, at time $s$, was at location $x$.
The concept of a flow is therefore encapsulated by the 
consistency condition $X_{s,t}\eqas X_{s,u}\circ X_{u,t}$, required at deterministic times.
This representation is analogous to the old-fashioned representation of a real valued stochastic process 
as an infinite family of random variables $(X_t)_{t\geq 0}$,
where $X_t\in\R$ denotes the position of the particle at time $t\geq 0$.

The modern perspective is to view a stochastic process as a single random variable,
whose value is a random path.
Such a representation is known as a `pathwise' representation.
\cite{Skorohod1956} introduced a suitable state space $\mc{D}$,
whose elements are {\cadlag} paths,
and the resulting theory is detailed within the now ubiquitous texts
of \cite{EthierKurtz1986} and \cite{Billingsley1995}.
From an analytical point of view
it is far more convenient to work with convergence of one random {\cadlag} path,
than with convergence of infinitely many $\R$ valued random variables.
To abstract this principle a little further,
it is better to define a single random variable within a highly structured state space,
than to work with infinitely many `smaller' random variables in a more straightforward state space.



The same principle will apply to random sets of {\cadlag} paths, however
such objects have not yet made an analogous transition --
with the exception of the Brownian web.
The present article seeks to remedy this situation.
The Brownian web is a pathwise representation of the stochastic flow of \cite{Arratia1979},
in which particles perform independent Brownian motions until they meet,
after which particles remain coalesced for all remaining time.
Loosely, one such particle begins at each point of space-time.

The modern study of the Brownian web began with \cite{TothWerner1998},
who were first to understand its rich internal structure.
Based on this work,
\cite{FontesIsopiEtAl2004a}
represented the Brownian web as a 
(single) random variable whose value is a random set of continuous paths,
and introduced the term \emph{Brownian web}.
In this representation they gave the first 
conditions for weak convergence to the Brownian web,
based on the forwards-in-time motions of finite sets of particles.
A large body of literature has since emerged,
leading to the refined criteria available in the survey of \cite{SchertzerSunEtAl2017}.
Close relatives of the Brownian web have been investigated in similar style
and the Brownian web is understood to be the scaling limit
of a large and diverse universality class.

The key to this success has been the 
availability of good criteria for characterization and weak convergence.
Such criteria must strike a careful balance:
a type of convergence that preserves less information
is often easier to prove, and is more often true, but is also less meaningful.
One possible approach, 
used by \cite{BerestyckiEtAl2015} and \cite{CannizzaroHairer2021}
for the case of continuous coalescing paths,
is to map sets of paths to other objects
(respectively, to sets of `tubes' and real trees)
in order to induce a topology that may be used 
as a basis for weak convergence.

In the present work we 
handle sets of {\cadlag} paths directly,
in the style that 
has become popular within the literature of the Brownian web.
We give criteria for characterization and convergence of general weaves,
with no requirement that the particle motions follow any particular distribution.
We must also introduce a suitable state space;
a version of Skorohod's space $\mc{D}$ suitable for random sets of {\cadlag} paths
begun at arbitrary points of space-time.
The state space constructed 
by \cite{FontesIsopiEtAl2004a}
is a subset of our own, with matching induced subspace topology.

Let us now briefly comment on the significance of webs.
Our exploration of the space of weaves will uncover a natural partition into equivalence classes.
Each equivalence class features two distinguished elements,
one of which is a flow (as discussed above)
and the other of which we will refer to as a \textit{web}.
We will see that the property of being a web is equivalent to what remains 
if one takes the usual definition of the Brownian web 
and \textit{removes} the requirement that the particle motions have a particular distribution.
Webs and flows are in bijective correspondence;
moreover they are the extremal points, respectively minima and maxima, 
within a structure that we will shortly describe.

Within much of the literature on the Brownian web,
the proofs 
rely heavily on the distribution of coalescing Brownian motions.
Consequently our own arguments have little in common.
Despite this, we remark that what is known about the Brownian web has been invaluable in writing the present article,
and the Brownian web is a canonical example of a weave.
In fact the majority of our results are new even in the special case of the Brownian web.

\subsection{Outline of results}
\label{sec:results_outline}

In Section \ref{sec:results}
we will introduce our state space and, following that, 
give rigorous statements of our main results.
Setting up the state space
requires some significant work,
so we will give here a non-rigorous presentation of our main results
and the ideas that led to them.

We require that {\cadlag} paths are allowed to jump \textit{at their initial times}.
Naturally, this requires some supporting structure,
which we delay for now and appeal instead to the readers intuition.
Our concept of a {\cadlag} path is precisely equivalent to the classical {\cadlag} path 
$f:[t,\infty]\to\ov{\R}$ that is right-continuous with left limits,
plus a possible jump at the initial time $t$.
See Figure \ref{fig:example_weaves} for an example showing why this augmentation is necessary.

\begin{figure}[t]
\centering
\includegraphics[scale=1.25]{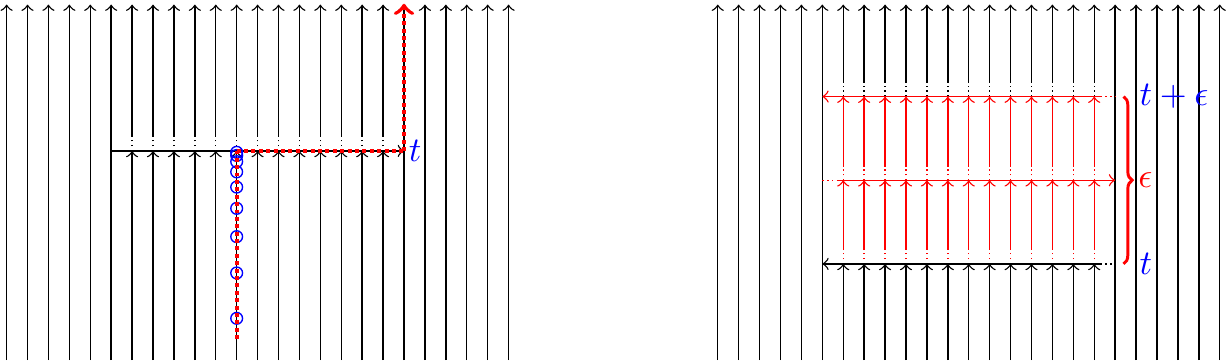}   
\caption{
In both figures, time runs upwards and the spatial axis is horizontal.
In each figure a weave is depicted via solid lines,
with the corresponding flow depicted via including dotted lines.\\
\textit{On the left:}
A weave $\mc{A}$, featuring a {\cadlag} path jumping at its initial time.
Order the blue circles from bottom to top.
Consider the particle motion $f_n$ starting within the $n^{\rm th}$ blue circle,
which then follows the red dotted line.
The limiting path $f$ is a trajectory that jumps rightwards at its initial time $t$.
We require that weaves are closed sets 
and we require that weaves have particle motions;
consequently we require that the path $f$ exists.
\\
\textit{On the right:} 
A warning example related to Theorems \ref{t:flow_conv} and \ref{t:weave_conv}. 
A weave $\mc{A}_\eps$ is depicted, along with the corresponding flow $\mc{F}_\eps$
of bi-infinite paths that do not cross $\mc{A}_{\eps}$.
Space-time points within the horizontal arrows, and forwards in time continuations thereof, are ramified.
In the limit as $\eps\to 0$ the red area vanishes;
the weaves $\mc{A}_\eps$ converge to a pervasive system of paths that contains crossing (jumping in both directions at $t$);
the sequence of flows $(\mc{F}_\eps)_{\eps>0}$ are not relatively compact (due to paths that jump left-right-left between $t$ and $t+\eps$);
whilst the $m$-particle motions $\mc{A}_\eps|_{\vec{z}}=\mc{F}_\eps|_{\vec{z}}$ from finite sets $\vec{z}$ of non-ramified points converge to 
those of a weave (which contains only the leftwards jump at $t$).
}
\label{fig:example_weaves}
\end{figure}

The theory of weak convergence of real valued stochastic processes is normally presented in Skorohod's J1 topology.
We require the (slightly coarser) Skorohod M1 topology.
The reasons for this are rather technical, 
but roughly speaking our use of non-crossing paths makes it natural to consider
jumps as part of the path, rather than as an empty region of space that the path jumps over.
The former perspective corresponds to Skorohod's M1 topology, the latter to J1.
An example of {\cadlag} paths 
such that $f_n\to f$ in M1 but not in J1 is
$f_n(t)= 0 \vee nt \wedge 1$
with limit
$f(t)=\1_{\{t\geq 0\}}$, both defined for all $t\in\ov{\R}$.

A key insight from the Brownian web is that we should consider random \emph{compact} sets of paths;
we do so within a suitable version of the M1 topology.
We say that a set of {\cadlag} paths is \textit{pervasive}
if each space-time point $z=(x,t)$ is contained within at least one path,
including jumps.

A central concept is the \textit{$m$-particle motion} of a weave.
Loosely, if we choose a point $z=(x,t)$ in space-time,
we may place a particle at the point within the weave and then watch how it moves, 
forwards in time.
For most deterministic points of space-time (in fact, Lebesgue almost all)
this operation is well defined and an almost surely unique forwards in time motion exists.
This motion is a single random {\cadlag} path
with initial time $t$.
If we do the same for $m\in\N$ space-time points at once,
then we obtain the $m$-particle motion of the weave.

It is clear a priori that a pair of {\cadlag} paths might cross each other.
The meaning is clear for continuous paths and, for now, we appeal to the readers intuition.
When we come to define crossing rigorously some clarification will be required,
to handle cases where {\cadlag} paths jump over each other at their initial times.
We say that a set of paths is \textit{non-crossing} if none of its elements cross each other.

We are now in a position to describe our main results concerning weaves.
Formally, 
a \textit{weave} is a probability measure on M1-compact sets of half-infinite {\cadlag} paths,
that is almost surely non-crossing and pervasive.
We also use the term weave for a random variable with such a law.
We adopt the convention of using caligraphic letters, such as $\mc{A}$ and $\mc{B}$, for weaves.
We also remind the reader that within a general partial order,
a typical element might 
sit below anything from none to infinitely many maxima;
similarly for minina.

\begin{enumerate}
\item 
There exists a natural partial order $\preceqd$ on the space of weaves.
Informally, the statement $\mc{A}\preceqd\mc{B}$ means:
there exists a coupling under which
$\mc{B}$ covers space-time more efficiently than $\mc{A}$
i.e.~with fewer and/or longer paths.

By definition, we say that a weave is a \textit{web} if it is minimal (within the space of all weaves) with respect to $\preceqd$.
We say that a weave is a \textit{flow} if it is maximal.

\item
The space of weaves is partitioned into equivalence classes,
each of which has a flow as its unique maximal element and a web as its unique minimal element.
We write this equivalence relation as $\mc{A}\sim\mc{B}$.
Elements within the same equivalence class need not be $\preceqd$-comparable.

\item
There exists a pair of deterministic functions $\web(\cdot)$ and $\flow(\cdot)$
with the following properties.
\begin{enumerate}
\item A weave $\mc{A}$ is a web if and only if $\web(\mc{A}) \eqas \mc{A}$.
\item A weave $\mc{A}$ is a flow if and only if $\flow(\mc{A}) \eqas \mc{A}$.
\end{enumerate}

Moreover, a weave is a flow if and only if it comprises exclusively of bi-infinite paths. 
Therefore, flows are natural pathwise representations of stochastic flows.

The web operation is a slight generalization of the operator $\mc{W}\mapsto \ov{\mc{W}(D)}$
that is familiar within the standard characterization of the Brownian web.
By definition, $\flow(\mc{A})$
is the set of bi-infinite {\cadlag} paths that do not cross $\mc{A}$.
The map $\flow(\cdot)$ is continuous, but $\web(\cdot)$ is not.

\item Two weaves $\mc{A}$ and $\mc{B}$ satisfy $\mc{A}\sim\mc{B}$
if and only if the $m$-particle motions of $\mc{A}$ and $\mc{B}$ have the same distribution.

\item A weak limit of flows is necessarily a flow.
Moreover, 
for flows,
weak convergence is equivalent to tightness plus weak convergence of the $m$-particle motions.

An analogous result holds for general weaves, at the level of equivalence classes.
Here we must include the assumption that weak limit points are non-crossing.

\item
Each web $\mc{W}$ has an associated dual web $\wh{\mc{W}}$, 
of {\cadlag} paths running backwards in time,
such that $\mc{W}$ and $\wh{\mc{W}}$ are almost surely non-crossing.

The triplet $(\mc{W},\wh{\mc{W}},\mc{F})$ may be reconstructed from any single one of $\mc{W},\wh{\mc{W}}$ and $\mc{F}$.
If any one of these three consists exclusively of continuous paths, then they all do.
\end{enumerate}

Underpinning all of these results is a delicate operation that takes a half-infinite path within a weave
and extends it, backwards in time, into a bi-infinite path, without inducing crossing
and preserving {\cadlag}ness.
Moreover such extension may be done to all paths within a weave, 
without breaking the compactness, 
to obtain its corresponding flow.
Note that weaves are by definition closed sets, 
so this operation does \textit{not} involve taking a limit of suitable paths within the weave.
Let us briefly describe what it does involve.

There is a partial order $\sw$ on (individual) {\cadlag} paths,
corresponding to the idea that $f\sw g$ if and only if 
the path $f$ may be extended, forwards and/or backwards in time, to give $g$.
Paths within weaves run until time $+\infty$, 
so for weaves only extension backwards in time is relevant.
For a given weave $\mc{A}$, let $\mc{A}_{\max}$ denote the set of maximal elements of $(\mc{A},\sw)$.

It turns out that there is a natural bijection between
Dedekind cuts of $\mc{A}_{\max}$ 
and 
bi-infinite paths that do not cross $\mc{A}$.
This relationship is reminiscent of Dedekind's famous construction of $\R$ from $\Q$,
but in our case the operation that connects $\mc{A}_{\max}$ to $\flow(\mc{A})$
is not a topological closure.
Loosely, we may take a bi-infinite path $h$ that does not cross $\mc{A}$,
and the corresponding Dedekind cut is all paths $f\in\mc{A}_{\max}$ that lie strictly to the left of $h$.
The inverse function of this correspondence is more complicated to define 
and we do not attempt a description at this point.
To extend half-infinite paths backwards in time,
we note that a Dedekind cut may be constructed in the same way from 
any {\cadlag} path (not necessarily bi-infinite) that does not cross $\mc{A}$, 
and then use that inverse function to produce a corresponding bi-infinite path.

The proof of this relationship between half-infinite and bi-infinite paths
relies on delicate analysis.
It requires a formulation of {\cadlag} paths
where potential jumps at the initial time are an integral part of the path,
rather than an afterthought to the otherwise classical definition.
We introduce such a formulation in Section \ref{sec:Rpm},
followed by a description of our state space in Section \ref{sec:pi}.
We then introduce key notation concerning crossing and ordering of paths in Section \ref{sec:terminology},
at which point we are able to give a rigorous presentation of our main results in Section \ref{sec:results_weaves}.
We discuss connections to the Brownian web in Section \ref{sec:bw},
and connections to related state spaces in Sections \ref{sec:related_topologies}.

The proofs appear in Sections \ref{sec:technical_1}-\ref{sec:weaves_random}.
In Section \ref{sec:technical_1} we set up machinery to work with general {\cadlag} paths,
and with jumps at their initial times.
Section \ref{sec:weaves_det} treats the internal structure of weaves
and the implications thereof for the space of weaves,
which is best studied (initially) in a deterministic context.
This includes the key result on path extension.
In Section \ref{sec:weaves_random} we give the proofs of our main results.

\section{Results}
\label{sec:results}

\subsection{The split real line}
\label{sec:Rpm}

A function is said to be \textit{{\cadlag}}, 
from the French `continue \`a droit, limite \`a gauche', 
if it is right-continuous with left limits.
\cite{Kolmogorov1956} observed that a real \cadlag\ function together with its left-continuous modification 
can be viewed as a continuous function on a peculiar topological space, introduced by \cite{AU29}. 
This will provide an elegant formulation of our results, 
as well as being a necessary component of more technical proofs.
We give here a brief introduction to this space.

Let $\ov\R:=[-\infty,\infty]$ denote the extended real line. 
By definition, for any subset $I\sw\ov\R$, 
we let 
$$I_\mf{s}
=\big\{(t,\star)\-t\in I\text{ and }\star\in\{-,+\}\big\}.$$
We will almost always write $t\star$ in place of the formal notation $(t,\star)$.
We call $\R_\mfs$ the \emph{split real line} and $\ov\R_\mfs$ the \emph{extended split real line}. 
Loosely,
to construct $\R_\mfs$ from $\R$,
each $t\in\R$ has been split into two parts, 
a left part $t-$ and a right part $t+$.
We equip $\ov\R_\mfs$ with the lexicographic order, 
from left to right,
that is $t_1\star_1<t_2\star_2$ if and only if either $t_1<t_2$ or both $t_1=t_2$ and $\star_1=-$, $\star_2=+$.
We use notation for intervals in $\ov\R_\mfs$ 
similar to the usual notation for the extended real line:
\begin{align}
(t_1\star_1,t_2\star_2)
&=\{t\star\in\ov\R_\mfs:t_1\star_1<t\star<t_2\star_2\}\\
[t_1\star_1,t_2\star_2]
&=\{t\star\in\ov\R_\mfs:t_1\star_1\leq t\star\leq t_2\star_2\},
\end{align}
and analogously for half-open intervals such as $(t_1\star_,t_2\star_2]$
or $[t_1\star_1,t_2\star_2)$. Note that there is some redundancy in this
notation since, for example, $(s-,t+)=[s+,t-]$.
We say that a set $A\sw\R_\mfs$ is \emph{bounded} if
$A\sw[-T,T]_{\mfs}$ for some $T<\infty$.

We equip $\R_\mfs$ and $\ov\R_\mfs$ with the \emph{order topology}. 
Recall that, in a 
totally ordered space $(S,<)$, 
the order topology is generated by the open intervals $(a,b)=\{x\in S \-a<x<b\}$ where $a<b$.
The order topology on $\R$ thus coincides with the usual Euclidean topology.
The following lemma records all that we need to know about the order topology on $\R_\mfs$.
Parts 1 and 4 appear respectively as Lemma 2.1 and Proposition 2.3 in \cite{FreemanSwart2023}.
Parts 2 and 3 are straightforward consequences of part 1.

\begin{lemma}
\label{l:Rpm}
The following hold.
\begin{enumerate}
\item 
A sequence $t_n\star_n$ converges to the limit $t+$
(resp.\ $t-$) if and only if $t_n\to t$ in $\R$ and $t_n\star_n\geq t+$
(resp.\ $t_n\star_n\leq t-$) for all $n$ sufficiently large.

\item 
Intervals of the form $(t_1\star_1,t_2\star_2)$ are open and intervals
of the form $[t_1\star_1,t_2\star_2]$ are closed. 

\item 
The quotient of $\R_\mfs$ by the relation $s\star \stackrel{_\cdot}{\sim} t\star\iff s=t$ is homeomorphic to $\R$.

\item
The space $\R_\mfs$ is a Hausdorff topological space. It is
separable 
but not metrisable.
For $C\sw\R_\mfs$, the following three statements are equivalent: 
(i) $C$ is compact; 
(ii) $C$ is sequentially compact;  
(iii) $C$ is closed and bounded.
\end{enumerate}
\end{lemma}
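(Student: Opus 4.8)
The plan is to establish, in turn: Hausdorffness, separability, first countability (an auxiliary property I shall use), non-metrisability, and the equivalence of (i), (ii), (iii) for subsets; throughout, the convergence criterion of part~1 does most of the work. Hausdorffness is automatic for an order topology: given $a<b$ in $\R_\mfs$, if some $c$ has $a<c<b$ then the open rays $(\leftarrow,c)$ and $(c,\rightarrow)$ separate $a$ and $b$, while if no such $c$ exists — in $\R_\mfs$ precisely when $a=t-$, $b=t+$ — then $(\leftarrow,b)$ and $(a,\rightarrow)$ do, their intersection $\{x\star:a<x\star<b\}$ being empty. For separability I would check that $\{q\star:q\in\Q,\ \star\in\{-,+\}\}$ is dense: the bounded open intervals form a base (part~2 plus the unboundedness of $\R_\mfs$), a nonempty such interval $(a\star_1,b\star_2)$ must have $a<b$ in $\R$ (else it is $(a-,a+)=\emptyset$), and it then contains $q-$ for any rational $q\in(a,b)$. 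For first countability I would read off the neighbourhood bases from part~1 and verify them using part~2: since nothing lies strictly between $t-$ and $t+$, the open sets $((t-1/n)-,\,t+)=((t-1/n)-,\,t-]$, $n\geq1$, form a base at $t-$, and $[t+,\,(t+1/n)+)=(t-,\,(t+1/n)+)$ form one at $t+$.

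\emph{Not metrisable.} Since $\R_\mfs$ is separable, it suffices to show it is not second countable, as a separable metrisable space would be. Suppose $(U_n)_n$ were a countable base. For each $t\in\R$ the ray $V_t:=\{x\star:x\star>t-\}$ — which equals $\{x\star:x\star\geq t+\}$ — is open and contains $t+$, so there is an index $n(t)$ with $t+\in U_{n(t)}\sw V_t$; then $t+$ is the minimum of $U_{n(t)}$, whence $t\mapsto n(t)$ injects $\R$ into $\N$, which is absurd.

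\emph{The equivalence of (i), (ii) and (iii).} I would prove (iii) $\Rightarrow$ (i), (i) $\Rightarrow$ (iii), (iii) $\Rightarrow$ (ii) and (ii) $\Rightarrow$ (iii). For (iii) $\Rightarrow$ (i): a closed bounded $C$ lies in some $[-T,T]_\mfs=[-T-,T+]$, which is order-convex, so its subspace topology agrees with its own order topology; by the standard criterion for linearly ordered topological spaces it is compact iff it is a complete lattice, which it is, since for nonempty $A\sw[-T,T]_\mfs$ one has $\sup A=s+$ if $s+\in A$ and $\sup A=s-$ otherwise, where $s=\sup\{t:t\star\in A\text{ for some }\star\}\in[-T,T]$, with the dual statement for infima and with least and greatest elements $-T-$ and $T+$; hence $[-T,T]_\mfs$ is compact and its closed subset $C$ is compact. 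For (i) $\Rightarrow$ (iii): Hausdorffness makes compact subsets closed, and $\{(-n-,n+)\}_{n\geq1}$ is an open cover of $\R_\mfs$ with no finite subcover of any unbounded set. For (iii) $\Rightarrow$ (ii): reducing to $[-T,T]_\mfs$ and given a sequence $(t_n\star_n)$, Bolzano--Weierstrass gives $t_{n_k}\to t\in[-T,T]$; passing to a subsequence along which $t_{n_k}<t$ for all $k$ (resp.\ $=t$, resp.\ $>t$), and in the middle case also along which $\star_{n_k}$ is constant, part~1 yields convergence to $t-$ (resp.\ $t\star$, resp.\ $t+$), all in $[-T,T]_\mfs$. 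For (ii) $\Rightarrow$ (iii): an unbounded $C$ carries a sequence with $|t_n|\to\infty$, which has no convergent subsequence by part~1, contradicting (ii); and if $C$ fails to be closed, first countability gives a sequence in $C$ converging to some $x\star\in\overline C\setminus C$, a subsequence of which converges into $C$ by (ii), contradicting uniqueness of limits.

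\emph{Where the real work lies.} The only delicate steps are the non-metrisability and the compactness of $[-T,T]_\mfs$. For the first, the point to appreciate is that $\R_\mfs$ \emph{is} first countable, so the obstruction to metrisability has to be located at the level of second countability — hence the $\min U_{n(t)}$ device. For the second, I would either cite the order-completeness criterion for linearly ordered topological spaces as above, or, wanting a self-contained argument, adapt the Heine--Borel supremum argument to the split line. Everything else reduces, via part~1, to ordinary real analysis, the recurring subtlety being simply to keep track of whether a subsequence approaches a point from the $-$ side or the $+$ side.
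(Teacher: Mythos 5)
Your proposal is a correct, self-contained proof of part 4, and it necessarily differs from the paper, which does not prove this lemma at all: parts 1 and 4 are cited from Lemma 2.1 and Proposition 2.3 of the companion paper \cite{FreemanSwart2023}, and parts 2 and 3 are dismissed as straightforward consequences of part 1. Your argument is the standard treatment of the split interval (double arrow space) and all the delicate points check out: the $\min U_{n(t)}$ device correctly defeats second countability (and hence, via separability, metrisability); the identification of $[-T,T]_\mfs=[-T-,T+]$ as an order-convex complete lattice with endpoints, combined with the compactness criterion for linearly ordered topological spaces, gives (iii)$\Rightarrow$(i); and the subsequence extractions for (iii)$\Rightarrow$(ii) and (ii)$\Rightarrow$(iii) are sound, the only unstated step being that closedness of $C$ places the limit of the convergent subsequence inside $C$, which is immediate. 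The one genuine incompleteness is that the statement has four parts and you prove only the fourth, using part 1 (and part 2) as inputs throughout. This is easily repaired from material you already have: your first-countability computation exhibits the neighbourhood bases $[(t-1/n)+,\,t-]$ at $t-$ and $[t+,\,(t+1/n)-]$ at $t+$, from which part 1 follows in one line; part 2 is immediate since open intervals generate the order topology and closed intervals are complements of pairs of open rays; and part 3 follows by checking that the quotient map $t\star\mapsto t$ is continuous, open and induces a bijection on the quotient. I would add a sentence making these three reductions explicit so the proof covers the whole statement.
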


The topology on $\ov\R_\mfs$ permits an elegant description of {\cadlag} functions on $\ov\R$,
developed in Section 2.2 of \cite{FreemanSwart2023}.
We require this characterization only for the case of real {\cadlag} functions on closed intervals, as follows.
Let $I=[a,b]\sub\ov\R$ be a closed interval and
let $f:I_\mfs\to \ov{\R}$ be a function.
The following statements are equivalent:
\begin{enumerate}
\item $f$ is continuous with respect to the topology on $I_\mfs$, as a subset of $\R_\mfs$;
\item the function $t\mapsto f(t+)$ defined from $[a,b]\mapsto\ov{\R}$ is right continuous with left limits,
and $t\mapsto f(t-)$ defined from $(a,b]\to\R$ is its left continuous modification;
\item the function $t\mapsto f(t-)$ defined from $[a,b]\mapsto\ov{\R}$ is left continuous with right limits,
and $t\mapsto f(t-)$ defined from $[a,b)\to\R$ is its right continuous modification.
\end{enumerate}
\begin{defn}
\label{d:cadlag}
We refer to a function $f:[a,b]_\mfs\to\ov{\R}$ satisfying (any of) these criteria as a \textit{{\cadlag} path}.
\end{defn}
Definition \ref{d:cadlag}
is a minor extension of the classical
notion of a {\cadlag} function on $[a,b]\sw\ov\R$.
Specifically, we attach a formal meaning and value to the `left limit' at $a-$,
which is absent in the classical definition.
It may take any value, which is to say that the value of
$f(a-)$ is not restricted
by the values of $f(t\star)$ for $t\star>a-$.
This introduces the possibility that $f(a-)\neq f(a+)$,
corresponding to a jump at the initial time.

Given a {\cadlag} path $f$ with domain $[a,b]_\mfs$ we write
$I(f)=[a,b]\sw\ov\R$ and $I(f)_\mfs=[a,b]_\mfs\sw\ov\R_\mfs$.
We write $\s_f=a$ for the \emph{initial time} and $\t_f=b$ for the \emph{final time} of $f$.
Similarly, we call $(f(\s_f-),\s_f)$ and $(f(\t_f+),\t_f)$ respectively the \textit{intial} and \textit{final points} of $f$.
We say that $f$ \textit{begins} and its initial point and \textit{ends} at its final point.

We say that $f$ makes a \textit{jump} at $t\in [\sigma_f,\tau_f]$ if $f(t-)\neq f(t+)$.
The jump is said to be \textit{to the left} if $f(t+)<f(t-)$ and \textit{to the right} if $f(t+)>f(t-)$.
As mentioned, {\cadlag} paths may jump at their initial and final times or at any time in between.
The number of such jumps is at most countable.
If $f(t-)=f(t+)$ then we say that $f$ is \textit{continuous} at $t$, in which case (and only in this case) we write $f(t)=f(t-)=f(t+)$.

\subsection{The path space}
\label{sec:pi}

In this section we introduce the space $\Pi$, 
whose elements are {\cadlag} paths defined on closed intervals of $\ov\R_\mfs$,
and the space $\mc{K}(\Pi)$,
whose elements are compact subsets of $\Pi$.
We will refer to \cite{FreemanSwart2023}
which considers a more general setup 
but also acts as a companion paper providing the topological basis for the present article.
Let
\begin{equation}
\Pi=\l\{f:I_\mfs\to\ov\R\-\text{$f$ is a {\cadlag} path and $I\sw\ov\R$ is a closed interval}\r\}.
\end{equation}
We regard two elements $f,g\in\Pi$ as equivalent if
they have the same values outside of times $\pm\infty$.
Formally,
define the equivalence relation 
\begin{equation}
\label{eq:pi_compactification}
f\stackrel{\Pi}{\sim} g 
\quad\iff\quad 
I(f)=I(g)\text{ and }f(t\pm)=g(t\pm)\text{ for all }t\in I(f)\cap\R,
\end{equation}
and work implicitly with the resulting equivalence classes of $\Pi$.
We abuse notation slightly by continuing to write $f\in\Pi$ for a {\cadlag} path, 
but including the notational convention%
\footnote{
To be precise: \cite{FreemanSwart2023} uses a slightly different compactification procedure to \eqref{eq:pi_compactification},
which allows the domain of {\cadlag} paths to be non-interval sets
and defines $\ov\R_\mfs$ to be a two-point compactification of $\R_\mfs$.
See Section 2.1 
within \cite{FreemanSwart2023} for details.
The space $\Pi$ from the present article is denoted there by $\Pi^|$.
}
that $f(t\star)=*$ whenever $t=\pm\infty$.

Our main results require Skorohod's M1 topology on $\Pi$, 
which we now introduce.
We will introduce the J1 topology at the same time,
as it is more widely used and the reader may wish to make a comparison.
We define the \emph{closed graph} $\Gr(f)$ 
and
\emph{interpolated graph} $\Hr(f)$ of a {\cadlag} path $f\in\Pi$ as
\begin{align*}
\Gr(f)&=\big\{(x,t)\in\ov\R^2\-
t\in I(f),\ x\in\{f(t-),f(t+)\}\big\},\\
\Hr(f)&=\big\{(x,t)\in\ov\R^2\-
t\in I(f),\ x\in[f(t-),f(t+)]\big\},
\end{align*}
where we use the convention $[s,t]:=[s\wedge t,s\vee t]$  for $s,t\in\ov\R$. 
See Figure~\ref{fig:Rc} for a picture displaying 
the difference between $\Gr(f)$ and $\Hr(f)$:
at times $t\in\R$ when the path makes a jump,
the line segments between $(f(t-),t)$ and $(f(t+),t)$ are drawn in $H(f)$ but not in $G(f)$.

\begin{figure}[t] 
\centering
\includegraphics{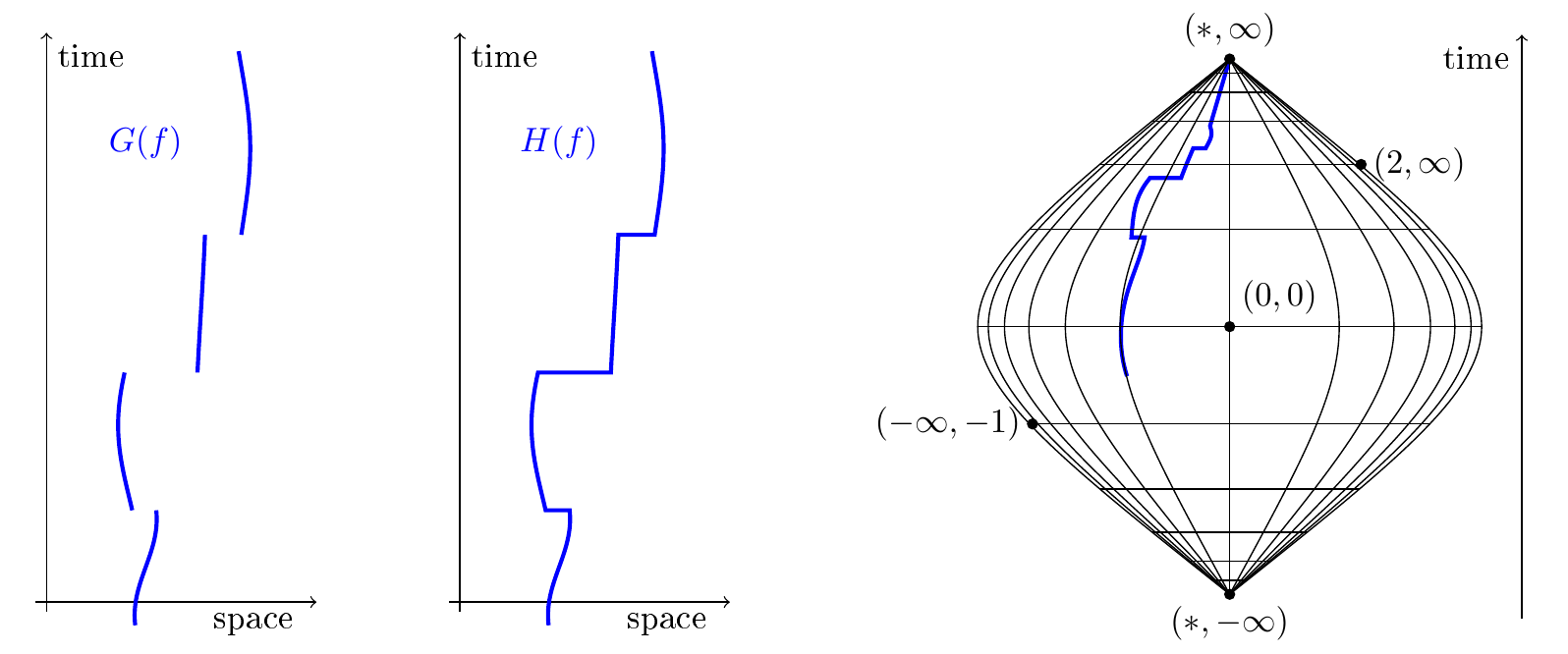}   
\caption{ 
On the left, the closed graph $\Gr(f)$ and interpolated graph $\Hr(f)$ of a path $f$. 
On the right, the compactification $\R^2_{\rm c}$ of
$\ov\R\times\R$ with the interpolated graph of $f$,
with some points marked for convenience.
In \cite{FreemanSwart2023} $\R^2_c$ is referred to as a \textit{squeezed space}.
}
\label{fig:Rc}
\end{figure}

The reason for \eqref{eq:pi_compactification} is that 
we intend to treat $G(f)$ and $H(f)$ as compact subsets of a suitable space,
which in turn will allow us to describe the J1 and M1 topologies. 
With this in mind, we define
\begin{equation*}
\Rc:=\l(\ov\R\times\R\r)\cup\big\{(\ast,-\infty),(\ast,\infty)\big\}
\end{equation*}
and equip $\Rc$ with a metrisable topology such that the induced subspace topology on $\ov\R\times\R$
is the product topology and, as $n\to\infty$,
\begin{equation*}
(x_n,t_n)\to(\ast,\pm\infty) 
\quad\mbox{if and only if}\quad 
t_n\to\pm\infty.
\end{equation*}
Equation (2.26) of \cite{FreemanSwart2023} gives an explicit metric $d_{\R^2_c}$ with these properties,
under which $\R^2_{\rm c}$ is compact
See Figure \ref{fig:Rc} for an illustration of $G(f)$, $H(f)$ and the compactification.
We endow $\Rc$ with the two-dimensional Lebesgue measure on $\ov{\R}\times\R\sw\Rc$, 
placing zero mass at $(*,\pm\infty)$.

Let $f\in\Pi$. 
There is a natural total order on both $G(f)$ and $H(f)$, which we denote by $\sqsubseteq$.
For $G(f)$ this order is given by $(f(t_1\star_1),t_1)\sqsubseteq (f(t_2\star_2),t_2)$ whenever $t_1\star_1\leq t_2\star_2$.
For $H(f)$ it requires a little more care:
we say that $(x_1,t_1)\sqsubseteq (x_2,t_2)$ whenever
$t_1<t_2$, 
or if
$t_1=t_2$ and $x_1$ is non-strictly closer to $f(t_1-)$ than $x_2$.

Informally, the J1 topology on $\Pi$ corresponds to convergence of closed graphs,
and the M1 topology to convergence of interpolated graphs, 
with the caveat that (in both cases) the total order $\sqsubseteq$ is preserved by the convergence.
Our next step is to formalize this intuition and give a definition of the two topologies.

For a metric space $(M,d_M)$, 
let $\Ki(M)$ denote the space of all nonempty compact subsets of $M$, equipped with the Hausdorff metric induced by $d_M$.
See Appendix \ref{a:hausdroff_metric} for a brief introduction to the Hausdorff metric.
We define the
\emph{second order closed graph} $G^{(2)}(f)$ and the 
\emph{second order interpolated graph} $H^{(2)}(f)$ 
of a path $f\in\Pi$ to be
\begin{align*}
G^{(2)}(f)&=
\big\{(z_1,z_2)\-
z_i\in G(f)\text{ and }z_1\sqsubseteq z_{2}\big\},\\
H^{(2)}(f)&=
\big\{(z_1,z_2)\-
z_i\in H(f)\text{ and }z_1\sqsubseteq z_{2}\big\},
\end{align*}
where $\sqsubseteq$ is as defined above. 
Lemma  3.1 of \cite{FreemanSwart2023} gives that the sets 
$G(f)$ and $H(f)$ are compact subsets of $\Rc$,
moreover the sets $G^{(2)}(f)$ and $H^{(2)}(f)$ are compact subsets of $(\Rc)^2$.
Note that $G^{(2)}(f)$ and $H^{(2)}(f)$ preserve information about the total order $\sqsubseteq$,
whereas $G(f)$ and $H(f)$ do not.
It can be seen that $G^{(2)}(f_n)\to G^{(2)}(f)$ implies 
$G(f_n)\to G(f)$,
but the converse is not true.
Similarly for $H^{(2)}$ and $H$.
If $G(f_n)\to G(f)$ then $H(f_n)\to H(f)$,
but the converse is not true.
Similarly for $G^{(2)}$ and $H^{(2)}$.

\begin{prop}
\label{p:J1M1}
In each case, under the metric listed $\Pi$ is a Polish space.
\begin{enumerate}
\item The \emph{J1 topology}: $d_{\rm J1}(f,g)=d_{\mc{K}((\Rc)^2)}\l(G^{(2)}(f),G^{(2)}(g)\r)$.
\item The \emph{M1 topology}: $d_{\rm M1}(f,g)=d_{\mc{K}((\Rc)^2)}\l(H^{(2)}(f),H^{(2)}(g)\r)$.
\end{enumerate}
\end{prop}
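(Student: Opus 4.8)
The plan is to identify $(\Pi,d_{\rm J1})$ and $(\Pi,d_{\rm M1})$ with subspaces of the compact metric space $\mc{K}\big((\Rc)^2\big)$ and then to invoke the classical theorem of Alexandrov that a $G_\delta$ subspace of a Polish space is itself Polish. Consider the maps $\Phi_{\rm J1}\colon f\mapsto G^{(2)}(f)$ and $\Phi_{\rm M1}\colon f\mapsto H^{(2)}(f)$ from $\Pi$ into $\mc{K}\big((\Rc)^2\big)$; by Lemma~3.1 of \cite{FreemanSwart2023} these are well defined. Since $d_{\rm J1}(f,g)=d_{\mc{K}((\Rc)^2)}\big(G^{(2)}(f),G^{(2)}(g)\big)$ by definition, and likewise for $d_{\rm M1}$, symmetry, the triangle inequality and finiteness are inherited from the Hausdorff metric, and the only metric axiom requiring proof is injectivity of $\Phi_{\rm J1}$ and $\Phi_{\rm M1}$ modulo $\stackrel{\Pi}{\sim}$. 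For this I would reconstruct $f$ from its graph: one recovers $G(f)=\pi_1\big(G^{(2)}(f)\big)$, hence $I(f)=\pi_2\big(G(f)\big)$, and then for each $t\in I(f)\cap\R$ the fibre $\{x:(x,t)\in G(f)\}$ equals $\{f(t-),f(t+)\}$, with $f(t+)$ pinned down by right continuity (as the limit of fibres over $s\downarrow t$) when $t<\t_f$, with $f(t-)$ pinned down by left limits when $t>\s_f$, and with the two leftover boundary values $f(\s_f-)$ and $f(\t_f+)$ read off as the remaining fibre endpoints; the argument is identical with $H$ in place of $G$. Once injectivity holds, $\Phi_{\rm J1}$ and $\Phi_{\rm M1}$ are isometric embeddings, and since $(\Rc)^2$ is compact metric the space $\mc{K}\big((\Rc)^2\big)$ is compact metric, hence separable, hence so are its subspaces $\Phi_{\rm J1}(\Pi)$ and $\Phi_{\rm M1}(\Pi)$; this gives separability of $(\Pi,d_{\rm J1})$ and $(\Pi,d_{\rm M1})$.

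It then remains to show that $\Phi_{\rm J1}(\Pi)$ and $\Phi_{\rm M1}(\Pi)$ are $G_\delta$ in $\mc{K}\big((\Rc)^2\big)$. I would obtain this by characterising the images. A compact set $K\sw(\Rc)^2$ equals $G^{(2)}(h)$ for some $h\in\Pi$ precisely when, writing $L=\pi_1(K)$: (i) $\pi_2(K)=L$, and $K$ viewed as a binary relation on $L$ is a total order with $\{z:(z,z)\in K\}=L$; (ii) the image of $L$ under projection to the time coordinate is a closed interval, every $t$-fibre of $L$ has at most two points, and the order $K$ refines the time order; and (iii) the pair of boundary functions thereby determined is a {\cadlag} path in the sense of Section~\ref{sec:Rpm}, i.e.\ is continuous on $I_\mfs$. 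For $H^{(2)}$ the analogous characterisation replaces ``every $t$-fibre has at most two points'' by ``every $t$-fibre is a nonempty closed interval'' and additionally requires that the interpolating vertical segments be present, encoding the connectedness at jumps that is characteristic of M1. The concluding step is to check that each clause of these lists cuts out a $G_\delta$ --- in fact mostly a closed --- subset of $\mc{K}\big((\Rc)^2\big)$, so that the countable intersection is $G_\delta$; Alexandrov's theorem then shows $\Phi_{\rm J1}(\Pi)$ and $\Phi_{\rm M1}(\Pi)$ are Polish, and Proposition~\ref{p:J1M1} follows.

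The main obstacle is exactly this concluding step, and within it the order clauses of~(i): properties such as transitivity of the relation encoded by $K$ are not automatically preserved under Hausdorff limits, since convergent sequences $(a_n,b_n)\to(a,b)$ and $(b_n',c_n)\to(b,c)$ with $(a_n,b_n),(b_n',c_n)\in K_n$ and $b_n\neq b_n'$ give no control on $(a_n,c_n)$. This is precisely why the two topologies are built from the \emph{second-order} graphs $G^{(2)},H^{(2)}$ rather than from $G,H$: carrying the full total order $\sqsubseteq$ along with the point set, together with the compactness of $(\Rc)^2$ that prevents escape of mass to the time-infinities, is what forces Hausdorff limits to remain genuine graphs. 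It is worth recording that the metric itself is \emph{not} complete --- for instance the paths $f_n=\1_{[1/2,\,1/2+1/n)}$ on $[0,1]$ are $d_{\rm J1}$-Cauchy but their second-order graphs converge to a set which is not a graph --- so the $G_\delta$ argument (equivalently, producing a topologically equivalent complete metric) is genuinely needed and cannot be replaced by a direct completeness proof. All of this is carried out, in a more general setting, in \cite{FreemanSwart2023}, where the present space $\Pi$ appears as $\Pi^|$; the most efficient route is simply to cite the relevant statements there.
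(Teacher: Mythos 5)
The paper offers no proof of Proposition \ref{p:J1M1} at all: it defers entirely to Sections 2.4, 3.2 and 3.4 of \cite{FreemanSwart2023}, so your closing move of citing the companion paper is precisely what the authors do, and to that extent your proposal matches theirs. The additional sketch you give of how the cited result would be established (isometric embedding into the compact space $\mc{K}\big((\Rc)^2\big)$, which yields separability for free, followed by a $G_\delta$ characterisation of the image and Alexandrov's theorem) is a sensible and standard outline, but two points would need repair if it were to stand on its own. First, there is an internal inconsistency: you assert that the second-order structure together with compactness of $(\Rc)^2$ ``forces Hausdorff limits to remain genuine graphs,'' and then correctly exhibit a $d_{\rm J1}$-Cauchy sequence whose second-order graphs converge to a set that is \emph{not} a graph. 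Both cannot be true. The resolution is that the order-theoretic clauses in your characterisation are indeed closed under Hausdorff limits, whereas the clause asserting that the limiting boundary relation is the graph of a genuine {\cadlag} path on $I_\mfs$ is only $G_\delta$ and not closed --- and verifying that this clause really is $G_\delta$ is the substantive content of the argument, which your sketch does not carry out. Second, your injectivity argument reconstructs $f$ from the fibres of $G(f)=\pi_1\big(G^{(2)}(f)\big)$ using one-sided limits, but this breaks down for degenerate paths with $\s_f=\t_f=a$, where the fibre $\{f(a-),f(a+)\}$ admits no limiting procedure to decide which element is which; there one must appeal to the order $\sqsubseteq$ recorded in $G^{(2)}(f)$ itself (likewise for $H^{(2)}$), which is available in your setup but not invoked. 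Neither issue is fatal, and since both are handled in \cite{FreemanSwart2023}, the citation carries the proof either way.
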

We mention also that $d_{\rm J2}(f,g)=d_{\mc{K}(\Rc)}\l(G(f),G(g)\r)$
and $d_{\rm M2}(f,g)=d_{\mc{K}(\Rc)}\l(H(f),H(g)\r)$
respectively correspond to Skorohod's J2 and M2 topologies.
In \cite{FreemanSwart2023} it is shown that each such metric
is equivalent to the corresponding classical Skorohod metric on 
$\mc{D}_{[a,b]}=\{f\in\Pi\-I(f)=[a,b],\; f(a-)=f(a+)\}$, for $a,b\in\R$.
See Sections 2.4, 3.2 and 3.4 of that article for details and proofs of these facts.

We now specialize to the case that is relevant to the present article:
\begin{quote}
\emph{from now on the space $\Pi$ is (implicitly) equipped with the M1 topology.}
\end{quote}
The same applies to subsets of $\Pi$.
We write $d_\Pi=d_{\rm M1}$, generating the M1 topology on $\Pi$,
but we remark that the metric space $(\Pi,d_\Pi)$ is not complete.
In Appendix \ref{a:M1} we collate together some results from \cite{FreemanSwart2023} concerning the M1 topology,
including criteria for relative compactness and tightness.


Let
$\Pi^\up=\{f\in\Pi\-\tau_f=\infty\}$,
$\Pi^\down=\{f\in\Pi\-\sig_f=-\infty\}$,
and $\Pi^\updo=\Pi^\up\cap\Pi^\down$
be the subspaces of (respectively) forwards and backwards \emph{half-infinite} paths, and \emph{bi-infinite}
paths. 
Note that $\Pi$ and $\Pi^\updownarrow$ are both symmetric under time reversal.
We write $\Pi_{\rm c}=\{f\in\Pi\-f(t-)=f(t+)\text{ for all }t\in I(f)\}$
for the subspace of \emph{continuous} paths.
We write $\Pi^\up_{\rm c}:=\Pi^\up\cap\Pi_{\rm c}$ and so on.
Informally, the induced topology on $\Pi_{\rm c}$
may be described as convergence of starting and final times plus
locally uniform convergence of continuous paths. 
In fact $\Pi^\uparrow_{\rm c}$ is the state space introduced by \cite{FontesIsopiEtAl2004a} for the Brownian web.
See Proposition 3.4 of \cite{FreemanSwart2023} for details.

Our main results will concern systems of half-infinite {\cadlag} paths 
and we will tend to state results forwards in time
i.e.~we will mostly work in $\Pi^\up$ or its subspace $\Pi^\updownarrow$. 
We require $\Pi^\downarrow$ only for results concerning duality.
We write $\mc{K}(\Pi)$ for the metric space of compact subsets of $\Pi$,
where the underlying metric on $\Pi$ comes from Proposition \ref{p:J1M1}.
It is easily seen from Proposition \ref{p:J1M1} that 
$\Pi^\uparrow$, $\Pi^\downarrow$ and $\Pi^\updownarrow$ are closed subsets of $\Pi$,
from which it follows that $\mc{K}(\Pi^\uparrow)$, $\mc{K}(\Pi^\updownarrow)$ and $\mc{K}(\Pi^\downarrow)$
are closed subsets of $\mc{K}(\Pi)$.


\subsection{Crossing and ordering}
\label{sec:terminology}

We now introduce notation and terminology associated to $\Pi$ and $\Pi^\uparrow$.
We say that a {\cadlag} path $g$ \emph{extends} a {\cadlag} path $f$ if 
$H^{(2)}(f)\sw H^{(2)}(g)$.
We note that, in all but the trivial case for which $\s_f=\t_f$ or $\s_g=\t_g$
this reduces to the more intuitive condition $H(f)\sw H(g)$.
The point is that the total ordering of $H(f)$ (see Section \ref{sec:pi}) 
should coincide with its induced order as a subset of $H(g)$.

We write $f\sw g$ to denote that $g$ extends $f$.
It is easily seen that $\sw$ is a partial order on $\Pi$
and we write the corresponding strict order relation as $\subset$.
For $A\sw\Pi$ we write 
\begin{align}
\label{eq:Amax_def}
A_{\max} &= \{g\in A\-\text{for all }f\in A,\text{ if }g\sw f\text{ then }g=f\} \\
A_\uparrow &= \{g\in\Pi^\uparrow\- g\sw f \text{ for some }f\in A\} \label{eq:A_uparrow_def}. 
\end{align}
In words, 
$A_{\max}$ denotes 
the set of maximal elements of $(A,\sw)$,
whilst $A_\uparrow$ denotes
the set of half-infinite paths
that may be extended to some $f\in A$.

For sets of paths $A,B\sw\Pi^\uparrow$ we define the relation 
\begin{equation}
\label{eq:preceq}
A\preceq B \quad\iff\quad A_\uparrow\cap B\sw A\sw B_\uparrow,
\end{equation}
which, as a consequence of Lemma \ref{l:preceq_deterministic}, is a partial order on (the set of) subsets of $\Pi^\uparrow$.
The corresponding strict order relation is written $\prec$.
The intuition behind \eqref{eq:preceq} is one of efficient covering of space-time: 
loosely
$A\prec B$ means that $B$ covers more of space-time using longer and/or fewer paths than $A$.

Note that if $A$ and $B$ are random variables on the same probability space then
we can make sense of the event $\{A\preceq B\}$ via \eqref{eq:preceq}.
If $A$ and $B$ are $\mc{K}(\Pi^\uparrow)$ valued random variables without an implicit coupling then,
with mild abuse of notation, we further extend $\preceq$ by writing
$A\preceqd B$ 
if and only if there exists a coupling of $A$ and $B$ such that 
$\P[A\preceq B]=1$.
We write $A\precd B$ if $A\preceqd B$ where $A$ and $B$ do not have the same marginal distributions.
In Lemma \ref{l:preceq_random} we show that $\preceqd$ is a partial order on the space of (laws of) $\mc{K}(\Pi^\uparrow)$ valued random variables.

\begin{defn}
\label{d:crossing}
We say that paths $f,g\in\Pi$ are \textit{non-crossing}
if there exists paths $f',g'\in\Pi^\updownarrow$
such that $f\sw f'$, $g\sw g'$ and $f(t\star)\leq g(t\star)$ for all $t\star\in\ov\R_\mfs$.
\end{defn}

The precise format of Definition \ref{d:crossing} is motivated by the complication that a pair of {\cadlag} paths may share the same initial (or final) time and might both jump at this time, with perhaps overlapping jumps. 
The reader may wish to glance forward at Figure \ref{fig:cross}
which depicts some of these complications.
We say that a set of paths $A\sw\Pi$ is non-crossing if all pairs of elements of $A$ are non-crossing.
We say that $A$ and $B$ are non-crossing if $A\cup B$ is non-crossing.

If $f\in\Pi$ and $z\in H(f)$,
then we say that $f$
\emph{passes through} the space-time point $z$. 
For $f\in\Pi^\uparrow$, if $f$ passes through $z\in\Rc$ then we define the \textit{restriction}
\begin{equation}
\label{eq:f|z}
\text{
$f|_z\in\Pi^\uparrow$ to be the unique $g\sw f$ such that $(g(\sigma_g-),\sigma_g)=z$.
}
\end{equation}
More generally, we say that $f$ passes through a set $D\sw\Rc$ if 
$f$ passes through some point $(x,t)\in D$. 
For an (unordered) set
$D\sw\Rc$ and $A\sw\Pi^\uparrow$ we write
\begin{equation}
\label{eq:A(z)}
A(D) = \big\{f\in A:f\text{ passes through }D\big\}
\end{equation}
for the set of paths in $\mc{A}$ that pass through $D$.
For convenience, for $z\in\Rc$ we write 
$A(z)=A(\{z\})$.
In the same vein we define
\begin{equation}
\label{eq:A|z}
A|_D = \{f|_z\in\Pi^\uparrow\-f\in\mc{A}(z)\text{ and }z\in D\}
\end{equation}
for the set of paths in $A$ that pass through some $z\in B$,
with the part prior to $z$ removed.
For $z\in\Rc$ we write $A|_z=A|_{\{z\}}$.

\subsection{Weaves, webs and flows}
\label{sec:results_weaves}

We are interested in systems of non-crossing paths that
touch every point of space-time. More rigorously,
we say that $A\sw\Pi^\uparrow$ is \emph{pervasive} if
$A(z)\neq\emptyset$ for all $z\in\Rc$.
We remark that, as a consequence of Lemma \ref{l:appdx_1_sw_limits},
if $A\in\mc{K}(\Pi)$ then
it suffices to check that $A(z)\neq\emptyset$ on a dense subset of $z\in\Rc$.
The key objects studied within the present article are as follows.

\begin{defn}
\label{d:weave}
A \emph{weave} is the law of a $\mc{K}(\Pi^\uparrow)$ valued random variable that is almost surely
pervasive and non-crossing.
Let $\mathscr{W}$ denote the set of weaves.
A weave that is a minimal element of $\mathscr{W}$ with respect to $\preceqd$ is known as a \textit{web}.
A weave that is a maximal element of $\mathscr{W}$ with respect to $\preceqd$ is known as a \textit{flow}.
\end{defn}

A weave is a probability measure on $\mc{K}(\Pi)$,
however we mildly abuse terminology in the usual way (c.f.~`a' Brownian motion) 
by saying that a $\mc{K}(\Pi^\uparrow)$ valued random variable is a weave if its law satisfies Definition \ref{d:weave}.
Similarly for webs and flows.
We write 
\begin{equation}
\label{eq:Wdet}
\mathscr{W}_{\det}=\{\mc{A}\in\mc{K}(\Pi^\uparrow)\-\mc{A}\text{ is non-crossing and pervasive}\}.
\end{equation}
If $\mc{A}$ is a weave then $\P[\mc{A}\in\mathscr{W}_{\det}]=1$.
Elements of $\mathscr{W}_{\det}$ are said to be \textit{deterministic weaves}.
Although $\mathscr{W}_{\det}$ is not formally a subset of $\mathscr{W}$,
it may be viewed as such by identifying 
$\mc{A}\in\mathscr{W}_{\det}$ with the probability measure that is a point-mass at $\mc{A}$.
The following concept plays a central role for both deterministic and random weaves.

\begin{defn}
\label{d:ramified}
Let $\mc{A}$ be a weave and let $z\in\Rc$ be a possibly random point of space-time.
We say that $z$ is a \textit{ramification point} of $\mc{A}$ 
if there exists $f,g\in\mc{A}(z)$ such that neither $f\sw g$ nor $g\sw f$.
Otherwise, $z\in\Rc$ is said to be \textit{non-ramified} in $\mc{A}$.
\end{defn}

If $z$ is non-ramified in $\mc{A}$ with $f,g\in\mc{A}(z)$ then $f\sw g$ or $g\sw f$.
Loosely, ramification points capture where weaves display atypical path behaviour,
for example branching or coalescing of paths, or perhaps both.
We stress that `$z$ is non-ramified in $\mc{A}$' is an event, with some associated probability,
and not a deterministic statement.
If it is clear from the context 
which weave is meant then we may simply say 
that $z\in\R^2_c$ is non-ramified.
We say that $D\sw\Rc$ is non-ramified 
if all $z\in D$ are non-ramified.


A recurring theme in our results is that behaviour at non-ramified points determines the full behaviour of the weave.
This suggests that non-ramified points should be plentiful.
In Lemma \ref{l:ramification_meas_zero_det} we show that 
for any weave $\mc{A}$
the deterministic set 
$$\{z\in\Rc\-\P[z\text{ is ramified in }\mc{A}]>0\}$$ 
has zero Lebesgue measure.
Hence if a random point $z\in\Rc$ is sampled independently of $\mc{A}$, according to some non-atomic law,
then $z$ is almost surely non-ramified in $\mc{A}$.

Let $A\in\mc{K}(\Pi)$ and let $D\sw\Rc$.
We define a key pair of deterministic operations as follows:
\begin{align}
\web_D(A)
&=\ov{(A|_D)_\uparrow}\label{eq:web_op}
\\
\flow(A) 
&=\{f\in \Pi^\updownarrow\- f\text{ does not cross }A\}. \label{eq:flow_op}
\end{align}
Let us briefly comment on the $\web_D(\cdot)$ operation.
The use of $\ov{(\cdot)}$ denotes closure in $\mc{K}(\Pi^\uparrow)$.
In Lemma \ref{l:web_op_D} we will see that, for $\mc{A}\in\mathscr{W}_{\det}$,
the value of $\web_D(\mc{A})$ does not depend upon the choice of dense and non-ramified $D\sw\R^2$.
Thus \eqref{eq:web_op} defines a deterministic function $\web(\cdot)$ with domain $\mathscr{W}_{\det}$,
which we write without explicit specification of $D$.

We are now ready to state our first main result.
It shows that extremal points of $(\mathscr{W},\preceqd)$ may be characterized 
as fixed points of the $\web$ and $\flow$ operators.
This leads to a particularly nice description of the structure of $\mathscr{W}$.

\begin{theorem}
\label{t:weave_structure}
Let $\mc{A}$ be a weave. 
\begin{enumerate}
\item
The following are equivalent:
(a) $\mc{A}$ is a web; (b) $\mc{A} \eqas \web(\mc{A})$.
\item 
The following are equivalent:
(a) $\mc{A}$ is a flow; (b) $\mc{A} \eqas \flow(\mc{A})$; (c) $\mc{A}\sw\Pi^\updownarrow$ almost surely.
\item
Almost surely, $\web(\mc{A})\preceq\mc{A}\preceq\flow(\mc{A})$.
\item
There exists a unique (in distribution) web $\mc{W}$ and a unique flow $\mc{F}$ such that
$\mc{W}\preceqd \mc{A}\preceqd\mc{F}$, given by
$\mc{W}\eqd\web(\mc{A})$ and $\mc{F}\eqd\flow(\mc{A})$.
\end{enumerate}
\end{theorem}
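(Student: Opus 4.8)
\textbf{Proof plan for Theorem \ref{t:weave_structure}.}

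The plan is to prove parts (3), (1), (2), (4) roughly in that logical order, since part (3) is the engine and the others are deductions from it together with the definitions of web and flow as extremal elements. First I would establish part (3): the deterministic statement that $\web(\mc{A})\preceq\mc{A}\preceq\flow(\mc{A})$ for every $\mc{A}\in\mathscr{W}_{\det}$. For the right-hand inequality I would unpack \eqref{eq:preceq}: I need $\mc{A}_\uparrow\cap\flow(\mc{A})\sw\mc{A}$ and $\mc{A}\sw\flow(\mc{A})_\uparrow$. The second containment is the substantive one and is exactly the ``path extension'' result advertised in the introduction: every half-infinite path in $\mc{A}$ extends backwards to a bi-infinite path that does not cross $\mc{A}$, hence lies in $\flow(\mc{A})$; this is the result from Section \ref{sec:weaves_det} that I am permitted to invoke. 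The first containment says any half-infinite non-crossing extension that already sits inside $\flow(\mc{A})$ must lie in $\mc{A}$ — this should follow from pervasiveness of $\mc{A}$ together with the non-crossing property (a bi-infinite path not crossing $\mc{A}$ and passing through a point $z$ is squeezed between the $\mc{A}$-paths through $z$, and at non-ramified $z$ there is essentially one such path). For the left-hand inequality $\web(\mc{A})\preceq\mc{A}$, I would use the characterization of $\web_D(\mc{A})=\ov{(\mc{A}|_D)_\uparrow}$ with $D$ dense and non-ramified (Lemma \ref{l:web_op_D}): every element of $\web(\mc{A})$ is an M1-limit of restrictions of paths of $\mc{A}$, so lies in $\mc{A}_\uparrow$ since $\mc{A}$ is closed; and conversely one checks $(\mc{A})_\uparrow\cap\web(\mc{A})\sw\web(\mc{A})\sw\mc{A}_\uparrow$ and $\web(\mc{A})\sw\mc{A}_\uparrow$ directly, while $\mc{A}\sw\web(\mc{A})_\uparrow$ needs that every path of $\mc{A}$, restricted to the dense non-ramified set, is approximated well enough that its full extension lies above some element of $\web(\mc{A})$.

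Given part (3) in its deterministic form, the random statement in (3) is immediate by applying it pointwise on the probability-one event $\{\mc{A}\in\mathscr{W}_{\det}\}$. Part (1): if $\mc{A}\eqas\web(\mc{A})$, then since $\web(\mc{A})\preceq\mc{B}$ can be arranged whenever $\mc{B}\sim\mc{A}$ (using part (3) applied to $\mc{B}$ together with the fact, from item 4 of the outline / a cited lemma, that $\web(\mc{B})\eqd\web(\mc{A})$ when $\mc{A}\sim\mc{B}$ — I would need to check this equivalence is available, otherwise argue via the $m$-particle motions), $\mc{A}$ is $\preceqd$-minimal, i.e.\ a web. Conversely if $\mc{A}$ is a web, then $\web(\mc{A})\preceqd\mc{A}$ by part (3), so minimality forces $\web(\mc{A})\eqd\mc{A}$; to upgrade from equality in distribution to $\mc{A}\eqas\web(\mc{A})$ I would use that $\web$ is a deterministic function and that $\web(\mc{A})\preceq\mc{A}$ holds in the coupling realizing $\web(\mc{A})\preceqd\mc{A}$ — and a $\preceq$-comparison between two things with the same law must be an a.s.\ equality (this is where the antisymmetry/partial-order property from Lemma \ref{l:preceq_deterministic} is used: $\web(\mc{A})\preceq\mc{A}$ a.s.\ and equality in law implies $\web(\mc{A})=\mc{A}$ a.s.). Part (2) is parallel: (a)$\iff$(b) runs exactly as for webs but with $\flow$ and maximality, using that $\flow(\cdot)$ is continuous and that $\flow(\mc{A})$ depends only on $\mc{A}$; (b)$\iff$(c) is the observation, already asserted to be provable in Section \ref{sec:weaves_det}, that $\flow(A)$ consists of bi-infinite paths and conversely a pervasive non-crossing set of bi-infinite paths equals its own $\flow$ — the containment $\mc{A}\sw\flow(\mc{A})$ being part of the path-extension machinery (with $\sig_f=-\infty$ already), and $\flow(\mc{A})\sw\mc{A}$ following from pervasiveness plus non-crossing at non-ramified points as in the first containment above.

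Part (4): existence is handled by taking $\mc{W}\eqd\web(\mc{A})$ and $\mc{F}\eqd\flow(\mc{A})$; I must check these are themselves weaves (pervasive: $\web(\mc{A})$ contains restrictions of $\mc{A}$-paths through a dense set and is closed, so pervasive by the density remark after Definition \ref{d:weave}; $\flow(\mc{A})$ contains the bi-infinite extensions of $\mc{A}$-paths so is pervasive; both are non-crossing since $\mc{A}$ is and non-crossing is preserved under extension and M1-limits), that they are a web and a flow respectively (apply the fixed-point characterizations (1)(b) and (2)(b), which requires $\web(\web(\mc{A}))=\web(\mc{A})$ and $\flow(\flow(\mc{A}))=\flow(\mc{A})$ — idempotence, presumably recorded among the deterministic lemmas), and that $\mc{W}\preceqd\mc{A}\preceqd\mc{F}$, which is part (3). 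Uniqueness: if $\mc{W}'$ is any web with $\mc{W}'\preceqd\mc{A}$, then $\mc{W}'$ and $\web(\mc{A})$ are both webs below $\mc{A}$; applying $\web(\cdot)$ and using that it is a deterministic operation invariant on the $\sim$-class, together with transitivity of $\preceqd$ and antisymmetry (Lemma \ref{l:preceq_random}), forces $\mc{W}'\eqd\web(\mc{A})$; symmetrically for flows. I expect the main obstacle to be the deterministic part (3), specifically the containment $\mc{A}\sw\flow(\mc{A})_\uparrow$ and the dual approximation claim $\mc{A}\sw\web(\mc{A})_\uparrow$ — these rest on the delicate backward path-extension construction and on controlling M1-limits of restrictions near ramification points, and while the excerpt promises this machinery in Section \ref{sec:weaves_det}, marshalling it cleanly (and handling jumps at initial times, which is precisely what makes Definition \ref{d:crossing} awkward) is the crux; the upgrade from ``equality in law'' to ``a.s.\ equality'' via antisymmetry of $\preceq$ is the other place where care is needed.
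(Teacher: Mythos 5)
Your overall architecture matches the paper's: part (3) is proved deterministically via the path-extension theorem and the minimality/maximality of $\web(\mc{A})$ and $\flow(\mc{A})$ in $(\mathscr{W}_{\det},\preceq)$, parts (1)--(2) are deduced as fixed-point characterizations, and uniqueness in (4) goes through the invariance of $\web$ and $\flow$ on non-crossing couplings. Two steps, however, are not right as written.

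First, you have misread \eqref{eq:preceq} when unpacking $\web(\mc{A})\preceq\mc{A}$. With $A=\web(\mc{A})$ and $B=\mc{A}$ the required containments are $\web(\mc{A})_\uparrow\cap\mc{A}\sw\web(\mc{A})$ and $\web(\mc{A})\sw\mc{A}_\uparrow$; the containment $\mc{A}\sw\web(\mc{A})_\uparrow$ that you single out as the crux is not needed, and it is \emph{false} in general: since $\web(\mc{A})$ is a decreasing set, $\web(\mc{A})_\uparrow=\web(\mc{A})$, and the left-hand example of Figure \ref{fig:web_dc} exhibits a path of $\mc{A}$ (the bi-infinite red path) lying in no element of $\web(\mc{A})_\uparrow$. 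The containment you actually need, $\web(\mc{A})_\uparrow\cap\mc{A}\sw\web(\mc{A})$, is immediate from $\web(\mc{A})_\uparrow=\web(\mc{A})$ (which requires only that $\mc{A}|_D$ be relatively compact, via Lemma \ref{l:relcomp_uparrow}). Similarly, for $\mc{A}_\uparrow\cap\flow(\mc{A})\sw\mc{A}$ no squeezing argument at non-ramified points is needed: an element of $\mc{A}_\uparrow\cap\flow(\mc{A})$ is bi-infinite and lies below some $f\in\mc{A}$, hence equals $f$.

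Second, and more substantively, your upgrade from ``$\web(\mc{A})\eqd\mc{A}$ and $\web(\mc{A})\preceq\mc{A}$ a.s.'' to ``$\web(\mc{A})\eqas\mc{A}$'' cannot be obtained from the deterministic antisymmetry of Lemma \ref{l:preceq_deterministic}. That lemma gives $A\preceq B\preceq A\Rightarrow A=B$, whereas here you have a one-sided a.s.\ comparison together with equality of laws. The standard argument for ``$X\leq Y$ a.s.\ and $X\eqd Y$ implies $X\eqas Y$'' requires the order to be closed in the product topology, and Remark \ref{r:preceq_not_compatible} shows explicitly that $\preceq$ is \emph{not} compatible with $(\mc{K}(\Pi^\uparrow),d_{\mc{K}(\Pi)})$. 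This step is the content of the paper's Lemma \ref{l:preceq_antisym}, whose proof is a genuinely nontrivial second-countability argument; the same lemma underlies the antisymmetry of $\preceqd$ that you invoke for uniqueness in part (4). The paper in fact sidesteps the issue for parts (1)--(2) by first proving (Lemma \ref{l:det_vs_random_webs_flows}) that being a web (resp.\ flow) is equivalent to $\mc{A}(\omega)$ being almost surely a minimal (resp.\ maximal) element of $(\mathscr{W}_{\det},\preceq)$; the fixed-point identities $\mc{A}\eqas\web(\mc{A})$ and $\mc{A}\eqas\flow(\mc{A})$ then follow pointwise on the probability space from the deterministic relations $\web(\mc{A})\preceq\mc{A}\preceq\flow(\mc{A})$. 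Either route is viable, but on yours the law-versus-a.s.\ antisymmetry must be stated and proved as a separate lemma rather than dispatched in a clause.
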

Thus, the space of weaves is partitioned by the equivalence relation
\begin{equation}
\label{eq:weave_sim}
\mc{A}\sim\mc{B}\quad\iff\quad \web(\mc{A})\eqd \web(\mc{B}) \quad\iff\quad \flow(\mc{A})\eqd \flow(\mc{B}),
\end{equation}
under which each equivalence class has a web as its unique minimal element, and a flow as its unique maximal element.

The relation $\sim$ can also be characterized using finite collections of particle motions, for which we now introduce formal notation.
Consider a weave $\mc{A}$, a flow $\mc{F}$ and a non-ramified point $z\in\Rc$. 
The set $\mc{A}|_z$
contains a single path, which begins at $z$.
Similarly, $\mc{F}(z)$ contains a single path, which passes through $z$.
This makes it natural to define versions of \eqref{eq:A(z)} and \eqref{eq:A|z} 
specialized to ordered sets of non-ramified points.

Let $m\in\N$. 
Given a weave $\mc{A}$ and an almost surely non-ramified $\vec{z}=(z_i)_{i=1}^m\in(\Rc)^m$, 
we write $\mc{A}|_{\vec{z}}=(f_1,\ldots,f_m)$ where 
$\{f_i\}\stackrel{a.s}{=}\mc{A}|_{z_i}$.
Similarly, given a flow $\mc{F}$ and an almost surely non-ramified $\vec{z}=(z_i)_{i=1}^m\in(\Rc)^m$, 
we write $\mc{F}(\vec{z})=(f'_1,\ldots,f'_n)$
where $f'_i\in\Pi^\updownarrow$ is the almost surely unique element of $\mc{F}(z_i)$.
We say that $\mc{A}|_{\vec{z}}$ is the \textit{(forwards in time) $m$-particle motion} of $\mc{A}$ from $\vec{z}$.
They are defined up to almost sure equivalence.

Loosely, the relation $\sim$ also characterizes when two weaves have 
the same forwards in time $m$-particle motions, in distribution.
With this in mind, 
we will need to make statements featuring multiple weaves that concern non-ramified points.
We adopt the implicit convention that non-ramification 
is with respect to all weaves featured in the corresponding statement.
We are now ready to state our second main result.

\begin{theorem}
\label{t:weaves_characterization}
Let $\mc{A},\mc{B}$ be weaves. 
\begin{enumerate}
\item
Suppose that $\mc{A}\sim\mc{B}$.
If $\vec{z}\sw\R^2_c$ is finite and almost surely non-ramified
then $\mc{A}|_{\vec{z}}\eqd \mc{B}|_{\vec{z}}$.
\item
The following are equivalent: 
\begin{enumerate}[label={(\alph*)}]
\item $\mc{A}\sim\mc{B}$;
\item there exists a coupling of $\mc{A}$ and $\mc{B}$ such that $\mc{A}\cup\mc{B}$ is almost surely non-crossing;
\item there exists a (deterministic) countable dense $D\sw\R^2$,
which is almost surely non-ramified, such that $\mc{A}|_{\vec{z}}\eqd \mc{B}|_{\vec{z}}$ for all finite $\vec{z}\sw D$.
\end{enumerate}
\end{enumerate}
\end{theorem}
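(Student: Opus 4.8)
The plan is to prove part~1 directly from the deterministic theory of Section~\ref{sec:weaves_det}, and then to prove the equivalences in part~2 via the cycle $(a)\Rightarrow(b)\Rightarrow(c)\Rightarrow(a)$. The key input for part~1, which I would extract from Section~\ref{sec:weaves_det}, is that $\web(\mc{A})$ and the family of forwards $m$-particle motions of $\mc{A}$ are \emph{deterministically} inter-derivable: on the one hand $\web(\mc{A})=\web_D(\mc{A})=\ov{(\mc{A}|_D)_\uparrow}$ for any dense, almost surely non-ramified $D\sw\R^2$, by Lemma~\ref{l:web_op_D}; on the other hand, for each fixed finite $\vec z=(z_i)_{i=1}^m$ there is a measurable map $\Theta_{\vec z}$ with $\mc{A}|_{\vec z}=\Theta_{\vec z}(\web(\mc{A}))$ on the (full-probability) event that $\vec z$ is non-ramified in $\mc{A}$ --- informally, $\mc{A}|_{z_i}$ is read off as the common forwards-in-time part of the elements of the pervasive set $\web(\mc{A})$ passing through $z_i$. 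Granting this, part~1 is immediate: $\mc{A}\sim\mc{B}$ gives $\web(\mc{A})\eqd\web(\mc{B})$ by~\eqref{eq:weave_sim}, and since the deterministic $\vec z$ is almost surely non-ramified in both weaves, $\mc{A}|_{\vec z}=\Theta_{\vec z}(\web(\mc{A}))\eqd\Theta_{\vec z}(\web(\mc{B}))=\mc{B}|_{\vec z}$. This also gives $(a)\Rightarrow(c)$ once a suitable $D$ is produced, for which I would use Lemma~\ref{l:ramification_meas_zero_det}: the set of $z\in\R^2$ ramified with positive probability in $\mc{A}$ or in $\mc{B}$ is Lebesgue-null, so its complement has full measure, is hence dense, and being separable it contains a countable dense $D\sw\R^2$; by countability every point of $D$ is simultaneously almost surely non-ramified in both weaves, so part~1 applies to each finite $\vec z\sw D$.

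For $(c)\Rightarrow(a)$ I would enumerate the given set as $D=\{z_1,z_2,\dots\}$ and work on the almost sure event that all of $D$ is non-ramified in $\mc{A}$ (and in $\mc{B}$), where each $\mc{A}|_{z_i}$ is a single path $f^{\mc{A}}_i\in\Pi^\uparrow$ and $\mc{A}|_D=\{f^{\mc{A}}_i:i\in\N\}$. By the previous paragraph, $\web(\mc{A})=\ov{(\mc{A}|_D)_\uparrow}=\Phi\big((f^{\mc{A}}_i)_{i\in\N}\big)$ for a fixed measurable $\Phi\colon(\Pi^\uparrow)^{\N}\to\mc{K}(\Pi^\uparrow)$, obtained by composing $f\mapsto\{g\in\Pi^\uparrow:g\sw f\}$ (cf.~\eqref{eq:A_uparrow_def}, \eqref{eq:web_op}) with countable union and closure. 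The law of $(f^{\mc{A}}_i)_{i\in\N}$ on the Polish space $(\Pi^\uparrow)^{\N}$ is determined by its finite-dimensional marginals, which are precisely the laws of the $m$-particle motions $\mc{A}|_{(z_{i_1},\dots,z_{i_m})}$ with $\{z_{i_1},\dots,z_{i_m}\}\sw D$; by (c) these coincide with those of $\mc{B}$, so $(f^{\mc{A}}_i)_i\eqd(f^{\mc{B}}_i)_i$, and applying $\Phi$ yields $\web(\mc{A})\eqd\web(\mc{B})$, i.e.~$\mc{A}\sim\mc{B}$.

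For $(a)\Rightarrow(b)$ I would couple $\mc{A}$ and $\mc{B}$ so that they share a common flow. Since $\flow(\cdot)$ is continuous, hence measurable, into the Polish space $\mc{K}(\Pi^\updownarrow)$, there exist regular conditional distributions $\mu^{\mc{A}}_{\mathcal{F}}$ and $\mu^{\mc{B}}_{\mathcal{F}}$ of $\mc{A}$ given $\flow(\mc{A})$ and of $\mc{B}$ given $\flow(\mc{B})$; because $\mc{A}\sim\mc{B}$, both are defined for $\mathcal{F}$ drawn from the common law $\nu$ of $\flow(\mc{A})$ and $\flow(\mc{B})$, and for $\nu$-a.e.~$\mathcal{F}$ each of $\mu^{\mc{A}}_{\mathcal{F}}$, $\mu^{\mc{B}}_{\mathcal{F}}$ is supported on $\{\mc{C}:\flow(\mc{C})=\mathcal{F}\}$ (a standard disintegration property; this set is closed since $\flow$ is continuous). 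Now realise $\mathcal{F}\sim\nu$ and, conditionally on $\mathcal{F}$, conditionally independent samples $\mc{A}'\sim\mu^{\mc{A}}_{\mathcal{F}}$ and $\mc{B}'\sim\mu^{\mc{B}}_{\mathcal{F}}$; then $\mc{A}'\eqd\mc{A}$, $\mc{B}'\eqd\mc{B}$, and $\flow(\mc{A}')\eqas\mathcal{F}\eqas\flow(\mc{B}')$. Finally, for $f\in\mc{A}'$ and $g\in\mc{B}'$, part~3 of Theorem~\ref{t:weave_structure} gives $\mc{A}'\preceq\flow(\mc{A}')$ and $\mc{B}'\preceq\flow(\mc{B}')$, so $f$ and $g$ extend respectively to bi-infinite paths $f',g'\in\mathcal{F}$; since $\mathcal{F}$ is a flow --- hence a non-crossing weave whose elements lie in $\Pi^\updownarrow$ --- we get $f'\leq g'$ or $g'\leq f'$ pointwise on $\ov\R_\mfs$, whence $f,g$ are non-crossing by Definition~\ref{d:crossing}. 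As pairs inside $\mc{A}'$ (resp.~$\mc{B}'$) are also non-crossing, $\mc{A}'\cup\mc{B}'$ is almost surely non-crossing, which is (b).

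The cycle closes with $(b)\Rightarrow(c)$: under such a coupling $\mc{A}\cup\mc{B}$ is almost surely pervasive, non-crossing and compact, hence a weave; applying Lemma~\ref{l:ramification_meas_zero_det} to $\mc{A}$, $\mc{B}$ and $\mc{A}\cup\mc{B}$ and choosing, as in the first paragraph, a countable dense $D\sw\R^2$ in the complement of the resulting Lebesgue-null set, almost surely every $z\in D$ is non-ramified in all three. For such $z$, both $\mc{A}|_z$ and $\mc{B}|_z$ lie among the forwards restrictions $\{h|_z:h\in(\mc{A}\cup\mc{B})(z)\}$, and since $(\mc{A}\cup\mc{B})(z)$ is totally ordered by $\sw$ these restrictions all coincide, forcing $\mc{A}|_z=\mc{B}|_z$; hence $\mc{A}|_{\vec z}=\mc{B}|_{\vec z}$ almost surely for every finite $\vec z\sw D$, which is (c). I expect the genuinely substantial ingredient to be the deterministic inter-derivability of $\web(\mc{A})$ and the $m$-particle motions used in the first paragraph --- this is where the path-extension machinery of Section~\ref{sec:weaves_det} does its work --- after which the only delicate probabilistic step is the disintegration coupling of $(a)\Rightarrow(b)$, and the remainder is routine measurability and finite-dimensional-distribution bookkeeping.
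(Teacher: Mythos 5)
Your proposal is correct, and all four implications go through, but the route differs from the paper's in several places worth recording. For part~1 the paper couples $\mc{A}$ and $\mc{B}$ so that they share a flow $\mc{F}$ and shows $\mc{A}|_{z_i}\eqas\mc{F}|_{z_i}\eqas\mc{B}|_{z_i}$ using Theorem~\ref{t:path_extension} together with Lemma~\ref{l:ramific_flow_web_same}; you instead read the motions off the \emph{web} as a deterministic measurable functional $\Theta_{\vec z}$, which works because $\web(\mc{A})$ is pervasive and contained in $\mc{A}_\uparrow$, so every element of $\web(\mc{A})(z_i)$ restricts at $z_i$ to the single path $\mc{A}|_{z_i}$ (measurability should be routed through Lemma~\ref{l:meas_A|z}). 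For part~2 the paper proves $(a)\iff(b)$ and $(a)\iff(c)$ separately — in particular $(b)\Rightarrow(a)$ is one line via Lemma~\ref{l:noncr_same_flow} — whereas you close the cycle $(a)\Rightarrow(b)\Rightarrow(c)\Rightarrow(a)$, so you obtain $(b)\Rightarrow(a)$ only indirectly but gain a short direct proof of $(b)\Rightarrow(c)$ from non-ramification in the weave $\mc{A}\cup\mc{B}$. Your $(a)\Rightarrow(b)$ is actually \emph{more} careful than the paper's: the paper simply asserts that $\flow(\mc{A})\eqd\flow(\mc{B})$ yields a coupling with $\flow(\mc{A})\eqas\flow(\mc{B})$, while you supply the disintegration argument that justifies it; you then deduce non-crossing of $\mc{A}'\cup\mc{B}'$ directly from the total order on the common flow (via Theorem~\ref{t:weave_structure} part~3 and Lemma~\ref{l:lhd_noncr}) rather than citing Lemma~\ref{l:noncr_transitive_weave}. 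The only loose end is cosmetic: your map $\Phi$ in $(c)\Rightarrow(a)$ is defined only on sequences whose union of downsets has compact closure, so it should be restricted to that (measurable) domain, which contains the support of $(f^{\mc{A}}_i)_i$; the paper's own treatment in Lemma~\ref{l:meas_web_op} and its proof of $(c)\Rightarrow(a)$ operate at the same level of detail.
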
	

Theorems \ref{t:weave_structure} and \ref{t:weaves_characterization} 
lead towards an appealing limit theory for weaves,
which we now develop.
We denote weak convergence (i.e.~convergence in law) by $\tod$.
We saw in Section \ref{sec:pi} that $\mc{K}(\Pi^\uparrow)$ is a Polish space,
thus weak convergence of $\mc{K}(\Pi^\uparrow)$ valued random variables,
or equivalently of probability measures on $\mc{K}(\Pi)$,
is defined in the standard way e.g.~as in Section 3.3 of \cite{EthierKurtz1986}.
This provides a natural sense in which to consider weak convergence of weaves.

Our next theorem shows that weak convergence of flows is equivalent to 
tightness plus weak convergence of $m$-particle motions,
and explores the same statement in the context of equivalence classes of weaves.
Note that convergence of $m$-particle motions is a statement about weak convergence of
$\ov{\R}^m$ valued stochastic processes,
which is within the realms of the classical theory in e.g.~\cite{EthierKurtz1986}.

\begin{theorem}
\label{t:flow_conv}
Let $\mc{F}_n,\mc{F}$ be flows.
\begin{enumerate}
\item
If $\mc{F}_n\tod \mc{F}$ then
for any $m\in\N$ and non-ramified $\vec{z}\in(\Rc)^m$ we have
$\mc{F}_n(\vec{z})\tod \mc{F}(\vec{z})$.
\item
Any weak limit point of $(\mc{F}_n)$ is a flow.
If $(\mc{F}_n)$ is tight and
for any $m\in\N$ and almost surely non-ramified $\vec{z}\in(\R^2)^m$ we have
$\mc{F}_n|_{\vec{z}}\tod \mc{F}|_{\vec{z}}$,
then $\mc{F}_n\tod \mc{F}$.
\item
Let $\mc{A}_n,\mc{A}$ be weaves with $\mc{A}_n\sim\mc{F}_n$ and $\mc{A}\sim\mc{F}$.
If $\mc{A}_n\tod \mc{A}$ then $\mc{F}_n\tod \mc{F}$.
Conversely, 
if $\mc{F}_n\tod \mc{F}$ then any weak limit point $\mc{B}$ of $(\mc{A}_n)$ is a weave and satisfies $\mc{B}\sim\mc{F}$.
\end{enumerate}
\end{theorem}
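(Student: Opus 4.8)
The plan is to prove Theorem \ref{t:flow_conv} by reducing everything to the continuity of $\flow(\cdot)$ and the structural characterization in Theorem \ref{t:weave_structure}, supplemented by a compactness-plus-identification argument. First I would handle part 1. Given $\mc{F}_n\tod\mc{F}$ and a non-ramified $\vec z\in(\Rc)^m$, I want to argue that the map $\mc{F}\mapsto\mc{F}(\vec z)$ is almost surely continuous at $\mc{F}$, so that the continuous mapping theorem applies. The key local fact should be: if $F_n\to F$ in $\mc{K}(\Pi^\updownarrow)$, $F,F_n$ are deterministic flows, and $z$ is non-ramified in $F$, then $F_n(z)\to F(z)$ as a single path in $\Pi$. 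This ought to follow from the definition of $\flow$ as the set of bi-infinite paths not crossing $F$, together with compactness in $\mc{K}(\Pi)$: any subsequential limit of elements of $F_n(z)$ passes through $z$ and does not cross $F$, hence lies in $F(z)$, which is a singleton by non-ramification. A little care is needed because $F_n(z)$ may contain more than one path even when $F(z)$ is a singleton, but Hausdorff convergence plus the singleton property of the limit set forces the diameters to vanish. This continuity statement, applied coordinatewise, gives $\mc{F}_n(\vec z)\tod\mc{F}(\vec z)$, and one should note $\mc{F}(\vec z)\eqd\mc{F}|_{\vec z}$ for flows since the relevant restrictions agree.

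Next I would do part 2. Tightness of $(\mc{F}_n)$ gives a weakly convergent subsequence with some limit $\mc{B}$, which is $\mc{K}(\Pi^\uparrow)$-valued. The first job is to show $\mc{B}$ is a flow: since $\flow(\cdot)$ is continuous (stated in the excerpt) and $\mc{F}_n\eqas\flow(\mc{F}_n)$ by Theorem \ref{t:weave_structure}(2), applying the continuous mapping theorem to the subsequence shows $\mc{F}_{n_k}\tod\flow(\mc{B})$, hence $\mc{B}\eqd\flow(\mc{B})$, so $\mc{B}$ is a flow, again by Theorem \ref{t:weave_structure}(2). (One must first check $\mc{B}$ is pervasive and non-crossing so that $\flow(\mc{B})$ is defined; pervasiveness should pass to the limit using the Hausdorff-convergence characterization of pervasiveness on a dense set noted after Definition \ref{d:weave}, and non-crossing of the limit should follow from non-crossing being closed under M1-limits, or can be imposed — but here it comes for free since the $\mc{F}_n$ are flows and $\flow$-images are automatically non-crossing.) Now to identify $\mc{B}$ with $\mc{F}$: by part 1 applied along the subsequence, $\mc{F}_{n_k}(\vec z)\tod\mc{B}(\vec z)$; by hypothesis $\mc{F}_{n_k}|_{\vec z}\tod\mc{F}|_{\vec z}$; since for flows $(\cdot)(\vec z)$ and $(\cdot)|_{\vec z}$ have the same law, $\mc{B}(\vec z)\eqd\mc{F}(\vec z)$ for all non-ramified $\vec z$ in a countable dense $D$. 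By Theorem \ref{t:weaves_characterization}(2)(c)$\Rightarrow$(a) this gives $\mc{B}\sim\mc{F}$, and since both are flows, $\mc{B}\eqd\flow(\mc{B})\eqd\flow(\mc{F})\eqd\mc{F}$ using \eqref{eq:weave_sim}. As every subsequential limit equals $\mc{F}$ in law and $(\mc{F}_n)$ is tight, $\mc{F}_n\tod\mc{F}$.

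For part 3, suppose $\mc{A}_n\tod\mc{A}$ with $\mc{A}_n\sim\mc{F}_n$, $\mc{A}\sim\mc{F}$. Then $\flow(\mc{A}_n)\eqd\mc{F}_n$ by \eqref{eq:weave_sim}, and $\flow(\mc{A}_n)\tod\flow(\mc{A})\eqd\mc{F}$ by continuity of $\flow$; hence $\mc{F}_n\tod\mc{F}$. Conversely, assume $\mc{F}_n\tod\mc{F}$ and let $\mc{B}$ be a weak limit point of $(\mc{A}_n)$ along a subsequence — this requires knowing $(\mc{A}_n)$ is tight, which is part of the hypothesis structure of the analogous weave statement; here we should extract it, or rather we are told a weak limit point exists, so fix one. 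I want to show $\mc{B}$ is a weave with $\mc{B}\sim\mc{F}$. Pervasiveness of $\mc{B}$ passes to the limit as before; non-crossing of $\mc{B}$ is the one property that does \emph{not} automatically survive M1-limits (this is exactly the warning in Figure \ref{fig:example_weaves}, right panel), so here I would either invoke it as a standing assumption at limit points — consistent with item 5 of the outline, which explicitly flags ``we must include the assumption that weak limit points are non-crossing'' — or note it is granted by hypothesis; I'd state this carefully. Granting it, $\mc{B}$ is a weave, and applying continuous-mapping with $\flow$ along the subsequence: $\flow(\mc{A}_n)\eqd\mc{F}_n\tod\mc{F}$ and $\flow(\mc{A}_{n_k})\tod\flow(\mc{B})$, so $\flow(\mc{B})\eqd\mc{F}$, which by \eqref{eq:weave_sim} means $\mc{B}\sim\mc{F}$.

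The main obstacle I anticipate is the local continuity statement underpinning part 1: showing that for deterministic flows $F_n\to F$ in the M1-induced Hausdorff metric on $\mc{K}(\Pi^\updownarrow)$, the path(s) through a non-ramified point converge. The subtlety is that the M1 topology allows a path to ``wiggle'' in ways that a naive Hausdorff argument might not control, and one genuinely uses the non-crossing structure (a limiting path through $z$ is squeezed between approximating paths on either side that themselves pass near $z$) together with the fact that $F(z)$ is a single path. I expect this to need the second-order graph machinery $H^{(2)}$ and Lemma \ref{l:appdx_1_sw_limits}, and to be where the real work lies; the rest is assembling Theorems \ref{t:weave_structure} and \ref{t:weaves_characterization} with the continuous mapping theorem. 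A secondary, more bookkeeping obstacle is making precise the interplay between $\mc{F}(\vec z)$ and $\mc{F}|_{\vec z}$ (bi-infinite path versus its half-infinite restriction from $z$) so that the hypothesis about $|_{\vec z}$ convergence and the conclusion about $(\vec z)$ convergence are genuinely interchangeable for flows.
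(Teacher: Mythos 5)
Parts 1 and 2, and the forward direction of part 3, are essentially the paper's argument: for part 1, Skorohod representation plus compactness of $\bigcup_n\mc{F}_n$ and non-ramification of the limit (your ``local continuity'' fact is exactly the paper's subsequential-limit argument); for part 2, limit points of flows are non-crossing because non-crossing of \emph{bi-infinite} paths survives M1-limits, hence are weaves contained in $\Pi^\updownarrow$, hence flows, and are then identified with $\mc{F}$ via Theorem \ref{t:weaves_characterization} and the uniqueness of the maximal element in Theorem \ref{t:weave_structure}; the forward half of part 3 is Lemma \ref{l:flow_map_cts}. Your detour through continuity of $\flow(\cdot)$ to show that a limit point of flows is a flow is unnecessary (once $\mc{B}$ is a weave with $\mc{B}\sw\Pi^\updownarrow$ a.s., Theorem \ref{t:weave_structure}(2) gives it directly), and your claim that $\mc{F}(\vec z)$ and $\mc{F}|_{\vec z}$ ``have the same law'' is imprecise — one is bi-infinite, the other half-infinite — but since the restriction map is continuous where defined (Lemma \ref{l:appdx_1_sw_limits}), convergence of one implies convergence of the other and the argument survives.

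The genuine gap is in the converse direction of part 3. The statement asserts, as a \emph{conclusion}, that any weak limit point $\mc{B}$ of $(\mc{A}_n)$ is a weave — in particular non-crossing — with no hypothesis to that effect. You propose to ``invoke it as a standing assumption at limit points'' or treat it as ``granted by hypothesis'', citing the warning attached to the general weave statement. That warning belongs to Theorem \ref{t:weave_conv}; the entire point of part 3 of Theorem \ref{t:flow_conv} is that no such assumption is needed when the associated flows converge, and this is what must be proved. The paper's argument: by Theorem \ref{t:weaves_characterization}, couple $\mc{A}_n$ with $\mc{F}_n$ so that $\mc{A}_n\cup\mc{F}_n$ is a.s.\ non-crossing, whence $\mc{A}_n\sw(\mc{F}_n)_\uparrow$ (this also yields tightness of $(\mc{A}_n)$ from tightness of $(\mc{F}_n)$ via Proposition \ref{p:relcom_tightness}, which you would otherwise have to assume). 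Passing to a Skorohod coupling with $(\mc{F}_n,\mc{A}_n)\toas(\mc{F},\mc{B})$, each $f\in\mc{B}$ is a limit of $f_n\in\mc{A}_n$ with $f_n\sw g_n\in\mc{F}_n$; compactness extracts $g_n\to g\in\mc{F}$, and Lemma \ref{l:appdx_1_sw_limits} gives $f\sw g$. Hence $\mc{B}\sw\mc{F}_\uparrow$, so $\mc{B}\cup\mc{F}$ is non-crossing: the non-crossing of $\mc{B}$ is inherited from the limiting \emph{flow}, whose bi-infinite paths do preserve non-crossing under limits. Without this step you prove a strictly weaker statement, and your subsequent appeal to continuity of $\flow$ along the subsequence $\mc{A}_{n_k}\to\mc{B}$ also fails, since Lemma \ref{l:flow_map_cts} requires the limit to be a deterministic weave.
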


Lemma \ref{l:flow_map_cts},
shows that the function $\flow(\cdot)$ is continuous on $\mathscr{W}_{\det}$,
which gives the forwards implication of part 3 of Theorem \ref{t:flow_conv}.
The function $\web(\cdot)$ is not continuous, 
as shown by example in Figure \ref{fig:web_dc},
which depicts a sequence of webs $(\mc{W}_n)$ converging to a weave $\mc{A}$
that is neither a web nor a flow.
This suggests that, for purposes of convergence, 
flows are a more natural representative element of their equivalence class than webs.

Let us now give analogues for general weaves of parts 1 and 2 of Theorem \ref{t:flow_conv}.
In part 3 of Theorem \ref{t:flow_conv} flows provide an overarching structure for weaves
in which the non-crossing property is preserved by taking limits of paths.
The non-crossing property is 
preserved when taking limits of bi-infinite paths, but
is not necessarily preserved in limits of half-infinite paths.
Consequently, if we wish to establish convergence of weaves
but also wish to avoid handling their associated flows,
then it becomes necessary to check that limit points are non-crossing.
See Figure \ref{fig:example_weaves} for a related warning example.

\begin{theorem}
\label{t:weave_conv}
Let $\mc{A}_n,\mc{A}$ be weaves.
\begin{enumerate}
\item
If $\mc{A}_n\tod \mc{A}$ then
for any $m\in\N$ and non-ramified $\vec{z}\in(\Rc)^m$ we have
$\mc{A}_n|_{\vec{z}}\tod \mc{A}|_{\vec{z}}$.
\item
If a weak limit point $\mc{B}$ of $(\mc{A}_n)$ is non-crossing then $\mc{B}$ is a weave.
If, additionally,
for any $m\in\N$ and almost surely non-ramified $\vec{z}\in(\R^2)^m$ we have
$\mc{A}_n|_{\vec{z}}\tod \mc{A}|_{\vec{z}}$,
then $\mc{A}\sim\mc{B}$.
\end{enumerate}
\end{theorem}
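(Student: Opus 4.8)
The plan is to deduce both parts from the already-established machinery: Theorem \ref{t:weave_structure}, Theorem \ref{t:weaves_characterization}, Theorem \ref{t:flow_conv}, the continuity of $\flow(\cdot)$ (Lemma \ref{l:flow_map_cts}), and the relative-compactness/tightness criteria for the M1 topology collated in Appendix \ref{a:M1}. For \textbf{part 1}, let $\mc{F}_n=\flow(\mc{A}_n)$ and $\mc{F}=\flow(\mc{A})$; by Theorem \ref{t:weave_structure}(4) these are flows with $\mc{A}_n\sim\mc{F}_n$ and $\mc{A}\sim\mc{F}$. Since $\flow(\cdot)$ is continuous on $\mathscr{W}_{\det}$ and $\mc{A}_n\tod\mc{A}$, the continuous mapping theorem gives $\mc{F}_n\tod\mc{F}$. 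Now apply Theorem \ref{t:flow_conv}(1) to conclude $\mc{F}_n(\vec z)\tod\mc{F}(\vec z)$ for non-ramified $\vec z$; finally, for non-ramified $\vec z$, the $m$-particle motion $\mc{A}_n|_{\vec z}$ is obtained from $\mc{F}_n(\vec z)$ by the measurable "restrict to the forward part after $z_i$" map (as noted after Theorem \ref{t:weave_structure}, $\mc{A}|_z$ and $\mc{F}(z)$ agree up to this restriction at non-ramified points), so $\mc{A}_n|_{\vec z}\tod\mc{A}|_{\vec z}$ follows by another application of continuous mapping. One must check that this restriction map is continuous at the relevant limit points, or at least a.s.-continuous for the limiting law — this is where a small amount of care is needed, but it reduces to the fact that $z$ is non-ramified in $\mc{F}$ a.s., so $\mc{F}(z)$ is a single path and restriction is well-behaved there.

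For \textbf{part 2}, suppose $\mc{B}$ is a weak limit point of $(\mc{A}_n)$, say along a subsequence $\mc{A}_{n_k}\tod\mc{B}$, and assume $\mc{B}$ is (a.s.) non-crossing. Each $\mc{A}_{n_k}$ is a.s.\ in $\mathscr{W}_{\det}$, i.e.\ non-crossing and pervasive and $\mc{K}(\Pi^\uparrow)$-valued. The set $\mc{K}(\Pi^\uparrow)$ is closed in $\mc{K}(\Pi)$ (stated in Section \ref{sec:pi}), and pervasiveness is preserved under M1-limits of compact sets — this should follow from Lemma \ref{l:appdx_1_sw_limits} / the remark that it suffices to check $A(z)\ne\emptyset$ on a dense set, combined with Hausdorff-convergence of the interpolated graphs. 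So $\mc{B}$ is a.s.\ pervasive and $\mc{K}(\Pi^\uparrow)$-valued; together with the hypothesis that $\mc{B}$ is non-crossing, Definition \ref{d:weave} gives that $\mc{B}$ is a weave.

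For the second assertion of part 2, assume in addition $\mc{A}_n|_{\vec z}\tod\mc{A}|_{\vec z}$ for all finite almost-surely-non-ramified $\vec z\sw\R^2$. Fix a countable dense $D\sw\R^2$ that is almost surely non-ramified in \emph{both} $\mc{A}$ and $\mc{B}$ (possible since for each weave the bad set has zero Lebesgue measure, by Lemma \ref{l:ramification_meas_zero_det}, so a generic countable dense $D$ works for both). By part 1 applied along the subsequence, $\mc{A}_{n_k}|_{\vec z}\tod\mc{B}|_{\vec z}$ for every finite $\vec z\sw D$; but by hypothesis $\mc{A}_{n_k}|_{\vec z}\tod\mc{A}|_{\vec z}$ as well, so $\mc{A}|_{\vec z}\eqd\mc{B}|_{\vec z}$ for all finite $\vec z\sw D$. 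By the equivalence (a)$\iff$(c) in Theorem \ref{t:weaves_characterization}(2), this gives $\mc{A}\sim\mc{B}$, completing the proof.

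\medskip

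\textbf{Main obstacle.} I expect the principal difficulty to be the passage from convergence of flows (or of $m$-particle motions) to convergence of the half-infinite $m$-particle motions $\mc{A}_n|_{\vec z}$ in part 1 — precisely, verifying that the "forget the past before $z$" restriction map is continuous (at least on a set of full measure under the limit law) in the M1 topology on $\mc{K}(\Pi)$. The subtlety is exactly the one flagged around Figure \ref{fig:example_weaves}: half-infinite restrictions are \emph{not} robust under limits in general (non-crossing can fail, jumps at the initial time can appear), so one genuinely needs the non-ramification of $\vec z$ in the limiting objects to rule out the pathological behaviour. A secondary, more routine obstacle is checking that pervasiveness survives M1-limits of compact path-sets; this is handled by the dense-set reduction already quoted in Section \ref{sec:results_weaves}, plus a Hausdorff-metric argument showing that if $z$ lies in the interpolated graph of some path in the limit set then nearby $z'$ lie in interpolated graphs of paths in the approximating sets, and conversely.
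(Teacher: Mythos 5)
Your proposal is essentially correct, and your Part 2 is the paper's argument almost verbatim: Skorohod representation plus compactness and Lemma \ref{l:appdx_1_sw_limits} for pervasiveness of the limit, then a dense non-ramified $D$ and Theorem \ref{t:weaves_characterization}(2)(a)$\iff$(c) for $\mc{A}\sim\mc{B}$. One small omission there: your $D$ must be chosen almost surely non-ramified in \emph{all} of $\mc{A}$, $\mc{B}$ \emph{and every} $\mc{A}_n$, not just ``both'' $\mc{A}$ and $\mc{B}$ --- this is needed both to invoke the hypothesis $\mc{A}_n|_{\vec z}\tod\mc{A}|_{\vec z}$ (the paper's convention is that non-ramification is with respect to every weave featured) and to apply Part 1 along the subsequence. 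Since each bad set is Lebesgue-null by Lemma \ref{l:ramification_meas_zero_det} and there are countably many weaves involved, the fix is immediate; the paper makes this explicit by defining $R$ as the union over $\mc{A}$, $\mc{B}$ and all $\mc{A}_n$.

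For Part 1 you take a genuinely different route: the paper argues directly (Skorohod representation, extract the unique paths $f_{n,i}\in\mc{A}_n|_{z_{n,i}}$, use relative compactness of $\bigcup_n\mc{A}_n$ and Lemma \ref{l:appdx_1_sw_limits} to identify the limit as $\mc{A}|_{z_i}$), whereas you detour through $\mc{F}_n=\flow(\mc{A}_n)\tod\mc{F}$ and Theorem \ref{t:flow_conv}(1), then restrict. Your reduction is sound: continuity of $\flow(\cdot)$ on $\mathscr{W}_{\det}$ (Lemma \ref{l:flow_map_cts}) transfers through Skorohod representation, $z$ non-ramified in $\mc{A}_n$ is non-ramified in $\mc{F}_n$ by Lemma \ref{l:ramific_flow_web_same}, the identity $\mc{A}_n|_z=(\mc{F}_n(z))|_z$ is exactly what is established in the proof of Theorem \ref{t:weaves_characterization}(1), and the restriction step is handled by Lemma \ref{l:appdx_1_sw_limits} precisely as you anticipate in your ``main obstacle'' remark. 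The caveat is one of logical ordering rather than correctness: in the paper Theorem \ref{t:flow_conv}(1) is itself proved by ``essentially the same argument'' as Theorem \ref{t:weave_conv}(1), so you cannot cite it as a black box without first supplying an independent proof --- and the natural independent proof is exactly the direct argument you are trying to avoid. Your route therefore buys no economy over the paper's, but it is not circular provided you prove the flow case first, and it does isolate cleanly where non-ramification enters (turning $\mc{F}_n(z)$ into a singleton so that restriction is well defined and continuous along the relevant sequences).
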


Our next result concerns time-reversed duality,
for which we must introduce some more notation.
Given $f\in\Pi^\uparrow$, define $f^\rot\in\Pi^\downarrow$ by $f^\rot(t\pm)=-f(-t\mp)$.
This operation, which corresponds to a rotation of space by 180 degrees, is applied pointwise to sets of paths as $\mc{A}^\rot=\{f^\rot\-f\in\mc{A}\}$.
Clearly $(A^\rot)^\rot=A$.
Note that $\cdot^\rot$ is an automorphism of $\Pi$ and $\Pi^\updownarrow$,
and that $(\Pi^\uparrow)^\rot=\Pi^\downarrow$.
Proposition \ref{p:relcom_tightness} implies that $A\sw\Pi$ is relatively compact if and only if $A^\rot$ is,
and it is trivial to see that the same holds for pervasiveness and the non-crossing property.
For $B\sw\Pi^\downarrow$ we write
\begin{equation}
B_\downarrow = \{g\in\Pi^\downarrow\- g\sw f \text{ for some }f\in B\} \label{eq:A_downarrow_def}
\end{equation}
in analogy to \eqref{eq:A_uparrow_def}.

A random subset of $\Pi^\downarrow$ that is compact, pervasive and non-crossing is said to be a \textit{dual weave}.
Thus $\mc{A}$ is a dual weave if and only if $\mc{A}^\rot$ is a weave.
We say that $\mc{A}\sw\Pi^\downarrow$ is a dual web if and only if $\mc{A}^\rot$ is a web.
Equivalently, we could define a relation on $\mc{K}(\Pi^\downarrow)$ akin to \eqref{eq:preceq} but with 
time reversed 
(i.e.~with $\cdot_\downarrow$ in place of $\cdot_\uparrow$)
and then a dual web would be a minimal dual weave with respect to this relation.
Note that $\mc{F}$ is a flow if and only if $\mc{F}^\rot$ is a flow.

\begin{defn}
\label{d:double_web}
A pair $(\mc{W},\hat{\mc{W}})$ is said to be a \textit{double web} if it consists of a web $\mc{W}$ and dual web $\hat{\mc{W}}$ coupled such that $\mc{W}\cup\hat{\mc{W}}$ is non-crossing.
\end{defn}
Our next result states that each web gives rise to a corresponding double web, 
in which $\hat{\mc{W}}$ essentially contains the extra segments of paths that are required to construct $\mc{F}$ directly from $\mc{W}$. 
There is a subtlety, however: when we come to connect $\mc{W}$ and its dual $\hat{\mc{W}}$ together to create $\mc{F}$, we must be careful not to introduce crossing.
In particular we should be wary of ramification points, at which the multiple in-going and out-going trajectories must be reconnected in such a way that they enter and exit $z$ without crossing each other.

Recall our terminology that 
a path $f\in\Pi$ begins at the point $(f(\sigma_f-),\sigma_f)\in\Rc$
and 
ends at the point $(f(\tau_f+),\tau_f)$.
Given $D\sw\Rc$ we say that $f$ begins in $D$ if $f$ begins at some point of $D$,
and $f$ ends in $D$ if $f$ ends at some point of $D$.
For $f\in\Pi^\uparrow,g\in\Pi^\downarrow$ 
are non-crossing,
with $\t_g=\s_f$ and $g(\t_g+)=f(\s_f-)$,
then we define $h=g_{\hookrightarrow} f\in\Pi^\updownarrow$ by 
$$
h(t\star)=
\begin{cases}
g(t\star) & \text{ for }t\star\leq \s_f -\\
f(t\star) & \text{ for }t\star\geq \s_f +.
\end{cases}
$$
Thus $g_{\hookrightarrow} f\in\Pi^\updownarrow$ is the concatenation of a path $f\in\Pi^\uparrow$ and a path $g\in\Pi^\downarrow$
that (respectively) begin and end at the same point of space-time.

\begin{theorem}
\label{t:dual_webs}
Let $\mc{A}$ be a weave and let $\mc{W}=\web(\mc{A})$, $\mc{F}=\flow(\mc{A})$.
There exists a dual web $\mc{\widehat{W}}$ on the same probability space such that $(\mc{W},\widehat{\mc{W}})$ is a double web,
and $\mc{\widehat{W}}$ is unique up to almost sure equivalence.
For any $D\sw\R^2$
that is dense and almost surely non-ramified,
\begin{align}
\widehat{\mc{W}}
&=\ov{\{g\in\Pi^\downarrow\-g\text{ does not cross }\mc{A}\text{ and }g\text{ begins in }D\}_\downarrow}
\label{eq:What_def} \\
\mc{F}
&=\ov{\{g_{\hookrightarrow}f\in\Pi^\updownarrow\-g\in\wh{\mc{W}}\text{ ends and }f\in\mc{W}\text{ begins at the same point of }D\}}. \label{eq:F_WWHat}
\end{align}
\end{theorem}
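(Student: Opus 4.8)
The plan is to construct the dual web $\widehat{\mc W}$ by the explicit formula \eqref{eq:What_def} and then verify all the required properties in turn: that $\widehat{\mc W}$ is a dual weave, that $(\mc W,\widehat{\mc W})$ is a double web, that uniqueness holds, that the formula is independent of the choice of $D$, and finally that the concatenation formula \eqref{eq:F_WWHat} recovers $\mc F$. The key tool underpinning everything is the path-extension result from Section \ref{sec:weaves_det} (the Dedekind-cut correspondence between half-infinite paths, or more generally paths, that do not cross $\mc A$ and bi-infinite paths not crossing $\mc A$): it supplies, for each $g\in\Pi^\downarrow$ not crossing $\mc A$, a distinguished bi-infinite extension, and dually it lets us read off the backwards segments of paths in $\mc F=\flow(\mc A)$. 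Since all the objects in play ($\mc W$, $\mc F$, the set on the right of \eqref{eq:What_def}) are measurable deterministic functions of $\mc A$, the randomness is handled once and for all by working $\omega$ by $\omega$ in $\mathscr W_{\det}$ and only at the end invoking that $\mc A\in\mathscr W_{\det}$ almost surely; thus the bulk of the argument is deterministic.

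I would proceed as follows. \emph{Step 1 (deterministic setup).} Fix a deterministic weave $\mc A\in\mathscr W_{\det}$ and a dense non-ramified $D\sw\R^2$, and let $\widehat{\mc W}$ be defined by \eqref{eq:What_def} with $\mc A$ in place of the random weave. First check $\widehat{\mc W}\in\mc K(\Pi^\downarrow)$: the set $\{g\in\Pi^\downarrow\-g\text{ does not cross }\mc A,\ g\text{ begins in }D\}_\downarrow$ has relatively compact closure by the M1 compactness criteria of Appendix \ref{a:M1} (the paths are pinned to pass through a restricted region because they cannot cross the pervasive non-crossing family $\mc A$), and closedness is by construction. \emph{Step 2 (non-crossing).} Show $\mc W\cup\widehat{\mc W}$ is non-crossing: elements of $\widehat{\mc W}$ do not cross $\mc A$ by definition, and since $\mc W=\web(\mc A)=\ov{(\mc A|_D)_\uparrow}$ consists of limits of pieces of paths of $\mc A$, a path in $\mc W$ that crossed some $g\in\widehat{\mc W}$ would force some path of $\mc A$ to cross $g$ — here I would use the limit-stability of the non-crossing relation for the relevant combination of half/bi-infinite paths (Definition \ref{d:crossing} together with the $\sw$-limit lemmas, e.g.\ Lemma \ref{l:appdx_1_sw_limits}). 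Thus $\mc W^\rot$-style arguments give that $\widehat{\mc W}^\rot$ is pervasive and non-crossing, i.e.\ a web, so $(\mc W,\widehat{\mc W})$ is a double web. \emph{Step 3 (pervasiveness and duality of $\widehat{\mc W}$).} Pervasiveness of $\widehat{\mc W}$ follows because for each $z\in\Rc$ there is a bi-infinite path of $\mc F$ through $z$ (Theorem \ref{t:weave_structure}(3) and pervasiveness of $\mc A$), whose backwards portion, suitably truncated, is an element of the generating set of \eqref{eq:What_def} after approximating its starting point by points of $D$; taking closure gives a path of $\widehat{\mc W}$ through $z$.

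\emph{Step 4 (the concatenation formula).} For \eqref{eq:F_WWHat}, one inclusion is easy: any concatenation $g_{\hookrightarrow}f$ with $g\in\widehat{\mc W}$, $f\in\mc W$ meeting at a point of $D$ is bi-infinite and does not cross $\mc A$ (both pieces don't, and they agree at the join; here the care about ramification points mentioned before the theorem statement is exactly the point — at a non-ramified $z\in D$ there is a unique in-going and out-going trajectory, so the concatenation is forced and cannot create crossing), hence lies in $\flow(\mc A)=\mc F$; closure then gives $\supseteq$ is actually $\subseteq$ into $\mc F$. For the reverse, take $h\in\mc F$; by the Dedekind-cut/path-extension machinery $h$ decomposes at each of its space-time points, and choosing points of $D$ along $h$ (possible by density, approximating from non-ramified points) exhibits $h$ as an M1-limit of concatenations $g_n{}_{\hookrightarrow}f_n$ with $g_n\in\widehat{\mc W}$, $f_n\in\mc W$ joined in $D$; hence $h$ lies in the right-hand side of \eqref{eq:F_WWHat}. \emph{Step 5 (uniqueness and $D$-independence).} Independence of $\widehat{\mc W}$ from $D$ follows from Step 4: \eqref{eq:F_WWHat} recovers the $D$-independent object $\mc F$ from $(\mc W,\widehat{\mc W})$, and conversely $\widehat{\mc W}$ is recovered from $\mc F$ as the closure of $\{$backwards portions of paths of $\mc F\}$ beginning in $D$ — but one shows this last set is in fact $D$-independent because $\mc F$ is pervasive, so every backwards portion is an $\sw$-limit of ones starting in $D$. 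Uniqueness up to a.s.\ equivalence among \emph{all} dual webs making $(\mc W,\widehat{\mc W})$ a double web is the delicate part: given another candidate $\widehat{\mc W}'$, one shows $\widehat{\mc W}'\sw\widehat{\mc W}$ (any $g'\in\widehat{\mc W}'$ doesn't cross $\mc W$, hence — using that $\mc W$ determines $\mc A$ up to $\sim$ via Theorem \ref{t:weave_structure}/\ref{t:weaves_characterization} — doesn't cross a copy of $\mc A$, and one may truncate and restart $g'$ from points of $D$) and then $\widehat{\mc W}'=\widehat{\mc W}$ by minimality of $\widehat{\mc W}^\rot$ as a dual web, i.e.\ $\widehat{\mc W}$ is a dual \emph{web}. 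Finally, lift to the random setting: all maps above are measurable functions of $\mc A$, so setting $\widehat{\mc W}:=\eqref{eq:What_def}$ pathwise defines it on the same probability space, and the a.s.\ statements follow from $\P[\mc A\in\mathscr W_{\det}]=1$ together with $\P[D\text{ non-ramified}]=1$.

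\textbf{Main obstacle.} The hard part will be Step 5, the uniqueness of $\widehat{\mc W}$ up to a.s.\ equivalence: this requires showing that $\widehat{\mc W}^\rot$ is genuinely a \emph{minimal} dual weave (a dual web), not merely \emph{a} dual weave non-crossing with $\mc W$, and that any competitor collapses onto it. This leans on the full strength of the deterministic structure theory — in particular the characterization of webs as $\web$-fixed points and the fact (from the path-extension/Dedekind-cut correspondence) that the backwards extensions are forced once one insists on non-crossing with $\mc W$ and on beginning in a dense non-ramified set. Getting the bookkeeping right at ramification points, where in-going and out-going trajectories must be matched up without crossing, is where the argument is most technically delicate.
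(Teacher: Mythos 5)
Your plan assembles most of the right ingredients (reverse-time path extension, non-ramification of $D$, regeneration of $\mc F$ from $\mc F(D)$), and your treatment of the concatenation formula \eqref{eq:F_WWHat} matches the paper's: the forward inclusion via uniqueness of in/out trajectories at non-ramified points of $D$, the reverse via Lemma \ref{l:flow_regeneration} and splitting paths of $\mc F$ at points of $D$. However, there is a genuine gap at the centre of the argument, precisely where you flag the ``main obstacle'': you never establish that $\widehat{\mc W}$ is a dual \emph{web} rather than merely a dual weave, and your uniqueness sketch does not close. In Step 2 you write that $\widehat{\mc W}^\rot$ is ``pervasive and non-crossing, i.e.\ a web'' --- but pervasive, non-crossing and compact only makes it a weave; web-ness is minimality under $\preceqd$, which is exactly what Definition \ref{d:double_web} requires and what the uniqueness claim rests on. In Step 5 your proposed route is to show a competitor satisfies $\widehat{\mc W}'\sw\widehat{\mc W}$ and then conclude equality ``by minimality'': the containment is only asserted parenthetically (a path of $\widehat{\mc W}'$ need not obviously lie in the closure of the downset of paths beginning in $D$), and even granted the containment, minimality under the partial order $\preceqd$ does not convert a set inclusion into equality --- these are different relations, and the paper is explicit (Remark \ref{r:preceq_not_compatible} and the surrounding discussion) that $\sw$ and $\preceq$ interact non-trivially.

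The paper closes both gaps with one observation you do not make: after identifying the generating set of \eqref{eq:What_def} with $\{f\rfloor_z\-z\in D,\ f\in\mc F(z)\}$ via Theorem \ref{t:path_extension} in reverse time, one recognises
$\widehat{\mc W}=(\web_{D^\rot}(\mc F^\rot))^\rot$,
i.e.\ the dual web is literally the web of the time-reversed flow. Everything you treat as delicate then falls out of the already-proved theory: $\widehat{\mc W}$ is a dual web and is $D$-independent by Theorem \ref{t:weave_structure} and Lemma \ref{l:web_op_D} applied to the weave $\mc F^\rot$; it does not cross $\mc F$, hence not $\mc A$, by Lemma \ref{l:noncr_transitive_weave}; and uniqueness is two lines --- any competing dual web $\wh{\mc U}$ with $(\mc W,\wh{\mc U})$ a double web satisfies, via Lemma \ref{l:noncr_two_weaves}, that $\wh{\mc U}^\rot\cup\wh{\mc W}^\rot$ is non-crossing, so Lemma \ref{l:noncr_same_web} gives $\web(\wh{\mc U}^\rot)=\web(\wh{\mc W}^\rot)$, and since both are fixed points of $\web$ they coincide. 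Without this identification (or an equivalent substitute) your Steps 2, 3 and 5 would each require a fresh argument, and the uniqueness step in particular would fail as written.
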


From Theorems \ref{t:weave_structure} and \ref{t:dual_webs}, if $(\mc{W}, \wh{\mc{W}}, \mc{F})$ is a triplet containing a double web and flowlines, all coupled to be non-crossing of each other, then given any one element of the triplet we may reconstruct the other two. 

Recall that $\Pi_c=\{f\in\Pi\-f\text{ is continuous}\}$.
Let us end this section by recording that continuity of paths is preserved through all of 
the various relationships established above.
We say that a weave $\mc{A}$ is continuous if $\mc{A}\sw\Pi^\uparrow_c$, and similarly for dual weaves.

\begin{theorem}
\label{t:weaves_continuity}
Let $\mc{A}$ and $\mc{B}$ be weaves such that $\mc{A}\sim\mc{B}$.
Then $\mc{A}$ is continuous if and only if $\mc{B}$ is continuous.
If $\mc{W}$ is a web then $\mc{W}$ is continuous if and only if $\wh{\mc{W}}$ is continuous.
\end{theorem}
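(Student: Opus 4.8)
The plan is to prove the two claims separately but using the same underlying principle: continuity of paths is a local property that is transmitted through the deterministic operations $\web(\cdot)$, $\flow(\cdot)$ and the duality construction, all of which were shown earlier to relate $\mc{A}$, $\mc{B}$, $\mc{W}$, $\widehat{\mc{W}}$ and $\mc{F}$. The key observation is Theorem \ref{t:weave_structure}(3), which says $\web(\mc{A})\preceq\mc{A}\preceq\flow(\mc{A})$ almost surely, combined with the fact (stated in Theorem \ref{t:weave_structure}(2)) that flows consist of bi-infinite paths and hence are naturally built out of maximal elements. I would first record the elementary lemma that $\Pi_c$ is closed in $\Pi$ under the M1 topology, that $f\sw g$ with $g\in\Pi_c$ forces $f\in\Pi_c$ (a restriction of a continuous path is continuous), and conversely that if $g=g'_{\hookrightarrow}f$ with $f,g'$ both continuous \emph{and} $g'(\tau_{g'}+)=f(\sigma_f-)$ then $g$ is continuous. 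These are immediate from the definitions in Sections \ref{sec:Rpm}--\ref{sec:terminology} but should be stated explicitly as they carry the argument.

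For the first claim ($\mc{A}$ continuous $\iff$ $\mc{B}$ continuous when $\mc{A}\sim\mc{B}$), I would argue via the flow. By \eqref{eq:weave_sim}, $\mc{A}\sim\mc{B}$ gives $\flow(\mc{A})\eqd\flow(\mc{B})$; call this common flow $\mc{F}$. By Theorem \ref{t:weave_structure}(3), almost surely every path in $\mc{A}$ extends to a path in $\flow(\mc{A})$, so if $\mc{F}$ is continuous then so is $\mc{A}$ (a path that extends to a continuous bi-infinite path is continuous), and symmetrically for $\mc{B}$; conversely $\flow(\mc{A})=\{f\in\Pi^\updownarrow: f\text{ does not cross }\mc{A}\}$, and I would show that if $\mc{A}\sw\Pi^\uparrow_c$ then every bi-infinite path not crossing $\mc{A}$ is itself continuous — this is where the real work sits. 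The idea: at a putative jump time $t$ of such an $f$, pervasiveness of $\mc{A}$ supplies paths of $\mc{A}$ through space-time points in the open region swept by the jump; non-crossing of these with $f$, together with continuity of the $\mc{A}$-paths, forces them to an impossible geometric configuration (they would have to be "trapped" against a vertical segment they cannot touch without crossing $f$ on one side and an $\mc{A}$-path on the other). Making this rigorous is the main obstacle and needs the Section \ref{sec:Rpm} formalism for jumps at initial times, because the dangerous case is exactly a jump of $f$ at the initial time of a nearby $\mc{A}$-path.

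For the second claim, I would use Theorem \ref{t:dual_webs}. Given a web $\mc{W}$, its dual $\widehat{\mc{W}}$ and the flow $\mc{F}$ are coupled so that $\mc{F}=\ov{\{g_{\hookrightarrow}f: g\in\widehat{\mc{W}}\text{ ends and }f\in\mc{W}\text{ begins at the same point of }D\}}$ for dense non-ramified $D$, and $\widehat{\mc{W}}$ is the closure of backwards paths not crossing $\mc{A}$ that begin in $D$. If $\mc{W}$ is continuous: by the first claim $\mc{A}$ is continuous (as $\mc{A}\sim\mc{W}$), hence $\mc{F}=\flow(\mc{A})$ is continuous by the argument above, hence $\widehat{\mc{W}}\sw\Pi^\downarrow_c$ since each $g\in\widehat{\mc{W}}$ is (up to closure) a restriction of some $g_{\hookrightarrow}f\in\mc{F}$ and restrictions of continuous paths are continuous — with a small closure argument using that $\Pi^\downarrow_c$ is closed in $\Pi^\downarrow$. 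The converse is symmetric after applying $\cdot^\rot$: $\widehat{\mc{W}}$ continuous means the dual web $\widehat{\mc{W}}^\rot$ is a continuous web whose dual is $\mc{W}^\rot$, which is then continuous, hence $\mc{W}$ is. The main obstacle throughout is the geometric lemma in the flow direction, i.e.\ showing a bi-infinite non-crossing path relative to a continuous pervasive system cannot jump; everything else is routine topology (closedness of $\Pi_c$ in M1, stability of continuity under $\sw$ and under $g_{\hookrightarrow}f$) plus bookkeeping with the equivalences already proved.
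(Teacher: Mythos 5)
Your overall route is the same as the paper's: the first claim goes through the common flow $\mc{F}$ and a key lemma stating that (i) a bi-infinite path not crossing a continuous weave is itself continuous, and (ii) hence any half-infinite path not crossing a continuous flow is continuous; the second claim then follows from Theorem \ref{t:dual_webs}. Your identification of (i) as the main obstacle is correct, and your geometric sketch is in the spirit of what is needed — in fact the paper deduces (i) in a few lines from the already-established Lemma \ref{l:approx_before_jump}: if $h(t-)<h(t+)$, that lemma produces $f\in\mc{A}$ with $f(t-)\leq h(t-)$ and $h\lhd f$, whence $f(t-)\leq h(t-)<h(t+)\leq f(t+)$, so $f$ jumps at $t$, contradicting $\mc{A}\sw\Pi^\uparrow_c$.

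However, one of your "elementary lemmas" is false, and the step that uses it breaks. The set $\Pi_c$ is \emph{not} closed in $(\Pi,d_{\rm M1})$: the paper's own introductory example $f_n(t)=0\vee nt\wedge 1\to\1\{t\geq0\}$ is a sequence of continuous paths with a discontinuous M1 limit (this is precisely why M1 rather than J1 is relevant here). Consequently your closure argument for $\widehat{\mc{W}}$ — that each $g\in\widehat{\mc{W}}$ is, up to closure, a restriction of a continuous path of $\mc{F}$, "with a small closure argument using that $\Pi^\downarrow_c$ is closed" — does not go through as stated. The repair is the one the paper uses: for $\hat{f}\in\widehat{\mc{W}}$ take approximants $\hat{f}_n\to\hat{f}$ that do not cross $\mc{F}$, extend each to $h_n\in\mc{F}$ by Theorem \ref{t:path_extension} applied in reverse time, use compactness of $\mc{F}$ and Lemma \ref{l:appdx_1_sw_limits} to extract $h\in\mc{F}$ with $\hat{f}\sw h$, conclude that $\hat{f}$ itself does not cross $\mc{F}$, and only then apply part (ii) of the continuity lemma directly to $\hat{f}$. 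In short, continuity of limit paths must be extracted from the non-crossing property of the limit path, not from any purported topological closedness of $\Pi_c$.
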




\subsection{Discussion}
\label{sec:discussion}

In this section we first discuss how various objects related to the Brownian web fit into the framework of weaves.
We then discuss other topologies that have been introduced for sets of paths and closely related objects.
Some open problems are mentioned along the way.

\subsubsection{The equivalence class of the Brownian web}
\label{sec:bw}

We write $\Wb$ for the Brownian web,
as defined in (for example) Theorem 2.3 of the survey article of \cite{SchertzerSunEtAl2017}.
Let us first resolve an apparent conflict in notation.
In common with the literature of $\Wb$,
\cite{SchertzerSunEtAl2017}
defined $\Wb(D)$ to be the set of paths in $\Wb$ that \emph{begin} at some $z\in D$, where $D\sw\Rc$.
According to \eqref{eq:A(z)} we reserve $\Wb(D)$ for the set of paths that \textit{pass through} some $z\in D$.
This may seem to conflict at first glance, 
but in fact there is no conflict here, as we now explain.

For the Brownian web, the notation $\Wb(D)$ is widely used when
$D\sw\Rc$ is deterministic and countable,
for example in the well known identity $\Wb\eqas\ov{\Wb(D)}$.
For the Brownian web, almost surely, for each $z\in D$ the set
$\Wb(z)=\{f\in\Wb\-f\text{ passes through }z\}$ consists of a single path that begins at $z$.
Thus, for the Brownian web, $\Wb(z)\eqas\Wb|_z$ and $\Wb(D)\eqas\Wb|_D$.
In general the distinction between $\mc{A}(D)$ and $\mc{A}|_D$ does matter
and \eqref{eq:web_op}, which features the latter, is required to construct $\web(\mc{A})$.
See the left part of Figure \ref{fig:web_dc} for a related example.

\begin{figure}[t]
\centering
\includegraphics[scale=1.07]{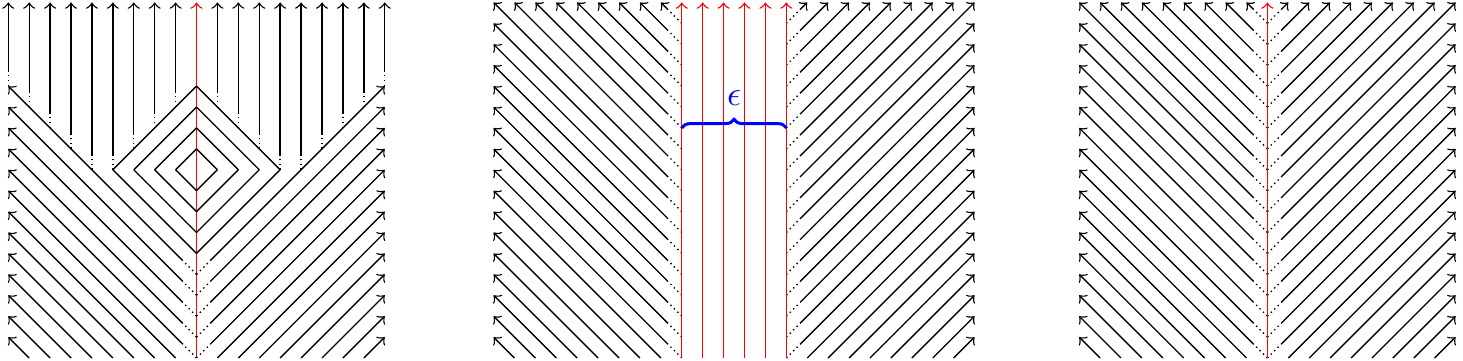}   
\caption{
In all three figures, time runs upwards and the spatial axis is horizontal.
In each figure a weave is depicted via solid lines,
with the corresponding flow depicted via including dotted lines.\\
\textit{On the left:}
A weave $\mc{A}$ such that
$\mc{A}(D)$ contains the bi-infinite red path,
whereas $\mc{A}|_D$ does not.
Here $D\sw\R^2$ must be dense and non-ramified with respect to the weave $\mc{A}$.
In this case we have that
$\web(\mc{A})=\ov{(\mc{A}|_D)_\uparrow}\prec\ov{\mc{A}(D)_\uparrow}$.\\
\textit{On the center and right:}
An example related to continuity of the $\flow(\cdot)$ map
and lack of continuity of the $\web(\cdot)$ map,
as well as to the existence of isolated points within flows and general weaves.
In the center, the weave $\mc{A}_\eps$ 
and corresponding flow $\mc{F}_\eps=\flow(\mc{A}_\eps)$
are depicted.
The limiting weave $\mc{A}=\lim_{\eps\to 0}\mc{A}_\eps$ is depicted on the right,
with $\mc{F}=\flow(\mc{A})$ again depicted via including dotted lines.
The red paths collapse to a single bi-infinite path in the limit.
Note that $\mc{F}_\eps\to\mc{F}$ in accordance with part 3 of Theorem \ref{t:flow_conv}.
In this case $\mc{W}_\eps=\web(\mc{A}_\eps)$ is equal to $(\mc{A}_\eps)_\uparrow$.
Therefore $\mc{W}_\eps\to\mc{A}_\uparrow$.
On the right, note that $\mc{W}=\web(\mc{A})$ does not include the bi-infinite red path,
and that every point of this path ramified.
Consequently $\lim_{\eps\to 0}\mc{W}_\eps\neq \mc{W}$,
showing that the map  $\web(\cdot)$ is discontinuous at $\mc{W}$.
Note that the bi-infinite red path on the right is an isolated point of $\mc{A}$
but is not an isolated point of $\mc{F}$.
}
\label{fig:web_dc}
\end{figure}

\begin{lemma}
\label{l:bw_web}
It holds that $\Wb$ is a web.
\end{lemma}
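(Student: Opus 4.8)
The plan is to verify the two defining properties of a web for $\Wb$: first that $\Wb$ is a weave (i.e.\ its law is that of a $\mc{K}(\Pi^\uparrow)$-valued random variable that is almost surely pervasive and non-crossing), and then that it is minimal with respect to $\preceqd$. For the first part, I would appeal to the known structural properties of the Brownian web recorded in \cite{SchertzerSunEtAl2017}. The Brownian web is by construction a compact set of continuous paths in $\Pi^\uparrow_{\rm c}\sw\Pi^\uparrow$; compactness in the state space of \cite{FontesIsopiEtAl2004a} is exactly compactness in $\mc{K}(\Pi^\uparrow)$ since, as noted after Proposition~\ref{p:J1M1}, the induced topology on $\Pi^\uparrow_{\rm c}$ coincides with theirs. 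Pervasiveness follows because the Brownian web almost surely contains a path starting from every deterministic point, and by density plus Lemma~\ref{l:appdx_1_sw_limits} (it suffices to check $\Wb(z)\neq\emptyset$ on a dense set, then closedness gives it everywhere). The non-crossing property follows from the well-known fact that paths in the Brownian web are non-crossing; since all its paths are continuous and half-infinite, Definition~\ref{d:crossing} reduces to the usual pointwise-ordering statement for coalescing Brownian motions, which is standard.

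For minimality, let $\mc{A}$ be any weave with $\mc{A}\preceqd\Wb$; I must show $\mc{A}\eqd\Wb$. By Theorem~\ref{t:weave_structure}(4) it is enough to show $\Wb\eqd\web(\Wb)$, i.e.\ that $\Wb$ is a fixed point of the $\web$ operator, since then $\Wb$ is itself a web and minimal in its equivalence class; combined with Theorem~\ref{t:weave_structure}(1), $\mc{A}\eqas\web(\mc{A})$ would give $\mc{A}\preceqd\Wb$ forcing equality. So the crux is the identity
\begin{equation*}
\Wb\;\eqas\;\web(\Wb)\;=\;\ov{(\Wb|_D)_\uparrow}
\end{equation*}
for $D\sw\R^2$ deterministic, countable, dense and almost surely non-ramified in $\Wb$. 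This is essentially the classical characterization $\Wb\eqas\ov{\Wb(D)}$, transported into our setting. Since non-ramified points of $\Wb$ are exactly points through which a unique path passes and that path begins there, we have $\Wb|_z\eqas\Wb(z)$ for each $z\in D$, so $(\Wb|_D)_\uparrow$ is the set of all half-infinite sub-paths of paths in $\Wb$ that begin at some point of $D$; its closure is $\Wb$ by the classical identity together with the observation that $\Wb$ is closed under the operation of restricting a path to a later starting time (coalescence guarantees the restricted path is again in $\Wb$, up to the closure).

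The main obstacle I anticipate is bookkeeping around ramification points and the distinction between $\Wb(D)$ and $\Wb|_D$ flagged in the discussion before Lemma~\ref{l:bw_web}: one must choose $D$ avoiding the (Lebesgue-null, by Lemma~\ref{l:ramification_meas_zero_det}) set of points that are ramified with positive probability, and then confirm that for such $D$ the Brownian-web identity $\Wb\eqas\ov{\Wb(D)}$ genuinely matches $\web(\Wb)=\ov{(\Wb|_D)_\uparrow}$ as subsets of $\Pi^\uparrow$ under the M1 topology — in particular that no extra bi-infinite or exotic paths sneak into $(\Wb|_D)_\uparrow$ and that the closure is taken in the correct space. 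A secondary point is checking that the deterministic points used to build $D$ are indeed almost surely non-ramified in $\Wb$, which follows from the fact that from a deterministic space-time point the Brownian web contains exactly one path and that path starts there. Once these are in hand, the result is immediate from Theorem~\ref{t:weave_structure}.
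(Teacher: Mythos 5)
Your proposal is correct and follows essentially the same route as the paper: verify pervasiveness and the non-crossing property from the standard characterization of $\Wb$, note that deterministic points are almost surely non-ramified so $\Wb(D)\eqas\Wb|_D$, and deduce $\Wb\eqas\web(\Wb)$ from the classical identity $\Wb\eqas\ov{\Wb(D)}$ before invoking Theorem \ref{t:weave_structure}. The one step you leave informal --- that $\Wb$ is closed under restriction to later starting points, i.e.\ $\Wb\eqas(\Wb)_\uparrow$ --- is justified in the paper not by a coalescence argument but by observing that the discrete random-walk approximants satisfy $A=A_\uparrow$ and that this property passes to the limit via Lemma \ref{l:appdx_1_sw_limits}.
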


A short proof of Lemma \ref{l:bw_web}
is given in Appendix \ref{a:bw_web}.
It rests on combining Theorem \ref{t:weave_structure} with the key property $\Wb\eqas\ov{\Wb(D)}$,
from which we may deduce that $\Wb\eqas\web(\Wb)$.
We refer to the equivalence class of the Brownian web as the class of \textit{Brownian weaves},
which are introduced for the first time in the present article.
The \emph{double Brownian web} $(\Wb,\wh{\Wb})$
as defined in Theorem 2.4 of \cite{SchertzerSunEtAl2017}
consists of a pair of coupled random variables,
where $(\wh{\Wb})^\rot$ and $\Wb$ have the same marginal distributions, 
such that
$\Wb$ and $\wh{\Wb}$ do not cross.
It follows from Theorem \ref{t:dual_webs} that $(\Wb,\wh{\Wb})$ is a double web in our framework.

Let us now comment on
the role of special points of the Brownian web 
versus ramification points of weaves.
Within the Brownian web each space-time point $z=(x,t)$ is assigned a `type' 
denoted $(z_{\rm in},z_{\rm out})$.
Here, 
$z_{\rm in}$ is the number of equivalence classes of incoming paths of $z=(x,t)$
that are distinct under the relation that two paths are equivalent if they are equal on a time interval $[t-\eps,t]$ for some $\eps>0$.
Similarly for $z_{\rm out}$, using outgoing paths and $[t,t+\eps]$.
See Section 2.5 of \cite{SchertzerSunEtAl2017} for further detail.
Note that $z_{\rm in}$ and $z_{\rm out}$ are local properties (in space-time) of $z$, 
whereas
ramification of $z$ is not a local property,
because ramification depends on the behaviour of paths within $\mc{A}(z)$ for \textit{all} time.
The two concepts are related but have different purposes.

Theorem 2.11 of \cite{SchertzerSunEtAl2017} describes the various types of special point within $\Wb$,
plus their associated local geometry and Hausdorff dimension.
This provides a highly detailed understanding of the microscopic structure of $\Wb$.
Within the Brownian web points of type $(0,1)$ have full measure in $\R^2$ and are non-ramified.
Points of all other types are regarded as `special' points of $\Wb$
and are ramified in $\Wb$.
This is something of a coincidence:
in general weaves points of type $(0,1)$ can be ramified
(for example, if they are upstream of a branch point)
and points of type $(1,1)$ can be non-ramified
(for example, the constant paths in Figure \ref{fig:example_weaves} that do not interact with the jumps).
Moreover,
for general weaves
the most abundant type is not necessarily $(0,1)$.
Within Figures \ref{fig:example_weaves}
and \ref{fig:web_dc}
all weaves depicted have points of type $(1,1)$ with full measure.

\cite{FontesNewman2006} explored two examples of Brownian weaves.
They considered the \emph{full Brownian web},
which in our terminology is precisely the flow $\Fb$ associated to the Brownian weave,
and the \emph{full forwards Brownian web},
which in our terminology is $(\Fb)_\uparrow$,
and is an example of a Brownian weave that is neither a flow nor a web.
Extension of paths, backwards in time, also features within \cite{FontesNewman2006}.
Their treatment relies fully on the structure of the Brownian web,
using the forwards-backwards reflection of Brownian paths established by 
\cite{SoucaliucTothEtAl2000a}.
The particular case of our own Theorem \ref{t:dual_webs} corresponding to $(\Wb,\hat{\Wb})$ and $\Fb$
is essentially Proposition 2.5 of \cite{FontesNewman2006}.
They also adapated results of \cite{FontesIsopiEtAl2004a}
and the earlier work of \cite{Piterbarg1998}
to give weak convergence criteria for $\Fb$.

The Brownian net of \cite{SunSwart2008} is not a weave, 
because it contains paths that cross. 
It is interesting to ask if a generalized form of nets exist,
as a family of $\mc{K}(\Pi)$ valued random variables
corresponding to general weaves,
but we do not attempt to answer this question within the present article.
It is also interesting to ask if there is a generalization of our results 
to pervasive systems that permit crossing,
such as the $\alpha$-stable web of \cite{MountfordRavishankarEtAl2019},
however at present very few non-trivial examples of such systems are available.

\subsubsection{Related topologies}
\label{sec:related_topologies}

We have already mentioned that the state space $\mc{K}(\Pi_{\rm c}^\uparrow)$ constructed by \cite{FontesIsopiEtAl2004a},
upon which most of the recent work on the Brownian web is based,
is a topological subspace of our own state space $\mc{K}(\Pi^\uparrow)$.
In this section we discuss some other recent works
concerning topologies induced upon sets of paths.

\cite{BerestyckiEtAl2015}
mapped sets of paths to sets of `tubes'.
Loosely, a tube is a subset of space-time that possesses a bottom face, sides and a top face.
The so-called \textit{tube topology} is then induced based on which tubes are traversed
(i.e.~from bottom to top, as time passes, whilst remaining within the sides) 
by the paths.
It is restricted to sets of continuous coalescing paths, 
but permits paths to cross.
  
The state space defined by \cite{BerestyckiEtAl2015} is compact,
which has substantial technical advantages (in particular, tightness becomes automatic)
but this comes at the cost of some loss of detail:
the tube topology is coarser than that of $\mc{K}(\Pi^\uparrow_{\rm c})$
and the map from sets of paths to sets of tubes is not injective.
In fact, 
the tube topology regards the sets
$A$ and $A_\uparrow$ as identical, where $f\in\Pi^\uparrow$,
and similarly for all $A'$ such that $A_\uparrow\preceq A'\preceq A$,
meaning that much of the structure displayed in Theorem \ref{t:weave_structure} is lost.
However, the weaker representation makes characterization and convergence easier.

Another piece of detail that is kept visible in $\mc{K}(\Pi^\uparrow)$,
but is dropped by the tube topology,
is behaviour near the start times of paths.
This can have implications for universality.
For example, \cite{BerestyckiEtAl2015}
showed that systems of coalescing random walkers with heavy tailed jumps (linearly interpolated)
will converge in law to the Brownian web, under the tube topology;
\cite{NewmanEtAl2005} had previously shown that such convergence failed within $\mc{K}(\Pi_{\rm c}^\uparrow)$
because jumps attempt to form in the limit at the initial times of some paths.
We conjecture that such systems \textit{will converge but not to the Brownian web}
if considered as elements of $\mc{K}(\Pi^\uparrow)$.
Loosely, we expect that the limit of such systems will be a Brownian web 
that is suitably augmented with jumps at initial times of paths.

Aside from tubes, another possibility is to
view the Brownian web as a real tree,
where the natural root is a point at time $+\infty$ at which all paths coalesce.
In this representation, loosely, 
each space-time point on a path within the Brownian web
becomes a point within the corresponding real tree,
and a metric is induced that captures 
both the natural tree structure and the distances travelled, forwards in time, 
along individual paths within the Brownian web until coalescence points.
Of course, not all sets of paths are suited to such a representation;
systems that contain branching are not.

\cite{CannizzaroHairer2021} identify a subset of $\mc{K}(\Pi^\uparrow_{\rm c})$ 
that can be naturally represented as real trees.
A similar theme underlies the framework of \textit{marked metric measure spaces} 
introduced by \cite{DepperschmidtEtAl2011}.
Like the tube topology, the setup of \cite{CannizzaroHairer2021} 
does not distinguish between $A_\uparrow$ and $A$,
in this case by associating real trees with sets of paths of the form $A_\uparrow$ 
(i.e.~decreasing sets under $\sw$).
They construct a Polish topology that is shown to be finer than that of $\mc{K}(\Pi)$,
in particular it enforces that coalescence times of paths are preserved when taking limits.




\section{Preliminaries}
\label{sec:technical_1}

We now begin the proofs.
In this section we develop some underlying concepts that we require for our theory of weaves,
related to the structure of $\mc{K}(\Pi^\uparrow)$.
In Section \ref{sec:crossing} we relate our notion of crossing 
to a relation $\lhd$ that describes when one path lies to the left of another.
In Section \ref{sec:preceq} we show that the relation $\preceq$ is a partial order.
Finally, in Section \ref{sec:compatibility} we examine the interaction between order relations and topology,
including to what extent the relations $\sw$, $\lhd$ and $\preceq$ are preserved by taking limits.
These results are technical in nature.
Readers wishing to gloss over technical issues may prefer to 
note the key results and definitions (at minimum, Definition \ref{d:lhd} and Lemma \ref{l:Amax_order}),
then proceed to Section \ref{sec:weaves_det}.

\subsection{On crossing}
\label{sec:crossing}

In this section we study the interaction between 
crossing and the idea of one path staying to the left (or right) of another.
The results in this section have straightforward extensions to $\Pi$
but for brevity we will state results covering $\Pi^\uparrow$.
Consequently we must handle jumps at the initial time $\s_f$ of $f\in\Pi^\uparrow$
but our compactification of space-time, in Figure \ref{fig:Rc}, 
means that jumps do not occur at time $\tau_f=+\infty$.

\begin{defn}
\label{d:lhd}
For $f,g\in\Pi^\uparrow$, we write $f\lhd g$ if there exist $f',g'\in\Pi^\updownarrow$ with $f\sw f'$ and $g\sw g'$
such that $f'(t\star)\leq g'(t\star)$ for all $t\star\in\R_\mfs$.
We write $f\nlhd g$ if this property fails to hold.
\end{defn}

The statement $f\lhd g$ should be interpreted as `$f$ lies to the left of $g$'.
There is some subtlety involved here.
If $f\lhd g$ then $f(t\star)\leq g(t\star)$ for all $t\star\geq\s+$,
but no such guarantee exists concerning $g(\s-)$ and $f(\s-)$.
An example of $f\lhd g$ with $g(\s-)<f(\s-)$ appears as (i) in Figure \ref{fig:cross}.

The relation $\lhd$ does not define a partial order on $\Pi^\uparrow$.
Antisymmetry fails because if $g\sw f$ with $g\neq f$ then we have both $g\lhd f$ and $f\lhd g$.
Transitivity fails too, for example if $f(t\star)=-\frac12+\1\{t\star=\s_f-\}$, $g(t\star)=0$, $h(t\star)=\frac12-\1\{t\star=\s_f-\}$,
with $\s_f=\s_h=0$ and $\s_g=1$, then $f\lhd g$ and $g\lhd h$ but $f\nlhd h$
(we leave it as an exercise for the reader 
to construct an example where transitivity fails and $\{f,g,h\}$ is also non-crossing!).
For this reason we will not use the symbols $\vartriangleright$ and $\ntriangleright$ in this article:
it would be possible to define them as analogous concepts to $\lhd$ and $\nlhd$ with the roles of right and left swapped,
but the notation would be unintuitive since $\nlhd$ and $\vartriangleright$ would not be equivalent.
In Lemma \ref{l:Amax_order} we will establish a more restricted setting in which $\lhd$ is better behaved.

Definition \ref{d:lhd} is intuitive and interacts well with Definition \ref{d:crossing}, but 
it is helpful to have a more explicit characterization of when one path lies to the left of another.
We define subsets 
$L(f)$ and $R(f)$ of $\ov\R\times\R_\mfs$, and
$L_{t\star}(f)$ and $R_{t\star}(g)$ of $\ov{\R}$ by:
\begin{align*}
L_{t\star}(f)&=
\begin{cases}
\emptyset & \text{ if } t\star<\s_f-,\;\text{ or if } t\star=\s_f- \text{ and } f(\s_f-)\leq f(\s_f+), \\
[-\infty,f(t\star)) & \text{ if } t\star\geq\s_f+, \text{ or if } t\star=\s_f- \text{ and } f(\s_f+)<f(\s_f-), \\
\end{cases} \\
R_{t\star}(f)&=
\begin{cases}
\emptyset & \text{ if } t\star<\s_f-,\;\text{ or if } t\star=\s_f- \text{ and } f(\s_f+)\leq f(\s_f-), \\
(f(t\star),\infty] & \text{ if } t\star\geq\s_f+, \text{ or if } t\star=\s_f- \text{ and } f(\s_f-)<f(\s_f+), \\
\end{cases} 
\end{align*}
\begin{equation}
\label{eq:LR_sets}
L(f)=\bigcup_{t\star\in \R_{\mfs}} L_{t\star}(f)\times\{t\star\},
\qquad\quad
R(f)=\bigcup_{t\star\in \R_{\mfs}} R_{t\star}(f)\times\{t\star\}.
\end{equation}
Note that $L(f)$ and $R(f)$ are subsets of $\ov{\R}\times\R_\mfs$.
The significance of $L(f)$ is that if $g(t\star)\in L_{t\star}(f)$ then 
$g$ must stay `to the left' of $f$ in order to avoid crossing it.
Similarly if $g({t\star})\in R_{t\star}(g)$, to the right.
We will formalize this intuition in Lemma \ref{l:lhd_left_right}.
See Figure~\ref{fig:cross} for a picture. 

For $t\geq \s_f+$, the sets $L_{t\star}(f)$ and $R_{t\star}(f)$ are, respectively,
the set of points strictly to the left and right of $f(t\star)$.
However, for $t=\s_f$ we must take into account the presence and direction of a jump at time $\s_f$.
Note that $t\star$ with $t=\pm\infty$ are excluded from $L(f)$ and $R(f)$,
in accordance with the compactification of space-time in Figure \ref{fig:Rc}.

\begin{figure}[t]
\centering
\includegraphics{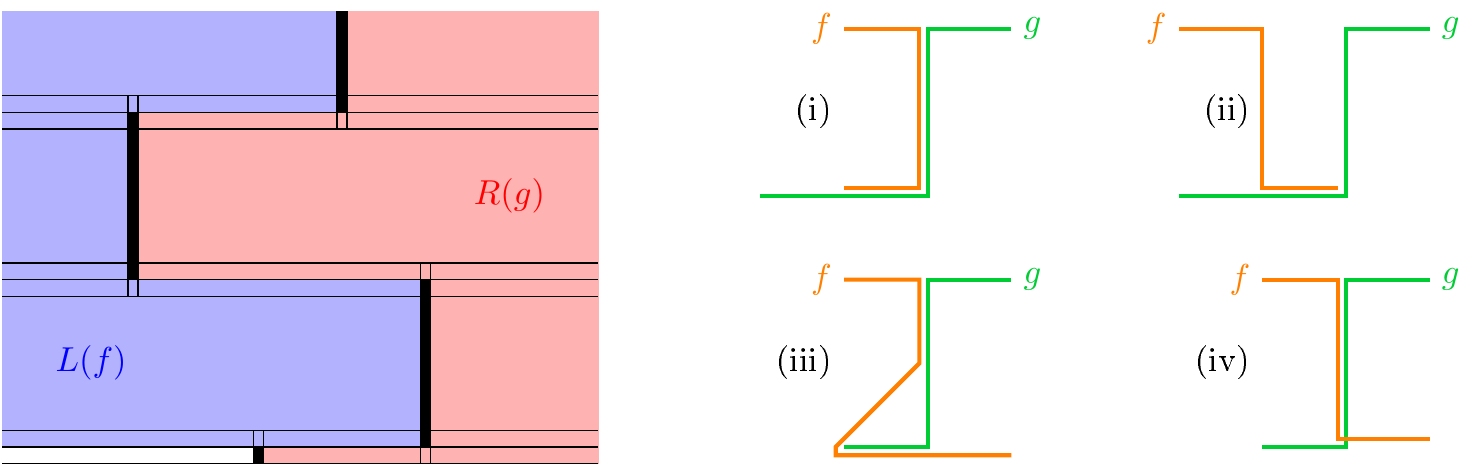}   
\small
\caption{
\textit{On the left:} 
A schematic depiction of the subsets $L(f)$ and $R(f)$ of $\ov\R\times\R_\mfs$. 
Time is running upwards, with values taken by $f(t\star)$ shown as a thick black line.
To help visualize, when $f$ jumps at time $t$ we depict time as split into $t-$ and $t+$
via thin horizontal lines.
The path $f$ makes a jump
at its starting time $\s_f$, at the very bottom of the figure.
Note that $L_{\s_f-}(f)=\emptyset$ and $R_{\s_f-}(f)=(f(\s_f-),\infty]$
because the jump at $\s_f$ is rightwards.\\
\textit{On the right:} 
Four examples of sets $\{f,g\}$ containing two paths. 
In each example, the initial point of $g$ lies to the left of the initial point of $f$. 
The paths in (i) satisfy $f\lhd g$ and do not cross,
but in examples (ii)--(iv) we have that $f$ crosses $g$ from right to left.
}
\label{fig:cross}
\end{figure}

\begin{lemma}
\label{l:lhd_LR}
Let $f,g\in\Pi^\uparrow$. Then
$f\lhd g$ if and only if $L(f)\cap R(g)=\emptyset$.
\end{lemma}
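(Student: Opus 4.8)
The plan is to prove the two implications separately, unwinding Definition \ref{d:lhd} of $\lhd$ and the definitions \eqref{eq:LR_sets} of $L(f), R(g)$.

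For the forward direction, suppose $f\lhd g$, so there exist $f',g'\in\Pi^\updownarrow$ with $f\sw f'$, $g\sw g'$ and $f'(t\star)\le g'(t\star)$ for all $t\star\in\R_\mfs$. I would argue by contradiction: suppose $(x,t\star)\in L(f)\cap R(g)$. Then $x\in L_{t\star}(f)$ and $x\in R_{t\star}(g)$, and in particular both $L_{t\star}(f)$ and $R_{t\star}(g)$ are nonempty, so $t\star\ge\sigma_f-$ and $t\star\ge\sigma_g-$. The bulk of the work is a short case analysis on whether $t\star$ equals $\sigma_f-$ or $\sigma_g-$ or is strictly larger. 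In the "generic" case $t\star\ge\sigma_f+$ and $t\star\ge\sigma_g+$ we have $x<f(t\star)=f'(t\star)$ and $x>g(t\star)=g'(t\star)$, whence $g'(t\star)<f'(t\star)$, contradicting $f'\le g'$. When $t\star=\sigma_f-$ (and $L_{\sigma_f-}(f)\ne\emptyset$, forcing $f(\sigma_f+)<f(\sigma_f-)$, i.e. a leftward jump at $\sigma_f$) we get $x<f(\sigma_f-)=f'(\sigma_f-)$; similarly when $t\star=\sigma_g-$ (forcing a rightward jump of $g$) we get $x>g(\sigma_g-)=g'(\sigma_g-)$; in each such sub-case one again reads off $g'(t\star)<x<f'(t\star)$ or the relevant inequality at the split time and contradicts $f'\le g'$. (The cases where $\sigma_f=\sigma_g$ and both jump at that time are covered by the same bookkeeping.) Hence $L(f)\cap R(g)=\emptyset$.

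For the converse, assume $L(f)\cap R(g)=\emptyset$ and construct $f',g'\in\Pi^\updownarrow$ witnessing $f\lhd g$. The natural choice is to extend $f$ backwards in time by a constant at level $f(\sigma_f-)$ (so $f'(t\star)=f(\sigma_f-)$ for $t\star<\sigma_f-$) and likewise $g'(t\star)=g(\sigma_g-)$ for $t\star<\sigma_g-$; these lie in $\Pi^\updownarrow$ and satisfy $f\sw f'$, $g\sw g'$. It remains to check $f'(t\star)\le g'(t\star)$ for all $t\star\in\R_\mfs$. I would verify this pointwise, again by cases on the position of $t\star$ relative to $\sigma_f-$ and $\sigma_g-$, each time using the hypothesis $L(f)\cap R(g)=\emptyset$ in the contrapositive form: if $f'(t\star)>g'(t\star)$ at some $t\star$, then $g'(t\star)$ would be a point lying in both $L_{t\star}(f)$ (or, at a split start time, witnessing membership after the backward-constant extension) and $R_{t\star}(g)$, contradicting disjointness. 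The start-time cases require care: e.g. if $t\star=\sigma_f-=\sigma_g-$ and $f$ jumps left while $g$ jumps right, one must check the ordering of $f(\sigma_f-)$ vs $g(\sigma_g-)$ and of $f(\sigma_f+)$ vs $g(\sigma_g+)$ both follow from disjointness of $L$ and $R$; this is exactly the phenomenon illustrated by example (i) in Figure \ref{fig:cross}, where $g(\sigma-)<f(\sigma-)$ is permitted.

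\textbf{Main obstacle.} The non-routine part is the bookkeeping at initial times: the sets $L_{t\star}(f), R_{t\star}(f)$ are defined asymmetrically at $t\star=\sigma_f-$ depending on the jump direction, and when $f$ and $g$ share an initial time and both jump there, several sub-configurations (left/right jumps, overlapping or nested jump intervals) must each be checked to confirm that $L(f)\cap R(g)=\emptyset$ is exactly equivalent to the existence of bi-infinite extensions with $f'\le g'$ everywhere. I would organize this as a single lemma-internal case table indexed by $\mathrm{sgn}(f(\sigma_f+)-f(\sigma_f-))$ and $\mathrm{sgn}(g(\sigma_g+)-g(\sigma_g-))$ together with the order of $\sigma_f$ and $\sigma_g$, and handle the generic times $t\star\ge\sigma_f+\vee\sigma_g+$ uniformly at the end.
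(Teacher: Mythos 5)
Your forward implication is essentially sound, and is in fact more laborious than necessary: the paper observes that $L(f)\sw L(f')$ and $R(g)\sw R(g')$ whenever $f\sw f'$, $g\sw g'$, and that for bi-infinite paths disjointness of $L(f')$ and $R(g')$ is literally the statement $f'(t\star)\le g'(t\star)$ for all $t\star$, which disposes of the direction in two lines. (One small imprecision in your version: $f(\sigma_f-)=f'(\sigma_f-)$ need not hold; the order-preservation built into $\sw$ only gives $f(\sigma_f-)\le f'(\sigma_f-)$ when $f$ jumps left at $\sigma_f$, which is the case you need, so your inequalities still close.)

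The backward implication has a genuine gap: the proposed witnesses — extending $f$ and $g$ backwards in time by the constants $f(\sigma_f-)$ and $g(\sigma_g-)$ — do not work in general. The hypothesis $L(f)\cap R(g)=\emptyset$ imposes \emph{no} constraint on the values of $f$ at times $t\star<\sigma_g-$, because $R_{t\star}(g)=\emptyset$ there (and symmetrically no constraint on $g$ before $\sigma_f-$). Concretely, let $\sigma_f=0$ with $f(t\star)=2$ for $t\star\in[0-,1-]$ and $f(t\star)=0$ for $t\star\ge 1+$, and let $\sigma_g=1$ with $g\equiv 1$ on $[1-,\infty)$ and no jump at $1$. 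Then $R_{t\star}(g)=\emptyset$ for all $t\star\le 1-$ and $L_{t\star}(f)=[-\infty,0)$ is disjoint from $R_{t\star}(g)=(1,\infty]$ for $t\star\ge 1+$, so $L(f)\cap R(g)=\emptyset$; but the constant extension gives $f'(0+)=2>1=g'(0+)$, so your verification step fails and no contradiction with the hypothesis is available to rescue it. The correct construction (as in the paper) must branch on the configuration at $\sigma=\sigma_f\vee\sigma_g$: if $L_{\sigma-}(f)=\emptyset$ one sends $f'$ to $-\infty$ backwards in time; if $R_{\sigma-}(g)=\emptyset$ one sends $g'$ to $+\infty$ backwards in time (this handles the example above); and only in the remaining case, where both sets are nonempty and hence $f(\sigma-)<g(\sigma-)$, does one use a constant extension for the earlier-starting path while the later-starting path copies the increments of the other backwards in time from the anchor $f(\sigma-)<g(\sigma-)$ — a constant extension fails there too, since the other path may dip below (resp.\ rise above) that constant on the interval between the two starting times. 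So the case table you propose is not just bookkeeping: the witnesses themselves must change from case to case.
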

\begin{proof}
We prove the forwards and backwards implications in turn, beginning with the former.
Suppose that $f\lhd g$.
Then there exists $f',g'\in\Pi^\updownarrow$ with $f\sw f'$ and $g\sw g'$ such that $f'(t\star)\leq g'(t\star)$ for all $t\star\in \R_\mfs$.
It follows from \eqref{eq:LR_sets} that $L(f')\cap R(g')=\emptyset$, which since $L(f)\sw L(f')$ and $R(g)\sw R(g')$ implies that $L(f)\cap R(g)=\emptyset$.

For the reverse implication, let $f,g\in\Pi^\uparrow$ with $L(f)\cap R(g)=\emptyset$.
Let $\s=\s_f\vee\s_g$.
It follows immediately from \eqref{eq:LR_sets} that $f(t\star)\leq g(t\star)$ for all $t\star\geq \s+$.
We will construct explicit $f',g'\in\Pi^\updownarrow$ such that $f\sw f'$, $g\sw g'$ and $f'(t\star)\leq g'(t\star)$ for all $t\star\in\R_\mfs$.
Note that we have nothing to prove if $\s_f=\s_g=-\infty$,
and that we do not need to define $f'$ or $g'$ at $t\star$ for $t=\pm\infty$.
We consider three cases, at least one of which must occur.
\begin{enumerate}
\item
Consider if $L_{\s-}(f)=\emptyset$.
By \eqref{eq:LR_sets} we have $\s=\s_f\geq\s_g$ and $f(\s-)\leq f(\s+)$.
In this case set
$$
f'(t\star)=
\begin{cases}
f(t\star) & \text{ if }t\star\geq \s_f+ \\
-\infty & \text{ if }t\star\leq \s_f-
\end{cases}
\qquad\qquad
g'(t\star)=
\begin{cases}
g(t\star) & \text{ if }t\star\geq \s_g+ \\
g(\s_g-) & \text{ if }t\star\leq \s_g-.
\end{cases}
$$
\item
Consider if $R_{\s-}(g)=\emptyset$.
By \eqref{eq:LR_sets} we have $\s=\s_g\geq\s_f$ and $g(\s+)\leq g(\s-)$.
In this case set
$$
f'(t\star)=
\begin{cases}
f(t\star) & \text{ if }t\star\geq \s_f+ \\
f(\s_f-) & \text{ if }t\star\leq \s_f-
\end{cases}
\qquad\qquad
g'(t\star)=
\begin{cases}
g(t\star) & \text{ if }t\star\geq \s_g+ \\
\infty & \text{ if }t\star\leq \s_g-.
\end{cases}
$$
\item
Consider if $L_{\s-}(f)=[-\infty,f(\s-)$ and $R_{\s-}(g)=(g(\s-),\infty]$.
Using that $L(f)\cap R(g)=\emptyset$ we have $f(\s-)<g(\s-)$.
If $\s=\s_f\geq\s_g$ then we set
$$
f'(t\star)=
\begin{cases}
f(t\star) & \text{ if }t\star\geq \s_f+ \\
f(\s_f-)+g(t\star)-g(\s_f-) & \text{ if }t\star\in[\s_g+,\s_f-] \\
f(\s_f-)+g(\s_g-)-g(\s_f-) & \text{ if }t\star\leq \s_g- \\
\end{cases}
\qquad
g'(t\star)=
\begin{cases}
g(t\star) & \text{ if }t\star\geq \s_g+ \\
g(\s_g-) & \text{ if }t\star\leq \s_g-.
\end{cases}
$$
Note that $f'$ copies the increments of $g'$ during $t\star\in[\s_g+,\s_f-]$, 
backwards in time 
starting from the condition $f'(\s_f)=f(\s_f-)<g(\s_f-)=g'(\s_f)$,
and then backwards from time $\s_g-$ both paths remain constant.
This ensures that $f'(t\star)\leq g'(t\star)$ for all $t\star$.
If $\s=\s_g\geq\s_f$ then we may employ a similar strategy,
where $g$ copies the increments of $f$ during $[\s_f+,\s_g-]$, backwards in time starting from $g(\s_g-)$.
\end{enumerate}
In all cases it is clear that $f\sw f'$, $g\sw g'$ and $f'(t\star)\leq g'(t\star)$ for all $t\star\in\R_\mfs$.
\end{proof}

\begin{lemma}
\label{l:lhd_noncr}
Let $f,g\in\Pi^\uparrow$. 
The following statements are equivalent:
(i) $f$ and $g$ are non-crossing;
(ii) $L(f)\cap R(g)=\emptyset$ or $R(f)\cap L(g)=\emptyset$;
(iii) $f\lhd g$ or $g\lhd f$.
\end{lemma}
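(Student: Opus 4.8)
The plan is to deduce the whole cycle of equivalences from Lemma~\ref{l:lhd_LR} together with a direct unwinding of Definitions~\ref{d:crossing} and~\ref{d:lhd}, so that no new hard work is needed beyond the explicit extension construction already carried out in Lemma~\ref{l:lhd_LR}.

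First I would prove (ii)$\iff$(iii). Applying Lemma~\ref{l:lhd_LR} to the ordered pair $(f,g)$ gives $f\lhd g\iff L(f)\cap R(g)=\emptyset$, and applying it to $(g,f)$ gives $g\lhd f\iff L(g)\cap R(f)=\emptyset$, i.e.\ $R(f)\cap L(g)=\emptyset$. Forming the disjunction of these two biconditionals shows that (iii) holds precisely when $L(f)\cap R(g)=\emptyset$ or $R(f)\cap L(g)=\emptyset$, which is (ii).

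Next I would prove (i)$\iff$(iii) by unwinding definitions. For (iii)$\Rightarrow$(i), assume without loss of generality $f\lhd g$; Definition~\ref{d:lhd} furnishes $f',g'\in\Pi^\updownarrow$ with $f\sw f'$, $g\sw g'$ and $f'(t\star)\leq g'(t\star)$ for all $t\star\in\R_\mfs$, and since $f'(t\star)=*=g'(t\star)$ whenever $t=\pm\infty$ the inequality extends to all of $\ov\R_\mfs$, which is exactly the witness demanded by Definition~\ref{d:crossing} for $f,g$ to be non-crossing. For (i)$\Rightarrow$(iii), if $f,g$ are non-crossing then Definition~\ref{d:crossing}, applied to the unordered pair $\{f,g\}$, yields $f',g'\in\Pi^\updownarrow$ extending $f,g$ with one of them pointwise below the other on $\ov\R_\mfs$; restricting that inequality to $\R_\mfs$ gives $f\lhd g$ or $g\lhd f$ via Definition~\ref{d:lhd}.

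The main obstacle is not, in the end, an analytic one: all the substance — the construction of explicit bi-infinite extensions certifying $L(f)\cap R(g)=\emptyset\Rightarrow f\lhd g$ — is already contained in Lemma~\ref{l:lhd_LR}. The only delicate points are bookkeeping: one must read Definition~\ref{d:crossing} symmetrically in its two arguments (so that a non-crossing pair may be certified with either path playing the role of the left one), and one must observe that quantifying over $\ov\R_\mfs$ in Definition~\ref{d:crossing} versus over $\R_\mfs$ in Definition~\ref{d:lhd} makes no difference, since every path in $\Pi^\updownarrow$ takes the value $*$ at the times $\pm\infty$.
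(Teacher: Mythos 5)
Your proof is correct and follows essentially the same route as the paper: (ii)$\iff$(iii) via Lemma~\ref{l:lhd_LR}, (iii)$\Rightarrow$(i) trivially from the definitions, and the remaining arrow by unwinding the witness in Definition~\ref{d:crossing}. The only cosmetic difference is that you close the cycle with (i)$\Rightarrow$(iii) where the paper proves (i)$\Rightarrow$(ii) directly from \eqref{eq:LR_sets}; both are one-line definitional observations, and your remarks on the symmetry of Definition~\ref{d:crossing} and on $\ov\R_\mfs$ versus $\R_\mfs$ correctly handle the only delicate points.
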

\begin{proof}
Equivalence of (ii) and (iii) follows immediately from Definition \ref{d:crossing} and Lemma \ref{l:lhd_LR}.
It follows trivially from Definitions \ref{d:crossing} and \ref{d:lhd} that (iii) implies (i).
Let us now show that (i) implies (ii).
If $f$ and $g$ are non-crossing then we have $f',g'\sw\Pi^\updownarrow$ such that $f\sw f'$, $g\sw g'$ and 
$f'(t\star)\leq g'(t\star)$ for all $t\star\in\R_\mfs$ or $g'(t\star)\leq f'(t\star)$ for all $t\star\in\R_\mfs$.
In the former case by \eqref{eq:LR_sets} we have $L(f')\cap R(g')=\emptyset$, which implies $L(f)\cap R(g)=\emptyset$.
In the latter case by \eqref{eq:LR_sets} we have $L(g')\cap R(f')=\emptyset$, which implies $L(g)\cap R(f)=\emptyset$.
Thus we have (ii).
\end{proof}

By Lemma \ref{l:lhd_noncr}, if $f$ and $g$ cross, then there must exist $t_1\star_1\neq t_2\star_2$ such that
$R_{t_1\star_1}(\pi)\cap L_{t_1\star_1}(\pi')\neq\emptyset$ and
$L_{t_2\star_2}(\pi)\cap R_{t_2\star_2}(\pi')\neq\emptyset$. If this happens
for $t_1\star_1<t_2\star_2$ (resp.\ $t_1\star_1>t_2\star_2$), then we say that
$f$ crosses $g$ \emph{from left to right} (resp.\ \emph{from right to
  left}). Of course, it can happen that $f$ crosses $g$ from left to
right and also from right to left. 
See Figure \ref{fig:cross} for a picture.

\begin{lemma}
\label{l:lhd_left_right}
Suppose that $f,g\in\Pi^\uparrow$ are non-crossing and
let $I_\mfs=I(f)_\mfs\cap I(g)_\pm$. 
If there exists $t\star\in I_\mfs$ such that $(f(t\star),t\star)\in L(g)$ then $f\lhd g$.
If there exists $t\star\in I_\mfs$ such that $(f(t\star),t\star)\in R(g)$ then $g\lhd f$.
Moreover, in either case $f\neq g$.
\end{lemma}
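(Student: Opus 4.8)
We want to show that if $f,g \in \Pi^\uparrow$ are non-crossing and $(f(t\star),t\star)\in L(g)$ for some $t\star \in I_\mfs$, then $f \lhd g$ (the statement about $R(g)$ being symmetric), and moreover $f \neq g$ in either case.

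First I would dispose of the claim $f \neq g$: if $f = g$, then $L(g) = L(f)$, and by the definition \eqref{eq:LR_sets} the point $(f(t\star),t\star)$ can never lie in $L(f)$, since $L_{t\star}(f)$ consists only of points strictly to the left of $f(t\star)$ (with $L_{\s_f-}(f)$ possibly empty, but never containing $f(\s_f-)$). This is immediate from inspection of the cases in the definition of $L_{t\star}$. The same argument with $R$ handles the other case. So from now on assume $f \neq g$.

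Next, the main line of argument. Since $f$ and $g$ are non-crossing, Lemma \ref{l:lhd_noncr} gives that either $f \lhd g$ or $g \lhd f$; equivalently, either $L(f)\cap R(g)=\emptyset$ or $R(f)\cap L(g)=\emptyset$. We want to rule out the possibility that \emph{only} $g \lhd f$ holds, i.e. we want to show that $f \lhd g$ holds regardless. Suppose for contradiction that $f \not\lhd g$; then by Lemma \ref{l:lhd_LR} we have $L(f)\cap R(g)\neq\emptyset$, so there is some $s\star$ with $(f(s\star),s\star)\in R(g)$ — wait, more carefully, with some point of $L(f)$ lying in $R(g)$. Combined with the hypothesis $(f(t\star),t\star)\in L(g)$, I would argue that these two facts force $f$ and $g$ to cross. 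The key point is that the hypothesis gives a time-slice where $f$ is strictly left of $g$, and $L(f)\cap R(g)\neq\emptyset$ gives (after a short argument, using that $L(f)$ is built from points to the left of $f$ and $R(g)$ from points to the right of $g$, so the overlap forces $g$ strictly left of $f$ at some slice) a time-slice where $g$ is strictly left of $f$. Having $f$ strictly left of $g$ at one slice and $g$ strictly left of $f$ at another is precisely a crossing (in the sense made precise just before Lemma \ref{l:lhd_left_right}, via the $L_{t\star}, R_{t\star}$ overlap conditions), contradicting non-crossing. Hence $f \lhd g$.

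\textbf{The main obstacle.} The delicate part is the bookkeeping at the initial times $\s_f$ and $\s_g$, where jumps can occur and where $L_{\s-}$ or $R_{\s-}$ may be empty or may point "the wrong way" (as illustrated by example (i) in Figure \ref{fig:cross}, where $g(\s-) < f(\s-)$ yet $f \lhd g$). One must check that the hypothesis $(f(t\star),t\star) \in L(g)$ — which by definition of $L(g)$ already encodes the direction of any jump of $g$ at its initial time — genuinely pins down $f$ as being on the left, and that the derivation of a crossing from $L(f) \cap R(g) \neq \emptyset$ is valid even when the relevant witness times coincide with $\s_f$ or $\s_g$. I expect the cleanest route is to phrase everything through the $L_{t\star}$/$R_{t\star}$ slicewise conditions and the formal notion of "crosses from left to right / right to left" defined just after Lemma \ref{l:lhd_noncr}, rather than trying to exhibit the bi-infinite extensions $f', g'$ directly; the extensions can then be invoked via Lemma \ref{l:lhd_LR} only at the very end.
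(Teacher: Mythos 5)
Your overall skeleton is the paper's: reduce everything to the slicewise sets $L_{t\star}$, $R_{t\star}$, use Lemma \ref{l:lhd_noncr} to get that at least one of $L(f)\cap R(g)$, $R(f)\cap L(g)$ is empty, and conclude via Lemma \ref{l:lhd_LR}; your argument for $f\neq g$ is also the paper's. However, the difficulty you flag at the end and defer is not a bookkeeping detail --- it is the one case where the crossing-contradiction route breaks down entirely, and where the paper has to switch to a different argument.

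Concretely, your key step is that the hypothesis $(f(t\star),t\star)\in L(g)$ produces a point of $R(f)\cap L(g)$, so that non-crossing forces $L(f)\cap R(g)=\emptyset$. This works whenever $R_{t\star}(f)\neq\emptyset$, i.e.\ whenever $t\star\geq\s_f+$, or $t\star=\s_f-$ and $f$ jumps strictly rightwards at $\s_f$ (and also when $t\star=\s_g-$ with $\s_g>\s_f$). But in the remaining case, $t\star=\s_f-$ with $f(\s_f+)\leq f(\s_f-)$, we have $R_{\s_f-}(f)=\emptyset$ by \eqref{eq:LR_sets}, the hypothesis yields no point of $R(f)\cap L(g)$, and no contradiction is available: $f\nlhd g$ together with non-crossing is not contradictory per se, since it merely says $g\lhd f$ with $R(f)\cap L(g)=\emptyset$. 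Your assertion that ``$f$ strictly left of $g$ at one slice and $g$ strictly left of $f$ at another is precisely a crossing'' is exactly what fails at an initial-time slice where the relevant $R$-set is empty. In this residual case the paper argues quite differently: it first disposes of the possibility that $f(s\bullet)<g(s\bullet)$ for some $s\bullet\geq(\s_f\vee\s_g)+$ by running the slice argument at that later time (where both half-lines are genuinely the full open half-lines), and only in what remains does it conclude $f\lhd g$ --- there not by intersecting $L$'s and $R$'s at all, but by directly exhibiting the bi-infinite extensions required by Definition \ref{d:lhd}. Your proposal contains neither the case split nor the construction, so it is incomplete precisely where the content of the lemma lies.
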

\begin{proof}
We will establish the first claim first:
let $f,g$ be as given and suppose $(f(t\star),t\star)\in L(g)$.
Hence $(f(t\star),t\star)\in L_{t\star}(g)$ so $L_{t\star}(g)=[-\infty, g(t\star))$
and $f(t\star)<g(t\star)$.
Let $\s=\s_f\vee \s_g$.
Consider if $s\bullet\geq\s+$ and $f(\s\bullet)<g(s\bullet)$
for some $s\bullet\geq \s-$.
Then, from Definition \ref{d:lhd}, we have $L_{s\bullet}(g)=[-\infty,g(s\bullet))$ and $R_{s\bullet}(f)=(f(s\bullet),\infty]$,
which implies $L(g)\cap R(f)\neq\emptyset$.
Lemma \ref{l:lhd_noncr} thus implies $L(f)\cap R(g)=\emptyset$,
from which Lemma \ref{l:lhd_LR} gives $f\lhd g$.
In particular, if $t\star\geq \s_f+$ then $f\lhd g$.

It remains only to consider the case of $t\star=\s_f-$
and, from what we have shown in the paragraph above, 
in this case we may assume without loss of generality that 
$f(s\bullet)\leq g(s\bullet)$ for all $s\bullet\geq\s+$.
We have $f(\s_f-)<g(\s_f-)$ and $L_{\s_f-}(g)=[-\infty,g(t\star))$.
If $f(\s_f-)<f(\s_f+)$ then $R_{\s_f-}(f)=[-\infty,f(\s_f-))$ and hence $L(g)\cap R(f)\neq\emptyset$, 
so here also Lemmas \ref{l:lhd_noncr} and \ref{l:lhd_LR} imply $f\lhd g$.
Otherwise, $f(\s_f+)\leq f(\s_f-)$, in which case it is immediate from Definition \ref{d:lhd} that $f\lhd g$.


The second claim, regarding the case $(f(t\star),t\star)\in R(g)$,
follows by symmetry (consider space $\ov{\R}$ reflected about the origin).
Lastly, the fact that $f\neq g$ follows from noting that points of the form $(g(t\star),t\star)\in\Rc$ are not elements of $L(g)$ or $R(g)$.
\end{proof}

\begin{lemma}
\label{l:lhd_possibilities}
The following hold:
\begin{enumerate}
\item
Suppose $f,g\in\Pi^\uparrow$ with $f\lhd g$ and write $\sigma=\sig_f\vee\sig_g$. Precisely one of following occurs: 
\begin{enumerate}[label=(\roman*)]
\item $f\sw g$ or $g\sw f$; 
\item $f(t\star)<g(t\star)$ for some $t\star\geq\s +$; 
\item $f(t\star)=g(t\star)$ for all $t\star\geq\s+$, $\;f(\s-)<f(\s+)=g(\s+)<g(\s-)$;
\item $f(t\star)=g(t\star)$ for all $t\star\geq\s+$, $\;f(\s-)<g(\s-)\leq f(\s+)=g(\s+)$ and $\s_g<\s_f=\s$;
\item $f(t\star)=g(t\star)$ for all $t\star\geq\s+$, $\;f(\s+)=g(\s+)\leq f(\s-)<g(\s-)$ and $\s_f<\s_g=\s$.
\end{enumerate}
In cases (ii)-(v)
we have $g\nlhd f$.

\item 
Let $f,g\in\Pi^\uparrow$. Then $f\lhd g$ and $g\lhd f$ if and only if $f\sw g$ or $g\sw f$.
\end{enumerate}
\end{lemma}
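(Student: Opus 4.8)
The plan is to prove the two parts in turn, using the explicit characterization of $\lhd$ via the sets $L(\cdot)$ and $R(\cdot)$ from Lemmas \ref{l:lhd_LR}, \ref{l:lhd_noncr} and \ref{l:lhd_left_right}. For part 1, I would start from the hypothesis $f\lhd g$, which by Lemma \ref{l:lhd_LR} means $L(f)\cap R(g)=\emptyset$. From \eqref{eq:LR_sets} this forces $f(t\star)\leq g(t\star)$ for all $t\star\geq\s+$, where $\s=\s_f\vee\s_g$. So either there is strict inequality somewhere on $[\s+,\infty)$ — this is case (ii) — or $f(t\star)=g(t\star)$ for all $t\star\geq\s+$. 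In the latter situation the paths agree from time $\s$ onwards, so any difference between them must occur strictly before $\s$, i.e.\ involve the jump behaviour at the start times and/or the portion of the longer path before $\s$. I would then split on whether $\s_f=\s_g$ or not, and on the relative order of $f(\s-)$, $g(\s-)$, $f(\s+)=g(\s+)$, always using the constraint $L(f)\cap R(g)=\emptyset$ applied at $t\star=\s-$ to rule out $f(\s-)>g(\s-)$ in the relevant configurations. A careful bookkeeping of these sub-cases should land exactly in one of (i), (iii), (iv), (v): if $f(\s-)=g(\s-)$ and the paths agree from $\s$ onward then (modulo the part of the longer path before $\s$, which must then also coincide because of non-crossing and agreement at $\s-$) we get $f\sw g$ or $g\sw f$, which is case (i); the remaining orderings of $f(\s-)$ versus $g(\s-)$ give (iii)–(v), with the side conditions on $\s_f$ versus $\s_g$ coming from which of $L_{\s-}(f)$, $R_{\s-}(g)$ is nonempty (this is exactly where the definitions of $L_{\s-}$ and $R_{\s-}$ distinguish a genuine start-time jump from an interior point of the longer path). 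Mutual exclusivity of (i)–(v) is then immediate by inspecting the listed (in)equalities.

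For the final assertion of part 1, that (ii)–(v) imply $g\nlhd f$: I would argue by contradiction. If also $g\lhd f$ then $L(g)\cap R(f)=\emptyset$, which gives $g(t\star)\leq f(t\star)$ for all $t\star\geq\s+$; combined with $f(t\star)\leq g(t\star)$ this yields $f\equiv g$ on $[\s+,\infty)$, immediately killing case (ii). For (iii)–(v), having both $f\lhd g$ and $g\lhd f$ forces $L(f)\cap R(g)=\emptyset$ and $L(g)\cap R(f)=\emptyset$ simultaneously at $t\star=\s-$; checking the three configurations, in each the nonempty one of $L_{\s-}(f)$ or $L_{\s-}(g)$ together with the nonempty one of $R_{\s-}(g)$ or $R_{\s-}(f)$ must intersect because of the strict inequality $f(\s-)<g(\s-)$ forced by (iii)–(v), giving the contradiction. (Concretely: in (iii) $R_{\s-}(g)=\emptyset$ but $R_{\s-}(f)=(f(\s-),\infty]$ and $L_{\s-}(g)=[-\infty,g(\s-))$ overlap since $f(\s-)<g(\s-)$; the other two cases are similar.)

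Part 2 is then a clean consequence. The forward direction: if $f\lhd g$ and $g\lhd f$, then by the last sentence of part 1 none of (ii)–(v) can hold, so (i) holds, i.e.\ $f\sw g$ or $g\sw f$. The reverse direction: if $f\sw g$ (say), then $f$ extends to any bi-infinite extension of $g$, so taking $f'=g'$ a common bi-infinite extension of $g$ gives $f'(t\star)\leq g'(t\star)$ and, by symmetry with $f'$ and $g'$ swapped, also the reverse inequality; hence both $f\lhd g$ and $g\lhd f$ directly from Definition \ref{d:lhd} (this also follows from the remark already made in the text that $g\sw f$ with $g\neq f$ gives both $g\lhd f$ and $f\lhd g$, and the case $f=g$ is trivial).

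The main obstacle I anticipate is the combinatorial case analysis in part 1 around the start times: correctly tracking all admissible orderings of $f(\s_f-)$, $f(\s_f+)$, $g(\s_g-)$, $g(\s_g+)$ together with whether $\s_f<\s_g$, $\s_f=\s_g$, or $\s_f>\s_g$, and verifying that the non-crossing-derived constraint $L(f)\cap R(g)=\emptyset$ collapses this into precisely the five listed cases with their stated side conditions — in particular making sure cases (iv) and (v) are stated with the correct strict inequality $\s_g<\s_f$ (resp.\ $\s_f<\s_g$) rather than a non-strict one, since when $\s_f=\s_g$ the configuration would instead fall under (i) or (iii). The $L/R$ machinery makes each individual check short, but there are enough branches that care is needed to be exhaustive and non-overlapping.
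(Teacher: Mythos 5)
Your proposal is correct and follows essentially the same route as the paper: both reduce everything to the $L(\cdot)/R(\cdot)$ characterization of Lemma \ref{l:lhd_LR}, note that $f\lhd g$ forces $f(t\star)\leq g(t\star)$ for $t\star\geq\s+$ so that failure of (ii) gives equality there, then case-split on the ordering of $f(\s-)$, $g(\s-)$, $f(\s+)=g(\s+)$ and on $\s_f$ versus $\s_g$ to land in (i) or (iii)--(v), and finally exhibit a point of $L(g)\cap R(f)$ in each of (ii)--(v) to get $g\nlhd f$, from which part 2 is immediate. The only difference is presentational (the paper argues contrapositively that if (ii)--(v) all fail then (i) holds, while you walk the case tree forward), which does not change the substance.
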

\begin{proof}
Let us begin with the first statement.
It is clear that all five cases are distinct.
Suppose neither of (ii), (iii) (iv) and (v) occurs,
and we will seek to prove that (i) holds.
Since (ii) fails we have $g(t\star)\leq f(t\star)$ for all $t\star\geq \s+$.
Since $f\lhd g$ we also have $f(t\star)\leq g(t\star)$ for all such $t\star$, hence in fact we have equality for $t\star\geq \s+$,
in particular at $t\star=\s+$.
Since (iii) fails, $f(t-)$ and $g(t-)$ lie (non-strictly) on the same side of $f(t+)=g(t+)$.

Consider first when they both lie to the left, that is $f(t-)\vee g(t-)\leq f(t+)=g(t+)$.
We divide into three cases
based upon whether $\s_f=\s_g$, $\s_f<\s_g$ or $\s_g<\s_f$. 
\begin{itemize}
\item If $\s_f=\s_g$ then $\s_f=\s_g=\s$, in which case (i) occurs.
\item If $\s_f<\s_g$ then $\s_g=\s$, so we have 
$L_{\s-}(f)=[-\infty,f(\s-))$ and $R_{\s-}(g)=(g(\s-),\infty]$.
By Lemma \ref{l:lhd_LR} we have $L_{\s-}(f)\cap R_{\s-}(g)=\emptyset$ so $f(\s-)\leq g(\s-)$. 
Hence $g\sw f$.
\item If $\s_g<\s_f$ then $\s_f=\s$ and as (iv) does not occur we must have $g(\s-)\leq f(\s-)$, which means $f\sw g$.
\end{itemize}
In all three cases we have that (i) occurs.
It remains to consider when both $f(\s-)$ and $g(\s-)$ lie to the right of $f(\s+)=g(\s+)$. 
A symmetric argument,
in which (v) takes the place of (vi), shows that (i) also occurs.
Note that in case (ii) we have $L_{t\star}(f)\cap R_{t\star}(g)\neq\emptyset$.
In cases (iii), (vi) and (v) we have $L_{\s-}(g)\cap R_{\s-}(f)\neq\emptyset$.
Thus, in all of cases (ii)-(v) Lemma \ref{l:lhd_LR} gives $g\nlhd f$.
For the second claim of the present lemma, 
the reverse implication is trivial from Definition \ref{d:lhd}
and the forwards implication follows from part 1 of the present lemma.
\end{proof}

Part 1 of Lemma \ref{l:lhd_possibilities} makes explicit 
the difficulties inherent to paths that may jump at their initial times.
We will often use arguments that
give some special attention to the initial times of paths.
The sets $L(f),R(g)$ and the relation $\lhd$
allow us to do so
without having to work through cases (i)--(v) in turn.

\begin{lemma}
\label{l:Amax_order} 
Let $A\sw\Pi^\uparrow$ be non-crossing. 
Then $(A_{\max},\lhd)$ is a totally ordered space.
\end{lemma}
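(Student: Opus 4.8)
The plan is to show that $\lhd$ restricted to $A_{\max}$ is reflexive, antisymmetric, transitive and total. Totality is immediate: for $f,g\in A_{\max}\sw A$, the pair $\{f,g\}$ is non-crossing, so Lemma \ref{l:lhd_noncr}(iii) gives $f\lhd g$ or $g\lhd f$. Reflexivity is trivial from Definition \ref{d:lhd} (take $f'=g'$ any bi-infinite extension of $f$, which exists since $f$ is non-crossing with itself). So the two substantive points are antisymmetry and transitivity, and these are exactly the places where the general failures noted after Definition \ref{d:lhd} must be ruled out by maximality.

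For antisymmetry: suppose $f,g\in A_{\max}$ with $f\lhd g$ and $g\lhd f$. By part 2 of Lemma \ref{l:lhd_possibilities} this forces $f\sw g$ or $g\sw f$. But $f,g$ are both maximal elements of $(A,\sw)$, so in either case $f=g$. This is the easy half and uses maximality in a clean way.

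For transitivity: suppose $f,g,h\in A_{\max}$ with $f\lhd g$ and $g\lhd h$; we must show $f\lhd h$. Suppose for contradiction that $f\nlhd h$. Since $\{f,h\}$ is non-crossing, Lemma \ref{l:lhd_noncr}(iii) gives $h\lhd f$. Now I have a cyclic situation $f\lhd g\lhd h\lhd f$ among three maximal elements, and the goal is to derive that two of them are $\sw$-comparable (hence equal), collapsing the cycle and giving a contradiction — or directly a contradiction with the non-crossing hypothesis. The mechanism I would use: from $f\lhd g$ and $h\lhd f$, apply part 1 of Lemma \ref{l:lhd_possibilities} to each pair. If the comparison $f\lhd g$ is of type (i), then $f\sw g$ or $g\sw f$, so $f=g$ by maximality and transitivity is immediate; similarly for $g\lhd h$ and $h\lhd f$. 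So we may assume all three comparisons are of types (ii)–(v), which by the last sentence of Lemma \ref{l:lhd_possibilities}(1) yields $g\nlhd f$, $h\nlhd g$ and $f\nlhd h$ — consistent so far. The key quantitative input is that for $t\star$ above $\sigma:=\sigma_f\vee\sigma_g\vee\sigma_h$ (or above the relevant pairwise maxima) the $\lhd$ relations become genuine pointwise inequalities of the path values, by the proof of Lemma \ref{l:lhd_LR} / the definition of $L,R$. Thus for large $t\star$ we get $f(t\star)\le g(t\star)\le h(t\star)\le f(t\star)$, forcing $f(t\star)=g(t\star)=h(t\star)$ for all $t\star$ above the pairwise maxima, which pins down the behaviour of all three paths except possibly near their (distinct) initial times. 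One then analyses the finitely many configurations of the three initial times and the jump directions there, exactly as in the case analysis of Lemma \ref{l:lhd_possibilities}(1): in each surviving configuration one of the three pairs turns out to be $\sw$-comparable — because, e.g., if $\sigma_f<\sigma_g=\sigma_h=\sigma$ and the values agree for $t\star\ge\sigma+$, then $g$ and $h$ are both extensions of the same half-infinite tail and $A$-maximality plus the $L_{\sigma-},R_{\sigma-}$ constraints force $g\sw h$ or $h\sw g$ — hence $g=h$, again collapsing the cycle. I would organise this as: (a) reduce to all three relations agreeing on the common tail; (b) order the three initial times; (c) in each ordering, use the $L_{\sigma-}/R_{\sigma-}$ emptiness dichotomies from \eqref{eq:LR_sets} together with Lemma \ref{l:lhd_LR} to show the two paths sharing the largest initial time are $\sw$-comparable, or else that $f\lhd h$ holds outright.

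The main obstacle I expect is step (b)–(c): the bookkeeping of jump directions at the three possibly-coinciding initial times, where the asymmetry of $\lhd$ (the fact that $f\lhd g$ says nothing about $g(\sigma-)$ vs $f(\sigma-)$ when the jump goes the "wrong" way) makes a naïve "chain the inequalities" argument fail — this is precisely the transitivity counterexample exhibited after Definition \ref{d:lhd}, and maximality within $A$ is what must be invoked to exclude it. Once the tail values are shown to coincide, though, the path whose initial time is strictly smallest cannot be "in between" the other two in the problematic way without violating either non-crossing of $A$ or maximality, so I am confident the case check closes; it is just somewhat lengthy. I would lean on Lemma \ref{l:lhd_possibilities}(1) as much as possible to keep it short.
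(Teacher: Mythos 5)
Your treatment of totality, reflexivity and antisymmetry is correct and coincides with the paper's argument (totality from Lemma \ref{l:lhd_noncr}, antisymmetry from part 2 of Lemma \ref{l:lhd_possibilities} plus maximality). The gap is in transitivity, where you stop at an outline whose key intermediate claim is wrong. From the cycle $f\lhd g\lhd h\lhd f$ you deduce $f(t\star)=g(t\star)=h(t\star)$ ``for all $t\star$ above the pairwise maxima''; but the three pointwise inequalities are only simultaneously available above the \emph{triple} maximum $\sigma_f\vee\sigma_g\vee\sigma_h$, since a path imposes no $L/R$ constraint before its own initial time. So the paths are not ``pinned down except possibly near their initial times'': in the configuration $\sigma_f\vee\sigma_h<\sigma_g$ the pair $(f,h)$ is unconstrained by $g$ on the whole interval from $\sigma_f\vee\sigma_h$ to $\sigma_g$, which is exactly where a witnessing point of $L(f)\cap R(h)$ can live. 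Your step (c) (``the two paths sharing the largest initial time are $\sw$-comparable'') does not apply there, because only $g$ attains the largest initial time; this is the substantive case and your outline does not close it.

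The case is recoverable with the tools you cite, so the strategy is viable rather than wrong. Since $f=g=h$ above $\sigma_g+$, case (ii) of Lemma \ref{l:lhd_possibilities}(1) is excluded for the pairs $(f,g)$ and $(g,h)$, and each of cases (iii)--(v) forces $f(\sigma_g-)<g(\sigma_g-)$ and $g(\sigma_g-)<h(\sigma_g-)$; since $\sigma_g->(\sigma_f\vee\sigma_h)+$, this contradicts the pointwise bound $h(\sigma_g-)\leq f(\sigma_g-)$ supplied by $h\lhd f$. The remaining orderings of the initial times collapse similarly (e.g.\ if all three pairwise maxima coincide one gets the cyclic strict inequality $f(\sigma-)<g(\sigma-)<h(\sigma-)<f(\sigma-)$). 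For comparison, the paper avoids the cycle and the enumeration of initial-time orderings altogether: it fixes a point $(x,t\star)\in L(f)\cap R(h)$ and splits on whether $t\star\geq\sigma_g-$, in which case $g(t\star)$ must land on one side of $x$, forcing $g\lhd f$ or $h\lhd g$ and hence $g=f$ or $g=h$ by maximality, or $t<\sigma_g$, in which case it extracts $f(u-)<h(u-)$ for some $u\geq\sigma_g$ and concludes that $f$ and $h$ cross. That decomposition reaches the same two mechanisms you anticipate, but organised around the location of the offending point rather than the configuration of initial times, which keeps the bookkeeping short.
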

\begin{proof}
Recall that $A_{\max}$ was defined in \eqref{eq:Amax_def}.
By Lemma \ref{l:lhd_noncr} all pairs $f,g\in A$ satisfy $f\lhd g$ or $g\lhd f$,
which holds in particular for $A_{\max}$.
It is clear from Definition \ref{d:lhd} that $f\lhd f$ for all $f\in\Pi^\uparrow$, 
thus also for all $f\in A_{\max}$.
If $f,g\in A_{\rm \max}$ satisfy $f\lhd g$ and $g\lhd f$, 
then Lemma~\ref{l:lhd_possibilities} tells us that $f\sw g$ or $g\sw f$. 
By maximality, this implies $f=g$. 
We have now shown that $\lhd$ is reflexive and antisymmetric,
and that all pairs of elements are comparable.
It remains to show that $\lhd$ is transitive.

Let $f,g,h\in A_{\max}$ with $f\lhd g$ and $g\lhd h$.
If $f=g$ or $g=h$ then it is trivial that $f\lhd h$.
If $f=h$ then we have $f\lhd g$ and $g\lhd h=f$, so Lemma \ref{l:lhd_possibilities}
implies that $f\sw g$ or $g\sw f$, which by maximality implies $f=g$, hence also $f\lhd g$.
Thus we may assume without loss of generality that $f,g,h$ are distinct elements of $\mc{A}_{\max}$. 
By Lemma \ref{l:lhd_LR} we have $L(f)\cap R(g)=\emptyset$ and $L(g)\cap R(h)=\emptyset$,
and we must show that $L(f)\cap R(h)$ is also empty.

We will argue by contradiction.
Suppose that $(x,t\star)\in L(f)\cap R(h)$,
which implies that $L_{t\star}(f)=[-\infty,f(t\star))$, $R_{t\star}(h)=(h(t\star),\infty]$ and $h(t\star)<x<f(t\star)$.
Consider first if $t\star\geq\s_g-$.
If $g(t\star)\leq x$ then $g(t\star)\in L_{t\star}(f)$, Lemma \ref{l:lhd_left_right} gives that $g\lhd f$,
in which case part 2 of Lemma \ref{l:lhd_possibilities} and maximality gives that $g=f$, which is a contradiction to our assumptions.
Similarly, if $g(t\star)\geq x$ then $g(t\star)\in R_{t\star}(h)$, Lemma \ref{l:lhd_left_right} gives that $g\lhd h$,
from which part 2 of Lemma \ref{l:lhd_possibilities} and maximality give $g=h$, which is again a contradiction.

It remains to consider when $t<\s_g$.
In this case $\s_f\vee\s_h<\s_g$.
Definition \ref{d:lhd} thus implies that for all $s\star\geq \s_g+$ we have $f(s\star)\leq g(s\star)\leq h(s\star)$.
Applying part 1 of Lemma \ref{l:lhd_possibilities} to $f\lhd g$ and $g\lhd h$,
and noting that these are distinct maximal paths (so case (i) of that lemma may not occur),
we obtain that for some $s,s'\geq \s_g$ it holds that $f(s-)<g(s-)$ and $g(s'-)<h(s'-)$.
If $s=s'=\s_g$ then $f(s-)<g(s-)<h(s-)$.
If $s>\s_g$ then $f(s-)<g(s-)\leq h(s-)$,
similarly if $s'>\s_g$ then $f(s'-)\leq g(s'-)<h(s'-)$.
In all three cases we have $u\geq\s$ such that $f(u-)<h(u-)$.
We have $\s_f\vee\s_h<\s_g\leq s$, so $L_{u-}(h)=[-\infty,h(u-))$ and $R_{u-}(f)=(f(u-),\infty]$,
meaning that $L(h)\cap R(f)\neq\emptyset$.
However now both $L(f)\cap R(h)$ and $L(h)\cap R(f)$ are non-empty, 
which by Lemma \ref{l:lhd_noncr} implies that $f$ and $h$ cross, 
which is contradiction.
\end{proof}

\begin{lemma}
\label{l:preceq_noncr}
Let $A$ be non-crossing and let $B$ be non-crossing,
both subsets of $\Pi^\uparrow$.
Suppose $A\preceq B$. 
Then $A\cup B$ is non-crossing.
\end{lemma}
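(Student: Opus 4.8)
The plan is to reduce the claim to pairwise statements and to exploit that the relation $\lhd$ is inherited by sub-paths. Since $A\cup B$ is non-crossing precisely when every pair of its elements is non-crossing, and pairs lying entirely in $A$ or entirely in $B$ are non-crossing by hypothesis, the only thing that needs checking is that an arbitrary $f\in A$ and $g\in B$ are non-crossing.

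First I would record the following elementary consequence of Definition~\ref{d:lhd} together with the transitivity of $\sw$ on $\Pi$: if $f,\ti f,g\in\Pi^\uparrow$ with $f\sw\ti f$, then $\ti f\lhd g$ implies $f\lhd g$, and $g\lhd\ti f$ implies $g\lhd f$. Indeed, a witnessing pair $f',g'\in\Pi^\updownarrow$ for $\ti f\lhd g$ (resp.\ for $g\lhd\ti f$) satisfies $\ti f\sw f'$, hence $f\sw f'$, so the very same pair $(f',g')$ witnesses $f\lhd g$ (resp.\ $g\lhd f$). This is the one place where a (very short) argument is needed, and I expect it to be the only potential stumbling point: one must check that the extension witnesses can be re-used verbatim, which they can because $\sw$ is transitive and $g'$ is never altered.

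Next, the hypothesis $A\preceq B$ gives in particular $A\sw B_\uparrow$ by \eqref{eq:preceq}, so for any $f\in A$ there exists $\ti f\in B$ with $f\sw\ti f$. Fixing such an $\ti f$ and an arbitrary $g\in B$, non-crossing-ness of $B$ gives that $\ti f$ and $g$ are non-crossing, so Lemma~\ref{l:lhd_noncr} yields $\ti f\lhd g$ or $g\lhd\ti f$. By the observation of the previous paragraph this upgrades to $f\lhd g$ or $g\lhd f$, and applying the implication (iii)$\Rightarrow$(i) of Lemma~\ref{l:lhd_noncr} once more shows that $f$ and $g$ are non-crossing.

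Finally I would assemble the pieces: every pair of elements of $A\cup B$ is non-crossing — pairs inside $A$ or inside $B$ by assumption, mixed pairs by the argument just given — hence $A\cup B$ is non-crossing, as required. (Note that only $A\sw B_\uparrow$, and not the full strength of $A\preceq B$, is used for the mixed pairs.)
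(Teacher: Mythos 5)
Your proof is correct and follows essentially the same route as the paper: both reduce to mixed pairs $f\in A$, $g\in B$, use $A\sw B_\uparrow$ to find $\ti f\in B$ extending $f$, and then pass the non-crossing property from $\{\ti f,g\}$ down to $\{f,g\}$. Your explicit verification that the witnessing extensions in Definition~\ref{d:lhd} can be reused for the sub-path $f$ is a step the paper leaves implicit, so no gap.
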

\begin{proof}
Let $f\in A$, $g\in B$ and assume that $A\preceq B$.
Since $A\sw B_\uparrow$ there exists $h\in B_\uparrow$ such that $f\sw h$.
We have that $g,h\in B$ and $B$ is non-crossing,
so $\{g,h\}$ is non-crossing, thus also $\{f,h\}$ is non-crossing.
The result follows.
\end{proof}

\subsection{On partial orders of sets}
\label{sec:preceq}

For the duration of Section \ref{sec:preceq}
let $(E,\leq)$ denote a partially ordered set.
We now recall some standard notation associated to partial orders.
For $A\sw E$, we write 
$A_\leq:=\{e\in E:e\leq e'\mbox{ for some }e'\in A\}$
for the \emph{downset} of
$A$. Note that $(A_\leq)_\leq=A_\leq$. 
The \emph{upset} $A_\geq$ of $A$ is defined in the same way as the downset $A_\leq$, 
but for the reversed order. 
A \emph{maximal element} of a subset
$A\sw E$ is an element $e\in A$ such that there exists no $e'\in A$ with
$e<e'$. We write $A_{\rm max}=\{e\in A:e\mbox{ is a maximal
  element of }A\}$.
A \emph{minimal element} is defined in the same way 
but for the reversed order.
As usual, we write $e<e'$ if
$e\leq e'$ and $e\neq e'$.
 
\begin{remark}
For $A\in\mc{K}(\Pi)$, we have specified in \eqref{eq:Amax_def} that 
$A_{\max}$ refers to the maximal elements of $(A,\sw)$.
In
\eqref{eq:A_uparrow_def}
we defined the set $A_{\uparrow}$ to be the downset of $A\sw\Pi^\uparrow$ in $(\Pi^\uparrow,\sw)$.
In Remark \ref{r:A_max} we will note that
$A\sw (A_{\max})_\downarrow$ when $A$ is compact
i.e.~each path in $A$ extends to at least one element of $A_{\max}$.
Note that 
$B_\downarrow\sw\Pi^\downarrow$ from \eqref{eq:A_downarrow_def} is the downset of $B$ in $(\Pi^\downarrow,\sw)$.
\end{remark}

The following lemma puts the relation $\preceq$ introduced
in \eqref{eq:preceq} into a wider framework.
It is a natural concept that might have been studied elsewhere
but have not been able to locate a reference.
With slight abuse of notation we will briefly use the notation $\preceq$ in the more abstract setting.
We noted in Section \ref{sec:terminology} that $\preceq$ on $\Pi$ is related to how efficiently paths cover space.
In the abstract setting there is a somewhat clearer interpretation.
Specifically, under $\leq$ both of the following two operations will make a set $A\sw E$ strictly increase:
inserting a new element to $A$ that (once inserted) is a $\leq$-maximal element;
removing an existing element that (prior to removal) is not a $\leq$-maximal element.


\begin{lemma}
\label{l:preceq_deterministic}
Let $(E,\leq)$ be a partially ordered set. 
For $A,B\sw E$, write $A\preceq B$ to mean that $A_\leq\cap B\sw A\sw B_\leq$.
Then $\preceq$ is a partial order on the set of all subsets of $E$.
\end{lemma}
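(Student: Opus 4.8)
The plan is to verify the three defining properties of a partial order—reflexivity, antisymmetry, and transitivity—directly from the condition $A\preceq B\iff A_\leq\cap B\sw A\sw B_\leq$, using only elementary set manipulations and the idempotency $(A_\leq)_\leq=A_\leq$ together with monotonicity of the operation $A\mapsto A_\leq$.

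\medskip\noindent\textbf{Reflexivity.} First I would check $A\preceq A$, i.e.\ $A_\leq\cap A\sw A\sw A_\leq$. The inclusion $A_\leq\cap A\sw A$ is trivial, and $A\sw A_\leq$ holds because every $e\in A$ satisfies $e\leq e$, so $e\in A_\leq$.

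\medskip\noindent\textbf{Antisymmetry.} Suppose $A\preceq B$ and $B\preceq A$; I want $A=B$. From $A\preceq B$ we have $A\sw B_\leq$, and from $B\preceq A$ we have $B_\leq\cap A\sw B$. Combining, $A=A\cap B_\leq\sw B$ (the first equality since $A\sw B_\leq$, the inclusion from $B_\leq\cap A\sw B$). Symmetrically, $B\preceq A$ gives $B\sw A_\leq$ and $A\preceq B$ gives $A_\leq\cap B\sw A$, whence $B\sw A$. Therefore $A=B$.

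\medskip\noindent\textbf{Transitivity.} This is the step I expect to require the most care, since it is where the interplay between the two inclusions matters. Assume $A\preceq B$ and $B\preceq C$, so we have $A_\leq\cap B\sw A\sw B_\leq$ and $B_\leq\cap C\sw B\sw C_\leq$. I must show $A_\leq\cap C\sw A\sw C_\leq$. The second inclusion is immediate: $A\sw B_\leq\sw (C_\leq)_\leq=C_\leq$, using $B\sw C_\leq$, monotonicity, and idempotency. For the first, take $e\in A_\leq\cap C$. Since $e\in A_\leq\sw B_\leq$ (again $A\sw B_\leq$ plus monotonicity and idempotency) and $e\in C$, we get $e\in B_\leq\cap C\sw B$. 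Now $e\in A_\leq$ and $e\in B$, so $e\in A_\leq\cap B\sw A$. Hence $A_\leq\cap C\sw A$, completing transitivity and the proof.

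\medskip The only genuinely delicate point is keeping straight which of the four hypotheses $A_\leq\cap B\sw A$, $A\sw B_\leq$, $B_\leq\cap C\sw B$, $B\sw C_\leq$ is being invoked at each stage, and recognizing that the left inclusion of $\preceq$ must be \emph{bootstrapped} through $B$ rather than applied directly to $C$; the monotonicity and idempotency of $(\cdot)_\leq$ are what make the chain $A_\leq\sw B_\leq\sw C_\leq$ legitimate.
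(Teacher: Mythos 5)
Your proof is correct and follows essentially the same route as the paper's: reflexivity and antisymmetry are handled identically, and your elementwise argument for transitivity (bootstrapping $e\in A_\leq\cap C$ through $B$ via $A_\leq\sw B_\leq$ and $B_\leq\cap C\sw B$) is exactly the paper's chain of inclusions written out pointwise. No gaps.
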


\begin{proof}
Clearly $A\preceq A$. Also, $ A\preceq B\preceq A$ implies
$ A\sw B_\leq\cap A\sw B$ and by a symmetric argument also
$ B\sw A$, so to complete the proof we must show that the relation $\preceq$
is transitive. The relations $A\preceq B\preceq C$ say that
\begin{equation*}
{\rm(i)}\ A_\leq\cap B\sw A,\quad
{\rm(ii)}\ A\sw B_\leq,\quad
{\rm(iii)}\ B_\leq\cap C\sw B,\quad
{\rm(iv)}\ B\sw C_\leq.
\end{equation*}
When we apply one of these facts we will indicate which
with a superscript above the corresponding $\sw$.
This implies (v) $ A_\leq\nmr{(ii)}{\sw} B_\leq$, (vi)
$ B_\leq\nmr{(iv)}{\sw} C_\leq$, and (vii)
$ B_\leq\cap C\nmr{(iii)}{\sw} B_\leq\cap B$, from which we get
\begin{equation*}
 A_\leq\cap C\nmr{(v),(vii)}{\sw} A_\leq\cap B\nmr{(i)}{\sw} A
\quand
 A\nmr{(ii)}{\sw} B_\leq\nmr{(vi)}{\sw} C_\leq,
\end{equation*}
proving that $ A\preceq C$.
\end{proof}


A set $A$ is said to be \textit{decreasing} if $A=A_\leq$
and \textit{increasing} if $A=A_\geq$.
If $A$ and $B$ are decreasing sets, then $A\preceq B$ if and only if $A\sw B$.
The proof is trivial and is left to the reader.

\subsection{On compatibility of order and topology}
\label{sec:compatibility}

We now turn our attention to the interaction between orders and limits.
We have now introduced several partial orders related to {\cadlag} paths:
the `path extension' order $\sw$ on $\Pi$,
the `coverage efficiency' order $\preceq$ on $\mc{K}(\Pi^\uparrow)$, 
and the `leftwards of' relation $\lhd$ that was shown in Lemma \ref{l:Amax_order} to be a total order on $A_{\max}$, when $A$ is closed.
In general $\lhd$ is not even a partial order.
With these situations in mind we make the following general definition.

\begin{defn}
\label{d:compatible}
Let $(E,\mathscr{T})$ be a topological space.
We say that a binary relation $\leq$ on $E$ is \emph{compatible} 
if $\big\{(e,f)\in E^2:e\leq f\big\}$ is a closed
subset of $E^2$ (in the product topology).
With mild abuse of notation we also apply this terminology to metric spaces $(E,d_E)$
where $d_E$ is a metric generating the topology on $E$.
\end{defn}

The point is that compatibility implies that if $e_n\to e$ and $f_n\to f$ with $e_n\leq f_n$,
all elements of $E$, then we may conclude that $e\leq f$.
We wish to study this notion in the three situations listed above, 
but as per our comments above we do not always wish to assume (in particular, for $\lhd$)
that $(E,\leq)$ is partially ordered.
Our first result in this direction is negative:

\begin{remark}
\label{r:preceq_not_compatible}
It is straightforward to see that $(\mc{K}(\Pi^\uparrow),d_{\mc{K}(\Pi)},\preceq)$ is not compatible.
For example,
let $f(t\pm)=0$ with $\sigma_f=0$ and let $g_n(t\pm)=1/n$ with $\sigma_{g_n}=1$.
Then $\{f\}\prec\{f,g_n\}$ and $\{f,g\}\prec\{f\}$.
Examples also exist where $A_n\prec B_n$ but the limits of $(A_n)$ and $(B_n)$ are incomparable;
we leave this as an exercise for the reader.
From a purely abstract point of view,
the non-trivial interaction of $\preceq$ with taking limits in $\mc{K}(\Pi^\uparrow)$
is the main source of interesting structure within the space of weaves.
\end{remark}

\begin{lemma}
\label{l:sw_compat} 
The partial order $\sw$ is compatible with $(\Pi, d_\Pi)$.
\end{lemma}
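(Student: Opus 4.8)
The statement asserts that $\sw$ is compatible with $(\Pi,d_\Pi)$, i.e.\ that $\{(f,g)\in\Pi^2: f\sw g\}$ is closed in the product M1 topology. By the definition of $\sw$ given in Section~\ref{sec:terminology}, $f\sw g$ means $H^{(2)}(f)\sw H^{(2)}(g)$. Since the M1 metric is precisely $d_{\rm M1}(f,g)=d_{\mc{K}((\Rc)^2)}(H^{(2)}(f),H^{(2)}(g))$, convergence $f_n\to f$ in $\Pi$ is by definition convergence $H^{(2)}(f_n)\to H^{(2)}(f)$ in the Hausdorff metric on $\mc{K}((\Rc)^2)$. So the whole statement reduces to the following purely metric-space fact about the Hausdorff metric: if $K_n\to K$ and $L_n\to L$ in $\mc{K}(X)$ for a compact metric space $X$, and $K_n\sw L_n$ for all $n$, then $K\sw L$.

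\textbf{Key steps.} First I would reduce to this Hausdorff-metric lemma as above; this is just unwinding Proposition~\ref{p:J1M1} and the definition of $\sw$. Second, I would prove the lemma: take $x\in K$; since $K_n\to K$ in Hausdorff distance, pick $x_n\in K_n$ with $x_n\to x$; since $K_n\sw L_n$ we have $x_n\in L_n$; since $L_n\to L$ and $L$ is compact (hence closed), any limit of a sequence with $x_n\in L_n$ lies in $L$, so $x\in L$. Hence $K\sw L$. (This is the standard fact that $\sw$ is closed on $\mc{K}(X)^2$; it only uses that $X$ is metric and the sets compact, which holds since $(\Rc)^2$ is compact metric by the remark after the definition of $\Rc$ and Lemma~3.1 of \cite{FreemanSwart2023} that $H^{(2)}(f)\in\mc{K}((\Rc)^2)$.) Applying this with $K_n=H^{(2)}(f_n)$, $L_n=H^{(2)}(g_n)$, $K=H^{(2)}(f)$, $L=H^{(2)}(g)$, and using that $f_n\sw g_n$ is by definition $H^{(2)}(f_n)\sw H^{(2)}(g_n)$, we conclude $H^{(2)}(f)\sw H^{(2)}(g)$, i.e.\ $f\sw g$. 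This shows the graph of $\sw$ is sequentially closed in $\Pi^2$; since $\Pi$ is metrisable (Polish, by Proposition~\ref{p:J1M1}), sequential closedness equals closedness, giving compatibility.

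\textbf{Main obstacle.} There is essentially no deep obstacle here — the result is soft once one recognises that $d_\Pi$ is literally the Hausdorff distance between the second-order interpolated graphs. The only point requiring a little care is making sure that the ambient space for the Hausdorff-distance argument is genuinely compact (so that ``a limit of a sequence $x_n\in L_n$ with $L_n\to L$ lies in $L$'' is valid without extra hypotheses); this is guaranteed because $\R^2_{\rm c}$, and hence $(\R^2_{\rm c})^2$, is a compact metric space, and $H^{(2)}(f)$ is a compact subset thereof by Lemma~3.1 of \cite{FreemanSwart2023}. One should also double-check the trivial direction that $H^{(2)}(f)\sw H^{(2)}(g)$ is exactly the relation $f\sw g$ as defined, including the degenerate cases $\sigma_f=\tau_f$, but this is already handled by the definition in Section~\ref{sec:terminology} which is phrased directly in terms of $H^{(2)}$. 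So the proof is a two- or three-line Hausdorff-metric argument, with the bulk of the work being the (already-established) identification of the topology.
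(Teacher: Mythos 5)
Your proof is correct and follows exactly the paper's route: the paper likewise unwinds $f\sw g$ as $H^{(2)}(f)\sw H^{(2)}(g)$ and $d_\Pi$ as the Hausdorff distance between second-order interpolated graphs, then invokes its Lemma \ref{l:sw_KM}, whose proof is word-for-word the same ``pick $x_n\in K_n$ with $x_n\to x$, note $x_n\in L_n$, pass to the limit'' argument you give. No substantive difference.
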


\begin{proof}
Proposition \ref{p:J1M1} gives that convergence in $d_\Pi$ is equivalent
to convergence under the metric $(f,g)\mapsto d_{\mc{K}((\Rc)^2)}(H^{(2)}(f),H^{(2)}(g))$.
By definition (see Section \ref{sec:terminology}) $f\sw g$ means that $H^{(2)}(f)\sw H^{(2)}(g)$.
With these facts in mind the stated result follows from Lemma \ref{l:sw_KM},
which asserts that the partial order of set inclusion is compatible with the Hausdorff metric.
\end{proof}

\begin{remark}
\label{r:A_max}
Using Lemma \ref{l:sw_compat} 
it is straightforward to check that if $A\in\mc{K}(\Pi)$ 
then for all $f\in A$ there exists $g\in A_{\max}$ such that $f\sw g$.
We will use this fact repeatedly, without referring back to this remark, from now on.
\end{remark}

We now consider the relation $\lhd$, which will require rather more work.
The following lemma shows that any lack of compatibility between $\lhd$ and $(\Pi^\uparrow,d_\Pi)$
must involve a jump at the initial time of a limiting path.
The reader should bear in mind examples like
$g_n(t\star)=\1\{t\star\geq\frac1n+\}$, $g(t\star)=\1\{t\star\geq0+\}$, $f(t)=\1\{t\star=0-\}$,
where $\s_{g_n}=\frac1n$ and $\s_f=\s_g=0$.
Note that $f\lhd g_n$ and $g_n\to g$, 
but $f$ and $g$ cross by jumping over each other in opposite directions at time $0$.
This example shows that $\lhd$ is not compatible with $(\Pi^\uparrow,d_\Pi)$.
\begin{lemma}
\label{l:lhd_psuedo_compat}
Let $f,g,f_n,g_n\in \Pi^\uparrow$ with $f_n\to f$ and $g_n\to g$.
Write $\s_n=\s_{f_n}\vee\s_{g_n}$ and $\s=\s_f\vee\s_g$.
Suppose that
$L_{s\bullet}(f_n)\cap R_{s\bullet}(g_n)=\emptyset$ 
for all $n\in\N$ and $s\bullet\geq \s_n+$.
Then also
$L_{s\bullet}(f)\cap R_{s\bullet}(g)=\emptyset$ 
for all $s\bullet\geq \s+$.
\end{lemma}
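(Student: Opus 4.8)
The plan is to recast both the hypothesis and the conclusion as pointwise inequalities. If $s\bullet\geq\s_n+$ then $s\bullet\geq\s_{f_n}+$ and $s\bullet\geq\s_{g_n}+$, so by \eqref{eq:LR_sets} the condition $L_{s\bullet}(f_n)\cap R_{s\bullet}(g_n)=\emptyset$ is precisely $f_n(s\bullet)\leq g_n(s\bullet)$; likewise, the desired conclusion is that $f(s\bullet)\leq g(s\bullet)$ for all $s\bullet\geq\s+$. Since convergence in $d_\Pi$ forces convergence of initial times (a standard property of the M1 topology; see Appendix \ref{a:M1}), we have $\s_{f_n}\to\s_f$ and $\s_{g_n}\to\s_g$, hence $\s_n\to\s$. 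If $\s=\infty$ there is no $s\bullet\geq\s+$ in $\R_\mfs$ and the statement is vacuous, so we may assume $\s<\infty$.

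Next I would reduce to checking the inequality at ``nice'' times. Let $C$ be the set of $t\in(\s,\infty)$ at which both $f$ and $g$ are continuous; since each of $f,g$ has at most countably many jumps, $C$ is co-countable, hence dense, in $(\s,\infty)$. Using the {\cadlag} structure one recovers every relevant value from values on $C$: for $t>\s$ one has $f(t-)=\lim_{u\uparrow t,\,u\in C}f(u)$, $f(t+)=\lim_{u\downarrow t,\,u\in C}f(u)$ and $f(\s+)=\lim_{u\downarrow\s,\,u\in C}f(u)$, and similarly for $g$. Consequently, if $f(t)\leq g(t)$ for every $t\in C$, then $f(s\bullet)\leq g(s\bullet)$ for every $s\bullet\geq\s+$, which is exactly the conclusion. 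So it suffices to fix $t\in C$ and prove $f(t)\leq g(t)$.

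The heart of the matter is that M1 convergence yields $f_n(t+)\to f(t)$ and $g_n(t+)\to g(t)$ in $\ov\R$. I would argue this from Hausdorff convergence of the (first-order) interpolated graphs: by Proposition \ref{p:J1M1} and the implication noted in Section \ref{sec:pi}, $H(f_n)\to H(f)$ in $\mc{K}(\Rc)$. Since $\s_{f_n}\to\s_f\leq\s<t<\infty$, for $n$ large $t$ lies in the interior of $I(f_n)$ and $(f_n(t+),t)\in H(f_n)$. By compactness of $\ov\R$, any subsequence of $(f_n(t+))$ has a further subsequence along which $(f_n(t+),t)\to(a,t)$ in $\Rc$ for some $a\in\ov\R$; closedness of Hausdorff limits (Appendix \ref{a:hausdroff_metric}) gives $(a,t)\in H(f)$, and since $f$ is continuous at $t$ we have $H(f)\cap(\ov\R\times\{t\})=\{(f(t),t)\}$, forcing $a=f(t)$. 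As every subsequential limit equals $f(t)$, we get $f_n(t+)\to f(t)$; the same argument applied to $g$ gives $g_n(t+)\to g(t)$. Finally, since $\s_n\to\s<t$, for $n$ large $t>\s_n$, hence $t+\geq\s_n+$, so the hypothesis gives $f_n(t+)\leq g_n(t+)$; letting $n\to\infty$ yields $f(t)\leq g(t)$, completing the proof.

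The delicate point, and the reason the statement is genuinely restricted to $s\bullet\geq\s+$, is the interaction with the split-line formalism at initial times: both the identity $L_{s\bullet}(f)=[-\infty,f(s\bullet))$ and the continuity-point argument require working strictly at or after $\s+$ and never at $\s-$ — which is exactly where a jump at an initial time could break the statement (cf.\ the example preceding the lemma). I expect this bookkeeping, together with pinning down the M1 convergence-at-continuity-points fact, to be the only real obstacle; the remaining ingredients (convergence of initial times, $H(f_n)\to H(f)$, closedness of Hausdorff limits) are all available from Appendix \ref{a:M1}, Section \ref{sec:pi} and Appendix \ref{a:hausdroff_metric}.
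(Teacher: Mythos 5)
Your proof is correct and follows essentially the same route as the paper's: both reduce to times $t>\s$ at which $f$ and $g$ are continuous, use convergence of the approximating paths' values at such points (your graph-limit argument is exactly the content of Lemma \ref{l:appdx_2_fntn}, which you could cite instead of re-deriving), and pass the inequality $f_n(t+)\leq g_n(t+)$ to the limit. The only difference is cosmetic — the paper argues by contradiction with an auxiliary sequence $t_n\star_n\to t\star$, while you argue directly at a fixed continuity point and recover the remaining values of $f,g$ by c\`adl\`ag limits.
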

\begin{proof}
We will argue by contradiction.
Suppose that $L_{s\bullet}(f_n)\cap R_{s\bullet}(g_n)=\emptyset$ 
for all $n\in\N$ and $s\bullet\geq \s_n+$, 
and that $L_{t\star}(f)\cap R_{t\star}(g)\neq\emptyset$
where $t\star\geq\s+$.
By the {\cadlag} property of $f$ and $g$, without loss of generality we may take $t>\s$
and assume that both $f$ and $g$ are continuous at $t$.
Let $\eps\in(0,t-\sig)$
and $N\in\N$ be large enough that $|(\sigma_{f_n}\vee\sigma_{g_n})-\sig|\leq\eps/2$ for all $n\geq N$.
Our assumption that $L_{s\bullet}(f_n)\cap R_{s\bullet}(g_n)=\emptyset$  implies that
$f_n(s\bullet)\leq g_n(s\bullet)$ for all $s\bullet\geq\s_n+$,
which in particular for $N\geq n$ includes all $s\bullet\geq (t-\eps/2)+$.

Let $t_n\star_n\to t\star$.
It follows from Lemma \ref{l:Rpm}
that $t_n\star_n\geq(\s\vee\s_n)+$ for all sufficiently large $n$,
so let us pass to a subsequence and assume that this holds for all $n\in\N$.
By Lemma \ref{l:appdx_2_fntn} and continuity of $f,g$ at $t$ we have
$f_n(t_n\star_n)\to f(t\star)$ and $g_n(t_n\star_n)\to g(t\star)$.
As $g(t\star)<f(t\star)$ we obtain that there exists $\delta>0$ such that 
$g_n(t_n\star_n)\leq f_n(t_n\star_n)-\delta$.
For sufficiently large $n$ we have $|t_n-t|<\eps/2$, which implies $t_n\star_n\geq (t-\eps/2)+$.
This contradicts the result of the previous paragraph.
\end{proof}

\begin{lemma}
\label{l:lhd_compat_biinf}
\label{l:noncr_biinf_limits}
The relation $\lhd$ is compatible with $(\Pi^\updownarrow,d_\Pi)$.
Moreover: let $f_n,g_n,f,g\in\Pi^\updownarrow$ with $f_n\to f$ and $g_n\to g$.
If $\{f_n,g_n\}$ is non-crossing for each $n\in\N$, then $\{f,g\}$ is non-crossing.
\end{lemma}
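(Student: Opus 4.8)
The plan is to deduce both assertions from Lemma~\ref{l:lhd_psuedo_compat}, exploiting the fact that a bi-infinite path has initial time $-\infty$, so that the restrictions ``$s\bullet\ge\s_n+$'' and ``$s\bullet\ge\s+$'' appearing in that lemma become vacuous. First I would prove compatibility of $\lhd$ with the metric space $(\Pi^\updownarrow,d_\Pi)$. By Definition~\ref{d:compatible} it is enough to show that $\{(f,g)\in(\Pi^\updownarrow)^2\-f\lhd g\}$ is closed, and since we are in a metric space this amounts to sequential closedness. So let $f_n\to f$ and $g_n\to g$ in $\Pi^\updownarrow$ with $f_n\lhd g_n$ for every $n$; the limits $f,g$ lie in $\Pi^\updownarrow$ automatically, since $\Pi^\updownarrow$ is a closed subspace of $\Pi$. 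Put $\s_n=\s_{f_n}\vee\s_{g_n}$ and $\s=\s_f\vee\s_g$ as in Lemma~\ref{l:lhd_psuedo_compat}; bi-infiniteness gives $\s_n=\s=-\infty$, so the conditions ``$s\bullet\ge\s_n+$'' and ``$s\bullet\ge\s+$'' there hold for every $s\bullet\in\R_\mfs$. By Lemma~\ref{l:lhd_LR}, each relation $f_n\lhd g_n$ is equivalent to $L(f_n)\cap R(g_n)=\emptyset$, i.e.\ to $L_{s\bullet}(f_n)\cap R_{s\bullet}(g_n)=\emptyset$ for all $s\bullet\in\R_\mfs$, which is precisely the hypothesis of Lemma~\ref{l:lhd_psuedo_compat}. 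That lemma then yields $L_{s\bullet}(f)\cap R_{s\bullet}(g)=\emptyset$ for all $s\bullet\in\R_\mfs$, hence $L(f)\cap R(g)=\emptyset$, and Lemma~\ref{l:lhd_LR} converts this back into $f\lhd g$.

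For the ``moreover'' claim I would argue via subsequences. By Lemma~\ref{l:lhd_noncr}, non-crossing of $\{f_n,g_n\}$ is equivalent to $f_n\lhd g_n$ or $g_n\lhd f_n$, so at least one of the two index sets $\{n\-f_n\lhd g_n\}$ and $\{n\-g_n\lhd f_n\}$ is infinite. Passing to a subsequence indexed by such a set --- along which $f_n\to f$ and $g_n\to g$ still hold --- and applying the compatibility just established (to the sequences $(f_n,g_n)$ in the first case, to $(g_n,f_n)$ in the second), we conclude that $f\lhd g$ or $g\lhd f$. A final application of the equivalence in Lemma~\ref{l:lhd_noncr} shows that $\{f,g\}$ is non-crossing.

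I do not expect a genuine obstacle here, since the delicate analysis has already been carried out in Lemma~\ref{l:lhd_psuedo_compat}. The one point that requires care --- rather than being a real difficulty --- is the reduction in the first paragraph: one must be sure that the pointwise description of $\lhd$ furnished by Lemma~\ref{l:lhd_LR}, once specialised to bi-infinite paths, never invokes the (finite, split) initial-time values of the $f_n$ and $g_n$. It is exactly such initial-time values that produce the incompatibility illustrated by the example preceding Lemma~\ref{l:lhd_psuedo_compat}, and bi-infiniteness is what removes them from consideration.
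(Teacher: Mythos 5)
Your proof is correct and follows essentially the same route as the paper: both deduce compatibility from Lemma \ref{l:lhd_psuedo_compat} together with Lemma \ref{l:lhd_LR}, using that bi-infiniteness makes the initial-time restrictions in those lemmas vacuous, and both obtain the non-crossing claim by the dichotomy of Lemma \ref{l:lhd_noncr} plus a subsequence argument. Your write-up is in fact somewhat more careful than the paper's in spelling out why the reduction to Lemma \ref{l:lhd_psuedo_compat} is legitimate.
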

\begin{proof}
The first claim follows from Lemmas \ref{l:lhd_psuedo_compat} and \ref{l:lhd_LR},
noting that for $f,g\in\Pi^\updownarrow$
the relation $f\lhd g$ is equivalence to requiring that
$L_{s+}(f)\cap R_{s+}(g)=\emptyset$ for all $s\in\R$.
For the second claim,
by part 2 of Lemma \ref{l:lhd_possibilities}
for each $n\in\N$  
we have that $f_n\lhd g_n$ or $f_n\lhd g_n$ .
At least one of these two possibilities must hold for infinitely many $n$.
From what we have already proved, it follows that $f\lhd g$ or $g\lhd f$,
from which Lemma \ref{l:lhd_noncr} gives that $f$ and $g$ do not cross.
\end{proof}

\begin{lemma}
\label{l:lhd_compat_cont}
\label{l:noncr_cont_limits} 
The relation $\lhd$ is compatible with $(\Pi^\uparrow_c,d_\Pi)$.
Moreover: let $f_n,g_n,f,g\in\Pi^\uparrow_c$ with $f_n\to f$ and $g_n\to g$.
If $\{f_n,g_n\}$ is non-crossing for each $n\in\N$, then $\{f,g\}$ is non-crossing.
\end{lemma}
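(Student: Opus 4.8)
The statement to prove is Lemma \ref{l:lhd_compat_cont}: the relation $\lhd$ is compatible with $(\Pi^\uparrow_c,d_\Pi)$, and moreover non-crossing is preserved under limits of continuous half-infinite paths. The plan is to reduce the continuous case to the machinery already developed in Lemma \ref{l:lhd_psuedo_compat} and Lemma \ref{l:lhd_LR}, with the extra observation that for continuous paths there are no jumps at the initial time, so the delicate behaviour near $\s_f-$ that caused the counterexamples (both in Remark \ref{r:preceq_not_compatible}-style examples and the explicit example preceding Lemma \ref{l:lhd_psuedo_compat}) cannot occur. Concretely, for $f\in\Pi^\uparrow_c$ we have $f(\s_f-)=f(\s_f+)$, hence by \eqref{eq:LR_sets} the sets $L(f)$ and $R(f)$ reduce to $L(f)=\bigcup_{s\bullet\geq \s_f+}[-\infty,f(s\bullet))\times\{s\bullet\}$ and similarly for $R(f)$; there is no contribution from $s\bullet = \s_f-$.

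For the first claim, suppose $f_n,g_n,f,g\in\Pi^\uparrow_c$ with $f_n\to f$, $g_n\to g$ and $f_n\lhd g_n$ for all $n$. By Lemma \ref{l:lhd_LR}, $f_n\lhd g_n$ is equivalent to $L(f_n)\cap R(g_n)=\emptyset$, which by the reduction above (continuity) is equivalent to $L_{s\bullet}(f_n)\cap R_{s\bullet}(g_n)=\emptyset$ for all $s\bullet\geq\s_n+$, where $\s_n=\s_{f_n}\vee\s_{g_n}$. Lemma \ref{l:lhd_psuedo_compat} then yields $L_{s\bullet}(f)\cap R_{s\bullet}(g)=\emptyset$ for all $s\bullet\geq\s+$, with $\s=\s_f\vee\s_g$. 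Since $f,g$ are continuous, this says exactly $L(f)\cap R(g)=\emptyset$, so Lemma \ref{l:lhd_LR} gives $f\lhd g$. Thus $\{(f,g): f\lhd g\}$ is closed in $(\Pi^\uparrow_c)^2$, i.e.\ $\lhd$ is compatible. The second (moreover) claim follows exactly as in the proof of Lemma \ref{l:lhd_compat_biinf}: by Lemma \ref{l:lhd_noncr}, non-crossing of $\{f_n,g_n\}$ means $f_n\lhd g_n$ or $g_n\lhd f_n$; at least one alternative holds for infinitely many $n$; passing to that subsequence and using the compatibility just proved gives $f\lhd g$ or $g\lhd f$; Lemma \ref{l:lhd_noncr} then gives that $f$ and $g$ do not cross.

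The step requiring the most care is verifying that, for continuous paths, $f\lhd g$ really is equivalent to the ``for all $s\bullet\geq\s+$'' pointwise condition used as the hypothesis of Lemma \ref{l:lhd_psuedo_compat} — in other words, that one can legitimately extend $f$ and $g$ backwards to bi-infinite paths $f',g'$ with $f'\leq g'$ everywhere without the constructions in the proof of Lemma \ref{l:lhd_LR} introducing any obstruction. Here the key point is that case 3 of that proof (the one involving an initial-time jump $f(\s-)<g(\s-)$ with the copying-of-increments device) is vacuous when $f,g$ are continuous, because then $L_{\s-}(f)=\emptyset$ or $R_{\s-}(g)=\emptyset$ automatically, so only cases 1 and 2 arise and the extension by a constant backwards in time works directly. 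I would spell this out in one short sentence rather than re-run the case analysis. Everything else is a mechanical invocation of the cited lemmas, so the proof should be quite short.

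\begin{proof}
Let $f\in\Pi^\uparrow_c$. Since $f(\s_f-)=f(\s_f+)$, the definition \eqref{eq:LR_sets} gives $L_{\s_f-}(f)=R_{\s_f-}(f)=\emptyset$, so that
$$
L(f)=\bigcup_{s\bullet\geq \s_f+}[-\infty,f(s\bullet))\times\{s\bullet\},
\qquad
R(f)=\bigcup_{s\bullet\geq \s_f+}(f(s\bullet),\infty]\times\{s\bullet\}.
$$
In particular, for $f,g\in\Pi^\uparrow_c$ with $\s:=\s_f\vee\s_g$, the condition $L(f)\cap R(g)=\emptyset$ is equivalent to $L_{s\bullet}(f)\cap R_{s\bullet}(g)=\emptyset$ for all $s\bullet\geq\s+$, and by Lemma \ref{l:lhd_LR} this in turn is equivalent to $f\lhd g$. (The backwards extensions needed in Lemma \ref{l:lhd_LR} are unproblematic here: case~3 in that proof requires an initial-time jump and is vacuous for continuous paths, so $f,g$ are extended backwards in time simply by constants.)

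Now suppose $f_n,g_n,f,g\in\Pi^\uparrow_c$ with $f_n\to f$, $g_n\to g$ and $f_n\lhd g_n$ for all $n\in\N$. Write $\s_n=\s_{f_n}\vee\s_{g_n}$. By the previous paragraph, $f_n\lhd g_n$ gives $L_{s\bullet}(f_n)\cap R_{s\bullet}(g_n)=\emptyset$ for all $s\bullet\geq\s_n+$. Lemma \ref{l:lhd_psuedo_compat} then yields $L_{s\bullet}(f)\cap R_{s\bullet}(g)=\emptyset$ for all $s\bullet\geq\s+$, which by the previous paragraph means $f\lhd g$. Hence $\{(f,g)\in(\Pi^\uparrow_c)^2\- f\lhd g\}$ is closed, i.e.\ $\lhd$ is compatible with $(\Pi^\uparrow_c,d_\Pi)$.

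For the final claim, let $f_n,g_n,f,g\in\Pi^\uparrow_c$ with $f_n\to f$, $g_n\to g$, and suppose $\{f_n,g_n\}$ is non-crossing for each $n$. By Lemma \ref{l:lhd_noncr} we have, for each $n$, $f_n\lhd g_n$ or $g_n\lhd f_n$; at least one of these holds for infinitely many $n$. Passing to such a subsequence and applying the compatibility just proved, we obtain $f\lhd g$ or $g\lhd f$. Lemma \ref{l:lhd_noncr} then gives that $f$ and $g$ do not cross.
\end{proof}
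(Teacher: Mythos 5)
Your proof is correct and follows essentially the same route as the paper's: the paper likewise observes that for $f\in\Pi^\uparrow_c$ one has $f(\s_f-)=f(\s_f+)$, hence $L_{\s_f-}(f)=R_{\s_f-}(f)=\emptyset$, and then invokes Lemmas \ref{l:lhd_psuedo_compat} and \ref{l:lhd_LR} for the first claim and repeats the subsequence argument of Lemma \ref{l:lhd_compat_biinf} for the second. Your write-up simply spells out the details the paper leaves implicit.
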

\begin{proof}
The first claim follows from Lemmas \ref{l:lhd_psuedo_compat} and \ref{l:lhd_LR},
noting all $f\in\Pi^\uparrow_c$ satisfy $f(\s_f-)=f(\s_f+)$ so that $L_{\s_f-}(f)=R_{\s_f-}(f)=\emptyset$.
The proof of the second claim is essentially the same as that of Lemma \ref{l:lhd_compat_biinf}.
\end{proof}

The remainder of this section concerns conditions under which $\lhd$
is preserved in limits $f_n\lhd g_n$ with $f_n\to f$ and $g_n\to g$ for
half-infinite {\cadlag} paths.
We will see, in Lemma \ref{l:lhd_compat} that 
the key (extra) condition is that $\{f,g\}$ must be non-crossing.
From Lemma \ref{l:lhd_psuedo_compat} if any crossing is too occur in such a limit,
it must take place at time $\s=\s_f\vee\s_g$.
It is helpful to introduce another relation,
which quantifies the `amount that $f$ and $g$ cross by at $\s$',
whilst $f$ and $g$ are otherwise non-crossing.

\begin{defn}
\label{d:crossing_eps}
Let $\eps>0$.
Let $f,g\in\Pi^\uparrow$ and write $\s=\s_f\vee\s_g$.
We write $f \blacktriangleleft_\eps g$ if
$\s\in\R$ and
$L_{s\bullet}(f)\cap R_{s\bullet}(g)=\emptyset$ 
for all $s\bullet\geq \s+$, 
as well as 
$g(\s-)+\eps\leq f(\s-)$ and
$f(\s+)+\eps\leq g(\s+)$, with
$g(\s-)+\eps\leq g(\s+)$ and
$f(\s+)+\eps\leq f(\s-)$.
\end{defn}

\begin{lemma}
\label{l:crossing_eps}
Let $f,g\in\Pi^\uparrow$ and
let $\s=\s_f\vee\s_g$.
\begin{enumerate}
\item
If $f\blacktriangleleft_\eps g$ for some $\eps>0$
then $f$ and $g$ cross.
\item
Suppose that $f_n,g_n\in\Pi^\uparrow$ with $f_n\to f$ and $g_n\to g$.
If $f_n\lhd g_n$ for all $n$ and $f\nlhd g$ then 
there exists $\eps>0$ such that $f\blacktriangleleft_\eps g$.
\end{enumerate}
\end{lemma}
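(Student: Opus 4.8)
For part 1, the plan is to unpack Definition \ref{d:crossing_eps} directly. If $f\blacktriangleleft_\eps g$ then $\s=\s_f\vee\s_g\in\R$ and the four inequalities $g(\s-)+\eps\leq f(\s-)$, $f(\s+)+\eps\leq g(\s+)$, $g(\s-)+\eps\leq g(\s+)$, $f(\s+)+\eps\leq f(\s-)$ hold. In particular $g(\s-)<f(\s-)$ and $f(\s+)<g(\s+)$, so by the definitions in \eqref{eq:LR_sets} we have $f(\s-)\in R_{\s-}(g)$ (since the jump of $g$ at $\s$ is rightwards, $g(\s-)<g(\s+)$, so $R_{\s-}(g)=(g(\s-),\infty]$, and it also contains $f(\s-)$... wait, need $f(\s-)$ to satisfy the right position) — more carefully: $g(\s-)<g(\s+)$ gives $R_{\s-}(g)=(g(\s-),\infty]\ni f(\s-)$, so $(f(\s-),\s-)\in R(g)$. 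Symmetrically $f(\s+)<f(\s-)$ gives $L_{\s-}(f)=[-\infty,f(\s-))$, and since $f(\s+)<g(\s+)$... actually the cleanest route is: the conditions force $L_{\s-}(g)\cap R_{\s-}(f)\neq\emptyset$ (because $f(\s+)<g(\s+)$ and both paths jump the "wrong way" relative to each other at $\s$), and also for $s\bullet\geq\s+$ we are given $L_{s\bullet}(f)\cap R_{s\bullet}(g)=\emptyset$, so there is some $t\star$ with $L_{t\star}(g)\cap R_{t\star}(f)\neq\emptyset$ and $\s-\leq\s+\leq$ some later level where the paths are the other way round — but actually I should verify there is a later crossing. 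Since $L_{s\bullet}(f)\cap R_{s\bullet}(g)=\emptyset$ for all $s\bullet\geq\s+$, we have $f(s\bullet)\leq g(s\bullet)$ for $s\bullet\geq\s+$, while at $\s-$ we have $f(\s-)>g(\s-)$; thus using the terminology introduced after Lemma \ref{l:lhd_left_right}, $f$ crosses $g$ from right to left at $\s$ (one level $t_1\star_1=\s-$ with $R_{t_1\star_1}(f)\cap L_{t_1\star_1}(g)\neq\emptyset$, i.e. $g(\s-)\in(f(\s+),f(\s-))$ since $f(\s+)\leq f(\s-)$ — but we need $g(\s-)<f(\s-)$, true, and $g(\s-)\geq f(\s+)$? we have $g(\s-)\geq g(\s-)$, and $f(\s+)+\eps\leq g(\s+)$, hmm, not directly). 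I would sort out the exact index bookkeeping here, but morally: the four inequalities say the two jump-intervals at $\s$ are nested/overlapping in a way that forces a crossing, and I would conclude via Lemma \ref{l:lhd_noncr} that $f$ and $g$ are not non-crossing, i.e. they cross.

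For part 2, the plan is: suppose $f_n\lhd g_n$ for all $n$, $f_n\to f$, $g_n\to g$, and $f\nlhd g$. By Lemma \ref{l:lhd_psuedo_compat}, the hypothesis $f_n\lhd g_n$ (which via Lemma \ref{l:lhd_LR} gives $L_{s\bullet}(f_n)\cap R_{s\bullet}(g_n)=\emptyset$ for all $s\bullet$, in particular for $s\bullet\geq\s_n+$) yields $L_{s\bullet}(f)\cap R_{s\bullet}(g)=\emptyset$ for all $s\bullet\geq\s+$, where $\s=\s_f\vee\s_g$. Combined with Lemma \ref{l:lhd_LR}, since $f\nlhd g$ we must have $L(f)\cap R(g)\neq\emptyset$, and the only places this intersection can occur are at level $\s-$ (as all levels $\geq\s+$ are excluded, and levels $<\s-$ give empty $L,R$ sets for whichever of $f,g$ has the larger initial time). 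So $L_{\s-}(f)\cap R_{\s-}(g)\neq\emptyset$, which by \eqref{eq:LR_sets} forces: $f$ jumps left at $\s$ (so $f(\s+)<f(\s-)$ and $L_{\s-}(f)=[-\infty,f(\s-))$), $g$ jumps right at $\s$ (so $g(\s-)<g(\s+)$ and $R_{\s-}(g)=(g(\s-),\infty]$), and $g(\s-)<f(\s-)$. This already gives three of the four inequalities needed for $f\blacktriangleleft_\eps g$ in a strict form, and since $f(s\bullet)\leq g(s\bullet)$ for $s\bullet\geq\s+$ gives $f(\s+)\leq g(\s+)$, I need to upgrade to the strict inequality $f(\s+)<g(\s+)$. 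For that I would use $f\nlhd g$ more carefully together with Lemma \ref{l:lhd_possibilities}: if $f(\s+)=g(\s+)$ then $f(\s+)=g(\s+)$ and $f(\s-)>g(\s-)$ with $f$ jumping left and $g$ jumping right places us in a configuration where $f\lhd g$ would actually hold (case (iii) of Lemma \ref{l:lhd_possibilities} with roles appropriately assigned — paths agree for $t\star\geq\s+$ and the jump condition $g(\s+)\leq f(\s-)$... ), contradicting $f\nlhd g$; hence $f(\s+)<g(\s+)$ strictly. Now all four quantities $f(\s-)-g(\s-)$, $g(\s+)-f(\s+)$, $g(\s+)-g(\s-)$, $f(\s-)-f(\s+)$ are strictly positive, so setting $\eps$ equal to their minimum gives $f\blacktriangleleft_\eps g$.

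The main obstacle is the case analysis at the initial level $\s-$ in part 2: pinning down precisely which jump directions are forced at $\s$, and in particular upgrading the weak inequality $f(\s+)\leq g(\s+)$ (which comes for free from Lemma \ref{l:lhd_psuedo_compat}) to the strict inequality required by Definition \ref{d:crossing_eps}. This requires carefully ruling out the boundary configurations using the failure of $f\lhd g$ via Lemma \ref{l:lhd_possibilities}, since those boundary cases are exactly the ones where $\lhd$ is delicate. Part 1 is essentially just translating the defining inequalities of $\blacktriangleleft_\eps$ into the language of $L(\cdot),R(\cdot)$ and invoking Lemma \ref{l:lhd_noncr}, so I expect it to be routine once the index bookkeeping is done correctly.
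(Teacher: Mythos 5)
There is a genuine gap in your Part 2, at exactly the step you flag as needing an upgrade from $f(\s+)\leq g(\s+)$ to $f(\s+)<g(\s+)$. Your proposed soft argument — that $f(\s+)=g(\s+)$ would place us in case (iii) of Lemma \ref{l:lhd_possibilities} and hence force $f\lhd g$, contradicting $f\nlhd g$ — does not work. In the configuration you reach ($f$ jumps left at $\s$, $g$ jumps right at $\s$, $g(\s-)<f(\s-)$, $f(\s+)=g(\s+)$), the intersection $L_{\s-}(f)\cap R_{\s-}(g)$ is still nonempty, so $f\nlhd g$ persists and no contradiction arises; moreover case (iii), read with the roles matching these jump directions, is the configuration for $g\lhd f$, not $f\lhd g$, and $g\lhd f$ contradicts nothing. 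The point is that $f(\s+)=g(\s+)$ is perfectly consistent with every order-theoretic fact available ($f\nlhd g$, $f(t\star)\leq g(t\star)$ for $t\star\geq\s+$, the jump directions at $\s$); what rules it out is only the hypothesis that $f,g$ are M1-limits of the ordered pairs $(f_n,g_n)$. The paper's proof of this step is quantitative: assuming $f(\s+)=g(\s+)$, it locates times $s_n\leq t_n<u_n$ in a window shrinking to $[\s,\s+\de]$ at which $f_n$ is forced (by $f_n\lhd g_n$ and the convergence of $g_n$ to a path with $g(\s-)$ low) to be near $f(\s-)$, then near $g(\s-)$, then back near $f(\s+)$, so that the M1 modulus $w_{T,\de}(f_n)$ is bounded below uniformly in $\de$, contradicting relative compactness of the convergent sequence $(f_n)$ via Proposition \ref{p:relcom_tightness}. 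This compactness argument is the heart of the lemma and cannot be replaced by a case analysis of $\lhd$. A secondary, smaller issue: nonemptiness of $L_{\s-}(f)\cap R_{\s-}(g)$ by itself only pins down the jump directions of whichever path actually starts at $\s$; when $\s_f\neq\s_g$ one of the two sets is a half-line regardless of jumps, and the paper needs a short extra argument (again exploiting $f\nlhd g$) to conclude $g(\s-)<g(\s+)$ and $f(\s+)<f(\s-)$.

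Part 1 follows the paper's route and is essentially right, but as written it never lands on the correct pair of witnesses, and at one point you test $L_{\s-}(g)\cap R_{\s-}(f)$, which is empty in the relevant case (when $\s=\s_f=\s_g$, a rightward jump of $g$ gives $L_{\s_g-}(g)=\emptyset$ and a leftward jump of $f$ gives $R_{\s_f-}(f)=\emptyset$). The clean bookkeeping is: the four inequalities give $L_{\s-}(f)=[-\infty,f(\s-))$ and $R_{\s-}(g)=(g(\s-),\infty]$ with $g(\s-)<f(\s-)$, hence $L(f)\cap R(g)\neq\emptyset$; and $L_{\s+}(g)=[-\infty,g(\s+))$, $R_{\s+}(f)=(f(\s+),\infty]$ with $f(\s+)<g(\s+)$, hence $L(g)\cap R(f)\neq\emptyset$; Lemma \ref{l:lhd_noncr}(ii) then fails, so $f$ and $g$ cross. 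That fix is routine, as you anticipated; the missing compactness argument in Part 2 is not.
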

\begin{proof}
We prove the two claims in turn.
For the first, suppose that $f\blacktriangleleft_\eps g$.
Then $g(\s-)<g(\s+)$, $f(\s+)<f(\s-)$
and $g(\s-)<f(\s-)$.
Hence $L_{\s-}(f)\cap R_{\s-}(g)\neq\emptyset$.
Since $f(\s+)<g(\s+)$ we have $L_{\s+}(g)\cap R_{\s+}(f)\neq\emptyset$.
By Lemma \ref{l:lhd_noncr}, $f$ and $g$ cross.

For the second claim,
let $f_n\to f$, $g_n\to g$ with $f_n\lhd g_n$ and $f\nlhd g$.
From Lemma \ref{l:lhd_psuedo_compat} we have
$L_{t\star}(f)\cap R_{t\star}(g)=\emptyset$ and $f(t\star)\leq g(t\star)$ for all $t\star\geq\s+$.
Since $f\nlhd g$, by Lemma \ref{l:lhd_LR} we must have $L(f)\cap R(g)\neq\emptyset$,
which implies that $L_{\s-}(f)\cap R_{\s-}(g)\neq\emptyset$.
Therefore $L_{s-}(f)=[-\infty,f(\s-))$, $R_{\s-}(g)=(g(\s-),\infty]$
with $g(\s-)<f(\s-)$.

If $g(\s+)\leq g(\s-)$ then 
$f(\s+)\leq g(\s+)\leq g(\s-)<f(\s-)$
which implies $f\lhd g$, so this may not occur.
Similarly, if $f(\s-)\leq f(\s+)$ then $g(\s-)<f(\s-)\leq f(\s+)\leq g(\s+)$,
which implies $f\lhd g$, so this may not occur either.
Thus $g(\s-)<g(\s+)$ and $f(\s+)<f(\s-)$.

It remains only to show that $f(\s+)<g(\s+)$.
Recall that we have $f(\s+)\leq g(\s+)$,
so we need only eliminate the case $f(\s+)=g(\s+)$.
We will argue by contradiction.
We thus assume $g(\s-)<g(\s+)=f(\s+)<f(\s-)$.
Our strategy is to show that compactness must fail for $(f_n)$ or $(g_n)$,
because in avoiding crossing each other the paths $f_n$ and $g_n$ 
must become too erratic in a short time interval near $\s$.

Let 
\begin{equation}
\label{eq:crossing_eps_kappa}
\kappa=\min\{g(\s+)-g(\s-), f(\s-)-f(\s+)\}
\end{equation}
By right continuity of $f$ and $g$ at $\s+$,
there exists $\delta>0$ such that
\begin{equation}
\label{eq:crossing_eps_fg+}
|f(t\star)-f(\s+)|\vee|g(t\star)-g(\s+)|\leq \kappa/4
\qquad
\text{ for all }t\star\in[\s+,(\s+\delta)+].
\end{equation}

By Lemma \ref{l:appdx_3_time_approx} there exists
$s_n\bullet_n$ and $t_n\star_n$ such that
$s_n\to \s$, $t_n\to \s$ and
$f_n(s_n\bullet_n)\to f(\s-)$ and $g_n(t_n\star_n)\to g(\s-)$.
Without loss of generality
(or consider the following argument with the roles of $f_n$ and $g_n$ swapped)
we may assume that 
$s_n\bullet_n\leq t_n\star_n$ for infinite many $n\in\N$,
and let us pass to a subsequence upon which 
$s_n\bullet_n\leq t_n\star_n$ holds for all $n$.
In particular, $\s_n- \leq s_n\bullet_n$.
Without loss of generality we may pass to a further subsequence and assume that
$s_n\bullet_n \leq t_n\star_n \leq (\s+\delta/3)+$
for all $n\in\N$.
Let $u=\s+\delta.$
By Lemma \ref{l:appdx_3_time_approx} there exists
$u_n\diamond_n$ such that $u_n\to u$ and  $f_n(u_n\diamond_n)\to f(u+)$.
Without loss of generality we may pass to a further subsequence and assume that
$u_n\geq \s+2\delta/3$,
which implies that
\begin{equation}
\label{eq:crossing_eps_order_tsu}
s_n\bullet_n\leq t_n\star_n < u_n\diamond_n.
\end{equation}
Again, without loss of generality we may pass to a further subsequence and assume that
\begin{align*}
|f_n(s_n\bullet_n)-f(\s-)| &\leq \kappa/4 \\ 
|g_n(t_n\star_n)-g(\s-)| &\leq \kappa/4  \\
|f_n(u_n\diamond_n)-f(u+)| &\leq \kappa/4 
\end{align*}
for all $n$. It follows from the above equations,
\eqref{eq:crossing_eps_kappa} and \eqref{eq:crossing_eps_fg+} that
\begin{align}
f_n(s_n\bullet_n) &\geq f(\s+) +  3\kappa/4 \label{eq:crossing_eps_tn_control} \\
g_n(t_n\star_n) &\leq f(\s+) - 3\kappa/4 \label{eq:crossing_eps_sn_control_pre} \\
|f_n(u_n\diamond_n)-f(\s+)| &\leq \kappa/2 \label{eq:crossing_eps_un_control} 
\end{align}
for all $n$.

We must now briefly divide into two cases.
If $g_n(t_n-)<g_n(t_n+)$ then
$L_{t_n-}(g_n)=[-\infty,g_n(t_n-))$
and by Lemma \ref{l:lhd_left_right} we have 
$f_n(t_n-)\leq g_n(t_n-)<g_n(t_n+)$.
Alternatively, if $g_n(t_n+)\leq g_n(t_n-)$ then we have
$f_n(t_n+)\leq g_n(t_n+)<g_n(t_n-)$.
In either case we have $\circ\in\{-,+\}$ such that
$f_n(t_n\circ_n)\leq g_n(t_n-)\wedge g_n(t_n+) \leq g_n(t_n\star_n).$
From 
\eqref{eq:crossing_eps_sn_control_pre} we thus obtain
\begin{equation}
\label{eq:crossing_eps_sn_control}
f_n(t_n\circ_n) \leq f(\s+)-3\kappa/4.
\end{equation}
We must again briefly divide into two cases.
If $\star_n=-$ then
\eqref{eq:crossing_eps_order_tsu}
gives $s_n<t_n$, so trivially $s_n\bullet_n \leq t_n\circ_n \leq u_n\diamond_n$.
Alternatively, if $\star_n=+$ then
we have $f_n(t_n\star_n)\leq g_n(t_n\star_n)$,
which from \eqref{eq:crossing_eps_tn_control} and \eqref{eq:crossing_eps_sn_control_pre} we have
that $s_n\bullet_n\neq t_n\star_n$.
From \eqref{eq:crossing_eps_order_tsu} we thus have $s_n\bullet_n \leq t_n-$,
so in this case too we obtain that
\begin{equation}
\label{eq:crossing_times_control}
s_n\bullet_n \leq t_n\circ_n < u_n\diamond_n.
\end{equation}
From Proposition \ref{p:relcom_tightness} and
\eqref{eq:crossing_eps_tn_control},
\eqref{eq:crossing_eps_un_control},
\eqref{eq:crossing_eps_sn_control},
\eqref{eq:crossing_times_control}
we obtain that the sequence $(f_n)$ is not relatively compact.
This is a contradiction,
and completes the proof.
\end{proof}

\begin{lemma}
\label{l:lhd_compat} 
If $A\sw\Pi^\uparrow$ is non-crossing and closed then the relation $\lhd$ is compatible with $(A,d_\Pi)$.
Moreover: suppose that $f_n\to f$ and $g_n\to g$, where $f,g,f_n,g_n\in \Pi^\uparrow$ and $f_n\lhd g_n$ for all $n$.
If $f$ and $g$ do not cross each other then $f\lhd g$.
\end{lemma}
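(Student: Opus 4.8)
The plan is to deduce the lemma almost entirely from Lemma~\ref{l:crossing_eps}, into whose proof (particularly part~2, the long relative-compactness argument) the real work has already been front-loaded. The substantive content is the ``moreover'' clause; the first assertion will then follow by a routine closedness argument.

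For the ``moreover'' clause I would argue by contradiction. Suppose $f_n\to f$ and $g_n\to g$ with $f_n\lhd g_n$ for all $n$, suppose $f$ and $g$ do not cross, and suppose for contradiction that $f\nlhd g$. Since $f\nlhd g$, part~2 of Lemma~\ref{l:crossing_eps} produces some $\eps>0$ with $f\blacktriangleleft_\eps g$. But then part~1 of Lemma~\ref{l:crossing_eps} forces $f$ and $g$ to cross, contradicting the hypothesis. Hence $f\lhd g$, which is the desired conclusion. This is essentially the entire argument for the second statement: the single hard phenomenon, namely a crossing that tries to form at the common initial time $\s=\s_f\vee\s_g$ of the limiting paths, has already been isolated and resolved in Lemmas~\ref{l:lhd_psuedo_compat}--\ref{l:crossing_eps}.

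For the first statement, let $A\sw\Pi^\uparrow$ be non-crossing and closed; by Definition~\ref{d:compatible} I must show that $\{(e,e')\in A^2\-e\lhd e'\}$ is a closed subset of $A^2$. Since $\Pi$ is metrisable (it is Polish by Proposition~\ref{p:J1M1}), so is $A^2$, and it suffices to check sequential closedness. So take $(f_n,g_n)\in A^2$ with $f_n\lhd g_n$ and $(f_n,g_n)\to(f,g)$ in $A^2$. Closedness of $A$ gives $f,g\in A$, and since $A$ is non-crossing the pair $\{f,g\}$ is non-crossing. The ``moreover'' clause just established then yields $f\lhd g$, so $(f,g)$ lies in the set, proving it closed and hence that $\lhd$ is compatible with $(A,d_\Pi)$.

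I expect no genuine obstacle in this lemma: the only step requiring a word of justification beyond citing earlier results is the reduction from closedness to sequential closedness, which is immediate from metrisability of $\Pi$, and the reduction of the first claim to the ``moreover'' clause, which uses only that the elements of a non-crossing set are pairwise non-crossing. The bulk of the difficulty has been discharged in the preparatory lemmas of this subsection.
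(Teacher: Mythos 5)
Your proposal is correct and follows essentially the same route as the paper: the paper likewise proves the (stronger) ``moreover'' clause first by combining both parts of Lemma~\ref{l:crossing_eps} to show that $f\nlhd g$ would force $f$ and $g$ to cross, and then deduces the compatibility statement for a closed non-crossing $A$ from it. Your write-up merely makes explicit the routine reduction to sequential closedness that the paper leaves implicit.
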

\begin{proof}
Note that the second claim is a stronger statement than the first.
The second claim permits paths within the sequence $(f_n)$ to cross each other,
and paths within the sequence $(g_n)$ to cross each other, requiring only that $f_n\lhd g_n$ for each $n\in\N$.
We will prove the second claim.
Let $f_n\to f$ and $g_n\to g$, where $f,g,f_n,g_n\in \Pi^\uparrow$ and $f_n\lhd g_n$ for all $n$.
By (both parts of) Lemma \ref{l:crossing_eps}, if $f\nlhd g$ then $f$ and $g$ cross.
The result follows.
\end{proof}

We commented below Definition \ref{d:lhd} that
the relation $\lhd$ is (in general) not a partial order.
In Lemma \ref{l:Amax_order} we showed that $\lhd$ is a total order on $A_{\max}$, provided $A\sw\Pi^\uparrow$ is non-crossing.
However, the set $A_{\max}$ is typically not a closed subset of $\Pi^\uparrow$,
even if $A$ is a closed subset of $\Pi^\uparrow$.
For example, consider when $A$ contains the paths $f(t)=k$ for $k\in[0,1]$ and $t\geq\sigma_f=0$, 
plus the single path $g(t)=0$ for $t\geq \sigma_g=-1$.
For this reason Lemmas \ref{l:Amax_order} and \ref{l:lhd_compat} are both important to us,
but we must take care when using them together.
The following technical lemma will be used in Section \ref{sec:meas_2}
to help prove that $\mathscr{W}_{\det}$ is measurable.

\begin{lemma}
\label{l:lhd_crossing_eps}
Let $\eps>0$.
Suppose that $f_n\to f$ and $g_n\to g$,
where $f,g,f_n,g_n\in\Pi^\uparrow$,
with $\s_f\vee\s_g\in \R$.
If $f_n\blacktriangleleft_\eps g_n$ for all $n$ then $f\blacktriangleleft_\eps g$.
\end{lemma}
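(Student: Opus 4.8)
The plan is to verify each clause of Definition \ref{d:crossing_eps} for the limit pair $(f,g)$ from the corresponding clause for $(f_n,g_n)$. Write $\sigma=\sigma_f\vee\sigma_g$ and $\sigma_n=\sigma_{f_n}\vee\sigma_{g_n}$. The requirement $\sigma\in\R$ is assumed. Since $f_n\to f$ and $g_n\to g$ in $\Pi$ the initial times converge, $\sigma_{f_n}\to\sigma_f$ and $\sigma_{g_n}\to\sigma_g$ (a standard property of the M1 topology; see \cite{FreemanSwart2023} and Appendix \ref{a:M1}), so $\sigma_n\to\sigma$, and in particular $\sigma_n\in\R$ and $\sigma_n\in I(f_n)\cap I(g_n)$ for all large $n$. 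The clause ``$L_{s\bullet}(f)\cap R_{s\bullet}(g)=\emptyset$ for all $s\bullet\geq\sigma+$'' is then immediate from Lemma \ref{l:lhd_psuedo_compat}, because $f_n\blacktriangleleft_\eps g_n$ supplies exactly the hypothesis of that lemma, namely $L_{s\bullet}(f_n)\cap R_{s\bullet}(g_n)=\emptyset$ for all $s\bullet\geq\sigma_n+$.

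It remains to obtain the four inequalities among the values at $\sigma$. The quantities $f_n(\sigma_n\pm)$ and $g_n(\sigma_n\pm)$ all lie in the compact space $\ov\R$, so after passing to a subsequence we may assume $f_n(\sigma_n\pm)\to\alpha^f_\pm$ and $g_n(\sigma_n\pm)\to\alpha^g_\pm$. By definition of $\blacktriangleleft_\eps$, each $f_n$ has a leftward jump of size at least $\eps$ at $\sigma_n$ and each $g_n$ a rightward jump of size at least $\eps$ at $\sigma_n$, and $\sigma_n\to\sigma$. A jump of fixed size $\geq\eps$ cannot be destroyed in an M1 limit: $f$ has a leftward jump at $\sigma$ and $g$ a rightward jump at $\sigma$, and since the jump of $f_n$ at $\sigma_n$ is traversed in the same direction as, and sits inside, the (possibly larger) jump of $f$ at $\sigma$, one reads off
$$f(\sigma-)\geq\alpha^f_-,\qquad f(\sigma+)\leq\alpha^f_+,\qquad g(\sigma-)\leq\alpha^g_-,\qquad g(\sigma+)\geq\alpha^g_+.$$
Passing to the limit in the inequalities $g_n(\sigma_n-)+\eps\leq f_n(\sigma_n-)$, $f_n(\sigma_n+)+\eps\leq g_n(\sigma_n+)$, $g_n(\sigma_n-)+\eps\leq g_n(\sigma_n+)$ and $f_n(\sigma_n+)+\eps\leq f_n(\sigma_n-)$ gives $\alpha^g_-+\eps\leq\alpha^f_-$, $\alpha^f_++\eps\leq\alpha^g_+$, $\alpha^g_-+\eps\leq\alpha^g_+$ and $\alpha^f_++\eps\leq\alpha^f_-$. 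Chaining these with the previous display yields $g(\sigma-)+\eps\leq f(\sigma-)$, $f(\sigma+)+\eps\leq g(\sigma+)$, $g(\sigma-)+\eps\leq g(\sigma+)$ and $f(\sigma+)+\eps\leq f(\sigma-)$, which together with the first paragraph is exactly $f\blacktriangleleft_\eps g$ (and since this is a property of $f$ and $g$ alone, the single passage to a subsequence above is harmless).

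I expect the main obstacle to be the middle step: making precise the claim that a jump of size $\geq\eps$ located at $\sigma_n\to\sigma$ persists in the M1 limit, produces a jump at exactly $\sigma$ of the same orientation, and obeys the one-sided bounds on the displayed line. This is where the use of M1 rather than J1 is essential — under M1 the $f_n$ may have several jumps near $\sigma$ that coalesce into one larger jump of $f$ at $\sigma$, so one cannot expect $f_n(\sigma_n\pm)\to f(\sigma\pm)$, only the one-sided bounds; the orientation together with those bounds must be extracted from the order $\sqsubseteq$ carried by the interpolated graphs via Proposition \ref{p:J1M1} (equivalently, from a parametric representation of the convergence $H^{(2)}(f_n)\to H^{(2)}(f)$). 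The remaining ingredient, convergence of initial times under M1 convergence, is a routine fact recorded in \cite{FreemanSwart2023}.
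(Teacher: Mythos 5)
Your proposal is correct and follows essentially the same route as the paper: the first clause via Lemma \ref{l:lhd_psuedo_compat}, and the four inequalities at $\s$ by extracting convergent subsequences of $f_n(\s_n\pm)$, $g_n(\s_n\pm)$ and sandwiching the limits inside the jumps of $f$ and $g$. The "main obstacle" you flag — that a jump of size $\geq\eps$ at $\s_n\to\s$ persists with the same orientation and yields the one-sided bounds — is exactly what Lemma \ref{l:appdx_1_sw_limits} (order preservation of $H^{(2)}$ under M1 limits) together with Lemma \ref{l:appdx_2_fntn} delivers, and this is precisely how the paper closes that step.
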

\begin{proof}
We remark that the condition $\s_f\vee\s_g\in \R$ is necessary,
in fact this is all that prevents $\blacktriangleleft_\eps$
from being compatible with $(\Pi^\uparrow,d_\Pi)$.
Let $f,g,f_n,g_n\in \Pi^\uparrow$ be as given and 
write $\s=\s_f\vee\s_g$.
Suppose that $f_n\blacktriangleleft_\eps g_n$ for all $n$.
Lemma \ref{l:lhd_psuedo_compat} gives that 
$L_{s\bullet}(f)\cap R_{s\bullet}(g)=\emptyset$ for all $s\bullet\geq \s+$.
Noting that $\s_n\to s$, for all sufficiently large $n$ we have $\s_n\in\R$.
Let us pass to a subsequence and assume that $\s_n\in\R$ for all $n$.        

We have $g_n(\s_n-)+\eps\leq g_n(\s_n+)$.
It follows from Lemma \ref{l:appdx_1_sw_limits} that 
$g(\s-)+\eps\leq g(\s+)$.
Similarly, it follows from 
$f_n(\s_n+)+\eps\leq f_n(\s_n-)$ that
$f(\s+)+\eps\leq f(\s-)$.
From $g_n(\s_n-)+\eps\leq f_n(\s_n-)$ we obtain
\begin{equation}
\label{eq:lhd_crossing_eps_gn_eps_fn}
\liminf_{n\to\infty} g_n(\s_n-) + \eps \leq \limsup_{n\to\infty} f_n(\s_n-).
\end{equation}
By Lemma \ref{l:appdx_1_sw_limits} we have that
$\liminf_{n\to\infty} g_n(\s_n-)$ lies between $g(\s-)$ and $g(\s+)$.
We have already shown that $g(\s-)<g(\s+)$, 
so in fact 
$g(\s-)\leq \liminf_{n\to\infty} g_n(\s_n-)$.
Similarly, Lemma \ref{l:appdx_1_sw_limits} gives that
$\limsup_{n\to\infty} f_n(\s_n-)$ lies between $f(\s-)$ and $f(\s+)$.
We have already shown that $f(\s+)<f(\s-)$,
so in fact
$\limsup_{n\to\infty} f_n(\s_n-) \leq f(\s-)$.
From these facts and \eqref{eq:lhd_crossing_eps_gn_eps_fn} we obtain
$g(\s-)+\eps\leq f(\s-)$.

By compactness and Lemma \ref{l:appdx_2_fntn} 
the sequence $(f_n(\s_n+))$ has a limit point $a$ between $f(\s-)$ and $f(\s+)$.
We have shown that $f$ jumps leftwards at $\s$, thus $f(\s+)\leq a$.
Similarly, $(g_n(\s_n+))$ has a limit point $b$ between $g(\s-)$ and $g(\s+)$.
We have shown that $g$ jumps rightwards at $\s$, thus $b\leq g(\s+)$.
Using that $f_n(\s_n)+\eps\leq g_n(\s_n+)$ we obtain that
$a+\eps\leq b$, hence $f(\s+)+\eps\leq g(\s+)$.
This completes the proof.
\end{proof}

\section{Deterministic weaves}
\label{sec:weaves_det}

Recall the space $\mathscr{W}_{\det}$ introduced in \eqref{eq:Wdet}.
An element of $\mathscr{W}_{\det}$ is known as a \textit{deterministic weave} and is, 
by definition, a deterministic element of $\mc{K}(\Pi^\uparrow)$ that is pervasive and non-crossing.
We study deterministic weaves in this section,
although some results will involve probability within their proofs.
The results in this section will feed into the proofs of our main results, in Section \ref{sec:weaves_random}.
Our long term strategy is to establish what can be said in general about the internal structure of deterministic weaves,
to translate this information into statements about the geometric structure of $\mathscr{W}_{\det}$,
and finally lift such results into $\mathscr{W}$.

Definition \ref{d:weave} defines webs and flows as, respectively, minimal and maximal elements of the space of weaves $\mathscr{W}$ under $\preceqd$.
Recall that elements of $\mathscr{W}$ are formally
probability measures on $\mc{K}(\Pi)$.
We identify $\mathscr{W}_{\det}$ with the subset of $\mathscr{W}$
consisting of point-mass measures.
It is not immediately clear what Definition \ref{d:weave} means for deterministic weaves:
extremal points of $(\mathscr{W}_{\det},\preceq)$
are not a priori extremal points of $(\mathscr{W},\preceqd)$, nor vice versa
We will resolve these difficulties in Lemma \ref{l:det_vs_random_webs_flows},
which shows that
a random weave $\mc{W}$ is a web (resp.~flow) if and only if 
$\P\l[\mc{W}\text{ is almost surely minimal (resp.~maximal) in }(\mc{W}_{\det},\preceq)\r]=1$.
The proof of Lemma \ref{l:det_vs_random_webs_flows} will rely on key results established in Section \ref{sec:weaves_det}.
Therefore, until we have proved Lemma \ref{l:det_vs_random_webs_flows} we will avoid calling any deterministic or random elements of $\mc{K}(\Pi)$ a `web' or `flow'.

However, we will use the deterministic maps 
$A\mapsto\web_D(A)$ and $A\mapsto\flow(A)$ 
defined in \eqref{eq:web_op} and \eqref{eq:flow_op}
from this point on.
The meaning of these maps on $\mathscr{W}_{\det}$ is clear,
where in the former case $D\sw\R^2$ is also taken to be deterministic.


\subsection{Weaves and the non-crossing property}
\label{sec:weaves_noncr}

Weaves provide a structure inside of which {\cadlag} paths behave rather better than within arbitrary subsets of $\mc{K}(\Pi)$.
We remark that if $f\in\Pi^\uparrow$ does not cross a weave $\mc{A}$
then it is trivial to see that $\mc{A}\cup\{f\}$ is also a weave.
Combining these two facts,
to some extent weaves are able to control the behaviour of paths that do not cross them.
We begin to explore this idea within the present section.
We start with a key technical lemma that uses all of the defining properties of weaves: 
compactness, pervasiveness and the non-crossing property.
It captures what happens when we approximate the middle of a jump with paths beginning earlier in time.

\begin{lemma}
\label{l:approx_before_jump}
Let $\mc{A}$ be a deterministic weave
and let $f\in\Pi^\uparrow$ be a path that does not cross $\mc{A}$.
\begin{enumerate}
\item
Suppose $f(t-)<f(t+)$.
Then
there exists $h\in\mc{A}$
such that $h(t-)\leq f(t-)$ and $f\lhd h$.
\item
Suppose $f(t+)<f(t-)$.
Then
there exists $h\in\mc{A}$
such that $f(t-)\leq h(t-)$ and $h\lhd f$.
\end{enumerate}
\end{lemma}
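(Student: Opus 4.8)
The plan is to first reduce to part 1: applying the spatial reflection $x\mapsto-x$ — a homeomorphism of $\Pi^\uparrow$ that maps weaves to weaves, preserves the non-crossing property, and interchanges $\lhd$ with its left--right mirror — turns part 2 into part 1. Within part 1 I would treat the representative case $t>\sigma_f$ in detail and then handle $t=\sigma_f$ separately.

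For $t>\sigma_f$: since $f$ has countably many jumps, pick continuity points $s_n\uparrow t$ of $f$ with $s_n>\sigma_f$, so $f(s_n)\to f(t-)$, and pick $x_n\in\ov\R$ with $f(s_n)<x_n$ and $x_n\to f(t-)$. By pervasiveness choose $h_n\in\mc{A}$ passing through $(x_n,s_n)$, and by compactness (hence sequential compactness) of $\mc{A}$ pass to a subsequence with $h_n\to h\in\mc{A}$. Since $(x_n,s_n)\to(f(t-),t)$ in $\R^2_{\rm c}$ and M1-convergence of $h_n$ forces $H(h_n)\to H(h)$ in the Hausdorff metric, we get $(f(t-),t)\in H(h)$, i.e.\ $h(t-)\wedge h(t+)\le f(t-)\le h(t-)\vee h(t+)$. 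Next, $h_n$ passing through $(x_n,s_n)$ gives $x_n\le h_n(s_n-)\vee h_n(s_n+)$, so with $f(s_n)<x_n$ there is a suitable $\star_n\in\{-,+\}$ for which \eqref{eq:LR_sets} yields $(f(s_n\star_n),s_n\star_n)\in L(h_n)$ (using $\sigma_{h_n}\le s_n$, and $\star_n=-$ only when $h_n$ jumps left at $s_n$). As $f$ does not cross $\mc{A}\ni h_n$, Lemma \ref{l:lhd_left_right} gives $f\lhd h_n$ for all $n$; and since $h\in\mc{A}$, $f$ and $h$ are non-crossing, so Lemma \ref{l:lhd_compat}, applied with the constant sequence $f_n\equiv f$ and $g_n=h_n$, gives $f\lhd h$.

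It then remains to extract $h(t-)\le f(t-)$. By Lemma \ref{l:lhd_LR}, $f\lhd h$ gives $L(f)\cap R(h)=\emptyset$; evaluating this at $t+$ — where $t\ge\sigma_f\vee\sigma_h$, so $L_{t+}(f)=[-\infty,f(t+))$ and $R_{t+}(h)=(h(t+),\infty]$ — forces $f(t+)\le h(t+)$. Hence $h(t-)\wedge h(t+)\le f(t-)<f(t+)\le h(t+)$, so $h(t-)\wedge h(t+)=h(t-)$, and therefore $h(t-)\le f(t-)$. Thus $h\in\mc{A}$ has both required properties.

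For $t=\sigma_f$ one cannot approximate from strictly before $t$, so instead I would take $h_n\in\mc{A}$ through points $z_n\to(f(\sigma_f-),\sigma_f)$ lying in the interior of $f$'s jump at $\sigma_f$, and extract $h_n\to h\in\mc{A}$, again with $(f(\sigma_f-),\sigma_f)\in H(h)$. I expect this case to be the main obstacle: a path of $\mc{A}$ meeting the middle of the jump need not, a priori, lie to the right of $f$, but a path of $\mc{A}$ that dodges the jump on the left would, along a suitable subsequence, converge to a path crossing $f$ — which is excluded precisely by the compactness of $\mc{A}$. Making this rigorous would use the classification of how two non-crossing half-infinite paths can relate (Lemma \ref{l:lhd_possibilities}), the preservation of $\lhd$ under non-crossing limits (Lemma \ref{l:lhd_compat}), and the fact that $\mc{A}\cup\{f\}$ is again a deterministic weave, in order to single out a subsequence whose limit $h$ satisfies $f\lhd h$; once that is in hand, $h(t-)\le f(t-)$ follows exactly as in the main case.
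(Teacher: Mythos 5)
Your reduction of part 2 to part 1 by spatial reflection matches the paper, and your argument for the case $t>\sigma_f$ is correct and complete: approximating $(f(t-),t)$ by points $(x_n,s_n)$ just to the right of $f$ at earlier continuity times, extracting a limit $h\in\mc{A}$, obtaining $f\lhd h_n$ from Lemma \ref{l:lhd_left_right}, passing to the limit via Lemma \ref{l:lhd_compat} (legitimate here because $f$ and $h$ are non-crossing), and then squeezing $h(t-)\leq f(t-)$ out of $f(t+)\leq h(t+)$ together with $(f(t-),t)\in H(h)$. This is a mild variant of the paper's construction, which instead fixes a point $x$ strictly inside the jump, approximates $(x,t)$ from times $t-a_n<t$, and only at the very end lets $x\searrow f(t-)$.

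The genuine gap is the case $t=\sigma_f$, which you leave as an acknowledged sketch. The paper needs no case split: because its approximating points $(x,t-a_n)$ have times strictly before $t$ and spatial coordinate strictly inside the jump, the dichotomy $f\lhd h_n$ versus $h_n\lhd f$ resolves uniformly. In the bad branch $h_n\lhd f$, disjointness of $L(h_n)$ and $R(f)$ evaluated at $t-$ still forces $h_n(t-)\leq f(t-)$ even when $t=\sigma_f$, because $R_{\sigma_f-}(f)=(f(\sigma_f-),\infty]$ exactly when $f$ jumps rightwards at its initial time; combined with $h_n((t-a_n)\pm)\geq x$ and the order-preservation of limits in Lemma \ref{l:appdx_1_sw_limits}, the limit path is forced to jump leftwards at $t$ from above $x$ to below $f(t-)$, crossing $f$ --- a contradiction. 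Your sketch names the right contradiction (``a path that dodges the jump on the left converges to a path crossing $f$''), but two ingredients are missing to make it rigorous: (i) the approximating points must be taken at times strictly before $\sigma_f$ --- a path through a point at time exactly $\sigma_f$ in the interior of the jump does not by itself determine on which side of $f$ it lies, so the dichotomy would remain unresolved; and (ii) one must invoke the order-preservation of second-order graphs under M1 limits to pin down the direction of the limiting jump of $h$ (mere membership $(f(\sigma_f-),\sigma_f)\in H(h)$ is not enough). Since these are precisely the ingredients of the paper's single construction, the cleanest repair is to drop the case split and run your argument with the paper's choice of approximating points throughout.
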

\begin{proof}
The second statement follows from the first by considering space reflected about the origin
(and is written out in full for clarity)
so we will prove only the first statement.
Suppose $f(t-)<f(t+)$.
We will now argue that it suffices to prove that
\begin{equation}
\label{eq:approx_before_jump_x}
\text{for any }x\in(f(t-),f(t+)
\text{ there exists } h\in\mc{A}
\text{ such that }h(t-)\leq f(t-)\text{ and }f\lhd h.
\end{equation}
With \eqref{eq:approx_before_jump_x} in hand,
let us write $h^{(x)}$ for the path $h$ generated from $x$, 
and
note that compactness of $\mc{A}$ implies the existence of a subsequential limit
$h^{(x)}\to h'$ as $x\searrow f(t-)$.
As $f\lhd h^{(x)}$ we have $h^{(x)}(t-)\leq f(t-)<f(t+)\leq h^{(x)}(t+)$,
so
Lemma \ref{l:appdx_1_sw_limits} ensures that $h'(t-)\leq f(t-)$.
Lemma \ref{l:lhd_compat} ensures that $f\lhd h'$.
Thus $h'$ has the desired properties.

It remains to establish \eqref{eq:approx_before_jump_x}.
Let $x'\in\R$ be such that $f(t-)<x<f(t+)$,
and suppose that $f$ does not cross $\mc{A}$.
Let $(a_n)\sw(0,\infty)$ be such that $a_n\to 0$.
Let $z_n=(x,t-a_n)$ and by pervasiveness let $h_n\in\mc{A}(z_n)$.
By compactness of $\mc{A}$, pass to a subsequence and assume without loss of generality that $h_n\to h\in\mc{A}$.
By Lemma \ref{l:appdx_2_fntn} we have 
\begin{equation}
\label{eq:hhxhh}
h(t-)\wedge h(t+)\leq x\leq h(t-)\vee h(t+).
\end{equation}
We have that $f$ and $h_n\in\mc{A}$ do not cross.
By Lemma \ref{l:lhd_noncr} this means that $f\lhd h_n$ or $h_n\lhd f$.
We now consider two cases.

Consider first if $f\lhd h_n$ for infinitely many $n\in\N$.
Then Lemma \ref{l:lhd_compat} implies that $f\lhd h$.
In this case $f(t+)\leq h(t+)$, so $x<h(t+)$ which by \eqref{eq:hhxhh} implies $h(t-)\leq x'$. 
Hence $h(t-)\leq y$, and we have established all the required properties of $h$.

If the above case does not occur then there exists $N\in\N$ such that $h_n\lhd f$ for all $n\geq N$.
Without loss of generality we may pass to a subsequence and assume $h_n\lhd f$ for all $n\in\N$.
As $\s_{h_m}<t$ and $L_{t-}(f)=[-\infty,f(t-))$,
Lemma \ref{l:lhd_left_right}
implies that $h_n(t-)\leq f(t-)$.
We have also that $h_n((t-a_n)-)\vee h_n((t-a_n)+)\geq x$.
Clearly $(t-a_n)\pm<t-$ for all $n$.
Hence by Lemma \ref{l:appdx_2_fntn}
we have 
$h(t-)\geq x>f(t-)\geq h(t+),$
which as $f(t-)<f(t+)$ means that $f$ and $h$ cross (by Definition \ref{d:crossing}).
This is a contradiction, so in fact this case does not occur.
This completes the proof.
\end{proof}

\begin{lemma}
\label{l:pincer_t_plus}
Let $\mc{A}$ be a deterministic weave.
Let $f,h\in\Pi^\uparrow$ be such that $\mc{A}\cup\{f\}$ is non-crossing, and $\mc{A}\cup\{h\}$ is non-crossing.
Let $\s=\s_f\vee\s_h$.
Suppose that $f(t\star)<h(t\star)$ for some $t\star\geq\s+$.
Then for all $\eps>0$ 
there exists $g\in\mc{A}$ with $\s_g\leq t+\eps$ such that $f\lhd g$ and $g\lhd h$.

Further, we may choose $g$ such that $g\nsubseteq f$, $f\nsubseteq g$, $g\nsubseteq h$ and $h\nsubseteq g$.
\end{lemma}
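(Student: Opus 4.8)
The plan is to obtain the sandwiched path $g$ directly from pervasiveness at a carefully chosen continuity point, and then to read off the two relations $f\lhd g$ and $g\lhd h$ from the explicit criterion of Lemma~\ref{l:lhd_left_right}; no compactness argument is needed here, in contrast with Lemma~\ref{l:approx_before_jump}. First the reduction: fix $\eps>0$. Since $f(t\star)<h(t\star)$ for some $t\star\geq\s+$ (which forces $t\in\R$), the {\cadlag} property of $f$ and $h$ lets me pick a time $s$ that is a continuity point of both $f$ and $h$, with $\s<s<t+\eps$ and $f(s)<h(s)$: if $t\star=t+$ I take $s$ slightly above $t$, and if $t\star=t-$ (so $t>\s$) I take $s$ slightly below $t$, using in each case that the set of joint continuity points is co-countable, hence dense. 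It then suffices to produce $g\in\mc{A}$ with $\s_g\leq s$ such that $f\lhd g$, $g\lhd h$, and $g$ is not $\sw$-comparable to $f$ or $h$.

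Now I choose a real $x$ with $f(s)<x<h(s)$ and, by pervasiveness of $\mc{A}$, let $g\in\mc{A}$ be a path passing through $(x,s)$; then $s\in I(g)$, so $\s_g\leq s<t+\eps$, and since $\mc{A}\cup\{f\}$ and $\mc{A}\cup\{h\}$ are non-crossing while $g\in\mc{A}$, both $\{f,g\}$ and $\{g,h\}$ are non-crossing. As $x$ lies between $g(s-)$ and $g(s+)$, one of these values is $\geq x$ and one is $\leq x$. If $g(s+)\geq x$ then $f(s+)=f(s)<x\leq g(s+)$, so $(f(s+),s+)\in L(g)$; if instead $g(s+)<x$ then $g$ jumps leftwards at $s$ with $g(s-)\geq x>f(s)=f(s-)$, so $(f(s-),s-)\in L(g)$ — and this second branch also handles $s=\s_g$, since a leftward jump at $\s_g$ still makes $L_{\s_g-}(g)$ non-empty. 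In either case Lemma~\ref{l:lhd_left_right} yields $f\lhd g$ and $f\neq g$. Symmetrically, using $x<h(s)=h(s\pm)$ and $s>\s_h$: if $g(s-)\leq x$ then $(g(s-),s-)\in L(h)$, and otherwise $g(s+)\leq x<h(s)=h(s+)$, so $(g(s+),s+)\in L(h)$; Lemma~\ref{l:lhd_left_right} then gives $g\lhd h$ and $g\neq h$.

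For the last assertion, $s$ is a continuity point of $f$ and of $h$ lying in the interior of $I(f)$ and of $I(h)$, so the interpolated graphs $H(f)$ and $H(h)$ meet the fibre above time $s$ only in $(f(s),s)$ and $(h(s),s)$ respectively, whereas $(x,s)\in H(g)$ and $x\notin\{f(s),h(s)\}$. Consequently $g\sw f$ is impossible (it would force $H(g)\sw H(f)$), and likewise $g\sw h$; while $f\sw g$ is impossible because $g$ extending $f$ forces $g(s\pm)=f(s\pm)=f(s)$ at the interior continuity point $s$ of $f$ (a routine consequence of right-continuity of $g$), so that $(x,s)\notin H(g)$; and symmetrically $h\sw g$ is impossible. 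Hence $g\nsubseteq f$, $f\nsubseteq g$, $g\nsubseteq h$ and $h\nsubseteq g$, which completes the proof.

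The construction itself is elementary; the care lies in two places. First, in the reduction one must keep the continuity point $s$ strictly below $t+\eps$, which is why $s$ is chosen in a sufficiently small one-sided neighbourhood of $t$. Second, in applying Lemma~\ref{l:lhd_left_right} one must track whether the witnessing point of $L(g)$ (resp.\ of $L(h)$) sits at $s-$ or at $s+$: this depends on the position of $x$ relative to $g(s-)$ and $g(s+)$ and, when $s=\s_g$, on the direction of the jump of $g$ at its initial time. Matching the branch $g(s+)<x$ with a leftward initial jump is exactly what makes $L_{\s_g-}(g)$ non-empty there, and this is the one point where the possibility of jumps at initial times genuinely intervenes.
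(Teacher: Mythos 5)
Your proposal is correct and follows essentially the same route as the paper: pass to a joint continuity point $s$ of $f$ and $h$ near $t$ with $f(s)<h(s)$, use pervasiveness to find $g\in\mc{A}$ through $(x,s)$ with $f(s)<x<h(s)$, and deduce $f\lhd g$ and $g\lhd h$ from Lemma \ref{l:lhd_left_right}. The only cosmetic differences are that the paper witnesses $f\lhd g$ via $R(f)$ (one of $g(s\pm)$ exceeds $f(s)$) rather than your $L(g)$ case split, and it disposes of $\sw$-comparability with the one-line observation $\s_f<s$ rather than your interpolated-graph argument; both are sound.
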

\begin{proof}
By the {\cadlag} property of $f$ and $h$
for any $\eps>0$ there exists $t'$ such that $|t-t'|<\eps/2$ and $f(t'+)<h(t'+)$.
Hence we may choose $\eps>0$
chosen sufficiently small that $f(s\pm)<h(s\pm)$ for all $s\in[t',t'+\eps/2)$,
with $|t-t'|<\eps/2$ and $t'>\s$.
By the {\cadlag} property of $f$ and $h$, 
choose $s\in(t',t'+\eps)$ such that both $f$ and $h$ are continuous at $s$.
Note that $s\leq t+\epsilon$.
Recall that when $f$ is continuous at $s$ we write $f(s)=f(s\pm)$.
Let $x\in(f(s),h(s))$ and by pervasiveness of $\mc{A}$ let $g\in\mc{A}(x,s)$.
Hence $g(s-)\wedge g(s+)\leq x\leq g(s-)\vee g(s+)$.
By continuity of $f$ at $s$ we have $L_{s-}(f)=L_{s+}(f)=[-\infty,f(s))$.
Note that $f(s)<h(s)$ implies that at least one of $g(s-)$ and $g(s+)$ is strictly greater than $f(t\star)$.
By Lemma \ref{l:lhd_left_right} we thus have $f\lhd g$,
and as $\s_f<s$ this means $f\nsubseteq g$ and $g\nsubseteq f$.
A symmetrical argument (reflect space about the origin) shows that $g\lhd h$,
with $g\nsubseteq h$ and $h\nsubseteq g$.
\end{proof}

\begin{lemma}
\label{l:noncr_transitive_weave}
Let $\mc{A}\sw\Pi^\uparrow$ be a deterministic weave. If $B, C\sw\Pi^\uparrow$ are such that $\mc{A}\cup B$ is non-crossing, and $\mc{A}\cup C$ is non-crossing,
then $\mc{A}\cup B\cup C$ is non-crossing.
\end{lemma}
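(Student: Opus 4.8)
The plan is to reduce everything to the two‑path statement and then exploit that $\mc A$, being a weave, can be slid \emph{between} any two paths that would otherwise cross. First I would observe that since $\mc A\cup B$ and $\mc A\cup C$ are non‑crossing, the only pairs from $\mc A\cup B\cup C$ not yet controlled are those of the form $\{f,g\}$ with $f\in B$, $g\in C$ (if $f\in\mc A$ the pair lies inside $\mc A\cup C$, and symmetrically). So fix such $f,g$; note that $\mc A\cup\{f\}$ and $\mc A\cup\{g\}$ are themselves deterministic weaves (compact, pervasive, non‑crossing), so in particular neither $f$ nor $g$ crosses $\mc A$. Assume for contradiction that $f$ crosses $g$, and write $\sigma=\sigma_f\vee\sigma_g$. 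By Lemma~\ref{l:lhd_noncr} and the discussion after it, there are two witness points $p_E<p_L$ in $\ov\R_\mfs$ at which $f$ lies strictly to one side of $g$, the side being opposite at the two points. A short case check on the sets $L(\cdot),R(\cdot)$ shows that every witness point is $=\sigma-$ or $\geq\sigma+$, and that the later one must satisfy $p_L\geq\sigma+$ (if $p_L=\sigma-$ then $p_E<\sigma-$ would lie before one of the paths exists).

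\textbf{Generic case ($p_E\geq\sigma+$).} Then both witnesses lie at or after $\sigma+$, and using {\cadlag}‑ness each can be realised at a genuine continuity time $>\sigma$: choose continuity times $r_E<r_L$, both $>\sigma$, at which $f$ and $g$ keep the strict orderings they have at $p_E$ and $p_L$. Relabelling $f\leftrightarrow g$ (harmless, as it merely swaps $B$ and $C$) we may take $f(r_E)<g(r_E)$. Now I would apply Lemma~\ref{l:pincer_t_plus} with $f$ as its first path and $g$ as its second: for every $\eps>0$ it produces $h\in\mc A$ with $\sigma_h\leq r_E+\eps$, $f\lhd h$ and $h\lhd g$. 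Taking $\eps<r_L-r_E$ gives $\sigma_h<r_L$, and then $f\lhd h$ and $h\lhd g$ force $f(r_L)\leq h(r_L)\leq g(r_L)$; but at $p_L$ the ordering is reversed, i.e.\ $g(r_L)<f(r_L)$ — a contradiction. (This case uses only Lemma~\ref{l:pincer_t_plus}.)

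\textbf{Initial‑time case ($p_E=\sigma-$).} Here one of $f,g$ genuinely jumps at its own initial time $\sigma$, and $p_E$ records $f$ and $g$ on opposite sides of that jump at $\sigma-$. By relabelling $f\leftrightarrow g$ and reflecting space about the origin if necessary, the model case is: $g$ jumps left at $\sigma=\sigma_g$ with $f(\sigma-)<g(\sigma-)$. I would feed this jump to Lemma~\ref{l:approx_before_jump}(2), getting $h\in\mc A$ with $g(\sigma-)\leq h(\sigma-)$, $h\lhd g$, and (as in that lemma's proof, via the M1‑limit results of Section~\ref{sec:technical_1}) $h$ itself jumping left at $\sigma$. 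Then at $\sigma-$ we have $f(\sigma-)<g(\sigma-)\leq h(\sigma-)$, so $h$ is strictly to the right of $f$ there, while at the later witness $p_L\geq\sigma+$ the ordering of $f,g$ is the opposite one and $h\lhd g$ gives $h(p_L)\leq g(p_L)<f(p_L)$, so $h$ is strictly to the left of $f$ there; by Lemma~\ref{l:lhd_noncr} this makes $h$ cross $f$, contradicting that $\mc A\cup\{f\}$ is non‑crossing. The remaining single‑jump configurations are handled symmetrically (producing instead a path of $\mc A$ that crosses $g$), and in the doubly degenerate case $\sigma_f=\sigma_g=\sigma$ with $f,g$ jumping past each other at $\sigma$ one applies Lemma~\ref{l:approx_before_jump} to \emph{both} $f$ and $g$, obtaining $h_f,h_g\in\mc A$ (jumping right and left respectively at $\sigma$) that are sandwiched on opposite sides of each other at $\sigma-$ and at $p_L$ and hence cross, contradicting that $\mc A$ is non‑crossing.

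The hard part is this initial‑time case: one must enumerate the few ways a crossing of $f$ and $g$ can be concealed at a jump at a starting time — which forces $p_E=\sigma-$ together with a coincidence of initial times and opposite jump directions — and in each verify, using Lemma~\ref{l:approx_before_jump} and the compatibility and limit lemmas of Section~\ref{sec:technical_1}, that the approximating $\mc A$‑path jumps in the correct direction, so that the concealed crossing of $f$ and $g$ is transferred to a crossing \emph{inside} $\mc A\cup\{f\}$, $\mc A\cup\{g\}$, or $\mc A$. The generic case, by contrast, is immediate from the sandwiching in Lemma~\ref{l:pincer_t_plus}.
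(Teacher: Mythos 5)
Your proof is correct and follows essentially the same route as the paper's: reduce to a pair $f\in B$, $g\in C$, extract two opposite\--sided witness points of the putative crossing, and split according to whether the earlier witness sits at the joint initial time $\sigma-$ (handled via Lemma~\ref{l:approx_before_jump}) or at $\geq\sigma+$ (handled via Lemma~\ref{l:pincer_t_plus}). The only cosmetic difference is in the initial\--time case, where you apply Lemma~\ref{l:approx_before_jump} to the left\--jumping path and exhibit a direct crossing of the resulting $\mc{A}$\--path with $f$, whereas the paper applies it to the right\--jumping path and derives the contradiction by sandwiching at the later witness; these are mirror images of the same argument.
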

\begin{proof}
Note that our conditions imply that $B$ is non-crossing, and $C$ is non-crossing.
It suffices to prove the case where $\mc{B}=\{f\}$ and $\mc{C}=\{h\}$ are singletons,
from which the general case follows immediately.
To this end, suppose that $f,g\in\Pi^\uparrow$ are such that $\mc{A}\cup\{f\}$ is non-crossing and $\mc{A}\cup\{h\}$ is non-crossing.

We will argue by contradiction.
Suppose that $\mc{A}\cup\{f\}\cup\{h\}$ contains a pair of paths that cross.
From our assumptions, the only possibility is that
$f$ and $h$ cross.
By Lemma \ref{l:lhd_noncr}
$f$ and $h$ cross if and only if $L(g)\cap R(f)\neq\emptyset$ and $L(f)\cap R(h)\neq\emptyset$.
Hence there exists $t\star,s\bullet\in\R_\mfs$ such that
$t\star<s\bullet$, with
\begin{alignat*}{2}
L_{t\star}(h)&=[-\infty,h(t\star)), \qquad 
R_{t\star}(f)&&=(f(t\star),\infty], \\
L_{s\bullet}(f)&=[-\infty,f(s\bullet)), \qquad
R_{s\bullet}(h)&&=(h(s\bullet),\infty],
\end{alignat*}
$f(t\star)<h(t\star)$ and $h(s\bullet)<f(s\bullet)$.
Let $\s=\s_f\vee\s_h$.

Consider, first, if $t\star\geq\s+$.
In this case, 
Lemma \ref{l:pincer_t_plus} implies that there exists $g\in\mc{A}$
such that $f\lhd g$ and $g\lhd h$, with $\s_g<s\bullet$.
As $s\bullet\geq\s+$ this means $f(s\bullet)\leq g(s\bullet)\leq h(s\bullet)$, 
which is a contradiction to $h(s\bullet)<f(s\bullet)$.

It remains to consider the case $t\star=\s-$.
In this case $\s_f=\s$ or $\s_h=\s$.
Without loss of generality (or consider space reflected about the origin)
let us assume that $\s_f=\s$.
As $R_{\s-}=(f(\s-),\infty]$ 
and $\s_f=\s$ it follows from \eqref{eq:LR_sets} that 
$f(\s-)<f(\s+)$.
Lemma \ref{l:approx_before_jump} implies the existence of $g\in\mc{A}$
such that $f\lhd g$ and $g(\s-)<f(\s+)$.
Hence $g(\s-)<h(\s-)$.
If $\s_h<\s$ then it follows immediately
by Lemma \ref{l:lhd_left_right} that $g\lhd h$.
Alternatively, 
if $\s_h=s$ then $L_{t\star}(h)=[-\infty,h(\s-))$
so in this case too we have $g\lhd h$.
We now have $f\lhd g$ and $g\lhd h$.
As $s\bullet\geq\s+$ this means $f(s\bullet)\leq g(s\bullet)\leq h(s\bullet)$, 
which is a contradiction to $h(s\bullet)<f(s\bullet)$.
\end{proof}

\begin{lemma}
\label{l:noncr_two_weaves}
Let $\mc{A},\mc{B}$ be deterministic weaves and suppose that $\mc{A}\cup\mc{B}$ is non-crossing.
Let $C\sw\Pi^\updownarrow$ be non-crossing. 
Then $\mc{A}\cup C$ is non-crossing if and only if $\mc{B}\cup C$ is non-crossing.
\end{lemma}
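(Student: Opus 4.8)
The plan is to obtain this as a short consequence of the transitivity-type statement in Lemma~\ref{l:noncr_transitive_weave}. First I would observe that the whole claim is symmetric in $\mc{A}$ and $\mc{B}$: the hypothesis that $\mc{A}\cup\mc{B}$ is non-crossing is symmetric in the two weaves, and $\mc{A}$ and $\mc{B}$ play identical roles as deterministic weaves. Hence it suffices to prove a single implication, say that if $\mc{A}\cup C$ is non-crossing then $\mc{B}\cup C$ is non-crossing; the reverse implication then follows by exchanging the names $\mc{A}$ and $\mc{B}$.

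For that implication I would invoke Lemma~\ref{l:noncr_transitive_weave} with the deterministic weave taken to be $\mc{A}$, with the set $B$ there equal to $\mc{B}$, and with the set $C$ there equal to the given $C$. This is legitimate since $\mc{B}\sw\Pi^\uparrow$ and, because $\Pi^\updownarrow\sw\Pi^\uparrow$, also $C\sw\Pi^\uparrow$. The hypotheses of Lemma~\ref{l:noncr_transitive_weave} are exactly our assumptions: $\mc{A}\cup\mc{B}$ is non-crossing and $\mc{A}\cup C$ is non-crossing. Its conclusion is that $\mc{A}\cup\mc{B}\cup C$ is non-crossing, and in particular its subset $\mc{B}\cup C$ is non-crossing, as required. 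Running the same argument with the weave $\mc{B}$ in place of $\mc{A}$ gives the converse, so the two conditions are equivalent.

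I do not expect any real obstacle here: essentially all the work has already been carried out in Lemma~\ref{l:noncr_transitive_weave}, whose proof is where the defining properties of a weave (compactness, pervasiveness, non-crossing) and the delicate handling of jumps at initial times are actually deployed, via Lemmas~\ref{l:approx_before_jump} and~\ref{l:pincer_t_plus}. The only points worth spelling out explicitly in the write-up are the reduction to one implication by symmetry and the elementary inclusion $\Pi^\updownarrow\sw\Pi^\uparrow$, which is what permits a set of bi-infinite paths to be fed into a lemma phrased for subsets of $\Pi^\uparrow$.
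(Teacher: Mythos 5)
Your proof is correct and takes essentially the same route as the paper: both deduce the result by applying Lemma~\ref{l:noncr_transitive_weave} with the weave $\mc{A}$, the only cosmetic difference being that the paper first reduces to the singleton case $C=\{f\}$, $B=\{b\}$ before invoking that lemma, whereas you feed in the full sets directly (which the lemma permits). The symmetry reduction and the observation $\Pi^\updownarrow\sw\Pi^\uparrow$ are exactly the right points to make explicit.
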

\begin{proof}
It suffices to consider the case $C=\{f\}$ where $f\in\Pi^\updownarrow$,
from which the general case follows immediately.
Assume that $f\in\Pi^\updownarrow$ does not cross $\Ai$.
Let $b\in\Bi$.
By assumption $b$ does not cross $\Ai$.
By Lemma \ref{l:noncr_transitive_weave}, we have that $\mc{A}\cup\{b\}\cup\{f\}$ is non-crossing, 
so in particular $f$ and $b$ do not cross each other.
Since $b\in\Bi$ was arbitrary, $f$ does not cross $\mc{B}$.
\end{proof}

In the proof of Lemma \ref{l:noncr_transitive_weave} we saw that
Lemma \ref{l:pincer_t_plus}
was a natural counterpart to Lemma \ref{l:approx_before_jump}.
The underlying principle is as follows.
If two paths $f$ and $g$ are such that $f\nsubseteq g$ and $g\nsubseteq f$ then either:
$f(t+)\neq g(t+)$ for some $t$,
or at least one of $f$ and $g$ has made a jump at its initial time,
in a direction away from the other.
Lemma \ref{l:pincer_t_plus} applied to the former case,
Lemma \ref{l:approx_before_jump} to the latter,
resulting in a path $h$ that lay between $f$ and $g$.
Variations upon this theme will feature in the proof of several future results, 
including the next lemma.

The following lemma is stated for paths $f,h\in\Pi^\uparrow$ that do not cross a weave $\mc{A}$,
but at this stage it is perhaps best understood by considering the special case $f,h\in\mc{A}_{\max}$.
Note that for maximal paths, 
the condition $f\nsubseteq h$ and $h\nsubseteq f$ is simply the requirement that $f\neq h$.
In this case Lemma \ref{l:lhd_intermediates} provides a key piece of information about the geometric structure of $(\mc{A}_{\max}, \lhd)$,
namely that any two distinct points, within the total order, will always have another point strictly in between them.
This lemma will be a key tool in Section \ref{sec:path_extn}.

\begin{lemma}
\label{l:lhd_intermediates}
Let $\mc{A}$ be a deterministic weave.
Suppose $f,h\in\Pi^\uparrow$ do not cross $\mc{A}$,
with $f\lhd h$, $f\nsubseteq h$ and $h\nsubseteq f$.
Then there exists $g\in\mc{A}_{\max}$ such that $f\lhd g$ and $g\lhd h$,
and $f\nsubseteq g$, $g\nsubseteq f$, $g\nsubseteq h$, $h\nsubseteq g$.
\end{lemma}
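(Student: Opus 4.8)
The plan is to follow the template of the proof of Lemma \ref{l:noncr_transitive_weave}: first produce a (possibly non-maximal) path $g_0\in\mc{A}$ lying strictly between $f$ and $h$, and then replace it by a maximal extension.

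\textbf{Step 1 (setup and dichotomy).} By Lemma \ref{l:noncr_transitive_weave} the set $\mc{A}\cup\{f,h\}$ is non-crossing. Apply part 1 of Lemma \ref{l:lhd_possibilities} to $f\lhd h$, with $\s=\s_f\vee\s_h$. Since $f\nsubseteq h$ and $h\nsubseteq f$, case (i) is excluded, so exactly one of the following holds: (A) [case (ii)] there is $t\star\geq\s+$ with $f(t\star)<h(t\star)$; or (B) [cases (iii)--(v)] $f$ and $h$ agree at all $t\star\geq\s+$, $f(\s-)<h(\s-)$, and at least one of $f,h$ makes a jump at $\s$ in the direction away from the other --- precisely, in (iii)/(iv) $f$ jumps rightwards at $\s_f=\s$, and in (iii)/(v) $h$ jumps leftwards at $\s_h=\s$.

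\textbf{Step 2 (an intermediate path in $\mc{A}$).} In case (A), Lemma \ref{l:pincer_t_plus} directly provides $g_0\in\mc{A}$ with $f\lhd g_0$, $g_0\lhd h$, and $g_0\nsubseteq f$, $f\nsubseteq g_0$, $g_0\nsubseteq h$, $h\nsubseteq g_0$. In case (B) I argue as in the ``$t\star=\s-$'' part of the proof of Lemma \ref{l:noncr_transitive_weave}: if $f$ jumps rightwards at $\s$, apply part 1 of Lemma \ref{l:approx_before_jump} to $f$ to obtain $g_0\in\mc{A}$ with $g_0(\s-)\leq f(\s-)$ and $f\lhd g_0$; if $h$ jumps leftwards at $\s$, apply part 2 of Lemma \ref{l:approx_before_jump} to $h$ to obtain $g_0\in\mc{A}$ with $h(\s-)\leq g_0(\s-)$ and $g_0\lhd h$. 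In each sub-case the missing order relation ($g_0\lhd h$, resp.\ $f\lhd g_0$) follows from Lemma \ref{l:lhd_noncr} --- since $g_0\in\mc{A}$ cannot cross $h$, resp.\ $f$ --- together with the observation, via \eqref{eq:LR_sets} and Lemma \ref{l:lhd_LR}, that $g_0(\s-)<h(\s-)$ (resp.\ $f(\s-)<g_0(\s-)$) rules out the opposite orientation by forcing $L(g_0)\cap R(f)$ (resp.\ $L(h)\cap R(g_0)$) to be nonempty at time $\s-$. The four non-extension conditions for $g_0$ follow from $\s_{g_0}\leq\s$ together with the strict separation $g_0(\s-)<f(\s-)$ or $g_0(\s-)>f(\s-)$; the borderline possibility that $g_0$ coincides with $f$ or with $h$ is excluded because it would make $f$ and $h$ themselves $\sw$-comparable, contradicting the hypothesis.

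\textbf{Step 3 (extension and verification).} Choose $g\in\mc{A}_{\max}$ with $g_0\sw g$ (Remark \ref{r:A_max}). The non-extension conditions transfer to $g$: if $g\sw f$ or $g\sw h$ then $g_0\sw g\sw f$ (resp.\ $h$) contradicts $g_0\nsubseteq f$ (resp.\ $g_0\nsubseteq h$); and if $f\sw g$ (resp.\ $h\sw g$), then since $f$ and $g_0$ are then both half-infinite sub-paths of the single path $g$ they are $\sw$-comparable, contradicting $f\nsubseteq g_0$ and $g_0\nsubseteq f$. For the betweenness, note that $g\in\mc{A}$ is non-crossing with both $f$ and $h$, so by Lemmas \ref{l:lhd_noncr} and \ref{l:lhd_possibilities}(2) exactly one of $f\lhd g$, $g\lhd f$ holds, and exactly one of $g\lhd h$, $h\lhd g$. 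Suppose for contradiction $g\lhd f$. Then $g(t\star)\leq f(t\star)$ for $t\star\geq\s_f\vee\s_g+$, while from $f\lhd g_0$ and the fact that $g_0$ agrees with $g$ at all $t\star\geq\s_{g_0}+$ we get $f(t\star)\leq g(t\star)$ for $t\star\geq\s_f\vee\s_{g_0}+$; since $\s_g\leq\s_{g_0}$ this forces $f=g$ on $[\s_f\vee\s_{g_0}+,\infty]$. Inspecting this common trajectory at time $\rho:=\s_f\vee\s_{g_0}$, and using \eqref{eq:LR_sets}, Lemma \ref{l:lhd_LR} and the jump structure of $g_0$ there, each branch yields either $L(f)\cap R(g_0)\neq\emptyset$ (contradicting $f\lhd g_0$) or $f\sw g_0$ or $g_0\sw f$ (contradicting the non-extension of $g_0$); in particular, when $g$ comes up to meet $f$ at its initial point and then coincides with $f$, one reads off $f\sw g$, hence $f\sw g_0$ or $g_0\sw f$. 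Thus $f\lhd g$, and the mirror-image argument (reflect space about the origin, as in Definition \ref{d:lhd}) gives $g\lhd h$. This establishes all six conclusions for $g$.

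\textbf{Main obstacle.} The hard part is the verification in Step 3 that passing to a maximal extension does not push $g$ past $f$ on the left or past $h$ on the right. This is delicate precisely because $\lhd$ is not transitive and because extending a path enlarges its sets $L(\cdot)$ and $R(\cdot)$ (Remark \ref{r:preceq_not_compatible}, and the discussion following Lemma \ref{l:sw_compat}); the argument has to exploit that $g$ cannot cross $f$ or $h$, so it is pinned to a single orientation relative to each, and then eliminate the wrong orientation using $f\lhd g_0\lhd h$ and the non-extension of $g_0$ via the $L/R$ bookkeeping of Section \ref{sec:crossing}. The initial-jump cases of Step 2 are a secondary, but more routine, source of technical overhead.
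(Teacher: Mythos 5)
Your Step 1 dichotomy and your treatment of case (A) via Lemma \ref{l:pincer_t_plus} match the paper's proof. The genuine gap is in case (B), where $f$ and $h$ agree at all $t\star\geq\s+$ and separate only through jumps at $\s$. There you invoke Lemma \ref{l:approx_before_jump}, and the paper contains an explicit remark warning that this lemma is \emph{not} suitable here, precisely because the path it produces may be $\sw$-comparable with $f$. Concretely: Lemma \ref{l:approx_before_jump} only yields $g_0(\s-)\leq f(\s-)$ (not the ``strict separation'' you assert), and once you have established $f\lhd g_0$ and $g_0\lhd h$ you get $f(s\bullet)\leq g_0(s\bullet)\leq h(s\bullet)=f(s\bullet)$ for all $s\bullet\geq\s+$, so $g_0$ coincides with $f$ on $[\s+,\infty]$. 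Since $f$ jumps rightwards at $\s$, the segment $[f(\s-),f(\s+)]\times\{\s\}\sw H(f)$ is then contained in $[g_0(\s-),g_0(\s+)]\times\{\s\}\sw H(g_0)$ with the orders matching, i.e.\ $f\sw g_0$ \emph{always} holds for this $g_0$ --- even when $g_0(\s-)<f(\s-)$ strictly. Consequently every maximal extension $g\supseteq g_0$ satisfies $f\sw g$, violating the required conclusion $f\nsubseteq g$. Your fallback (``the borderline possibility that $g_0$ coincides with $f$ ... is excluded'') only rules out literal equality $g_0=f$, not the strict extension $f\subset g_0$, which is the actual failure mode. So Step 2(B) does not produce a usable intermediate path, and Step 3 cannot repair it since its premises ($f\nsubseteq g_0$, $g_0\nsubseteq f$) are false there.

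The paper's actual argument for this case is substantially more delicate: it fixes $\eps>0$ with $f(\s-)+2\eps\leq f(\s+)\wedge h(\s-)$, takes $g_n\in\mc{A}_{\max}$ passing through the points $(f(\s-)+\eps,\s-a_n)$ with $a_n\downarrow 0$ --- i.e.\ strictly to the \emph{right} of $f(\s-)$ and strictly \emph{before} time $\s$ --- and then shows by repeated subsequence extraction that each undesired property ($f\sw g_n$, $g_n\lhd f$, comparability with $h$, etc.)\ can hold for only finitely many $n$, on pain of producing two crossing paths of $\mc{A}$ in the limit. Crucially the path one keeps is some $g_n$ far enough along the sequence, \emph{not} the limit (which would again risk being an extension of $f$). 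You also misplace the main obstacle: the passage to a maximal element in Step 3 is the routine part (in case (A) the witnessing time $s>\s_f\vee\s_h$ is interior to all paths involved, so the Lemma \ref{l:lhd_left_right} argument survives backward extension verbatim); the hard part is manufacturing an intermediate path that is incomparable with $f$ and $h$ when they only separate at their initial jumps.
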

\begin{proof}
Let $f,h$ be as given in the lemma and set $\s=\s_f\vee \s_h$.
By part 2 of Lemma \ref{l:lhd_possibilities}
our conditions on $f$ and $h$ imply
that $h\nlhd f$.
From Lemma \ref{l:lhd_LR} we thus have
$L(h)\cap R(f)\neq\emptyset$.
In particular there exists $t\star\geq\s-$ such that
$L_{t\star}(h)=[-\infty,h(t\star))$ and $R_{t\star}(f)=(f(t\star),\infty]$ with
$f(t\star)<h(t\star)$.

Consider first if $t\star\geq\s+$. Then Lemma \ref{l:pincer_t_plus} implies the existence of
$g\in\mc{A}$ with $f\lhd g$ and $g\lhd f$,
also $f\nsubseteq g$, $g\nsubseteq f$, $g\nsubseteq h$ and $h\nsubseteq g$.
Without loss of generality we may take $g\in\mc{A}_{\max}$,
which completes the proof in this case.

It remains to consider the case $t\star=\s-$.
In this case we have $\s=\s_f$ or $\s=\s_h$.
Without loss of generality let us assume that $\s=s_f$
(or consider space reflected about the origin).
As $R_{\s-}(f)$ is non-empty this implies that $f(t-)<f(t+)$.

\begin{remark}
Let us briefly comment on the strategy for the remainder of the proof.
Although $f$ jumps at $\s$, Lemma \ref{l:approx_before_jump} is not suitable for use here
because (if used to construct $g$) it allows the possibility that $g\sw f$.
Instead, we require a more sophisticated version of the approximation scheme 
used in the proof of Lemma \ref{l:approx_before_jump},
but the path we are looking for here is \textit{not} the limiting path;
rather it is some path that occurs sufficiently close to the limit.
We require a path with several different properties.
To find it, we will repeatedly show that one such desired property can fail only for finitely many $n$,
then (without loss of generality) pass to a subsequence on which the property holds for all $n$.
\end{remark}

As $t\star=\s-$ we have $f(\s-)<h(\s-)$, so 
\begin{equation}
\label{eq:intermediate_ffh}
f(\s-)\;<\;f(\s+)\wedge h(\s-).
\end{equation}
Let $\eps>0$ be such that $f(\s-)+2\eps\leq f(\s+)\wedge h(\s-)$
and let $(a_n)\sw(0,\infty)$ be such that $a_n\to 0$.
Let $(y,s_n)=(f(\s-)+\eps,\s-a_n)$
and note that $(y,s_n)\to (y,\s)$ where $y=f(\s-)+\eps$.
By pervasiveness of $\mc{A}$ let $g_n\in\mc{A}((y,s_n))$
so that
\begin{equation}
\label{eq:intermediate_gngnygngn}
g_n(s_n-)\wedge g_n(s_n+) \;\leq\; y \;\leq\; g_n(s_n-)\vee g_n(s_n+).
\end{equation}
Without loss of generality we may take $g_n\in\mc{A}_{\max}$.
By compactness of $\mc{A}$ we may pass to a subsequence and assume that $g_n\to g\in\mc{A}$.

Consider if $f\sw g_n$ for infinitely many $n$.
For such $n$, noting from \eqref{eq:intermediate_ffh} that $f$ jumps rightwards at $\s$, we have $g_n(\s-)\leq f(\s-)$.
From \eqref{eq:intermediate_gngnygngn} we have $g_n(s_n-)\vee g_n(s_n+)\geq y=f(\s-)+\eps$,
and $s_n\pm<\s-$ with $s_n\to\s$,
so Lemma \ref{l:appdx_2_fntn} gives that
$g$ jumps leftwards at $\s$, from right of $f(\s-)+\eps$ to left of $f(\eps)$.
This would make $f$ and $g\in\mc{A}$ cross, which is a contradiction.
Hence in fact $f\sw g_n$ for at most finitely many $n$,
so we may pass to a subsequence and assume 
$f\nsubseteq g_n$ for all $n$.
If $g_n\sw f$ then we would have $\s_{g_n}\geq\s_f$, which is not the case because $\s_{g_n}\leq s_n<\s_f$.
Hence we also have $g_n\nsubseteq f$ for all $n$.

Consider if $g_n\lhd f$ for infinitely many $n\in\N$.
For such $n$, noting that we have $f\nsubseteq g_n$ and $g_n\nsubseteq f$,
by part 2 of Lemma \ref{l:lhd_possibilities} we have $f\nlhd g_n$.
Hence, for such $n$, using that $R_{\s-}(f)=(f(\s-),\infty]$,
by Lemma \ref{l:lhd_left_right} we must have $g_n(\s-)\leq f(\s-)$.
From \eqref{eq:intermediate_gngnygngn} we have  $y\leq g_n(s_n-)\vee g_n(\s_n+)$, where $s_n<\s$ with $s_n\to \s$.
By Lemma \ref{l:appdx_2_fntn}, taking a limit along a subsequence of such $n$ would result in
$g(\s-)\geq y>f(\s-)\geq g(\s+)$,
in which case $f$ and $g$ cross (by jumping over each other in opposite directions at time $\s$).
This may not occur.
Hence in fact $g_n\lhd f$ for at most finitely many $n$.
By Lemma \ref{l:lhd_noncr} for all $n$ we have $f\lhd g_n$ or $g_n\lhd f$.
We may thus pass to a subsequence and assume that $f\lhd g_n$ for all $n$.

We now have $f\lhd g_n$, $f\nsubseteq g_n$ and $g_n\nsubseteq f$.
We will move on to establishing properties of $g_n$ with $h$.
Here we divide into two cases, based upon whether $h(\s-)\leq h(\s+)$ or $h(\s+)<h(\s-)$.
\begin{itemize}
\item
Firstly, consider if $h(\s+)<h(\s-)$.

Consider if $h(\s-)\leq g_n(\s-)$ for infinitely many $n$.
From \eqref{eq:intermediate_gngnygngn} we have $g_n(s_n-)\wedge g_n(s_n+)\leq y$ and $s_n\pm<\s-$ with $s_n\to\s$,
so by Lemma \ref{l:appdx_2_fntn} we obtain that
$g$ jumps rightwards at $\s$, from left of $y$ to right of $h(s-)$.
This means that $g\in\mc{A}$ crosses $h$, which is a contradiction.
Therefore we may pass to a subsequence and assume that $g_n(\s-)<h(\s-)$ for all $n$.

We have $\s_g\leq s_n<\s$ and $\s_h\leq \s$.
If $h\sw g_n$ then, noting that $h$ jumps leftwards at $\s$, we would have $h_n(\s-)\leq g_n(\s-)$,
which is a contradiction.
Hence $h\nsubseteq g_n$.
Similarly, if $g_n\sw h$ then $\s_h\leq\s_g<\s$, so we would have $g_n(\s-)=h_n(\s-)$,
which is a contradiction, so $g_n\nsubseteq h$.

Consider if $h\lhd g_n$. 
We have already seen that $h\nsubseteq g_n$ and $g_n\nsubseteq h$,
so by part 2 of Lemma \ref{l:lhd_possibilities} we have $g_n\nlhd h$.
We have $L_{\s-}(h)=[-\infty,h(\s-))$ so Lemma \ref{l:lhd_left_right} gives that $h(\s-)\leq g_n(\s-)$,
which again may not occur.
Hence in fact $g_n\lhd h$.

\item
Secondly, consider if $h(\s-)\leq h(\s+)$.

Our assumption $f\lhd h$ implies that $f(\s\bullet)\leq h(s\bullet)$ for all $s\bullet\geq\s+$,
so from what we have already proved (in the case $t\star\geq \s+$) 
we may assume without loss of generality that
$f(s\bullet)=h(s\bullet)$ for all $s\bullet\geq\s+$.
Using that $h(\s-)\leq h(\s+)$ we must therefore have $\s_h<\s$, 
as otherwise by \eqref{eq:intermediate_ffh} we would have $h\sw f$.

By left continuity of $h$ at $\s-$ there exists some $\delta>0$ such that
$h(s\bullet)\geq h(\s-)-\eps/2$ for all $s\bullet\in[\s-,(\s-\delta)+]$.
From \eqref{eq:intermediate_gngnygngn} for all sufficiently large $n$ we have
$s_n\in(\s-\delta,s)$
and
\begin{equation}
\label{eq:intermediate_gngnyh}
g_n(s_n-)\wedge g_n(s_n+) \;\leq\; y \;<\; h(\s-)-\eps/2 \;\leq\; h(s_n\pm).
\end{equation}
It follows immediately that, for such $n$, $g_n\nsubseteq h$ and $h\nsubseteq g_n$.
For sufficiently large $n$ we also have $s_n>\s_h$,
in which case \eqref{eq:intermediate_gngnyh} gives $g_n(s_n-)\in L_{s_n-}(h)$ or $g_n(s_n+)\in L_{s_n+}(h)$.
Hence Lemma \ref{l:lhd_left_right} gives $g_n\lhd h$.
We may thus pass to a subsequence and assume that 
$g_n\nsubseteq h$, $h\nsubseteq g_n$ and $g_n\lhd h$ for all $n$.
\end{itemize}
In both cases we have now shown,
for all $g_n$ within the subsequence that we have passed into,
that
$f\lhd g_n$, $g_n\lhd h$, 
and that none of $f,g_n,h$ are $\sw$-comparable with each other.
Therefore, any such $g_n$ has the required properties 
and the proof of the present lemma is complete.
\end{proof}

\subsection{Weaves of bi-infinite paths}
\label{sec:biinf_weaves}

In this section we establish some geometric properties of deterministic weaves that comprise entirely of bi-infinite paths.
One key result is that if $\mc{A}\sw\Pi^\updownarrow$ is a deterministic weave then set of ramification points of $\mc{A}$ has zero (two dimensional) Lebesgue measure.
This result will be later extended to all deterministic weaves, in Lemma \ref{l:ramification_meas_zero}.
Whenever we refer to Lebesgue measure in this section, we mean two dimensional Lebesgue measure on $\Rc$.

\begin{lemma}
\label{l:Hmeas} 
For each $f\in\Pi$, the set $H(f)$ has zero Lebesgue measure
\end{lemma}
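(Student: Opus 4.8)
The plan is to show that the interpolated graph $H(f)$ of a {\cadlag} path $f$ is a closed set (as established earlier, $H(f)$ is compact in $\Rc$) that meets each horizontal line $\ov\R\times\{t\}$ in a set of (one-dimensional) Lebesgue measure zero, and then conclude by Fubini's theorem. First I would recall that $H(f)=\{(x,t)\-t\in I(f),\ x\in[f(t-),f(t+)]\}$, so for each fixed $t\in I(f)\cap\R$ the horizontal slice $H(f)\cap(\ov\R\times\{t\})$ is exactly the closed interval $[f(t-)\wedge f(t+),\,f(t-)\vee f(t+)]$, which is a single point except at the (at most countably many) times $t$ where $f$ makes a jump. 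Hence for all but countably many $t$ the slice is a single point, which has one-dimensional Lebesgue measure zero; and the countably many exceptional values of $t$ themselves form a set of measure zero on the time axis.

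The core of the argument is then an application of Tonelli/Fubini for the product (two-dimensional) Lebesgue measure on $\ov\R\times\R\sw\Rc$. Writing $\lambda$ for one-dimensional Lebesgue measure and $\lambda_2$ for two-dimensional Lebesgue measure, I would first note that $H(f)$ is measurable (it is compact, hence Borel), so $\lambda_2(H(f))=\int_{\R}\lambda\big(H(f)\cap(\ov\R\times\{t\})\big)\,\di t$. For each $t$ the inner integrand is the length of the interval $[f(t-),f(t+)]$, namely $|f(t+)-f(t-)|$, which is $0$ except on the countable jump set $J=\{t\in I(f)\cap\R\-f(t-)\neq f(t+)\}$. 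Since $\lambda(J)=0$, the integral vanishes, so $\lambda_2(H(f))=0$. (The point $(\ast,\pm\infty)$ carries zero mass by the definition of the measure on $\Rc$, so it contributes nothing.)

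I do not expect any serious obstacle here; the statement is genuinely elementary once the slice structure of $H(f)$ is made explicit. The only mild care needed is (i) confirming measurability of $H(f)$ so Fubini applies — which is immediate from compactness in $\Rc$, already recorded in the excerpt via Lemma~3.1 of \cite{FreemanSwart2023} — and (ii) being slightly careful about the extended values $\pm\infty$: at a time $t$ where, say, $f(t-)=-\infty$ and $f(t+)$ is finite, the slice is $[-\infty,f(t+)]$, which still has infinite $\lambda$-measure; but such $t$ lie in the countable jump set $J$ (a genuine jump, from $-\infty$ to a finite value), so they still contribute nothing after integrating over the null set $J$. Thus the whole argument reduces to: countably many jump times, null on the time axis; every other slice is a single point, null on the space axis; Fubini finishes it.
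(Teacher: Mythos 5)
Your proof is correct and follows essentially the same route as the paper, which only sketches the argument (countably many jump times, Fubini for the rest) and leaves the details to the reader; you have simply written out those details, correctly handling measurability, the extended values $\pm\infty$, and the fact that integrating a possibly infinite slice-length over the countable (hence null) jump set still gives zero.
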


\begin{proof}
This lemma is almost self-evident but
in view of the example of Jordan curves with positive Lebesgue measure
we will give a short proof.
Since {\cadlag} functions have only countably many discontinuities, 
the result holds for the part of $H(f)$ corresponding to jumps of $f$. 
The remaining part of $H(f)$ can be shown to have zero Lebesgue measure via Fubini's theorem.
We leave the details to the reader.
\end{proof}

\begin{lemma}
\label{l:biinf_lr_approx}
Let $\mc{A}\sw\Pi^\updownarrow$ be a deterministic weave and $h\in \mc{A}$.
\begin{enumerate}
\item
If $h(t\star)>-\infty$ for some $t\star\in\R_\mfs$ then
there exist a strictly monotone sequence $(f_n)$ with $f_n\lhd f_{n+1}$ such that
$f_n\to h$.
\item
If $h(t\star)<\infty$ for some $t\star\in\R_\mfs$ then
there exist a strictly monotone sequence $(g_n)$ with $g_{n+1}\lhd g_{n}$ such that
$g_n\to h$.
\end{enumerate}
\end{lemma}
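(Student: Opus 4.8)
The plan is to prove part 1; part 2 then follows by reflecting space about the origin, which sends $\mc A$ to another deterministic weave contained in $\Pi^\updownarrow$, reverses $\lhd$, and interchanges the hypotheses $h(t\star)>-\infty$ and $h(t\star)<\infty$. The key structural facts I would use are: on $\Pi^\updownarrow$ the relation $\lhd$ coincides with pointwise domination (as noted in the proof of Lemma \ref{l:lhd_compat_biinf}), hence is a genuine partial order, and by Lemma \ref{l:lhd_noncr} it is a \emph{total} order on the non-crossing compact set $\mc A$; and, since $\lhd$ is compatible on $\Pi^\updownarrow$ (Lemma \ref{l:lhd_compat_biinf}), every $\lhd$-totally-ordered compact subset $K\sw\mc A$ has a $\lhd$-maximum, because the closed nonempty sets $\{f\in K:g\lhd f\}$, $g\in K$, have the finite intersection property by totality, so compactness of $K$ yields a common point.

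The first substantive step is to exhibit a single element of $\mc A$ lying strictly to the left of $h$. Since $h(t_0\star_0)>-\infty$, right-continuity of $h$ (if $\star_0=+$) or the existence of the left limit (if $\star_0=-$) furnishes a non-degenerate interval of times on which $h>-\infty$; choosing a continuity point $t_0$ of $h$ inside it gives $h(t_0)>-\infty$, and I pick a real number $x_0<h(t_0)$. By pervasiveness there is $f^{(0)}\in\mc A$ passing through $(x_0,t_0)$. Since $\mc A$ is non-crossing, Lemma \ref{l:lhd_noncr} forces $f^{(0)}\lhd h$ or $h\lhd f^{(0)}$; the latter would give $x_0\ge f^{(0)}(t_0-)\wedge f^{(0)}(t_0+)\ge h(t_0)>x_0$, and $f^{(0)}=h$ would give $x_0=h(t_0)$. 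Hence $f^{(0)}\lhd h$ and $f^{(0)}\neq h$, so the compact set $\mc A^{\lhd h}:=\{f\in\mc A:f\lhd h\}$ (compact since $\lhd$ is compatible and $\mc A$ is closed) strictly contains $\{h\}$.

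Next I would run a maximisation scheme producing the approximating sequence. Put $S_n:=\{f\in\mc A^{\lhd h}:d_\Pi(f,h)\ge 1/n\}\cup\{f^{(0)}\}$; each $S_n$ is a compact $\lhd$-totally-ordered subset of $\mc A^{\lhd h}\setminus\{h\}$, the sequence $(S_n)$ is increasing, and I set $f_n:=\max S_n$ (legitimate by the first paragraph). Then $(f_n)$ is $\lhd$-increasing and each $f_n$ satisfies $f_n\lhd h$, $f_n\neq h$. Using sequential compactness of $\mc A$ I pass to a convergent subsequence $f_{n_k}\to f^\ast$; since $(f_m)$ is increasing, $f_n\lhd f_{n_k}$ for $n_k\ge n$, and compatibility of $\lhd$ gives $f^\ast\in\mc A^{\lhd h}$ together with $f_n\lhd f^\ast$ for every $n$.

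It remains — and this is the crux — to show $f^\ast=h$. If not, set $\rho:=d_\Pi(f^\ast,h)>0$; then for $n>1/\rho$ we have $f^\ast\in S_n$, so $f^\ast\lhd f_n$ while also $f_n\lhd f^\ast$, whence $f_n=f^\ast$; and every $e\in\mc A^{\lhd h}\setminus\{h\}$ lies in $S_n$ once $n>1/d_\Pi(e,h)$, so $e\lhd f_n\lhd f^\ast$. Thus $f^\ast$ would be the $\lhd$-maximum of $\mc A^{\lhd h}\setminus\{h\}$. But Lemma \ref{l:lhd_intermediates}, applied to $f^\ast\lhd h$ (both bi-infinite and lying in $\mc A$, hence not crossing $\mc A$, and with $f^\ast\neq h$ so neither $\sw$-extends the other), produces $g\in\mc A_{\max}\sw\mc A$ with $f^\ast\lhd g\lhd h$ and $g\notin\{f^\ast,h\}$; then $g\in\mc A^{\lhd h}\setminus\{h\}$ with $f^\ast\lhd g$, $g\neq f^\ast$, contradicting maximality. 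Hence $f^\ast=h$, so $(f_{n_k})$ is a $\lhd$-increasing sequence of paths all distinct from $h$ yet converging to $h$; being $\lhd$-increasing and not eventually constant, it admits a strictly $\lhd$-increasing sub-subsequence, which after relabelling is the required $(f_n)$. I expect the genuine obstacle to be exactly this last step: a naive chain built from Lemma \ref{l:lhd_intermediates} alone would converge to some $f^\ast\lhd h$ not necessarily equal to $h$, so the construction must be arranged (via the sets $S_n$) so that its limit dominates all of $\mc A^{\lhd h}\setminus\{h\}$, at which point Lemma \ref{l:lhd_intermediates} forces the limit up to $h$; the subsidiary point that the relevant compact subsets of $\mc A$ carry $\lhd$-maxima is where compatibility of $\lhd$ on $\Pi^\updownarrow$ enters.
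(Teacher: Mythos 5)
Your proof is correct and follows essentially the same route as the paper: exhibit one path of $\mc{A}$ strictly to the left of $h$ via pervasiveness at a continuity point, show that the $\lhd$-supremum of the paths strictly left of $h$ must be $h$ itself because otherwise an intermediate path of $\mc{A}$ would contradict maximality, and then extract a strictly monotone subsequence. The only differences are cosmetic: the paper takes the unique maximal element of the closure of $L=\{f\in\mc{A}\-f\lhd h,\ f\neq h\}$ and invokes Lemma \ref{l:pincer_t_plus} directly, whereas you realise the same supremum as a limit of maxima of the truncated compact sets $S_n$ and invoke Lemma \ref{l:lhd_intermediates} (which itself rests on Lemma \ref{l:pincer_t_plus}); both are legitimate and non-circular.
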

\begin{proof}
We will show only the existence and properties of $(f_n)$.
The corresponding statements for $(g_n)$ follow by symmetry.
By Lemma \ref{l:Amax_order} $(\mc{A},\lhd)$ is totally ordered.
Let $h\in\mc{A}$ and set $L=\{f'\in\mc{A}\-f'\lhd h, f'\neq h\}$.
As $h(\star)>-\infty$,
by the {\cadlag} property of $h\in\Pi^\updownarrow$ there exists some $s\in\R$ such that 
$h$ is continuous at $s$ and $h(s)>-\infty$.
Taking $g\in\mc{A}((h(s)/2,s))$ gives $g\in L$, so $L$ is non-empty.
By compactness of $\mc{A}$ the set $\ov{L}$ is compact, 
which by Lemma \ref{l:lhd_compat} 
implies that $(\ov{L},\lhd)$ contains a unique maximal element $f$.
By Lemma \ref{l:lhd_compat} we have $f\lhd h$.

Suppose, in preparation for an argument by contradiction, that $f\neq h$.
Since $f,h$ are both bi-infinite there exists $t\in\R$ such that $f(t+)<h(t+)$.
By Lemma \ref{l:pincer_t_plus} and using that $\mc{A}\sw\Pi^\updownarrow$, 
there exists $g\in\mc{A}$ such that $f\neq g$ and $g\neq h$.
This is a contradiction to maximality of $f$ in $\ov{L}$.

We thus have $f=h$, which by definition of $L$ implies that there exists $(f_n)\sw L$ such that $f_n\to h$,
with $f_n\neq h$ for all $n$.
Without loss of generality we may choose a strictly monotone subsequence, which completes the proof.
\end{proof}

Lemma \ref{l:biinf_lr_approx} fails for general deterministic weaves, 
which may contain paths that are isolated points
(from the left, right or both).
For example see the weave $\mc{A}$ on the right hand side of Figure \ref{fig:web_dc}.
We will shortly show, as a consequence of Lemma \ref{l:biinf_vs_01},
that if a deterministic weave consists entirely of bi-infinite paths
then it does not contain any isolated points.

\begin{lemma}
\label{l:lhd_vs_M1}
Let $\mc{A}\sw\Pi^\updownarrow$ be a deterministic weave. 
The order topology induced on $\mc{A}$ by the total order $\lhd$ coincides with its topology as a subspace of $\Pi$.
\end{lemma}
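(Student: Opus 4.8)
The plan is to realise both topologies on $\mathcal{A}$ through a single map and then exploit compactness. Since $\mathcal{A}\sw\Pi^\updownarrow$ consists of bi-infinite paths, every element of $\mathcal{A}$ is $\sw$-maximal, so $\mathcal{A}_{\max}=\mathcal{A}$ and Lemma \ref{l:Amax_order} tells us that $(\mathcal{A},\lhd)$ is a total order (in particular $\lhd$ is reflexive, transitive and antisymmetric on $\mathcal{A}$). Write $\mathscr{T}_\lhd$ for the associated order topology and $\mathscr{T}_\Pi$ for the subspace topology inherited from $(\Pi,d_\Pi)$. Two general facts do the bulk of the work: the order topology of any totally ordered set is Hausdorff; and $(\mathcal{A},\mathscr{T}_\Pi)$ is compact, being a compact subset of $\Pi$. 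Hence, writing $\iota$ for the identity map on $\mathcal{A}$, it suffices to show that $\iota\colon(\mathcal{A},\mathscr{T}_\Pi)\to(\mathcal{A},\mathscr{T}_\lhd)$ is continuous: a continuous bijection from a compact space onto a Hausdorff space is automatically a homeomorphism, whence $\mathscr{T}_\Pi=\mathscr{T}_\lhd$.

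To prove continuity of $\iota$ it is enough to check that the subbasic open sets of $\mathscr{T}_\lhd$ lie in $\mathscr{T}_\Pi$. The order topology $\mathscr{T}_\lhd$ admits as a subbasis the family of all rays $\{f\in\mathcal{A}:f\lhd g,\ f\neq g\}$ and $\{f\in\mathcal{A}:g\lhd f,\ f\neq g\}$ with $g\in\mathcal{A}$, so I would show each of these is $\mathscr{T}_\Pi$-open. Here the key input is Lemma \ref{l:lhd_compat}: a deterministic weave $\mathcal{A}\sw\Pi^\updownarrow\sw\Pi^\uparrow$ is non-crossing and closed, so $\lhd$ is compatible with $(\mathcal{A},d_\Pi)$, i.e.\ $\{(f,g)\in\mathcal{A}^2:f\lhd g\}$ is closed in $\mathcal{A}^2$. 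Consequently, for each fixed $g\in\mathcal{A}$, both $\{f\in\mathcal{A}:f\lhd g\}$ and $\{f\in\mathcal{A}:g\lhd f\}$ are $\mathscr{T}_\Pi$-closed, being preimages of that closed set under the continuous maps $f\mapsto(f,g)$ and $f\mapsto(g,f)$. Using totality and antisymmetry of $\lhd$ on $\mathcal{A}$ one has $\{f:\neg(g\lhd f)\}=\{f:f\lhd g,\ f\neq g\}$ and $\{f:\neg(f\lhd g)\}=\{f:g\lhd f,\ f\neq g\}$, so both rays are complements of $\mathscr{T}_\Pi$-closed sets and hence $\mathscr{T}_\Pi$-open. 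This establishes $\mathscr{T}_\lhd\sw\mathscr{T}_\Pi$ and therefore the continuity of $\iota$.

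Combining the two paragraphs completes the argument. The only delicate points — the nearest thing to an obstacle — are organisational rather than analytic: one must be sure that $\lhd$ really is a total order on $\mathcal{A}$ (this needs $\mathcal{A}=\mathcal{A}_{\max}$, which is exactly where bi-infiniteness is used, via Lemma \ref{l:Amax_order}), so that the complement computation identifying the open rays is valid, and one must verify that the hypotheses of Lemma \ref{l:lhd_compat} — non-crossing and closed — are precisely the defining properties of a deterministic weave contained in $\Pi^\updownarrow$. No new estimates are needed; the substantive work has already been carried out in Lemmas \ref{l:Amax_order} and \ref{l:lhd_compat}.
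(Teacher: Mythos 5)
Your proof is correct, and while the first half coincides with the paper's argument, the second half takes a genuinely different and shorter route. Both you and the paper establish $\mathscr{T}_\lhd\sw\mathscr{T}_\Pi$ in the same way: $(\mc{A},\lhd)$ is totally ordered by Lemma \ref{l:Amax_order} (your observation that $\mc{A}=\mc{A}_{\max}$ because bi-infinite paths are $\sw$-maximal is exactly the point where bi-infiniteness enters), and compatibility of $\lhd$ with $(\mc{A},d_\Pi)$ from Lemma \ref{l:lhd_compat} makes the sets $\{f\-g\lhd f\}$ M1-closed, so the open rays are M1-open. For the reverse inclusion the paper argues directly: given an M1-closed $B\sw\mc{A}$ and an order-convergent sequence $h_n\to h$ in $B$, it uses Lemma \ref{l:biinf_lr_approx} to sandwich $h$ between strictly monotone M1-approximating sequences $f_m\nearrow h$, $g_m\searrow h$, and deduces via Lemma \ref{l:lhd_compat} that every M1-limit point of $(h_n)$ equals $h$. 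You instead observe that the identity is a continuous bijection from the compact space $(\mc{A},\mathscr{T}_\Pi)$ onto the Hausdorff space $(\mc{A},\mathscr{T}_\lhd)$ and is therefore a homeomorphism. This bypasses Lemma \ref{l:biinf_lr_approx} entirely and is cleaner; it is essentially the same compact-to-Hausdorff device the paper deploys one lemma later in the proof of Lemma \ref{l:biinf_vs_01}. What the paper's longer argument buys is the explicit two-sided monotone approximation of paths in $\mc{A}$, which it reuses elsewhere, but for the statement at hand your argument is complete: the only hypotheses you need (compactness and the non-crossing property of $\mc{A}$, so that Lemma \ref{l:lhd_compat} applies and $\lhd$ is a total order on $\mc{A}$) are precisely the defining properties of a deterministic weave contained in $\Pi^\updownarrow$.
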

\begin{proof}
By Lemma \ref{l:Amax_order}, $(\mc{A},\lhd)$ is totally ordered.
Recall that the order topology on $\mc{A}$ is generated by the open rays
\begin{align*}
R_f&=\{g\in\mc{A}\-g\lhd f\text{ and }f\neq g\}, \\
R'_f&=\{g\in\mc{A}\-f\lhd g\text{ and }f\neq g\}
\end{align*}
where $f\in\mc{A}$.
We will show that $R_f$ is open in the M1 topology on $\mc{A}$. 
The same result follows for $R'_f$ by a symmetrical argument.
Note that if $f$ is the bi-infinite path with constant value at $-\infty$ then 
$R_f=\emptyset$ and $R'_f=\mc{A}\sc\{f\}$,
which are automatically open.
Similar considerations apply to if $f$ is the bi-infinite path with constant value $\infty$.
We may therefore restrict to $f\in\mc{A}$ such that $f(t\star)>-\infty$ for some $t\star\in\R_\mfs$.

By Lemma \ref{l:Amax_order} $\mc{A}$ is totally ordered, from which it follows that
$\mc{A}\sc R_g=\{f\in\mc{A}\-g\lhd f\}$.
By Lemma \ref{l:lhd_compat} this is a closed subset of $\mc{A}$ in the M1 topology,
thus $R_g$ is an open subset of $\mc{A}$ in the M1 topology.
It follows that any subset of $\mc{A}$ that is open in the order topology is also open in the M1 topology,
and it remains to prove the converse.

It suffices to show that if $B\sw\mc{A}$ is closed in the M1 topology, then it is also closed in the order topology.
Let us write $\stackrel{M1}{\to}$ for convergence in the M1 topology and $\stackrel{\lhd}{\to}$ for convergence in the order topology.
Let $B\sw\mc{A}$ be closed in the M1 topology i.e.~if $f_n\in B$ and $f_n\stackrel{M1}{\to} f\in\mc{A}$ then $f\in B$.
Suppose that $h_n\in B$ and $h_n\stackrel{\lhd}{\to} h\in\mc{A}$.
By Lemma \ref{l:biinf_lr_approx} there exists $f_n,g_n\in\mc{A}$ such that $f_n\lhd f_{n+1}$, $g_{n+1}\lhd g_n$ for all $n$,
and $f_n\stackrel{M1}{\to} h$, $g_n\stackrel{M1}{\to} h$ as $n\to\infty$.
For each $m\in\N$ the set
$$O_m=\{h'\in\mc{A}\-f_m\lhd h', h'\lhd g_m, f_m\neq h', h'\neq g_m\}$$
is an open interval in the order topology,  
hence there exists $N_m\in\N$ such that for all $n\geq N_m$ we have $h_n\in O_{N_m}$.
Without loss of generality we may assume $N_m\to\infty$ as $m\to\infty$.
We thus have that for all $n\geq N_m$
\begin{equation}
\label{eq:fmhngm}
f_m\lhd h_n 
\quand
h_n\lhd g_m.
\end{equation}
Let $h'$ be any limit point of $(h_n)$ in the M1 topology,
thus $h'\in\Pi^\updownarrow$
Letting $m\to\infty$ in \eqref{eq:fmhngm}, by Lemma \ref{l:lhd_compat} we obtain 
$h\lhd h'$ and $h'\lhd h$.
Since both are bi-infinite, we have $h=h'$.
It follows that $h_n\stackrel{M1}{\to} h$,
which (since $B$ is closed in the M1 topology) shows that $h\in B$, and thus completes the proof.
\end{proof}

\begin{lemma}
\label{l:biinf_vs_01}
Let $\mc{A}\sw\Pi^\updownarrow$ be a deterministic weave. 
There exists an
order preserving homeomorphism $\phi$ between the totally ordered spaces $(\mc{A},\lhd)$ and $([0,1],\leq)$.
\end{lemma}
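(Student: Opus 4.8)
The plan is to verify that $(\mc{A},\lhd)$, equipped with its order topology, is a compact, connected, separable linearly ordered space with at least two points, and then to identify any such space with $[0,1]$ by a back-and-forth argument. Since both $(\mc{A},\lhd)$ and $([0,1],\leq)$ carry their respective order topologies, any order-isomorphism between them is automatically a homeomorphism; so the whole task reduces to producing an order-isomorphism $\phi$.

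First I would collect the structural facts. As every path in $\mc{A}\sw\Pi^\updownarrow$ is bi-infinite, no element of $\mc{A}$ properly extends another, so $\mc{A}=\mc{A}_{\max}$, and Lemma \ref{l:Amax_order} gives that $(\mc{A},\lhd)$ is totally ordered. By Lemma \ref{l:lhd_vs_M1} its subspace topology coincides with the order topology of $\lhd$. Since $\mc{A}\in\mc{K}(\Pi)$ is a compact subset of the Polish space $\Pi$, it is compact and metrisable, hence separable. Pervasiveness forces $|\mc{A}|\geq 2$: a single {\cadlag} path is continuous off a countable set of times, so it cannot pass through every point of $\Rc$. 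A compact linearly ordered space has a least and a greatest element (otherwise the open rays lying strictly below, respectively strictly above, the points of $\mc{A}$ form an open cover with no finite subcover). The relation $\lhd$ is closed on $\mc{A}$ by Lemma \ref{l:lhd_compat}, so in $\mc{A}$ every nonempty subset has a supremum (the nonempty closed set of its upper bounds has a least element, by the same ray argument); thus $(\mc{A},\lhd)$ is Dedekind complete. Finally, $(\mc{A},\lhd)$ is order-dense: given $f\lhd h$ in $\mc{A}$ with $f\neq h$, the paths being bi-infinite we have $f\nsubseteq h$ and $h\nsubseteq f$, so Lemma \ref{l:lhd_intermediates} produces $g\in\mc{A}_{\max}=\mc{A}$ with $f\lhd g\lhd h$ and $g\notin\{f,h\}$. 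A Dedekind-complete, order-dense linearly ordered space has no jumps and no gaps, hence its order topology is connected.

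Next I would run Cantor's back-and-forth construction. Fix a countable dense $D\sw\mc{A}$ and enlarge it to include $\min\mc{A}$ and $\max\mc{A}$. Using order-density together with density of $D$, one checks that $D$ is order-dense in $\mc{A}$, i.e.\ every nonempty open interval $(a,b)$ with $a\lhd b$ meets $D$ (apply order-density twice to obtain $a\lhd a_1\lhd a_2\lhd b$, so that $(a_1,a_2)$ is a nonempty open set, which therefore meets the dense set $D$). Thus $(D,<)$ is a countable dense linear order with two endpoints, which is order-isomorphic to $(\Q\cap[0,1],\leq)$ via some $\phi_0$ by Cantor's theorem (the version for linear orders with endpoints). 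One then verifies that $\mc{A}$ is the Dedekind completion of $D$: every $x\in\mc{A}$ equals $\sup\{d\in D\-d\lhd x\}$ (were this supremum some $y\lhd x$, order-density and density of $D$ would supply a point of $D$ strictly between $y$ and $x$, a contradiction), and the endpoints lie in $D$ by construction; likewise $[0,1]$ is the Dedekind completion of $\Q\cap[0,1]$. Hence $\phi_0$ extends uniquely to an order-isomorphism $\phi\colon\mc{A}\to[0,1]$, which is the desired order-preserving homeomorphism.

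The hard part is not any single deep fact but the correct handling of the linearly-ordered-topological-space folklore: deducing Dedekind completeness and then connectedness of a compact linearly ordered space from compactness together with closedness of $\lhd$ (Lemma \ref{l:lhd_compat}), and checking that the Dedekind completion of the countable dense suborder $D$ recovers $\mc{A}$ exactly, including the behaviour at $\min\mc{A}$ and $\max\mc{A}$. All of the weave-specific input — the total order, the coincidence of the order and M1 topologies, and order-density — is already supplied by Lemmas \ref{l:Amax_order}, \ref{l:lhd_vs_M1} and \ref{l:lhd_intermediates}, so once this order-theoretic scaffolding is in place the theorem follows.
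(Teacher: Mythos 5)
Your proof is correct, but it takes a genuinely different route from the paper's. The paper constructs the homeomorphism explicitly: it fixes a fully supported measure $\mu$ on $\Rc$, absolutely continuous with respect to Lebesgue measure, and sets $\phi(f)=\mu(H^-(f))$, the mass strictly to the left of the interpolated graph of $f$; monotonicity is immediate, continuity follows from Hausdorff convergence $H(f_n)\to H(f)$ together with Lemma \ref{l:Hmeas}, injectivity from the fact that two distinct non-crossing bi-infinite paths enclose a region of positive Lebesgue measure, and the homeomorphism property then comes from compactness. You instead verify that $(\mc{A},\lhd)$ is a compact, separable, Dedekind-complete, densely ordered linearly ordered topological space with endpoints and at least two points, and invoke the classical order-theoretic characterization of $[0,1]$ via Cantor's back-and-forth on a countable dense suborder followed by Dedekind completion. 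The weave-specific inputs you need are all legitimately available at this point without circularity: Lemma \ref{l:Amax_order} for the total order (together with the observation that $\mc{A}=\mc{A}_{\max}$, since distinct bi-infinite paths are never $\sw$-comparable), Lemma \ref{l:lhd_vs_M1} for the identification of the order and M1 topologies, Lemma \ref{l:lhd_compat} for closedness of $\lhd$ (which drives your completeness argument), and Lemma \ref{l:lhd_intermediates} for order-density. The trade-off is that the paper's construction yields a concrete formula and delivers the topological identification as a by-product (Lemma \ref{l:lhd_vs_M1} is logically a corollary of the explicit $\phi$), and the explicit $\phi$ is reused in the proof of Lemma \ref{l:biinf_ramification_meas_zero}, whereas your argument must import Lemma \ref{l:lhd_vs_M1} as a separate input and carry the standard but somewhat lengthy order-topology folklore; in exchange you avoid checking injectivity, surjectivity and continuity of any particular map, and your treatment of connectedness (via completeness plus density) is arguably more transparent than the paper's appeal to surjectivity. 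Both arguments are sound.
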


\begin{proof}
Note that the result of Lemma \ref{l:lhd_vs_M1} is implicit in the statement of the present lemma.
Throughout the proof we will use the result of Lemma \ref{l:Amax_order}, that $(\mc{A},\lhd)$ is totally ordered.
For $f\in\Pi^\updownarrow$ we define
\begin{align*}
H^-(f)&=\{(x,t)\-t\in x<f(t-)\wedge f(t+)\}, \\
H^+(f)&=\{(x,t)\-t\in x>f(t-)\vee f(t+)\}.
\end{align*}
Note that $\ov{\R}\times\ov{\R}=H^-(f)\cup H(f)\cup H^+(f)$ 
and that this union is disjoint.
Recall that $d_{\Rc}$ is a metric that generates the topology on $\R^2_{\rm c}$ 
and recall that for $A\sub\R^2_{\rm c}$ the open $\epsilon$-expansion of $A$ is given by 
$A^{(\eps)}=\{z\in\R^2_{\rm c}:\dist_{\Rc}(z,A)<\eps\}$,
where $\dist(z,A)=\inf_{a\in A} d_{\Rc}(z,a)$.
Let $\mu$ be a measure on $\Rc$ that is absolutely continuous with respect to Lebesgue measure, with full support.
Let $\phi(f)=\mu(H^-(f))$.

It is immediate that $\phi$ is non-decreasing and that $\phi(f_{-\infty})=0$, $\phi(f_\infty)=1$
where $f_{\pm\infty}$ are the constant paths at $\pm\infty$ (it is trivial to check that $f_{\pm\infty}\in\mc{A}$).
We next show that $\phi$ is continuous. 
Assume that $f_n, f\in\mc{A}$ and that $f_n\to f$. 
From our remarks above Proposition \ref{p:J1M1} we have $H(f_n)\to H(f)$ in the Hausdorff metric induced by $d$.
Hence, for each $\eps>0$, 
for sufficiently large $n$ we have both $H^-(f_n)\sw H^-(f)^{(\eps)}$ and $H^+(f_n)\sw H^+(f)^{(\eps)}$.
Thus
$$
\limsup_{n\to\infty}\mu\big(H^-(f_n)\big)\leq\mu\big(H^-(f)^{(\eps)}\big)
\quand
\limsup_{n\to\infty}\mu\big(H^+(f_n)\big)\leq\mu\big(H^+(f)^{(\eps)}\big).
$$
Letting $\eps\down 0$ and using the fact that
$\phi_\mu(\pi)=\mu\big(H^-(\pi)\big)=1-\mu\big(H^+(\pi)\big)$ by
Lemma~\ref{l:Hmeas}, we see that $\phi_\mu(\pi_n)\to\phi_\mu(\pi)$ as
$n\to\infty$.

Our next goal is to show that $\phi_\mu$ is a bijection.
From what we have already proved, $\phi$ is surjective and non-decreasing,
so it suffices to prove that if $f\lhd g$ with $f\neq g$ then $\phi(f)<\phi(g)$.
If $f\lhd g$ are not equal then, since both are bi-infinite, 
there exists $t\in\R$ such that $f(t+)<g(t+)$,
from which it follows by right continuity that the set
$O=\{(x,s)\-f(t+)<x<g(t+)\}$ has non-empty interior,
and thus positive Lebesgue measure.
Since $O\sw H^-(g)\sc H^-(f)$ and $\mu$ is absolutely continuous with Lebesgue measure,
we have $\phi(f)<\phi(g)$, as required.

We have now shown that $\phi$ is a continuous bijection from the compact space $\mc{A}$ to (the Hausdorff topological space) $[0,1]$,
which implies that $\phi$ is a homeomorphism.
The fact that $\phi$ is non-decreasing and bijective implies that $\phi$ is order preserving.
\end{proof}

\begin{lemma}
\label{l:biinf_ramification_meas_zero}
The function $(\mc{A},z)\to\1\{z\text{ is ramified in }\mc{A}\}$
is measurable from $\mc{K}(\Pi)\times\Rc\to \{0,1\}$.
Moreover, for any deterministic weave $\mc{A}\sw\Pi^\updownarrow$
the set of ramification points of $\mc{A}$ has zero Lebesgue measure.
\end{lemma}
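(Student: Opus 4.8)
The plan is to prove the two assertions separately: first that the set $\{(\mc{A},z)\in\mc{K}(\Pi)\times\Rc: z\text{ is ramified in }\mc{A}\}$ is Borel, and then that this set is Lebesgue-null for bi-infinite deterministic weaves.

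\emph{Measurability.} I would first record three closedness facts. The set $\{(\mc{A},f)\in\mc{K}(\Pi)\times\Pi: f\in\mc{A}\}$ is closed (from $\dist(f,\mc{A})\le\dist(f,f_n)+d_{\mc{K}(\Pi)}(\mc{A}_n,\mc{A})$); the set $\{(z,f)\in\Rc\times\Pi: z\in H(f)\}$ is closed, since M1-convergence $f_n\to f$ gives $H^{(2)}(f_n)\to H^{(2)}(f)$ and hence $H(f_n)\to H(f)$ in $\mc{K}(\Rc)$ by Proposition~\ref{p:J1M1} and the remarks following it, so $z_n\in H(f_n)$ with $z_n\to z$ forces $z\in H(f)$; and, writing $D=\{(f,g)\in\Pi^2: f\sw g\text{ or }g\sw f\}$, the set $D$ is closed by Lemma~\ref{l:sw_compat}. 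It follows that $G=\{(\mc{A},z,f,g): f,g\in\mc{A}(z)\}$ is closed, being an intersection of preimages of the first two closed sets under coordinate projections. Writing $D^c=\bigcup_{m\ge1}F_m$ with $F_m=\{(f,g): \dist_{\Pi^2}((f,g),D)\ge1/m\}$ closed, Definition~\ref{d:ramified} gives that $z$ is ramified in $\mc{A}$ iff $(\mc{A},z)\in\bigcup_m E_m$, where $E_m$ is the projection of the closed set $G\cap(\mc{K}(\Pi)\times\Rc\times F_m)$ to the first two coordinates. Each $E_m$ is closed: if $(\mc{A}_n,z_n)\to(\mc{A},z)$ with witnesses $(f_n,g_n)$, then $\ov{\mc{A}\cup\bigcup_n\mc{A}_n}$ is sequentially compact — a Hausdorff-convergent sequence of compact sets has relatively compact union — so $(f_n,g_n)$ has a convergent subsequence whose limit again lies in $G\cap(\mc{K}(\Pi)\times\Rc\times F_m)$ by closedness. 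Hence the ramified set is $F_\sigma$, so its indicator is Borel measurable.

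\emph{The null claim.} Fix a deterministic weave $\mc{A}\sw\Pi^\updownarrow$ and let $\phi:\mc{A}\to[0,1]$ be the order-preserving homeomorphism of Lemma~\ref{l:biinf_vs_01}, so in particular $(\mc{A},\lhd)$ is totally ordered. For bi-infinite $f,g$ one has $f\sw g\iff f=g$ (agreement of $f$ and $g$ on their common co-countable set of continuity points, plus right-continuity), so by Definition~\ref{d:ramified} a point $z$ is ramified in $\mc{A}$ iff $\mc{A}(z)$ contains two distinct paths. Moreover $\mc{A}(z)$ is order-convex in $(\mc{A},\lhd)$: if $f\lhd g\lhd h$ with $f,h\in\mc{A}(z)$ then, all three being bi-infinite and non-crossing, $f\le g\le h$ pointwise, which for $z=(x,t)$ gives $g(t-)\le h(t-)\le x\le f(t+)\le g(t+)$, i.e.\ $z\in H(g)$. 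Hence $\phi(\mc{A}(z))$ is a sub-interval of $[0,1]$, and if $z$ is ramified it contains at least two points and so has positive length. Setting $m(z)$ equal to this length, we conclude that the ramified set is contained in $\{z: m(z)>0\}$.

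\emph{Conclusion, and the main obstacle.} Since $\phi(\mc{A}(z))=\{s\in[0,1]: z\in H(\phi^{-1}(s))\}$, we have $m(z)=\int_0^1\1\{z\in H(\phi^{-1}(s))\}\,ds$, with the integrand jointly Borel (the closed set from the measurability step composed with the continuous map $s\mapsto\phi^{-1}(s)$). Writing $\lambda$ for two-dimensional Lebesgue measure on $\Rc$, Tonelli's theorem together with Lemma~\ref{l:Hmeas} gives
\[
\int_{\Rc}m(z)\,\lambda(dz)=\int_0^1\lambda\big(H(\phi^{-1}(s))\big)\,ds=0,
\]
so $\{z: m(z)>0\}$ is $\lambda$-null, and therefore so is its subset, the ramified set. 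I expect the main obstacle to be the measurability statement: the non-comparability condition defining ramification is open and is quantified existentially over the pair $(f,g)$, so one cannot simply intersect closed sets; the $F_\sigma$ decomposition of $D^c$ combined with compactness of the union of a Hausdorff-convergent sequence of compacta is what makes the relevant projections closed. Once Lemmas~\ref{l:Hmeas} and~\ref{l:biinf_vs_01} are available, the null claim itself is short.
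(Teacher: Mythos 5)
Your proof is correct, and the null-set half is essentially the paper's argument: the same two ingredients (the order-preserving homeomorphism of Lemma~\ref{l:biinf_vs_01} and the fact from Lemma~\ref{l:Hmeas} that $H(f)$ is Lebesgue-null) combined by Tonelli, where the paper phrases the identical computation probabilistically via an independent uniform $U$ and a random point $Z$ with absolutely continuous law. The measurability half is where you genuinely diverge. The paper reduces to the measurable map $(A,z)\mapsto A(z)$ of Lemma~\ref{l:meas_Az} and observes that non-ramification of $z$ is the event $A(z)\in\mathscr{A}$, where $\mathscr{A}=\{B\in\mc{K}(\Pi):\exists f\text{ with }B\sw\{f\}_\uparrow\}$ is closed; you instead exhibit the ramified set directly as an $F_\sigma$, writing the open non-comparability set as a countable union of closed sets $F_m$ and showing each projection $E_m$ is closed via relative compactness of the union of a Hausdorff-convergent sequence of compacta (Lemma~\ref{l:relcom_Pi_KPi}). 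Your route avoids the heavy Lemma~\ref{l:meas_Az} entirely and yields the slightly stronger conclusion that the ramified set is $F_\sigma$ rather than merely Borel; the paper's route buys reusability, since $(A,z)\mapsto A(z)$ is needed elsewhere anyway and the closed set $\mathscr{A}$ packages non-ramification once and for all.

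One small slip worth fixing: in the order-convexity argument the displayed chain $g(t-)\le h(t-)\le x\le f(t+)\le g(t+)$ does not hold in general, since $f,h\in\mc{A}(z)$ only gives $f(t-)\wedge f(t+)\le x\le f(t-)\vee f(t+)$ and likewise for $h$, not $h(t-)\le x\le f(t+)$. The claim itself is fine: if $x>g(t-)\vee g(t+)$ then $x>f(t-)\vee f(t+)$ (contradicting $f\in\mc{A}(z)$), and if $x<g(t-)\wedge g(t+)$ then $x<h(t-)\wedge h(t+)$ (contradicting $h\in\mc{A}(z)$), so $z\in H(g)$. This is the argument the paper leaves implicit when asserting that $\mc{A}(z)$ is an interval of $(\mc{A},\lhd)$.
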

\begin{proof}
Let $\mathscr{A}=\{B\in\mc{K}(\Pi)\-\exists f\in A\text{ with }B\sw \{f\}_\uparrow\}$
and let $\ram(\mc{A})\sw\Rc$ denote the set of ramification points of $\mc{A}$.
Note that $z\in\Rc$ is non-ramified if and only if $A(z)\in\mathscr{A}$.
It is straightforward to check that $\mathscr{A}\sw\mc{K}(\Pi)$ is closed,
as a consequence of Lemma \ref{l:appdx_1_sw_limits}.
From Lemma \ref{l:meas_Az} the map
$(A,z)\mapsto A(z)$ from $\mc{K}(\Pi)\times\Rc\to\mc{K}(\Pi)$ is measurable.
We have
$\1\{z\in\ram(\mc{A})\}=\1\{\mc{A}(z)\in\mathscr{A}\}$,
hence that the map $(\mc{A},z)\mapsto \1\{z\in\ram(A)\}$ is measurable.
It follows immediately that $\ram(\mc{A})$ is a measurable subset of $\Rc$,
for any $\mc{A}\in\mathscr{W}_{\det}$.

It remains to show that the measure of $\ram(A)$ is zero.
Take $\phi:\mc{A}\to[0,1]$ as in the statement of Lemma \ref{l:biinf_vs_01},
and let $z\in\ov{\R}\times\ov\R_\mfs$. 
By Lemma \ref{l:appdx_1_sw_limits} the set $\mc{A}(z)$ is closed.
By definition of $\lhd$ (and the fact that $\mc{A}\sw\Pi^\updownarrow$) we have that $\mc{A}(z)$ is an interval of the totally ordered space $(\mc{A},\lhd)$.
Thus $\mc{A}(z)$ is a closed interval of $(\mc{A},\lhd)$ and $\phi(\mc{A}(z))$ is a closed interval of $[0,1]$.

Let $U$ be uniformly distributed on $[0,1]$.
Note that $z\in\Rc$ is ramified 
if and only if the closed interval $\mc{A}(z)$ is more than just a single point,
which occurs if and only if $\P[U\in \phi(\mc{A}(z))]>0$.
Let $\mu$ be a measure on $\Rc$ that is absolutely continuous with respect to Lebesgue measure.
Let $Z$ be a random variable with law $\mu$, independently of $U$.
By Lemma \ref{l:meas_Az} $\mc{A}(Z)$ is a $\mc{K}(\Pi)$ valued random variable.
Then $\P[U\in\phi(\mc{A}(Z))]=\P[\phi^{-1}(U)\in\mc{A}(Z)]$ is the probability that 
the random path $\phi^{-1}(U)$ passes through the random point $Z$.
Recalling that $\mu$ is absolutely continuous with respect to Lebesgue measure, by Lemma \ref{l:Hmeas} this probability is zero.
Thus $Z$ is almost surely not ramified,
which implies that the set of ramification points has zero Lebesgue measure.
\end{proof}

\begin{lemma}
\label{l:biinf_noncr}
Let $\mc{A}\sw\Pi^\updownarrow$ be a deterministic weave.
If $h\in\Pi^\updownarrow$ does not cross $\mc{A}$ then $h\in \mc{A}$.
\end{lemma}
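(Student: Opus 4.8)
The plan is to use the homeomorphism $\phi:(\mc{A},\lhd)\to([0,1],\leq)$ from Lemma~\ref{l:biinf_vs_01} together with the intermediate-point result Lemma~\ref{l:lhd_intermediates} to show that any $h\in\Pi^\updownarrow$ not crossing $\mc{A}$ must coincide with a path already in $\mc{A}$. Suppose $h\in\Pi^\updownarrow$ does not cross $\mc{A}$, and for contradiction suppose $h\notin\mc{A}$. Consider the two sets
$$L=\{f\in\mc{A}\-f\lhd h\},\qquad R=\{f\in\mc{A}\-h\lhd f\}.$$
By Lemma~\ref{l:lhd_noncr} every $f\in\mc{A}$ is non-crossing with $h$, hence $f\lhd h$ or $h\lhd f$, so $L\cup R=\mc{A}$. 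Since $h$ is bi-infinite and $h\notin\mc{A}$, no $f\in\mc{A}$ satisfies $f\sw h$ or $h\sw f$, so by Lemma~\ref{l:lhd_possibilities}(2) we cannot have both $f\lhd h$ and $h\lhd f$; thus $L$ and $R$ partition $\mc{A}$. Using that $\mc{A}\sw\Pi^\updownarrow$, the constant path $f_{-\infty}$ lies in $L$ and $f_{\infty}$ lies in $R$, so both are non-empty.

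First I would show $L$ and $R$ are closed in $\mc{A}$. This is exactly where Lemma~\ref{l:lhd_compat} is used: if $f_n\in L$ with $f_n\to f\in\mc{A}$, then $f_n\lhd h$ for all $n$, and since $f$ and $h$ do not cross (as $f\in\mc{A}$) the ``moreover'' clause of Lemma~\ref{l:lhd_compat} gives $f\lhd h$, i.e.\ $f\in L$; symmetrically $R$ is closed. Now transport this through $\phi$: $\phi(L)$ and $\phi(R)$ are disjoint nonempty closed subsets of $[0,1]$ covering $[0,1]$. By connectedness of $[0,1]$ this is a contradiction --- unless one of $L,R$ is in fact all of $\mc{A}$, which is impossible since both $f_{\pm\infty}$ lie in distinct pieces. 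Wait: the clean way is that $\phi(L),\phi(R)$ being closed, disjoint and covering the connected space $[0,1]$ forces one to be empty, contradicting $f_{-\infty}\in L$, $f_\infty\in R$. Hence no such $h$ exists outside $\mc{A}$, i.e.\ $h\in\mc{A}$.

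The one gap to fill carefully --- and what I expect to be the main obstacle --- is ruling out the degenerate possibility that $\phi(L)$ and $\phi(R)$ overlap at a single point, which would happen if there is a unique $f^\ast\in\mc{A}$ with $f^\ast\lhd h$ and $h\lhd f^\ast$ simultaneously; but we already excluded this via Lemma~\ref{l:lhd_possibilities}(2) since $h\notin\mc{A}$ and $h$ is bi-infinite. A second subtlety: one must confirm that $\phi$ being a homeomorphism (Lemma~\ref{l:biinf_vs_01}) really does send closed sets to closed sets, which is immediate since $\mc{A}$ is compact and $[0,1]$ Hausdorff. An alternative route avoiding the connectedness argument, in case the partition argument needs $\mc{A}(z)$-style reasoning: pick $t\in\R$; the values $\{f(t+)\-f\in L\}$ and $\{f(t+)\-f\in R\}$ are intervals sitting respectively weakly left and weakly right of $h(t+)$, and by Lemma~\ref{l:lhd_intermediates} applied to a supremum of $L$ and infimum of $R$ (which exist and lie in $\mc{A}$ by compactness plus Lemma~\ref{l:lhd_compat}) one produces an $\mc{A}$-path strictly between them forcing it into neither $L$ nor $R$, a contradiction --- but I would present the connectedness version as the cleaner argument.
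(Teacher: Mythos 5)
Your proof is correct, but it takes a different route from the paper's. The paper also reduces everything to Lemma \ref{l:biinf_vs_01}, but applies it to the \emph{enlarged} set $\mc{A}\cup\{h\}$: since $h$ does not cross $\mc{A}$, the set $\mc{A}\cup\{h\}$ is again a deterministic weave in $\Pi^\updownarrow$, hence homeomorphic to $[0,1]$ and so without isolated points; on the other hand $\mc{A}$ closed and $h\notin\mc{A}$ would make $h$ an isolated point of $\mc{A}\cup\{h\}$ --- contradiction. That version is shorter because it needs no partition and no compatibility lemma. Your version instead extracts \emph{connectedness} of $\mc{A}\cong[0,1]$ and exhibits a disconnection $\mc{A}=L\cup R$. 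The pieces are all sound: $L\cup R=\mc{A}$ by Lemma \ref{l:lhd_noncr}; disjointness by part 2 of Lemma \ref{l:lhd_possibilities} together with the fact that for bi-infinite paths $f\sw h$ forces $f=h$; and closedness of $L$ and $R$ follows even more directly from Lemma \ref{l:lhd_compat_biinf} (compatibility of $\lhd$ on $\Pi^\updownarrow$), which spares you the non-crossing hypothesis of Lemma \ref{l:lhd_compat}. The one step that needs a little more justification is nonemptiness of $L$ and $R$: you need $f_{-\infty},f_{+\infty}\in\mc{A}$, and you cannot cite Remark \ref{r:biinf_weave_min_max}, since that remark is itself deduced from the present lemma. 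The fact is, however, already asserted (and used) in the proof of Lemma \ref{l:biinf_vs_01}, and can be checked independently: $(\mc{A},\lhd)$ is a compact totally ordered space, so it has a maximum and a minimum, and pervasiveness forces these to be the constant paths at $+\infty$ and $-\infty$ respectively. With that recorded, your connectedness argument closes; your worry about $\phi(L)$ and $\phi(R)$ meeting in a point is already handled by the disjointness step, and no appeal to Lemma \ref{l:lhd_intermediates} is needed.
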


\begin{proof}
We will argue by contradiction.
Suppose that $h\in\Pi^\updownarrow$ does not cross $\mc{A}$ and that $h\notin \mc{A}$.
Since $\mc{A}$ is closed, this means that $h$ is an isolated point of $\mc{A}\cup\{h\}$.
Since $h$ does not cross $\mc{A}$ it is straightfoward to check that $\mc{A}\cup\{h\}$ is a deterministic weave.
Thus, from Lemma \ref{l:biinf_vs_01} we have that $\mc{A}\cup\{h\}$ does not contain any isolated points.
This is a contradiction, which completes the proof.
\end{proof}

\begin{remark}
\label{r:biinf_weave_min_max}
By Lemma \ref{l:biinf_noncr} a deterministic weave $\mc{A}\sw\Pi^\updownarrow$
always contains a constant path at spatial location $-\infty$, similarly at $+\infty$.
These are the minimal and maximal elements of $(\mc{A},\lhd)$.
\end{remark}

\begin{lemma}
\label{l:flow_regeneration}
Let $\mc{A}\sw\Pi^\updownarrow$ be a deterministic weave. 
If $D\sw\R^2$ is dense then $\mc{A}=\ov{\mc{A}(D)}$.
\end{lemma}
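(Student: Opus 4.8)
The plan is to prove the two inclusions separately. Since $\mc{A}$ is compact, hence closed in $\Pi$, and $\mc{A}(D)\sw\mc{A}$, the inclusion $\ov{\mc{A}(D)}\sw\mc{A}$ is immediate; the content is $\mc{A}\sw\ov{\mc{A}(D)}$, i.e.\ that $\mc{A}(D)$ is dense in $\mc{A}$. I would invoke Lemma \ref{l:biinf_vs_01} to get an order preserving homeomorphism $\phi\colon(\mc{A},\lhd)\to([0,1],\le)$, so that $\mc{A}(D)$ is dense in $\mc{A}$ if and only if $\phi(\mc{A}(D))$ is dense in $[0,1]$. Since $\phi$ is an order isomorphism, every nonempty open subinterval of $[0,1]$ is of the form $(\phi(f),\phi(g))$ for some $f,g\in\mc{A}$ with $f\lhd g$ and $f\neq g$. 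Hence it suffices to show: for every $f,g\in\mc{A}$ with $f\lhd g$ and $f\neq g$ there is $h\in\mc{A}(D)$ with $f\lhd h$, $h\lhd g$, $h\neq f$ and $h\neq g$, so that $\phi(h)\in(\phi(f),\phi(g))$.

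To do this I would fix such $f,g$ and build a small open rectangle in $\R^2$ that ``separates'' them. Since $\mc A\sw\Pi^\updownarrow$, the relation $f\lhd g$ means $f(u\star)\le g(u\star)$ for all $u\star\in\R_\mfs$, while $f\neq g$ in $\Pi$ yields some $u\in\R$ with $f(u-)<g(u-)$ or $f(u+)<g(u+)$; passing to a slightly earlier time via left limits in the first case, I may fix $t\in\R$ with $f(t+)<g(t+)$. Choose $m_1<m_2$ in $\R$ with $f(t+)<m_1<m_2<g(t+)$. By right-continuity of the {\cadlag} paths $f$ and $g$ at $t$ — which also controls the left limits $f(s-),g(s-)$ for $s$ just above $t$, these being limits of values of $f,g$ just above $t$ — there is $\delta>0$ such that $f(s\star)<m_1$ and $g(s\star)>m_2$ for all $s\in(t,t+\delta)$ and all $\star\in\{-,+\}$. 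Set $O=(m_1,m_2)\times(t,t+\delta)$, a nonempty open subset of $\R^2$.

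Next I would show that every $h\in\mc{A}$ passing through a point of $O$ is a witness of the required kind. Let $(x,s)\in O\cap H(h)$, so $h(s-)\wedge h(s+)\le x\le h(s-)\vee h(s+)$. Since $\mc A$ is non-crossing, Lemma \ref{l:lhd_noncr} gives $f\lhd h$ or $h\lhd f$; but $h\lhd f$ would force $h(s-)\vee h(s+)\le f(s-)\vee f(s+)<m_1<x$, contradicting $x\le h(s-)\vee h(s+)$. Hence $f\lhd h$, and by the symmetric argument $h\lhd g$. Moreover $h\neq f$: otherwise $h(s-)\vee h(s+)=f(s-)\vee f(s+)<m_1<x$, again impossible; and likewise $h\neq g$. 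Finally, since $D$ is dense in $\R^2$ I pick $z\in D\cap O$; by pervasiveness of $\mc{A}$ there is $h\in\mc{A}(z)\sw\mc{A}(D)$, and this $h$ has all the desired properties. Applying $\phi$, one concludes that $\phi(\mc{A}(D))$ meets every nonempty open subinterval of $[0,1]$, hence is dense, hence $\mc{A}(D)$ is dense in $\mc{A}$ and $\ov{\mc{A}(D)}=\mc{A}$.

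The main obstacle I anticipate is purely the careful construction of the genuine open rectangle $O\sw\R^2$: one must verify that not only the right values $f(s+),g(s+)$ but also the left limits $f(s-),g(s-)$ stay clear of the gap $(m_1,m_2)$ throughout the slab $(t,t+\delta)$ — which is exactly what right-continuity of $f,g$ at $t$ delivers, since $f(s-)$ and $g(s-)$ for $s$ slightly larger than $t$ are limits of values of $f$ and $g$ at times slightly larger than $t$. Everything else is a routine combination of the order description of $\mc{A}$ (Lemma \ref{l:biinf_vs_01}), the characterization of non-crossing by $\lhd$ (Lemma \ref{l:lhd_noncr}), and pervasiveness.
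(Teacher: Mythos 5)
Your proof is correct, but it takes a genuinely different route from the paper's. The paper's argument is shorter and more modular: it observes that $\ov{\mc{A}(D)}$ is itself a deterministic weave contained in $\Pi^\updownarrow$ (compactness and the non-crossing property are inherited from $\mc{A}$, and pervasiveness follows from density of $D$ together with Lemma \ref{l:appdx_1_sw_limits}), and then applies Lemma \ref{l:biinf_noncr} --- a bi-infinite deterministic weave absorbs every bi-infinite path that does not cross it --- to each $h\in\mc{A}$. You instead reduce the claim to order-density of $\mc{A}(D)$ in $(\mc{A},\lhd)$ via the homeomorphism of Lemma \ref{l:biinf_vs_01}, and establish order-density by a hands-on insertion argument: between any two distinct $\lhd$-ordered paths of $\mc{A}$ you build an open rectangle separating them, pick a point of $D$ inside it, and use pervasiveness plus Lemma \ref{l:lhd_noncr} to produce a strictly intermediate path passing through $D$. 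That insertion step is essentially the paper's Lemma \ref{l:biinf_insertion}, which the paper proves separately later (for the proof of Lemma \ref{l:ramific_flow_web_same}); so your route re-derives that tool instead of invoking Lemma \ref{l:biinf_noncr}. Both arguments ultimately rest on Lemma \ref{l:biinf_vs_01}; the paper's buys brevity by reusing the ``no new bi-infinite paths'' principle, while yours is more self-contained and makes the order-density mechanism explicit. Your worry about the left limits $f(s-),g(s-)$ on the slab $(t,t+\delta)$ is resolved exactly as you say: choosing the right-continuity tolerance strictly below $m_1$ (resp.\ above $m_2$) bounds the left limits as well, and in any case only the non-strict bound $f(s-)\vee f(s+)\le m_1<x$ is needed for the contradiction.
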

\begin{proof}
It is immediate that $\ov{\mc{A}(D)}\sw\mc{A}$. 
Thus $\ov{\mc{A}(D)}$ is compact. 
Since $D$ is dense, it is easily seen from Lemma \ref{l:appdx_1_sw_limits}
that $\ov{\mc{A}(D)}$ is pervasive, and since $\ov{\mc{A}(D)}\sw\mc{A}$ it is also non-crossing.
Hence $\ov{\mc{A}(D)}$ is a deterministic weave.
If $h\in\mc{A}$ then since $\ov{\mc{A}(D)}\sw\mc{A}$
we have that $h$ does not cross $\ov{\mc{A}(D)}$.
Hence by Lemma \ref{l:biinf_noncr} we have $h\in\ov{\mc{A}(D)}$.
Thus $\mc{A}=\ov{\mc{A}(D)}$.
\end{proof}

\subsection{Extensions of paths in weaves}
\label{sec:path_extn}

Our results on weaves rely fundamentally on the fact that, within a weave, 
half-infinite paths may be extended into bi-infinite paths, without inducing crossing.
We also require that such extensions preserve compactness; 
this point will be addressed later on in Lemma \ref{l:flow_is_weave}.
As one might expect, bi-infinite extensions of paths
are closely related to maximal paths under the partial order $\sw$ of path extension.
From Lemma \ref{l:Amax_order}, 
if $\mc{A}$ is a weave then
$(\mc{A}_{\max},\lhd)$ is totally ordered.
It is tempting to hope that bi-infinite extensions of paths could be constructed via taking a suitable limit
of paths in $\mc{A}$,
but in general this is not possible because deterministic weaves are closed sets.
A more delicate operation is required.

\begin{defn}
\label{d:d_cut}
Let $\mc{A}$ be a deterministic weave.
We say that a subset $X\sw\mc{A}_{\max}$ is a \textit {Dedekind cut} of $\mc{A}_{\max}$
if 
(i) whenever $f,g\in\mc{A}_{\max}$ with $f\lhd g$ and $g\in X$ we have $f\in X$ and
(ii) $X$ has no maximal element.
\end{defn}

Let us outline the main results within this section.
Recall that $\flow(\mc{A})$ denotes the set of bi-infinite {\cadlag} paths that do not cross $\mc{A}$.
Dedekind cuts are best known as part of Dedekind's construction of $\ov{\R}$ from $\Q$.
A related situation presents itself here,
in which $\mc{A}_{\max}$ plays the role of $\Q$
and $\flow(\mc{A})$ plays the role of $\ov{\R}$.
Specifically:
in Lemma \ref{l:dcut_Xh} we show that if $h\in\Pi^\uparrow$ does not cross $\mc{A}$
then the paths of $\mc{A}_{\max}$ that lie strictly to the left of $h$
are a Dedekind cut of $\mc{A}_{\max}$.
In Theorem \ref{t:path_extension},
which will be proved across several lemmas, 
we show that Dedekind cuts of $\mc{A}_{\max}$ are in bijective correspondence with
bi-infinite paths that do not cross $\mc{A}$.
Thus, each $f\in\Pi^\uparrow$ gives rise to a Dedekind cut, 
which in turn gives rise to a bi-infinite path $h$, 
extending $f$ without crossing $\mc{A}$.

\begin{lemma}
\label{l:dcut_Xh}
Let $\mc{A}$ be a deterministic weave and let $h\in\Pi^\uparrow$ be a path that does not cross $\mc{A}$.
Then 
\begin{equation}
\label{eq:Xh_def}
X_h=\{f\in\mc{A}_{\max}\-f\lhd h,\; f\nsubseteq h,\; h\nsubseteq f\}
\end{equation}
is a Dedekind cut of $\mc{A}_{\max}$.
If $h,h'\in\Pi^\uparrow$ do not cross $\mc{A}$, and are such that $h\nsubseteq h'$ and $h'\nsubseteq h$, then 
$X_h\neq X_{h'}$.
\end{lemma}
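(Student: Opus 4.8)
The plan is to prove the two assertions of Lemma \ref{l:dcut_Xh} in turn. For the first, that $X_h$ is a Dedekind cut, I need to verify the two defining properties of Definition \ref{d:d_cut}. Property (i) is almost immediate from transitivity of $\lhd$ on $\mc{A}_{\max}$: if $f,g\in\mc{A}_{\max}$ with $f\lhd g$ and $g\in X_h$, then $g\lhd h$ with $g\nsubseteq h$, $h\nsubseteq g$, and I want to conclude $f\lhd h$ with $f\nsubseteq h$, $h\nsubseteq f$. The relation $f\lhd h$ follows from $f\lhd g\lhd h$, but transitivity of $\lhd$ only holds on $\mc{A}_{\max}$ by Lemma \ref{l:Amax_order}, and $h$ need not be maximal --- so I should instead argue directly. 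Since $f,g\in\mc{A}_{\max}$ are distinct (from $g\in X_h$, $g\nsubseteq h$ but there could be coincidence issues; actually $f\lhd g$ with $f\neq g$ gives $g\nlhd f$ by Lemma \ref{l:lhd_possibilities}, part 2) we have that $f$ lies strictly left of $g$; combined with $g\lhd h$ and $\{f,g,h\}\cup\mc{A}$ non-crossing (since $f,g\in\mc{A}$ and $h$ does not cross $\mc{A}$, using Lemma \ref{l:noncr_transitive_weave}), I can run an argument with the $L(\cdot),R(\cdot)$ sets as in the proof of Lemma \ref{l:Amax_order} to conclude $f\lhd h$. The conditions $f\nsubseteq h$, $h\nsubseteq f$ then need separate attention: if $f\sw h$ then since $f\in\mc{A}_{\max}$ and the extension $g$ of $f$ lies between, I get a contradiction with $g\in\mc{A}_{\max}$ distinct from $f$; if $h\sw f$ then $g\sw f$-type reasoning via $g\lhd h\sw f$ forces $g\sw f$, contradicting maximality of $g$. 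I expect this bookkeeping with the initial-time jump cases to be the fiddliest part.

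Property (ii), that $X_h$ has no maximal element, is where Lemma \ref{l:lhd_intermediates} does the real work. Suppose $f\in X_h$ were $\lhd$-maximal in $X_h$. Then $f\lhd h$ with $f\nsubseteq h$ and $h\nsubseteq f$, so Lemma \ref{l:lhd_intermediates} (applied with the deterministic weave $\mc{A}$, and the two non-crossing paths $f$ and $h$, both of which do not cross $\mc{A}$) produces $g\in\mc{A}_{\max}$ with $f\lhd g$, $g\lhd h$, and none of $f,g,h$ mutually $\sw$-comparable. In particular $g\in X_h$, $g\neq f$ (from $f\nsubseteq g$, $g\nsubseteq f$), and $g$ lies strictly to the right of $f$ (by Lemma \ref{l:lhd_possibilities}(2), since $f\lhd g$ and $f\neq g$ forces $g\nlhd f$), contradicting maximality of $f$ in $X_h$. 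This gives (ii).

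For the second assertion, suppose $h,h'\in\Pi^\uparrow$ do not cross $\mc{A}$ with $h\nsubseteq h'$ and $h'\nsubseteq h$, and suppose for contradiction that $X_h=X_{h'}$. Since $\mc{A}\cup\{h\}$ and $\mc{A}\cup\{h'\}$ are non-crossing, Lemma \ref{l:noncr_transitive_weave} gives that $\mc{A}\cup\{h,h'\}$ is non-crossing; in particular $h$ and $h'$ do not cross, so by Lemma \ref{l:lhd_noncr} we have $h\lhd h'$ or $h'\lhd h$ --- say $h\lhd h'$, the other case by symmetry. Since $h\nsubseteq h'$ and $h'\nsubseteq h$, Lemma \ref{l:lhd_intermediates} applied to $h\lhd h'$ (both not crossing $\mc{A}$) yields $g\in\mc{A}_{\max}$ with $h\lhd g$, $g\lhd h'$, and none of $h,g,h'$ mutually $\sw$-comparable. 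Then $g\lhd h'$ with $g\nsubseteq h'$, $h'\nsubseteq g$ gives $g\in X_{h'}$. On the other hand, $h\lhd g$ with $h\nsubseteq g$, $g\nsubseteq h$, plus Lemma \ref{l:lhd_possibilities}(2) (from $h\lhd g$, $h\neq g$, and non-$\sw$-comparability) forces $g\nlhd h$, so $g\notin X_h$ --- indeed no $f\in X_h$ can satisfy $g\lhd f$ while also $f\lhd h$ without forcing $g\lhd h$ by the transitivity-style argument above, and $g\lhd h$ is false. Hence $g\in X_{h'}\setminus X_h$, contradicting $X_h=X_{h'}$.

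\textbf{Main obstacle.} The principal difficulty throughout is handling paths that jump at their initial time, where $\lhd$ is badly behaved (not even transitive on all of $\Pi^\uparrow$) and the relations $f\nsubseteq h$, $h\nsubseteq f$ interact subtly with $\lhd$; I would route all such arguments through the explicit sets $L(\cdot)$, $R(\cdot)$ and Lemmas \ref{l:lhd_LR}, \ref{l:lhd_left_right}, \ref{l:lhd_possibilities} rather than manipulating $\lhd$ directly, and lean on Lemma \ref{l:noncr_transitive_weave} whenever I need to combine non-crossing facts across $\mc{A}$, $h$, and $h'$. The cleanest formulation of the key sub-claim --- "if $f\in\mc{A}_{\max}$, $f\lhd h$, $f\nsubseteq h$, $h\nsubseteq f$, and $g\in\mc{A}_{\max}$ with $f\lhd g$, $f\neq g$, then $g\lhd h$ fails $\Rightarrow$ contradiction" --- is really a repackaging of the transitivity argument in Lemma \ref{l:Amax_order}, extended to allow one of the three paths to be non-maximal but still non-crossing with $\mc{A}$, and I would state and prove it once as an internal claim before deploying it in both halves of the lemma.
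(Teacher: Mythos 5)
Your proposal is correct and follows essentially the same route as the paper: property (ii) and the injectivity claim are both obtained exactly as in the paper, by applying Lemma \ref{l:lhd_intermediates} to produce a strictly intermediate element of $\mc{A}_{\max}$ (together with Lemmas \ref{l:noncr_transitive_weave} and \ref{l:lhd_noncr} to compare $h$ and $h'$), and you correctly place the resulting $g$ in $X_{h'}\setminus X_h$. The only divergence is in property (i), where the paper simply invokes transitivity of $\lhd$ via Lemma \ref{l:Amax_order} and does not separately verify the incomparability conditions, whereas you rightly note that $h$ need not lie in $\mc{A}_{\max}$ and propose to rerun the $L(\cdot),R(\cdot)$ argument directly — a more careful treatment of the same idea rather than a different one.
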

\begin{proof}
Let us first check that $X_h$ is a Dedekind cut of $\mc{A}_{\max}$.
We must check that $X_h$ satisfies conditions (i) and (ii) of Definition \ref{d:d_cut}.
For (i),
suppose that $f\in X$ and $g\in\mc{A}_{\max}$ with $g\lhd f$.
We have $f\lhd h$.
Lemma \ref{l:Amax_order} (which includes that $\lhd$ is transitive) gives that $g\lhd h$, as required.
For (ii),
let $f\in X_h$.
Lemma \ref{l:lhd_intermediates} implies that there exists $g\in\mc{A}_{\max}$, such that 
$f\nsubseteq g$, $g\nsubseteq h$, $g\nsubseteq h$,$h\nsubseteq g$, with $f\lhd g$ and $g\lhd h$.
As $f,g,h\in\mc{A}_{\max}$ this implies that $f,g,h$ are all distinct.
Thus $f$ is not a maximal element of $X$, as required.

It remains to check that $X_h\neq X_{h'}$ whenever $h,h'\in\Pi^\uparrow$ 
do not cross $\mc{A}$ and satisfy $h\nsubseteq h'$ and $h'\nsubseteq h$.
Noting that both $h$ and $h'$ do not cross $\mc{A}$,
by Lemma \ref{l:noncr_transitive_weave} $h$ and $h'$ also do not cross each other. 
Lemma \ref{l:lhd_noncr} gives that $h\lhd h'$ or $h'\lhd h$.
Without loss of generality suppose that $h\lhd h'$.
Then by Lemma \ref{l:lhd_intermediates} there exists $g\in\mc{A}_{\max}$
such that $g\nsubseteq h$, $h\nsubseteq g$, $g\nsubseteq h'$, $h'\nsubseteq g$ with $h\lhd g$ and $g\lhd h'$.
It follows that $g\in X_h$ and $g\notin X_{h'}$, as required.
\end{proof}

We now consider the inverse map to \eqref{eq:Xh_def},
in that we seek to reconstruct a bi-infinite path from its corresponding Dedekind cut.
This part is rather technical and will involve the topology introduced in Section \ref{sec:Rpm} on $\ov\R_\mfs$.
which we invite the reader to recall at this point.
In particular 
recall from Lemma \ref{l:Rpm} that 
$t_n\star_n\to t+$
if and if and only if $t_n\to t$ and $t_n\star_n\geq t+$ for all sufficiently large $n$;
similarly
$t_n\star_n\to t-$
if and if and only if $t_n\to t$ and $t_n\star_n\leq t-$ for all sufficiently large $n$.

If $\mc{A}$ is a weave and $X$ is a Dedekind cut of $\mc{A}_{\max}$ then we set
\begin{equation}
\label{eq:LX_def}
\mathscr{L}(X)=\bigcup_{f\in X}L(f)
\end{equation}
where $L(f)$ is defined in \eqref{eq:LR_sets}.
Note that $\mathscr{L}(X)$ is a subset of $\ov{\R}\times\R_{\mfs}$,
which we equip with the product topology.
Recall also that $L_{t\star}(f)=\{x\in\ov{\R}\-(x,t\star)\in\mathscr{L}(X)\}$
which, according to \eqref{eq:LR_sets} is either empty or equal to $[-\infty,f(t\star))$.
Roughly, our strategy is to show that 
the right-hand boundary of $\mathscr{L}(X)$ is the graph (in space-time $\ov{\R}\times\R_{\mfs}$) of a {\cadlag} path. 
With this in mind, given a Dedekind cut $X$ of $\mc{A}_{\max}$ let $\mathscr{P}_X:\R_{\mfs}\to\ov{\R}$ be given by
\begin{equation}
\label{eq:PX_def}
\mathscr{P}_X(t\star)
=
\sup\l\{x\in\ov{\R}\-(x,t\star)\in\ov{\mathscr{L}(X)}\r\}.
\end{equation}
Taking the closure of $\mathscr{L}(X)$ in \eqref{eq:PX_def} is crucial,
because being a {\cadlag} path corresponds to being a continuous function on $\R_\mfs$,
and being a continuous function corresponds to having a closed graph.

\begin{lemma}
\label{l:LX_leftwards_structure}
Let $\mc{A}$ be a deterministic weave and let $X$ be a Dedekind cut of $\mc{A}_{\max}$.
If $(y,t\star)\in\ov{\mathscr{L}(X)}$ and $x\leq y$ then $(x,t\star)\in\ov{\mathscr{L}(X)}$.
\end{lemma}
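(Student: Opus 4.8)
The plan is to reduce the statement to the elementary fact that, at each fixed time-coordinate, the corresponding section of $\mathscr{L}(X)$ is a leftward ray in $\ov\R$, and to observe that this ``leftward closedness'' is automatically inherited by the closure. So first I would record the section-wise observation. For any $f\in\mc{A}_{\max}$ and any $s\bullet\in\R_\mfs$, the definition \eqref{eq:LR_sets} gives that $\{z\in\ov\R\-(z,s\bullet)\in L(f)\}=L_{s\bullet}(f)$ is either empty or equal to $[-\infty,f(s\bullet))$; in either case it is downward closed in $\ov\R$. Taking the union over $f\in X$, the section $\{z\in\ov\R\-(z,s\bullet)\in\mathscr{L}(X)\}=\bigcup_{f\in X}L_{s\bullet}(f)$ is a union of downward closed subsets of $\ov\R$, hence itself downward closed. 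I will denote this section by $\mathscr{L}(X)_{s\bullet}$.

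Next I would treat the statement itself. The case $x=y$ is trivial, so assume $x<y$. Let $U$ be an arbitrary open neighbourhood of $(x,t\star)$ in $\ov\R\times\R_\mfs$; it suffices to show $U\cap\mathscr{L}(X)\neq\emptyset$. By definition of the product topology I may shrink $U$ to a basic open set $V\times W$, where $V\sw\ov\R$ is open with $x\in V$ and $W\sw\R_\mfs$ is open with $t\star\in W$. Since $x<y$, the ray $(x,\infty]=\ov\R\sc[-\infty,x]$ is open in $\ov\R$ and contains $y$, so $(x,\infty]\times W$ is an open neighbourhood of $(y,t\star)$. As $(y,t\star)\in\ov{\mathscr{L}(X)}$, there is a point $(z,s\bullet)\in\mathscr{L}(X)$ with $z\in(x,\infty]$ and $s\bullet\in W$. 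Then $z>x$, and since $\mathscr{L}(X)_{s\bullet}$ is downward closed and contains $z$, it contains $x$ as well; that is, $(x,s\bullet)\in\mathscr{L}(X)$. Since $x\in V$ and $s\bullet\in W$, this point lies in $V\times W\sw U$, so $U$ meets $\mathscr{L}(X)$. As $U$ was arbitrary, $(x,t\star)\in\ov{\mathscr{L}(X)}$.

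I do not expect any serious obstacle: the argument is essentially bookkeeping about the product topology. The only points that need a moment's attention are the reduction to basic open neighbourhoods, and checking that $(x,\infty]$ really is open in $\ov\R$ and contains $y$ in the boundary cases $x=-\infty$ or $y=\infty$ (both are fine, using that $\ov\R$ is Hausdorff, so $[-\infty,x]$ is closed, and that $y>x$). The conceptual heart is the first paragraph: leftward closedness on each time-level passes to the closure precisely because, once one has a witness point $(z,s\bullet)\in\mathscr{L}(X)$ close to $(y,t\star)$, one is free to slide it leftwards within the single level $s\bullet$ so that it lands exactly at $x$ before concluding.
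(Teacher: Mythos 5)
Your proof is correct and follows essentially the same route as the paper's: both rest on the observation that each time-section $L_{s\bullet}(f)$ is either empty or a leftward ray $[-\infty,f(s\bullet))$, so a witness point of $\mathscr{L}(X)$ near $(y,t\star)$ with first coordinate exceeding $x$ can be slid left within its own time level to land at $x$. The only cosmetic difference is that you argue with basic open neighbourhoods while the paper uses approximating sequences; both are valid here (and your version does not even need first countability of $\R_\mfs$).
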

\begin{proof}
Let $(y,t\star)\in\ov{\mathscr{L}(X)}$ and $x<y$
By \eqref{eq:LX_def} there exists $f_n\in X$ and $(y_n,t_n\star_n)\in\ov{\R}\times\R_{\mfs}$
such that $(y_n,t_n\star_n)\to y$ and $y_n\in L_{t_n\star_n}(f_n)$.
Thus $x\leq y_n$ for all sufficiently large $n\in\N$.
For such $n$ we have $x_n\in L_{t_n\star_n}(f_n)$,
which implies $(x_n,t_n\star_n)\in \mathscr{L}(X)$,
hence $(x,t\star)\in\ov{\mathscr{L}(X)}$.
\end{proof}

\begin{lemma}
\label{l:lhd_X_Xc}
Let $\mc{A}$ be a deterministic weave and let $X$ be a Dedekind cut of $\mc{A}_{\max}$.
The following hold:
\begin{enumerate}
\item Let $f\in X$ and $g\in\mc{A}_{\max}\sc X$. Then $f\lhd g$.
\item Suppose that $(x,t\star)\in(\ov{\R}\times\R_{\mfs})\sc\ov{\mathscr{L}(X)}$.
Then for all $\eps>0$ there exists $t'\in\R$
and $g\in\mc{A}_{\max}\sc X$ such that $|t-t'|\leq\eps$ and $g\in\mc{A}((x,t'))$.
\end{enumerate}
\end{lemma}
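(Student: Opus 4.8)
The plan is to treat the two parts separately, since Part 1 is essentially a restatement of the Dedekind‑cut axioms while Part 2 is the substantive claim. For Part 1, note that $\mc{A}$ is non‑crossing and $f,g\in\mc{A}\sw\Pi^\uparrow$, so Lemma \ref{l:lhd_noncr} gives $f\lhd g$ or $g\lhd f$. If $g\lhd f$ held, then $g,f\in\mc{A}_{\max}$ with $g\lhd f$ and $f\in X$, so condition (i) of Definition \ref{d:d_cut} would force $g\in X$, contradicting $g\in\mc{A}_{\max}\sc X$. Hence $f\lhd g$. (One could equally invoke Lemma \ref{l:Amax_order}, which gives a total order on $\mc{A}_{\max}$.)

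For Part 2, fix $(x,t\star)\in(\ov\R\times\R_\mfs)\sc\ov{\mathscr{L}(X)}$ and $\eps>0$; I would run the argument for $x\in\R$, the cases $x=\pm\infty$ needing only an obvious modification of the first coordinate. Since the complement of $\ov{\mathscr{L}(X)}$ is open in the product topology on $\ov\R\times\R_\mfs$ and $\R_\mfs$ carries the order topology, I would first use Lemma \ref{l:Rpm} together with the interval identities for $\R_\mfs$ to pick a basic open neighbourhood $U=(x-\delta,x+\delta)\times V$ of $(x,t\star)$ with $U\cap\mathscr{L}(X)=\emptyset$, where $\delta>0$, $0<\eps'\leq\eps$, and $V=[t+,(t+\eps')-]$ if $t\star=t+$, or $V=[(t-\eps')+,t-]$ if $t\star=t-$. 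The key consequence I would record is: for every $h\in X$ and every $s\bullet\in V$ with $s\bullet\geq\sigma_h+$, the set $L_{s\bullet}(h)=[-\infty,h(s\bullet))$ is disjoint from $(x-\delta,x+\delta)$, hence $h(s\bullet)\leq x-\delta$.

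Next I would set $t':=t+\eps'/2$ (resp.\ $t':=t-\eps'/2$), so $|t-t'|\leq\eps$ and $t'$ lies strictly inside the time window underlying $V$. By pervasiveness pick $h\in\mc{A}((x,t'))$ and, by Remark \ref{r:A_max}, a maximal extension $g\in\mc{A}_{\max}$ with $h\sw g$; then $g$ still passes through $(x,t')$, so it remains to show $g\notin X$. Suppose $g\in X$. If $\sigma_g<t'$, then $t'-$ and $t'+$ both lie in $V$ and are $\geq\sigma_g+$, so the estimate above gives $g(t'-)\vee g(t'+)\leq x-\delta$, contradicting $(x,t')\in H(g)$, which forces $g(t'-)\vee g(t'+)\geq x$. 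If $\sigma_g=t'$, then $g$ begins at time $t'$; applying the estimate along $s\bullet=s+$ for $s\downarrow t'$ (which lie in $V$ and satisfy $s+\geq\sigma_g+$) and using right‑continuity of $s\mapsto g(s+)$ at $t'$ gives $g(t'+)\leq x-\delta<x$, so $(x,t')\in H(g)$ forces $g(t'+)<x\leq g(t'-)$; thus $g$ jumps leftwards at its initial time, so by definition $L_{t'-}(g)=[-\infty,g(t'-))\supseteq(x-\delta,x)$, placing points of $L(g)\sw\mathscr{L}(X)$ inside $U$ — again a contradiction. Hence $g\in\mc{A}_{\max}\sc X$ with $g\in\mc{A}((x,t'))$ and $|t-t'|\leq\eps$, as required; the case $t\star=t-$ is entirely parallel, using that for $s\downarrow t'<t$ the times $s+$ still lie in $V$.

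The step I expect to be the main obstacle is the boundary case $\sigma_g=t'$: there the extension $g$ may itself make a jump at its initial time, and it is precisely the split‑real‑line bookkeeping for $L_{\sigma_g-}$ that lets a leftward initial jump of $g$ be converted into a vertical segment of $\mathscr{L}(X)$ lying arbitrarily close to $(x,t\star)$, so that this delicate case yields a contradiction rather than an exception. The rest — extracting the neighbourhood, the quantitative estimate, and the choice of $t'$ — is routine.
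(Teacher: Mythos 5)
Your proof is correct and follows essentially the same strategy as the paper's: Part 1 is the same appeal to Lemma \ref{l:lhd_noncr} plus the Dedekind-cut axiom (your version is if anything slightly more direct, since condition (i) of Definition \ref{d:d_cut} rules out $g\lhd f$ outright, without the paper's detour through maximality), and Part 2 uses the same mechanism of isolating a product neighbourhood of $(x,t\star)$ disjoint from $\mathscr{L}(X)$, placing $t'$ inside its time window, and deriving a contradiction from $g\in X$ by forcing points of $L(g)\sw\mathscr{L}(X)$ into that neighbourhood. Your explicit case split on $\sigma_g<t'$ versus $\sigma_g=t'$, handling the initial leftward jump via $L_{\sigma_g-}$, matches the paper's case $g(t'+)<x\leq g(t'-)$.
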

\begin{proof}
For the first part, take $f\in X$ and $g\in \mc{A}_{\max}\sc X$.
Note that $f,g\in\mc{A}$ so $f$ and $g$ may not cross,
which by Lemma \ref{l:lhd_noncr} implies that $f\lhd g$ or $g\lhd f$.
If $f\sw g$ or $g\sw f$ then by maximality $f=g$, in which case $f\lhd g$.
Alternatively, if both $f\nsubseteq g$ and $g\nsubseteq f$ then $g\lhd f$ would imply $g\in X$, 
because $X$ is a Dedekind cut; so we must have $f\lhd g$.
Thus, in all cases we have $f\lhd g$.

Let us now consider the second claim.
First consider the case $t\star=t+$.
Let $\eps>0$.
As $(x,t+)\notin\ov{\mathscr{L}(X)}$ there exists $\eps_0>0$ such that
\begin{equation}
\label{eq:spacetime_box_not_L}
\big([x-\eps_0,x+\eps_0]\times[t+,(t+\eps_0)+]\big) \cap \ov{\mathscr{L}(X)}=\emptyset.
\end{equation}
Without loss of generality, assume $\eps\in(0,\eps_0)$.
By pervasiveness of $\mc{A}$ there exists $g\in\mc{A}((x,t'))$ where $t'=t+\eps/2$.
Without loss of generality we may take $g\in\mc{A}_{\max}$.
It is clear that $|t-t'|\leq\eps$.

Consider if $g\in X$.
Note that we have $g(t'+)\vee g(t'-)\geq x$.
If $g(t'+)\geq x$ then $[-\infty,x)\sw L_g(t'+)$
which would imply $(x,t'+)\in \ov{L(g)}\sw\ov{\mathscr{L}(X)}$, contradicting \eqref{eq:spacetime_box_not_L}, so this may not happen.
The remaining case is that $g(t'+)<x\leq g(t'-)$,
in which case $[-\infty,x)\sw L_{t'-}(g)$, implying that $(x,t')\in\ov{L(g)}\sw\ov{\mathscr{L}(X)}$,
contradicting \eqref{eq:spacetime_box_not_L}, 
so this may not happen either.
We conclude that $g\notin X$.
This completes the proof of the case $t\star=t+$.
The case $t=t-$ is similar, using in place of \eqref{eq:spacetime_box_not_L}
that for some $\eps_0>0$ we have
$[x-\eps_0,x+\eps_0]\times[(t-\eps_0)-,t-] \cap \ov{\mathscr{L}(X)}=\emptyset$.
\end{proof}

\begin{lemma}
\label{l:PX_cadlag}
Let $\mc{A}$ be a deterministic weave and let $X$ be a Dedekind cut of $\mc{A}_{\max}$.
Then $\mathscr{P}_X$ is a bi-infinite {\cadlag} path.
\end{lemma}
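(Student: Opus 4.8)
The plan is to show that $\mathscr{P}_X:\R_\mfs\to\ov\R$ is a {\cadlag} path in the sense of Definition \ref{d:cadlag}, i.e.\ that it is continuous with respect to the order topology on $\R_\mfs$, and then observe that it is bi-infinite because its domain is all of $\R_\mfs$. By the characterization of {\cadlag} paths recalled in Section \ref{sec:Rpm}, it suffices to establish two things: (a) $\mathscr{P}_X(t\star)$ is finite (lies in $\ov\R$, which is automatic, but really we want the right-continuity/left-limit structure) and (b) $\mathscr{P}_X$ has a closed graph as a function into $\ov\R$, since a function on $\R_\mfs$ is {\cadlag} precisely when its graph in $\ov\R\times\R_\mfs$ is closed (continuity into a compact Hausdorff space $\iff$ closed graph). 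The key structural inputs are Lemmas \ref{l:LX_leftwards_structure} and \ref{l:lhd_X_Xc}: the former says $\ov{\mathscr{L}(X)}$ is ``leftwards closed'' at each time-slice, so $\mathscr{P}_X(t\star)$ is exactly the right-hand boundary value and $\{x : (x,t\star)\in\ov{\mathscr{L}(X)}\} = [-\infty,\mathscr{P}_X(t\star)]$ (the sup is attained, since $\ov{\mathscr{L}(X)}$ is closed and time-slices of closed sets in a product with the first factor compact are closed); the latter controls the complement of $\ov{\mathscr{L}(X)}$ via actual paths of $\mc{A}$ passing through nearby points.

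First I would verify the graph of $\mathscr{P}_X$ is closed. Suppose $(x_n,t_n\star_n)\to(x,t\star)$ with $x_n=\mathscr{P}_X(t_n\star_n)$; I must show $x=\mathscr{P}_X(t\star)$. Since each $(x_n,t_n\star_n)\in\ov{\mathscr{L}(X)}$ and $\ov{\mathscr{L}(X)}$ is closed, the limit $(x,t\star)\in\ov{\mathscr{L}(X)}$, hence $x\le\mathscr{P}_X(t\star)$. For the reverse inequality, suppose $x<\mathscr{P}_X(t\star)$, so there is $y$ with $x<y<\mathscr{P}_X(t\star)$ and $(y,t\star)\in\ov{\mathscr{L}(X)}$; I want to derive $y\le\mathscr{P}_X(t_n\star_n)=x_n$ for large $n$, contradicting $x_n\to x$. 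This requires a ``lower semicontinuity'' of $\mathscr{P}_X$: if $(y,t\star)\in\mathscr{L}(X)$ (the un-closed version — here I'd first reduce to this case using that $\mathscr{L}(X)=\bigcup_{f\in X}L(f)$ is a union of sets of the form $[-\infty,f(s\bullet))\times\{s\bullet\}$ with $f$ a {\cadlag} path, so points of $\ov{\mathscr{L}(X)}$ are limits of points of $\mathscr{L}(X)$, and one can push $y$ slightly left to land in $\mathscr{L}(X)$ itself), then $y\in L_{t\star}(f)=[-\infty,f(t\star))$ for some $f\in X$, and by right/left continuity of the {\cadlag} path $f$ (Lemma \ref{l:Rpm} part 1, controlling which side $t_n\star_n$ approaches from) we get $y<f(t_n\star_n)$ for all large $n$, so $y\in L_{t_n\star_n}(f)\subseteq\mathscr{L}(X)$, giving $y\le\mathscr{P}_X(t_n\star_n)$. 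This closes the graph and hence gives that $\mathscr{P}_X$ is a continuous function $\R_\mfs\to\ov\R$, i.e.\ a bi-infinite {\cadlag} path by Definition \ref{d:cadlag}. One small point: I should check $\mathscr{P}_X$ is not identically $-\infty$ or $+\infty$ in a degenerate way — but a constant path at $\pm\infty$ is a perfectly good {\cadlag} path, so nothing needs excluding.

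I expect the main obstacle to be the careful handling of jumps — specifically, verifying that when $\mathscr{P}_X$ jumps at some time $t$, the value $\mathscr{P}_X(t-)$ and $\mathscr{P}_X(t+)$ genuinely record the left and right boundary behaviour of $\ov{\mathscr{L}(X)}$, with no spurious extra points of $\ov{\mathscr{L}(X)}$ sitting strictly to the right of the graph at the split time $t-$. This is exactly where part 2 of Lemma \ref{l:lhd_X_Xc} is needed: if a point $(x,t\star)$ lies strictly right of the graph, i.e.\ $(x,t\star)\notin\ov{\mathscr{L}(X)}$, then there is a path $g\in\mc{A}_{\max}\setminus X$ through a point $(x,t')$ with $t'$ close to $t$ (on the correct side dictated by $\star$), and by part 1 of that lemma $f\lhd g$ for every $f\in X$; chasing this through the definitions of $L(f)$ and using that $g$ does not cross the paths in $X$ should show such an $x$ cannot be approached from within $\mathscr{L}(X)$, pinning down the boundary exactly. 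The bookkeeping with $t\star=t-$ versus $t\star=t+$ and the interplay with the lexicographic order on $\R_\mfs$ is the delicate part; everything else is a routine consequence of $\ov{\mathscr{L}(X)}$ being closed and leftwards-closed.
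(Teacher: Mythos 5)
Your skeleton (continuity of $\mathscr{P}_X$ via a closed-graph argument, with the inequality $x\leq\mathscr{P}_X(t\star)$ coming for free from closedness of $\ov{\mathscr{L}(X)}$) matches the paper's, and that easy half is fine. The genuine gap is in the reverse inequality, at the step where you "push $y$ slightly left to land in $\mathscr{L}(X)$ itself." The supremum in \eqref{eq:PX_def} is taken over the \emph{closure} $\ov{\mathscr{L}(X)}$, and a point $(y,t\star)\in\ov{\mathscr{L}(X)}$ is a limit of points $(y_m,s_m\bullet_m)\in\mathscr{L}(X)$ whose \emph{times} $s_m\bullet_m$ need not equal $t\star$. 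So the time-$t\star$ slice of $\ov{\mathscr{L}(X)}$ can be strictly larger than the closure of the time-$t\star$ slice of $\mathscr{L}(X)$, and no leftward perturbation of $y$ lands you back in $\mathscr{L}(X)$ at time $t\star$. Your continuity-of-$f$ argument therefore only covers the case $\mathscr{P}_X(t\star)=\sup\{x\-(x,t\star)\in\mathscr{L}(X)\}$, which is exactly the case the paper dispatches in one paragraph (around \eqref{eq:ht+_sup_L_bdry}). In the complementary case — which occurs generically when $\mathscr{P}_X$ jumps at $t$ — the argument cannot be purely order-theoretic/topological about $\ov{\mathscr{L}(X)}$: the paper must invoke pervasiveness of $\mc{A}$ to produce paths $g_n\in\mc{A}_{\max}\sc X$ through points just right of $(x,t_n)$ (via part 2 of Lemma \ref{l:lhd_X_Xc}), extract limits of both these and of paths $f_n\in X$ witnessing $(y_n,s_n\bullet_n)\in\mathscr{L}(X)$, carefully interleave the approximating times, and derive a contradiction from the \emph{non-crossing} property of $\mc{A}$ (the limits $f,g\in\mc{A}$ would jump over each other in opposite directions at $t$).

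Your closing paragraph gestures at this but misidentifies what must be shown: the problem is not "spurious points of $\ov{\mathscr{L}(X)}$ strictly right of the graph" (there are none, by definition of $\mathscr{P}_X$ as a supremum over the closure), nor showing that some $x$ "cannot be approached from within $\mathscr{L}(X)$"; it is showing that a point of $\ov{\mathscr{L}(X)}$ at height near $\mathscr{P}_X(t\star)$, approached only through \emph{other} time-slices, forces $\mathscr{P}_X(t_n\star_n)$ to be correspondingly large. That implication is the substance of the lemma and is where compactness, pervasiveness and non-crossing of $\mc{A}$ all enter; as written your proposal does not supply it. A secondary, fixable issue: in the $t\star=t-$ case your appeal to continuity of $f$ fails when $t=\s_f$, since $L_{t_n\star_n}(f)=\emptyset$ for $t_n\star_n<\s_f-$; the paper treats this sub-case separately using the jump direction encoded in $L_{\s_f-}(f)$.
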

\begin{proof}
Let us write $h=\mathscr{P}_X$ for the duration of this proof.
We must show that $h$ is a continuous map from $\R_{\mfs}$ to $\ov{\R}$.
By the closed graph theorem, 
the function $h:\R_\mfs\to\ov{\R}$ is continuous if and only if its graph 
$\mathscr{H}=\{(h(t\star),t\star)\-t\star\in\R_\mfs\}$ 
is a closed subset of $\ov{\R}\times\R_{\mfs}$.
Let $t_n\star_n\to t\star$ in $\R_\mfs$.
By compactness of $\ov{\R}$ the sequence $(h(t_n\star_n),t_n\star_n)$ is relatively compact.
Let $(x,t\star)$ be a limit point of this sequence,
and (with slight abuse of notation) let us pass to a subsequence such that $h(t_n\star_n)\to x$.
To establish the present lemma we must show that $x=h(t\star)$.

By \eqref{eq:PX_def},
for each $n\in\N$ there exists a sequence $(x_{n,m},t_{n,m}\star_{n,m})_{m\in\N}\sw \mathscr{L}(X)$ 
such that 
$(x_{n,m},t_{n,m}\star_{n,m})\to (h(t_n\star_n),t_n\star_n)$
as $m\to\infty$.
By a diagonal argument there exists a strictly increasing function $m:\N\to\N$ such that
$(x_{n,m(n)},t_{n,m(n)}\star_{n,m(n)})\to (x,t\star)$.
Hence $(x,t\star)\in\ov{\mathscr{L}(X)}$,
which implies that $x\leq h(t\star)$.
If $h(t\star)=-\infty$ then we now have $x=h(t\star)$, 
so in what follows we may assume that 
$-\infty<h(t\star)$.

We will now argue by contradiction:
suppose that $x<h(t\star)$.
Let $\eps>0$ be such that $x+3\eps\leq h(t\star)$,
and note that $x+\eps<h(t\star)-\eps$.
We consider the cases $t\star=t+$ and $t\star=t-$ in turn.

\medskip

Suppose, first, that $t\star=t+$. Let us briefly outline the strategy. 
We will construct a sequence of 
$f_n\in X$ that come close to the space-time point $(h(t+),t+)$,
and a sequence of $g_n\in\mc{A}_{\max}\sc X$ that come close to $(h(t_n\star_n),t_n\star_n)\approx (x,t+)$.
Note that $x<h(t+)$, whilst
Lemma \ref{l:lhd_X_Xc} gives $f_j\lhd g_k$ for all $j,k\in\N$.
This combination causes $(f_n)$ and $(g_n)$ to become tangled up in each other,
so much so that their limit points $f,g\in\mc{A}$ will cross, 
by jumping over each other in opposite directions at time $t$,
resulting in a contradiction.
We now proceed with the proof.

By Lemma \ref{l:Rpm}, the fact that $t_n\star_n\to t+$ implies that for sufficiently large $n$ we must have $t_n\star_n\geq t+$.
As $h(t_n\star_n)\to x<h(t+)$, in fact for sufficiently large $n$ we have $t_n\star_n>t+$,
and also $h(t_n\star_n)\leq x+\eps$.
Without loss of generality we pass to a subsequence and assume that both these properties hold for all $n$.

By \eqref{eq:PX_def} there exists $(y_n,s_n\bullet_n)_{n\in\N}\sw\mathscr{L}(X)$ such that
$(y_n,s_n\bullet_n)\to(h(t+),t+)$.
The fact that $s_n\bullet_n\to t+$ implies that for sufficiently large $n$ we must have $s_n\bullet_n\geq t+$,
and for sufficiently large $n$ we also have $y_n\geq h(t\star)-\eps$.
so without loss of generality we pass to a subsequence and assume that both these properties hold for all $n$.

Consider if 
\begin{equation}
\label{eq:ht+_sup_L_bdry}
h(t+)=\sup\{x\-(x,t+)\in\mathscr{L}(X)\}.
\end{equation}
In this case there exists $f\in X$ such that $L_{t+}(f)=[-\infty,f(t+))$ and $0<h(t+)-f(t+)\leq\eps$,
so $f(t+)\geq x+2\epsilon$.
Right continuity (i.e.~forwards in time) of $f$ thus implies $h(t_n\star_n)\geq x+\eps$ for all sufficiently large $n$,
which contradicts the fact that $h(t_n\star_n)\to x$.
So this case may not occur.

Therefore, $h(t+)\neq \sup\{x\-(x,t+)\in\mathscr{L}(X)\}$,
which implies that for sufficiently large $n$ we have $s_n\bullet_n\neq t+$
(because $(y_n,s_n\bullet_n)\to (h(t+),t+)$ and $(y_n,s_n\bullet_n)\in\mathscr{L}(X)$).
We have already seen that $s_n\bullet_n\geq t+$, so
without loss of generality we pass to a subsequence and assume that $s_n\bullet_n>t+$ for all $n$.

As $(y_n,s_n\bullet_n)\in\mathscr{L}(X)$ there exists $f_n\in X$ such that $(y_n,s_n\bullet_n)\in L(f_n)$.
Hence $f_n(s_n\bullet_n)\geq y_n\geq h(t\star)-\epsilon$.
By compactness of $\mc{A}$, 
without loss of generality we pass to a subsequence and assume that $f_n\to f\in\mc{A}$.

Let $x_n=h(t_n\star_n)+2^{-n}$. 
As $h(t_n\star_n)\to x$, without loss of generality we pass to a subsequence and assume that
$x_n\leq x+\eps$ for all $n$.
Lemma \ref{l:LX_leftwards_structure} gives that $(x_n,t_n\star_n)\notin \ov{\mathscr{L}(X)}$.
Thus, by the second part of Lemma \ref{l:lhd_X_Xc},
for each $n\in\N$ there exists $t_n'\in\R_\mfs$ and $g_n\in\mc{A}_{\max}\sc X$
such that $|t_n-t_n'|\leq 2^{-n}$, $t'_n\star_n>t+$ and $g_n\in\mc{A}((x_n,t'_n))$.
By compactness of $\mc{A}$, 
without loss of generality we pass to a subsequence and assume that $g_n\to g\in\mc{A}$.
By the first part of Lemma \ref{l:lhd_X_Xc} we have $f_i\lhd g_j$ for all $i,j\in \N$.

We have that $t'_n\star_n$ and $s_n\bullet_n$ are both strictly greater than $t+$,
and both tend to $t+$ as $n\to\infty$.
Consequently, 
passing to further subsequences,
there exists a strictly increasing function $n\mapsto n'$ such that 
\begin{equation}
\label{eq:st_time_order}
s_{n+1}\bullet_{n+1} < t'_{n'}\star_{n'} < s_n\bullet_n
\end{equation}
for all $n$.

We now examine the sequence $(g_n)$ as $n\to\infty$.
We will show that
\begin{alignat}{3}
g_{n'}(t'_{n'}-)\vee g_{n'}(t'_{n'}+) & \leq x_{n'} && \leq x+\eps, \label{eq:gln_1} \\
g_{n'}(s_n\bullet_n) & \geq y_n && \geq h(t\star)-\eps. \label{eq:gln_2}
\end{alignat}
Equation \eqref{eq:gln_1} follows because $g_{n'}\in\mc{A}((x_{n'},t'_{n'}))$.
To see equation \eqref{eq:gln_2}: we have that $f_{n}\lhd g_{n'}$
and that $[-\infty,y_{n})\sw L_{s_{n}\bullet_{n}}(f_{n})$.
Lemma \ref{l:lhd_LR} implies $L(f_{n})\cap R(g_{n'})=\emptyset$,
and $\sigma_{g_{n'}}<s_n\bullet_n$ so we must have $g_{n'}(s_n\bullet_n)\geq y_n$.

From Lemma \ref{l:appdx_1_sw_limits},
combined with \eqref{eq:st_time_order}, \eqref{eq:gln_1} and \eqref{eq:gln_2}
we obtain that the limit $g$ makes a rightwards jump at time $t$,
from below $x+\epsilon$ at time $t-$ to above $h(t\star)-\epsilon$ at time $t+$.

We now turn our attention to $(f_n)$, in similar style.
Here, we show that
\begin{alignat}{3}
f_{n+1}(s_{n+1}\bullet_{n+1}) & \geq y_{n+1} && \geq h(t\star)-\eps \label{eq:fn_1} \\
f_{n+1}(t'_{n'}-)\wedge f_{n+1}(t'_{n'}+) & \leq  x_n && \leq x+\eps \label{eq:fn_2} 
\end{alignat}
Equation \eqref{eq:fn_1} follows from the fact that $(y_n,s_n\bullet_n)\in L(f_n)$.
To see equation \eqref{eq:fn_2}: we have that $f_{n+1}\lhd g_{n'}$.
If $g_{n'}(t'_{n'}+)\leq g_{n'}(t'_{n'}-)$ then we have
$f_{n+1}(t'_{n'}+)\leq g_{n'}(t'_{n'}+)\leq x_n$.
Alternatively, if $g_{n'}(t'_{n'}-)<g_{n'}(t'_{n'}+)$ then we have
$[-\infty,x_n)\sw R_{t'_{n'}-}(g_{n'})$,
and Lemma \ref{l:lhd_LR} gives $L(f_{n+1})\cap R(g_{n'})=\emptyset$,
which implies $f_{n+1}(t'_{n'}-)\leq x_n$.
In both cases we have \eqref{eq:fn_2}.

From Lemma \ref{l:appdx_1_sw_limits}
combined with \eqref{eq:st_time_order}, \eqref{eq:fn_1} and \eqref{eq:fn_2}
we obtain that the limit $f$ makes a leftwards jump at time $t$,
from above $h(t\star)-\epsilon$ at time $t-$ to below $x+\epsilon$ at time $t+$.
Thus $f$ and $g$ cross (by jumping in opposite directions over each other at time $t$).
As both $f,g\in\mc{A}$, this is a contradiction.
This completes the proof of the case $t\star=t+$.

\medskip

It remains to consider the case $t\star=t-$.
The argument is essentially the same, 
except that Lemma \ref{l:Rpm} requires that we now 
approach $t-$ from the left (i.e.~from backwards in time) rather than $t+$ from the right.
In outline: construct a sequence of 
$f_n\in X$ that come close to the space-time point $(h(t-),t-)$,
and a sequence of $g_n\in\mc{A}_{\max}$ that come close to $(h(t_n\star_n),t_n\star_n)\approx (x,t-)$.
Note that $x<h(t-)$, whilst
Lemma \ref{l:lhd_X_Xc} gives $f_j\lhd g_k$ for all $j,k\in\N$.
This combination causes $(f_n)$ and $(g_n)$ to become entangled with each other,
so that once again their limit points $f,g\in\mc{A}$ will cross -- resulting in a contradiction.

There is one point at which a difference worthy of comment emerges.
This concerns \eqref{eq:ht+_sup_L_bdry}.
If $h(t-)=\sup\{x\-(x,t-)\in\mathscr{L}(X)\}$ then 
there exists $f\in X$ such that $L_{t-}(f)=[-\infty,f(t-))$ and $0<h(t-)-f(t-)\leq\epsilon$.
\begin{itemize}
\item If $\sigma_f<t$ then a similar argument to that in the same paragraph as \eqref{eq:ht+_sup_L_bdry} applies, 
using left continuity of $f$ instead of right continuity;
this reaches a contradiction.
\item If $\sigma_f=t$ then we require a new step within the argument,
one that features only here because of the `extra' behaviour of $L_{t\star}(f)$ when $t\star=\sigma_f-$, see \eqref{eq:LR_sets}.
In particular, for $t=\sigma_f$ the fact that $L_{t-}(f)=[-\infty,f(t-))$ implies that $f(t+)<f(t-)$.
We then proceed as before to construct $g\in\mc{A}$ such that $g(t-)\leq x+\epsilon$ and $g(t+)\geq h(t-)-\epsilon$.
Thus $f$ and $g$ cross, reaching a contradiction.
\end{itemize}
If $h(t-)\neq\{x\-(x,t-)\in\mathscr{L}(X)\}$
then we can (and moreover can only) approximate $(h(t-),t-)\in\ov{\mathscr{L}(X)}$ 
using space-time points in $\mathscr{L}(X)$ with times strictly less than $t-$.
In this case we may proceed as before.
This completes the proof.
\end{proof}

\begin{lemma}
\label{l:PX_lhd_noncr}
Let $\mc{A}$ be a deterministic weave and let $X$ be a Dedekind cut of $\mc{A}_{\max}$.
Then $\mathscr{P}_X$ does not cross $\mc{A}$.
Moreover, if $f\in X$ then $f\lhd \mathscr{P}_X$,
and if $g\in \mc{A}_{\max}\sc X$ then $\mathscr{P}_X\lhd g$.
\end{lemma}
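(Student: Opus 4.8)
The plan is to write $h:=\mathscr{P}_X$, which is a bi-infinite \cadlag\ path by Lemma~\ref{l:PX_cadlag}, to prove the two ordering statements first, and then deduce the non-crossing claim. The non-crossing reduction is formal: given $a\in\mc{A}$, Remark~\ref{r:A_max} supplies $g\in\mc{A}_{\max}$ with $a\sw g$, and since $h\in\Pi^\updownarrow$ any bi-infinite witness $g'\sw g$ that $h$ and $g$ are non-crossing also witnesses, via $a\sw g\sw g'$, that $h$ and $a$ are non-crossing; so it suffices to show that every $g\in\mc{A}_{\max}$ satisfies $g\lhd h$ or $h\lhd g$ and then apply Lemma~\ref{l:lhd_noncr}. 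As $\mc{A}_{\max}=X\cup(\mc{A}_{\max}\sc X)$, this is exactly what the two ordering statements assert.

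For $f\in X$ we have $L(f)\sw\mathscr{L}(X)\sw\ov{\mathscr{L}(X)}$, so whenever $L_{t\star}(f)=[-\infty,f(t\star))$ is non-empty, every $x<f(t\star)$ satisfies $(x,t\star)\in\ov{\mathscr{L}(X)}$, hence $x\leq h(t\star)$ by \eqref{eq:PX_def}; letting $x\up f(t\star)$ gives $f(t\star)\leq h(t\star)$. Since $h$ is bi-infinite, $L_{t\star}(h)=[-\infty,h(t\star))$ and $R_{t\star}(h)=(h(t\star),\infty]$ for every $t\star\in\R_\mfs$, so the inequality just obtained gives $L_{t\star}(f)\cap R_{t\star}(h)=\emptyset$ for all $t\star$, and $f\lhd h$ follows from Lemma~\ref{l:lhd_LR}.

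Now fix $g\in\mc{A}_{\max}\sc X$; I will prove $L(h)\cap R(g)=\emptyset$, which gives $h\lhd g$ by Lemma~\ref{l:lhd_LR}. By \eqref{eq:PX_def} and Lemma~\ref{l:LX_leftwards_structure} we have $L(h)\sw\ov{\mathscr{L}(X)}$, so it is enough to show $\ov{\mathscr{L}(X)}\cap R(g)=\emptyset$, equivalently $h(t\star)\leq g(t\star)$ at every $t\star$ with $R_{t\star}(g)\neq\emptyset$. First, $\mathscr{L}(X)\cap R(g)=\emptyset$: if $(x,t\star)\in L(f)\cap R(g)$ with $f\in X$ then $g(t\star)<x<f(t\star)$, so $(g(t\star),t\star)\in L(f)$, and since $f,g\in\mc{A}$ are non-crossing, Lemma~\ref{l:lhd_left_right} yields $g\lhd f$; as $X$ is a Dedekind cut this forces $g\in X$, a contradiction. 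This is promoted to $\ov{\mathscr{L}(X)}$ at every $t\star$ with $t>\sigma_g$: an approximating sequence $(x_n,t_n\star_n)\in\mathscr{L}(X)$ with $(x_n,t_n\star_n)\to(x,t\star)\in R(g)$ eventually has $t_n>\sigma_g$, so $t_n\star_n$ is an interior time of $g$ and $g(t_n\star_n)\to g(t\star)$ by continuity of $g\colon I(g)_\mfs\to\ov\R$; since $x_n\to x>g(t\star)$, we get $(x_n,t_n\star_n)\in R(g)$ for large $n$, contradicting the previous step. The value $t\star=\sigma_g+$ is then handled by taking $t\down\sigma_g$ and using right continuity of $h=\mathscr{P}_X$ and of $g$ at $\sigma_g+$ (with Lemma~\ref{l:Rpm}).

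The remaining case --- and the main obstacle --- is $t\star=\sigma_g-$ when $g$ jumps rightward at $\sigma_g$, which is the only time with $R_{\sigma_g-}(g)\neq\emptyset$; here an approximating sequence may live at times strictly below $\sigma_g$, where $g$ is undefined, so the argument above no longer applies. Suppose for contradiction that $h(\sigma_g-)>g(\sigma_g-)$, fix $x\in(g(\sigma_g-),h(\sigma_g-))$, and choose $f_n\in X$ with $(x_n,t_n\star_n)\in L(f_n)$ and $(x_n,t_n\star_n)\to(x,\sigma_g-)$; by Lemma~\ref{l:Rpm} we may pass to a subsequence so that either all $t_n\star_n=\sigma_g-$, or all $t_n<\sigma_g$. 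In either case $f_n\lhd g$ by Lemma~\ref{l:lhd_X_Xc}(1) and $\sigma_{f_n}\leq\sigma_g$ (from $(x_n,t_n\star_n)\in L(f_n)$ and $t_n\star_n\leq\sigma_g-$), so evaluating bi-infinite witnesses of $f_n\lhd g$ at the point $\sigma_g-\in\R_\mfs$ gives $f_n(\sigma_g-)\leq g(\sigma_g-)$; moreover $x_n\in L_{t_n\star_n}(f_n)$ forces $x_n<f_n(t_n\star_n)$. If $t_n\star_n=\sigma_g-$, then $x_n<f_n(\sigma_g-)\leq g(\sigma_g-)$ contradicts $x_n\to x>g(\sigma_g-)$. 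If $t_n<\sigma_g$, then for large $n$ we have $f_n(t_n\star_n)>x_n>g(\sigma_g-)\geq f_n(\sigma_g-)$ with $t_n\star_n<\sigma_g-$ and $t_n\star_n\to\sigma_g-$; thus $f_n$ makes an upward--then--downward excursion of height bounded below by $x-g(\sigma_g-)>0$ inside a time window of vanishing length near $\sigma_g$ (upward because $f_n\lhd g$ pins $f_n(s)\le g(\sigma_g-)+o(1)$ for $s$ near $\sigma_g$, downward to return to $f_n(\sigma_g-)\leq g(\sigma_g-)$), and no convergent subsequence $f_n\to f\in\mc{A}$ can accommodate this, since the interpolated second-order graphs $H^{(2)}(f_n)$ would fail to converge to any $H^{(2)}(f)$. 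By the M1 relative-compactness criterion (Proposition~\ref{p:relcom_tightness}) the sequence $(f_n)$ is not relatively compact, contradicting compactness of $\mc{A}$; this oscillation argument parallels the one in the proof of Lemma~\ref{l:crossing_eps}. With $\ov{\mathscr{L}(X)}\cap R(g)=\emptyset$ established, $h\lhd g$ follows, and the proof is complete.
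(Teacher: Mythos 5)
Your overall architecture matches the paper's: reduce the non-crossing claim to the two ordering claims via Lemma \ref{l:lhd_noncr}, dispose of $f\in X$ by observing $L(f)\sw\ov{\mathscr{L}(X)}$, and for $g\in\mc{A}_{\max}\sc X$ argue by contradiction, splitting according to whether the offending time is $\geq\sigma_g+$ or equals $\sigma_g-$. Everything up to and including your Case A is sound. The gap is in the final case, where the approximating times satisfy $t_n<\sigma_g$. The configuration you exhibit --- $f_n(t_n\star_n)>x_n\to x>g(\sigma_g-)\geq f_n(\sigma_g-)$ with $t_n\star_n<\sigma_g-$ and $t_n\to\sigma_g$ --- is a single high-then-low pattern collapsing onto the time $\sigma_g$, and that is exactly what the M1 topology is designed to accommodate: such a sequence is relatively compact and converges to a path with a leftward jump at $\sigma_g$ (compare $f_n=\1\{\cdot<\sigma_g-\tfrac1n\}$). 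To violate the modulus $w_{T,\delta}$ of Proposition \ref{p:relcom_tightness} you need three times at which $f_n$ is low--high--low (or high--low--high) by a fixed amount, and your proposed third point does not exist: the parenthetical justification that ``$f_n\lhd g$ pins $f_n(s)\leq g(\sigma_g-)+o(1)$ for $s$ near $\sigma_g$'' fails for $s<\sigma_g$, because $g$ is undefined there and the bi-infinite witness $g'$ of $f_n\lhd g$ is an arbitrary backwards extension, so it imposes no upper bound on $f_n$ before time $\sigma_g$. This is precisely what distinguishes your situation from Lemma \ref{l:crossing_eps}, where both paths are defined at the intermediate time and one genuinely pins the other; so the claimed failure of relative compactness of $(f_n)\sw\mc{A}$ is not only unproven but false.

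The case is salvageable, and the paper closes it differently: pass to a subsequence with $f_n\to f\in\mc{A}$ (using compactness of $\mc{A}$, i.e.\ exactly the relative compactness you tried to refute). From $f_n(t_n\star_n)\geq x_n$ with $t_n\star_n<\sigma_g-$ and $t_n\star_n\to\sigma_g-$, together with $f_n(\sigma_g-)\leq g(\sigma_g-)$, Lemmas \ref{l:appdx_1_sw_limits} and \ref{l:appdx_2_fntn} force the limit $f$ to jump leftwards at $\sigma_g$, from $f(\sigma_g-)\geq x>g(\sigma_g-)$ down to $f(\sigma_g+)\leq g(\sigma_g-)$. Since $g$ jumps rightwards at its initial time $\sigma_g$ (the only way $R_{\sigma_g-}(g)\neq\emptyset$), both $L(f)\cap R(g)$ and $L(g)\cap R(f)$ are non-empty, so $f$ and $g$ cross by Lemma \ref{l:lhd_noncr}, contradicting $f,g\in\mc{A}$. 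With that repair your proof is complete and essentially coincides with the paper's.
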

\begin{proof}
Let us write $h=\mathscr{P}_X$ for the duration of this proof.
From Lemma \ref{l:PX_cadlag} we have $h\in\Pi^\updownarrow$.
Note that $h$ crosses $\mc{A}$ if and only if $h$ crosses $\mc{A}_{\max}$.
We will show, in turn, that 
(a) $f\in X \ra f\lhd h$
and 
(b) $g\in\mc{A}_{\max}\sc X \ra h\lhd g$.
With this in hand it follows from Lemma \ref{l:lhd_noncr} that $h$ does not cross $\mc{A}_{\max}$,
thus $h$ does not cross $\mc{A}$.

We begin with (a). 
Let $f\in X$ and $t\star\geq\s_f-$.
If $L_{t\star}(f)=[-\infty,f(t\star))$ then
$(f(t\star),t\star)\in\mathscr{L}(X)$ and hence $f(t\star)\leq h(t\star)$,
which implies $L_{t\star}(f)\cap R_{t\star}(h)=\emptyset$.
Alternatively, if $L_{t\star}(f)=\emptyset$ then it is immediate that $L_{t\star}(f)\cap R_{t\star}(h)=\emptyset$.
Hence $L(f)\cap R(g)=\emptyset$, which by Lemma \ref{l:lhd_left_right} implies that $f\lhd h$.

We now move on to (b). 
Let $g\in\mc{A}_{\max}\sc X$.
We will argue by contradiction.
Suppose that $h\nlhd g$.
Then by Lemma \ref{l:lhd_LR} we have $L(h)\cap R(g)\neq\emptyset$.
In particular, 
for some $t\star\geq\s_g-$ we have
$L_{t\star}(h)=[-\infty,h(t\star))$ and
$R_{t\star}(g)=(g(t\star),\infty]$
with $g(t\star)<h(t\star)$.

Consider first if $t\star\geq\s_g+$.
Then, using the {\cadlag} property of $g$ and $h$ 
there exists $\eps>0$ an interval $[a+,b-]$ with $a<b$
such that 
\begin{equation}
\label{eq:PX_lhd_0}
g(s\bullet)+\eps\leq h(\s\bullet)
\end{equation}
for all $s\bullet\in[a-,b+]$
(to see this: if $\star=+$ take $a=t$, if $\star=-$ take $b=t$).
Let us briefly note our strategy here: we will use \eqref{eq:PX_lhd_0} to
show that $g$ lies to the left of some path in $X$.
Fix some $s\bullet\in\R_\mfs$ with $a+<s\bullet<b-$.
By \eqref{eq:PX_def} there exists $(y_n,s_n\bullet_n)\in\mathscr{L}(X)$
such that $(y_n,s_n\bullet_n)\to (h(s\bullet),s\bullet)$ as $n\to\infty$.
For all sufficiently large $n\in\N$ we have 
\begin{equation}
\label{eq:PX_lhd_1}
h(s\bullet)-\eps/2 \leq y_n
\qquad\text{ and }\qquad
a-\leq s_n\bullet_n\leq b+.
\end{equation}
By Lemma \ref{l:PX_cadlag} we have $h(s_n\bullet_n)\to h(s\bullet)$,
so for all sufficiently large $n$ we also have
\begin{equation}
\label{eq:PX_lhd_2}
|h(s\bullet)-h(s_n\bullet_n)|\leq\eps/2.
\end{equation}
Fix $n\in\N$ large enough that \eqref{eq:PX_lhd_1} and \eqref{eq:PX_lhd_2} both hold.
By \eqref{eq:LX_def} there exists $f_n\in X$ such that 
$(y_n,s_n\bullet_n)\in L_{s_n\bullet_n}(f)$,
which means that $y_n<f(s_n\bullet_n)$.
Combining this inequality with \eqref{eq:PX_lhd_0}, \eqref{eq:PX_lhd_1} and \eqref{eq:PX_lhd_2} we obtain that
$$
g(s_n\bullet_n)
\;\leq\; h(s_n\bullet_n)-\eps
\;\leq\; h(s\bullet)-\eps/2
\;\leq\; y_n
\;<\; f_n(s_n\bullet_n).$$
Hence $g(s_n\bullet_n)\in L_{s_n\bullet_n}(f)$,
which by Lemma \ref{l:lhd_left_right} means that $g\lhd f_n$.
As $X$ is a Dedekind cut and $f_n\in X$, we thus have $g\in X$, which is a contradiction.

It remains to consider the case $t\star=\s_g-$.
In this case by \eqref{eq:LR_sets} we have $R_{\s-}(g)=(g(\s-),\infty]$ and $g(\s-)<g(\s+)$.
We have also that $g(\s-)<h(\s-)$.
By \eqref{eq:PX_def} there exists $(y_n,s_n\bullet_n)\in\mathscr{L}(X)$ such that
$(y_n,s_n\bullet_n)\to (h(\s-),\s-)$,
and $f_n\in X$ such that $y_n\in L_{s_n\bullet_n}(f)=[-\infty,f_n(s_n\bullet_n))$.
By compactness of $\mc{A}$ we may pass to a subsequence and assume that $f_n\to f\in\mc{A}$.
Without loss of generality we may assume that $s_n\bullet_n\leq s-$,
so $\s_{f_n}\leq \s_g$.

Suppose that $f_n(\s-)\geq g(\s-)$: then $f_n(\s-)\in R_{\s-}(g)$ which, 
by Lemma \ref{l:lhd_left_right} would give $g\lhd f_n$,
and as $X$ is a Dedekind cut this would give $g\in X$, which is false.
Hence in fact $f_n(\s-)<g(\s-)$.
We now have $s_n\bullet_n\leq \s-$ with $\s_n\bullet_n\to \s-$, along with $f_n(s_n\bullet_n)\geq y_n$
and $f_n(\s-)\leq g(\s-)$.
By Lemma \ref{l:appdx_2_fntn}, this implies that $f$ jumps leftwards at $\s$,
from right of $h(\s-)$ to left of $g(\s-)$.
This implies that $g$ and $f$ cross, by jumping over each other in opposite directions as $\s$,
which is a contradiction as both $f,g\in\mc{A}$.
This completes the proof.
\end{proof}

\begin{lemma}
\label{l:PX_inverse_of_Xh}
Let $\mc{A}$ be a deterministic weave and suppose that $h\in\Pi^\updownarrow$ does not cross $\mc{A}$.
Define $X=X_h$ according to \eqref{eq:Xh_def}.
Then $\mathscr{P}_X=h$.
\end{lemma}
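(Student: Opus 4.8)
The plan is to show that the two bi-infinite {\cadlag} paths $h$ and $\mathscr{P}_X$ agree at every $t\star\in\R_\mfs$, and since both lie in $\Pi^\updownarrow$ this gives $\mathscr{P}_X=h$. Throughout write $k=\mathscr{P}_X$. By Lemma \ref{l:PX_cadlag} we know $k\in\Pi^\updownarrow$, and by Lemma \ref{l:PX_lhd_noncr} we know $k$ does not cross $\mc{A}$; by hypothesis $h$ does not cross $\mc{A}$ either, so by Lemma \ref{l:noncr_transitive_weave} $h$ and $k$ do not cross each other, and by Lemma \ref{l:lhd_noncr} we have $h\lhd k$ or $k\lhd h$. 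Since both are bi-infinite, part 2 of Lemma \ref{l:lhd_possibilities} will reduce the whole problem to ruling out $h\lhd k$ with $h\neq k$ and, symmetrically, $k\lhd h$ with $h\neq k$; so it suffices to show $X_h=X_k$ and then invoke the injectivity clause of Lemma \ref{l:dcut_Xh}. Actually a cleaner route is available: apply Lemma \ref{l:dcut_Xh} directly. If $h\neq k$ then, since neither extends the other (both being bi-infinite and distinct), Lemma \ref{l:dcut_Xh} gives $X_h\neq X_k$, so it is enough to prove $X_k=X_h=X$.

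So the core of the argument is the identity $X_{\mathscr{P}_X}=X$. For the inclusion $X\sw X_k$: take $f\in X$. Lemma \ref{l:PX_lhd_noncr} gives $f\lhd k$ directly. It remains to check $f\nsubseteq k$ and $k\nsubseteq f$. By condition (ii) of Definition \ref{d:d_cut}, $f$ is not maximal in $X$, so there is $f'\in X$ with $f\lhd f'$, $f\neq f'$; combined with $f'\lhd k$ (again Lemma \ref{l:PX_lhd_noncr}) and the total order on $\mc{A}_{\max}$ from Lemma \ref{l:Amax_order}, together with part 1 of Lemma \ref{l:lhd_possibilities} applied to $f\lhd f'$ and $f'\lhd k$, one extracts a strict separation showing $k\nsubseteq f$; and $f\sw k$ is impossible because $f\in\mc{A}_{\max}$ is a genuinely maximal element of $\mc{A}$ while $k\notin\mc{A}$ in general — more carefully, $f\sw k$ with $f$ bi-infinite-extendable and $f\neq k$ would contradict $f$ being a maximal path only if $k$ were forced into $\mc{A}$, so here one instead argues that if $f\sw k$ then $k$ would be a bi-infinite extension of $f$ lying strictly to the right of all of $X$, contradicting the fact that $X$ has no maximum together with $f'\lhd k$. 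I would streamline this: the cleanest statement is that for $f\in X$ we have $f\lhd k$ and $f\neq k$ (the latter because $k$ does not cross $\mc{A}$ yet sits weakly right of every element of a cut with no top, whereas $f\in\mc{A}_{\max}$), and then distinctness plus $f\lhd k$ plus the fact that $f$ is $\sw$-maximal forces $f\nsubseteq k$ and $k\nsubseteq f$ via part 2 of Lemma \ref{l:lhd_possibilities} — if $k\lhd f$ too then $k\sw f$ or $f\sw k$, but $k$ is bi-infinite so $k\sw f$ is impossible and $f\sw k$ contradicts maximality of $f$ in $\mc{A}_{\max}$ unless $f=k$. Hence $f\in X_k$.

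For the reverse inclusion $X_k\sw X$: take $g\in\mc{A}_{\max}\sc X$; I must show $g\notin X_k$, i.e.\ that it is \emph{not} the case that $g\lhd k$ with $g\nsubseteq k$ and $k\nsubseteq g$. But Lemma \ref{l:PX_lhd_noncr} gives exactly $k\lhd g$. Then part 2 of Lemma \ref{l:lhd_possibilities} shows that if additionally $g\lhd k$ we would get $g\sw k$ or $k\sw g$; $g\sw k$ is impossible since $g\in\mc{A}_{\max}$ would then force $g=k\notin\mc{A}$, a contradiction; and $k\sw g$ is impossible since $k\in\Pi^\updownarrow$ is already bi-infinite and $g\neq k$. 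So $g\notin X_k$. Combining the two inclusions, $X_k=X$. If $h\neq k$, then applying the injectivity part of Lemma \ref{l:dcut_Xh} (valid since $h$ and $k$ do not cross $\mc{A}$ and, being distinct bi-infinite paths, neither extends the other) gives $X_h\neq X_k$. But $X_h=X$ by hypothesis (the definition of $X$) and $X_k=X$ by what we just proved, a contradiction. Hence $h=k=\mathscr{P}_X$.

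The main obstacle I anticipate is the careful handling of the degenerate comparisons at the initial time in the two inclusions above — precisely the cases catalogued in part 1 of Lemma \ref{l:lhd_possibilities}, where $f\lhd k$ but $k$ jumps to the left at a shared initial time so that $k(\s-)>f(\s-)$. One must make sure the conclusions $f\nsubseteq k$ and $k\nsubseteq f$ (resp.\ the impossibility of $g\lhd k$) are genuinely forced and not merely intuitively plausible; the safe tool is always to translate back to the sets $L(\cdot),R(\cdot)$ via Lemmas \ref{l:lhd_LR} and \ref{l:lhd_left_right} rather than reasoning pictorially. I expect this to occupy most of the written proof, while the high-level skeleton (reduce to $X_h=X_{\mathscr{P}_X}$, then quote the injectivity in Lemma \ref{l:dcut_Xh}) is short.
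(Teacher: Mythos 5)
Your overall route is genuinely different from the paper's: instead of arguing directly by contradiction (the paper supposes $h\neq\mathscr{P}_X$, uses Lemma \ref{l:pincer_t_plus} to produce an intermediate $g\in\mc{A}_{\max}$ strictly between the two bi-infinite paths, and derives a contradiction according to whether $g\in X$ or not), you reduce everything to the identity $X_{\mathscr{P}_X}=X$ and then quote the injectivity clause of Lemma \ref{l:dcut_Xh}. That skeleton is sound --- in fact $X_{\mathscr{P}_X}=X$ is precisely the content of the paper's Lemma \ref{l:Xh_onto}, proved immediately after this one, so your argument amounts to deriving the present lemma from that one --- and your treatment of the inclusion $X_{\mathscr{P}_X}\sw X$ is essentially correct (Lemma \ref{l:PX_lhd_noncr} gives $\mathscr{P}_X\lhd g$ for $g\in\mc{A}_{\max}\sc X$, and part 2 of Lemma \ref{l:lhd_possibilities} then rules out membership in $X_{\mathscr{P}_X}$).

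However, the step you ultimately commit to for the inclusion $X\sw X_{\mathscr{P}_X}$ is wrong. Writing $k=\mathscr{P}_X$, you need $f\nsubseteq k$ for $f\in X$, and your ``streamlined'' justification is that $f\sw k$ would ``contradict maximality of $f$ in $\mc{A}_{\max}$''. It would not: maximality of $f$ in $(\mc{A},\sw)$ only forbids extensions of $f$ \emph{inside} $\mc{A}$, and $k=\mathscr{P}_X$ is in general not an element of $\mc{A}$ --- indeed the whole point of the construction (see part 2 of Theorem \ref{t:path_extension}) is that $\mathscr{P}$-paths strictly extend paths of $\mc{A}_{\max}$ backwards in time. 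The correct argument is the one you sketched and then discarded: if $f\sw k$, then since $k$ is bi-infinite and $g\lhd k$ for every $g\in X$ (Lemma \ref{l:PX_lhd_noncr}), taking $k$ itself as the bi-infinite extension of $f$ in Definition \ref{d:lhd} gives $g\lhd f$ for every $g\in X$; by antisymmetry on $\mc{A}_{\max}$ (Lemma \ref{l:Amax_order}) this makes $f$ the maximum of $X$, contradicting condition (ii) of Definition \ref{d:d_cut}. This is exactly how the paper proves the same inclusion in Lemma \ref{l:Xh_onto}. With that repair your proof goes through.
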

\begin{proof}
Let us write $h'=\mathscr{P}_X$ for the duration of this proof.
We must show that $h=h'$.
We will argue by contradiction.
Noting that $h$ is bi-infinite, 
if $h\neq h'$ then there exist $t+\in\R_\mfs$ such that $h(t+)\neq h'(t+)$.
Without loss of generality (or consider space reflected about the origin) 
we may assume that $h(t+)<h'(t+)$.
Thus $h\lhd h'$.
By Lemma \ref{l:pincer_t_plus},
again using that $h$ and $h'$ are bi-infinite,
there exists $g\in\mc{A}$
such that $h\lhd g$ and $g\lhd h'$ with $g\nsubseteq h$ and $g\nsubseteq h'$.
Without loss of generality we may take $g\in\mc{A}_{\max}$.
Hence either $g\in X$ or $g\in\mc{A}_{\max}\sc X$.
We consider these two cases separately.

If $g\in X$ then,
recalling that
$X=\{f\in\mc{A}_{\max}\-f\lhd h\text{ and }f\nsubseteq h\}$,
we have $g\lhd h$.
From Lemma \ref{l:lhd_noncr}, noting that $h$ is bi-infinite,
we thus obtain $g\sw h$, which is a contradiction.
If $g\in \mc{A}_{\max}\sc X$ then
by Lemma \ref{l:PX_lhd_noncr} we have $h'\lhd g$.
From Lemma \ref{l:lhd_noncr}, noting that $h'$ is bi-infinite,
we thus obtain $g\sw h'$, which is a contradiction.
Having reached a contradiction in both cases,
we conclude that in fact $h=h'$.
\end{proof}

\begin{lemma}
\label{l:Xh_onto}
Let $\mc{A}$ be a deterministic weave and
let $X$ be a Dedekind cut of $\mc{A}_{\max}$.
There exists $h\in\Pi^\updownarrow$ such that $X=X_h$,
where $X_h$ is given by \eqref{eq:Xh_def}.
\end{lemma}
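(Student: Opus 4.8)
The plan is to exhibit the path $h=\mathscr{P}_X$ as constructed in \eqref{eq:PX_def} and verify that it is the desired preimage, i.e.\ that $X_h=X$. Lemma \ref{l:PX_cadlag} already tells us $h=\mathscr{P}_X\in\Pi^\updownarrow$, and Lemma \ref{l:PX_lhd_noncr} tells us that $h$ does not cross $\mc{A}$ and that $f\lhd h$ for all $f\in X$ while $h\lhd g$ for all $g\in\mc{A}_{\max}\sc X$. So the only remaining work is to match up the Dedekind cut $X_h$, defined by \eqref{eq:Xh_def} as $\{f\in\mc{A}_{\max}\- f\lhd h,\; f\nsubseteq h,\; h\nsubseteq f\}$, with the given cut $X$.

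First I would show $X\sw X_h$. Let $f\in X$. By Lemma \ref{l:PX_lhd_noncr} we have $f\lhd h$, so it remains to rule out $f\sw h$ and $h\sw f$. If $f\sw h$, then since $X$ is a Dedekind cut it has no maximal element, so there exists $g\in X$ with $f\lhd g$ and $f\neq g$, in fact with $f\nsubseteq g$ and $g\nsubseteq f$ (using Lemma \ref{l:lhd_intermediates} applied inside $\mc{A}_{\max}$, or directly from the no-maximal-element property together with Lemma \ref{l:lhd_possibilities}); but then $g\lhd h$ by Lemma \ref{l:PX_lhd_noncr}, and combining $f\sw h$ with $g\lhd h$ and $h\in\Pi^\updownarrow$ via Lemma \ref{l:lhd_noncr} would force $g\sw h$, whence $g=f$ by maximality — a contradiction. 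The case $h\sw f$ is handled similarly (it would make $h$ half-infinite if $\sigma_f>-\infty$, impossible since $h\in\Pi^\updownarrow$, or otherwise $h=f\in\mc{A}_{\max}$ and one derives a contradiction with the no-maximal-element property of $X$). Hence $f\in X_h$.

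Next I would show $X_h\sw X$. Let $f\in X_h$, so $f\in\mc{A}_{\max}$, $f\lhd h$, $f\nsubseteq h$, $h\nsubseteq f$. Suppose for contradiction that $f\notin X$, i.e.\ $f\in\mc{A}_{\max}\sc X$. Then Lemma \ref{l:PX_lhd_noncr} gives $h\lhd f$. Together with $f\lhd h$, part 2 of Lemma \ref{l:lhd_possibilities} gives $f\sw h$ or $h\sw f$, contradicting $f\nsubseteq h$ and $h\nsubseteq f$. Hence $f\in X$. Combining the two inclusions, $X=X_h$, which completes the proof.

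The main obstacle I anticipate is the bookkeeping around paths with equal initial times and jumps there: the relation $\lhd$ is not a partial order on all of $\Pi^\uparrow$ (transitivity and antisymmetry fail in general), so each step that "combines" $\lhd$-inequalities has to be routed either through Lemma \ref{l:Amax_order} (which gives that $\lhd$ is a genuine total order on $\mc{A}_{\max}$, so transitivity and antisymmetry are available among maximal paths) or through Lemma \ref{l:lhd_noncr}/\ref{l:lhd_possibilities} when one of the paths is bi-infinite (so that $\lhd$ both ways forces $\sw$). The delicate point is that $h$ itself need not lie in $\mc{A}$ or in $\mc{A}_{\max}$, so I cannot simply invoke the total order; instead I must consistently exploit that $h\in\Pi^\updownarrow$ is bi-infinite, which is exactly what makes $f\lhd h$ and $h\lhd f$ together imply $\sw$-comparability. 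I expect the verification itself to be short once this is recognized; the subtlety is purely in choosing which earlier lemma to apply at each juncture.
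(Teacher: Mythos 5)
Your overall route is the paper's: take $h=\mathscr{P}_X$, invoke Lemmas \ref{l:PX_cadlag} and \ref{l:PX_lhd_noncr}, and prove the two inclusions $X\sw X_h$ and $X_h\sw X$. Your argument for $X_h\sw X$ is identical to the paper's and is correct. The handling of $h\sw f$ is also fine: since $h\in\Pi^\updownarrow$, $h\sw f$ forces $f=h$ and hence $f\sw h$, so this case reduces to the other one (this is exactly how the paper discards the condition $h\nsubseteq f$ from \eqref{eq:Xh_def}).

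There is, however, a flawed step in your exclusion of $f\sw h$. Having produced $g\in X$ with $f\lhd g$, $f\neq g$, you claim that ``combining $f\sw h$ with $g\lhd h$ and $h\in\Pi^\updownarrow$ via Lemma \ref{l:lhd_noncr} would force $g\sw h$.'' Lemma \ref{l:lhd_noncr} says nothing about $\sw$, and the conclusion $g\sw h$ is genuinely false in general: if $\s_g<\s_f$ then $f\lhd g$, $g\lhd h$ and $f\sw h$ only force $g=h$ on $[\s_f+,\infty)$, while on $[\s_g+,\s_f-]$ one merely gets $g\leq h$, possibly with strict inequality, so $g$ need not be a restriction of $h$. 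The contradiction you want is available by a shorter and correct route, which is the one the paper takes: $f\sw h$ together with $g\lhd h$ immediately yields $g\lhd f$ directly from Definition \ref{d:lhd} (the bi-infinite extensions witnessing $g\lhd h$ also witness $g\lhd f$, since $f\sw h$ and $h$ is already bi-infinite). As this holds for \emph{every} $g\in X$, the element $f$ would be a maximal element of $(X,\lhd)$ (equivalently, $g\lhd f$ and $f\lhd g$ with $f,g\in\mc{A}_{\max}$ force $f=g$ by Lemma \ref{l:Amax_order}), contradicting condition (ii) of Definition \ref{d:d_cut}. With that one step replaced, your proof matches the paper's.
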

\begin{proof}
Let us write $h=\mathscr{P}_X$ for the duration of this proof.
By Lemma \ref{l:PX_cadlag} we have $h\in\Pi^\updownarrow$.
Define $X_h$ as in \eqref{eq:Xh_def},
that is 
$X_h=\{f\in\mc{A}_{\max}\-f\leq h\text{ and }f\nsubseteq h\}$.
Note that since $h\in\Pi^\updownarrow$ we may discard the condition $h\nsubseteq f$,
because if $h\sw f$ then $f=h$ and thus also $f\sw h$.
We must show that $X=X_h$.

Let $f\in X$. 
Lemma \ref{l:PX_lhd_noncr} gives that $f\lhd h$.
Moreover Lemma \ref{l:PX_lhd_noncr} gives that
$g\lhd h$ for all $g\in X$.
Thus, if $f\subseteq h$ then we would also have $g\lhd f$ for all $g\in X$,
which would make $f$ a maximal element of $X$; 
this a contradiction as $X$ is a Dedekind cut.
Hence $f\nsubseteq h$.
We thus have $X\sw X_h$.
We now move on to the reverse inclusion.

Let $f\in X_h$,
so we have $f\in\mc{A}_{\max}$, $f\lhd h$ and $f\nsubseteq h$.
If $f\notin X$ then Lemma \ref{l:PX_lhd_noncr} gives that $h\lhd f$,
from which part 2 of Lemma \ref{l:lhd_possibilities} implies that $f\sw h$ or $h\sw f$;
this is a contradiction.
Hence $X_h\sw X$, so $X=X_h$ as required.
\end{proof}

\begin{theorem}
\label{t:path_extension}
Let $\mc{A}$ be a deterministic weave.
Let $\mathscr{X}$ denote the set of Dedekind cuts of $\mc{A}_{\max}$.
\begin{enumerate}
\item
The map $h\mapsto X_h$ given by \eqref{eq:Xh_def}
is a bijection between $\flow(\mc{A})$ and $\mathscr{X}$.
The inverse map $X\mapsto \mathscr{P}_X$ is given by \eqref{eq:PX_def}.
\item
For any $f\in\Pi^\uparrow$ that does not cross $\mc{A}$,
there exists $h\in\flow(\mc{A})$ such that $f\sw h$.
\end{enumerate}
\end{theorem}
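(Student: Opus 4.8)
The plan is to assemble Theorem \ref{t:path_extension} from the lemmas of Section \ref{sec:path_extn}, in which essentially all of the real work --- above all the {\cadlag}ness established in Lemma \ref{l:PX_cadlag} --- has already been done. Part 1 is a bijectivity statement, and Part 2 will follow quickly from Part 1 together with the injectivity content of Lemma \ref{l:dcut_Xh}.

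For Part 1 I would check four points in turn. \emph{(i) The map lands in $\mathscr{X}$:} for $h\in\flow(\mc{A})$ we have $h\in\Pi^\updownarrow$ and $h$ does not cross $\mc{A}$, so $X_h\in\mathscr{X}$ by Lemma \ref{l:dcut_Xh}. \emph{(ii) Surjectivity:} given $X\in\mathscr{X}$, Lemma \ref{l:PX_cadlag} gives $\mathscr{P}_X\in\Pi^\updownarrow$, Lemma \ref{l:PX_lhd_noncr} gives that $\mathscr{P}_X$ does not cross $\mc{A}$, hence $\mathscr{P}_X\in\flow(\mc{A})$, and Lemma \ref{l:Xh_onto} gives $X_{\mathscr{P}_X}=X$, so $X$ has a preimage. \emph{(iii) Injectivity:} if $h,h'\in\flow(\mc{A})$ with $h\neq h'$, then, both being bi-infinite, neither $h\sw h'$ nor $h'\sw h$ can hold (either would force $h=h'$), so $h\nsubseteq h'$ and $h'\nsubseteq h$, and Lemma \ref{l:dcut_Xh} gives $X_h\neq X_{h'}$. \emph{(iv) Mutual inverse:} $\mathscr{P}_{X_h}=h$ for $h\in\flow(\mc{A})$ is Lemma \ref{l:PX_inverse_of_Xh}, while $X_{\mathscr{P}_X}=X$ for $X\in\mathscr{X}$ is what (ii) extracted from Lemma \ref{l:Xh_onto}; together these say $h\mapsto X_h$ and $X\mapsto\mathscr{P}_X$ are mutually inverse.

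For Part 2, let $f\in\Pi^\uparrow$ not cross $\mc{A}$. By Lemma \ref{l:dcut_Xh}, $X_f\in\mathscr{X}$, so we may set $h=\mathscr{P}_{X_f}$; by Part 1 then $h\in\flow(\mc{A})$ and $X_h=X_{\mathscr{P}_{X_f}}=X_f$. Now $f$ and $h$ both fail to cross $\mc{A}$ and have equal Dedekind cuts, so the contrapositive of the injectivity clause of Lemma \ref{l:dcut_Xh} forces $f\sw h$ or $h\sw f$; in the latter case, since $h\in\Pi^\updownarrow$ while $f\in\Pi^\uparrow$, extension of $h$ by $f$ forces $\sigma_f=-\infty$ and hence $f=h$, so $f\sw h$ in every case, as required.

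I do not expect a serious obstacle, since the heavy lifting was done in Lemmas \ref{l:dcut_Xh}--\ref{l:Xh_onto} and the theorem itself is a clean assembly. The one point that must be kept visible is the distinction between $\flow(\mc{A})$ and $\Pi^\updownarrow$: Lemma \ref{l:Xh_onto} on its own only produces $\mathscr{P}_X\in\Pi^\updownarrow$, so one must additionally invoke Lemma \ref{l:PX_lhd_noncr} to land inside $\flow(\mc{A})$; and one should remember that bi-infiniteness collapses the partial order $\sw$ on $\flow(\mc{A})$, which is exactly what drives both the injectivity in Part 1 and the elimination of the case $h\sw f$ in Part 2.
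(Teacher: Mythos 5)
Your Part 1 is correct and follows exactly the paper's assembly: Lemma \ref{l:dcut_Xh} for range and injectivity, Lemma \ref{l:Xh_onto} for surjectivity, Lemmas \ref{l:PX_cadlag} and \ref{l:PX_lhd_noncr} to place $\mathscr{P}_X$ inside $\flow(\mc{A})$, and Lemma \ref{l:PX_inverse_of_Xh} for the inverse relation. Your Part 2 is also correct but packaged differently from the paper's. The paper argues by contradiction: assuming $f\nsubseteq h$, it invokes Lemma \ref{l:noncr_transitive_weave} and Lemma \ref{l:lhd_noncr} to get $f\lhd h$ or $h\lhd f$, then reruns the intermediate-path construction (Lemma \ref{l:lhd_intermediates}) to produce a $g\in\mc{A}_{\max}$ strictly between $f$ and $h$, and derives a contradiction via Lemma \ref{l:PX_lhd_noncr} and Lemma \ref{l:Amax_order}. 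You instead observe that $X_h=X_{\mathscr{P}_{X_f}}=X_f$ and apply the contrapositive of the injectivity clause of Lemma \ref{l:dcut_Xh} to the pair $(f,h)\in(\Pi^\uparrow)^2$, which immediately yields $f\sw h$ or $h\sw f$, the latter collapsing to $f=h$ by bi-infiniteness of $h$ (a fact the paper uses freely elsewhere). This is a legitimate shortcut and arguably cleaner; note, though, that it is not a genuinely independent argument, since the injectivity clause of Lemma \ref{l:dcut_Xh} is itself proved via Lemma \ref{l:lhd_intermediates} and Lemma \ref{l:noncr_transitive_weave} --- you are reusing that work rather than repeating it. The only point to keep explicit is that the identity $X_{\mathscr{P}_X}=X$ is extracted from the \emph{proof} of Lemma \ref{l:Xh_onto} (whose statement only asserts existence of some $h$ with $X=X_h$); you flag this adequately.
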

\begin{proof}
Recall from \eqref{eq:flow_op} that by definition $\flow(\mc{A})=\{h\in\Pi^\updownarrow\-h\text{ does not cross }\mc{A}\}$.
Lemma \ref{l:dcut_Xh} gives that the range of the map $h\mapsto X_h$ (with domain $\flow(\mc{A})$) is within $\mathscr{X}$
and that this map is injective.
Lemma \ref{l:Xh_onto} gives that $h\mapsto X_h$ has range $\mathscr{X}$.
Lemmas \ref{l:PX_cadlag} and \ref{l:PX_lhd_noncr} ensure that the range of the map $X\mapsto\mathscr{P}_X$ 
(with domain $\mathscr{X}$)
is within $\flow(\mc{A})$,
so Lemma \ref{l:PX_inverse_of_Xh} gives that $h\mapsto X_h$ and $X\mapsto\mathscr{P}_X$ are inverses of each other,
between $\flow(\mc{A})$ and $\mathscr{X}$.
This establishes the first claim of the present theorem.

To see the second claim, let $f\in\Pi^\uparrow$ and suppose that $f$ does not cross $\mc{A}$.
By Lemma \ref{l:dcut_Xh} we have $X_f\in\mathscr{X}$,
from which part 1 of the present theorem gives that $h=\mathscr{P}_{X_f}\in\flow(\mc{A})$
does not cross $\mc{A}$.
It remains to show that $f\sw h$.
We will argue by contradiction. 

Suppose that $f\nsubseteq h$,
which as $h\in\Pi^\updownarrow$ implies that $h\nsubseteq f$.
We have that $\mc{A}\cup\{f\}$ is non-crossing and that $\mc{A}\cup\{h\}$ is non-crossing.
It follows by Lemma \ref{l:noncr_transitive_weave} that $\mc{A}\cup\{f,h\}$ is non-crossing,
so in particular $f$ does not cross $h$.
By Lemma \ref{l:lhd_noncr} we have
$f\lhd h$ or $h\lhd f$.
We treat these two cases in turn.

Consider, first, if $f\lhd h$.
Then Lemma \ref{l:lhd_intermediates} gives $g\in\mc{A}_{\max}$
such that $f\lhd g$, $g\lhd h$, with $f,g,h$ all incomparable under $\sw$.
Hence $g\in\mc{A}_{\max}\sc X_f$, which by Lemma \ref{l:PX_lhd_noncr} gives $h\lhd g$.
By Lemma \ref{l:Amax_order} we thus have $g\sw h$ or $h\sw g$, which is a contradiction.

The argument when $h\lhd f$ is similar.
Now Lemma \ref{l:lhd_intermediates} gives $g\in\mc{A}_{\max}$
such that $h\lhd g$, $g\lhd f$ with $f,g,h$ all incomparable under $\sw$.
Hence $g\in X_f$,
which by Lemma \ref{l:PX_lhd_noncr} gives $g\lhd h$,
and Lemma \ref{l:Amax_order} again arrives at a contradiction.
This completes the proof.
\end{proof}

\subsection{The flow map}
\label{sec:flow_op}

In this section we establish further properties of the $\flow$ operation defined in \eqref{eq:flow_op},
including Lemma \ref{l:flow_map_cts} which shows that $\mc{A}\mapsto\flow(\mc{A})$ is continuous on $\mathscr{W}_{\det}$.
Another key result, contained within Lemma \ref{l:relcomp_weaves_to_flows}, 
is that the path extension of Theorem \ref{t:path_extension} preserves compactness.
Here, for the first time, 
we see interaction between all the main concepts of the present article:
relative compactness in $\mc{K}(\Pi^\uparrow)$,
path extension,
and the pervasiveness and non-crossing properties of weaves.

\begin{lemma}
\label{l:relcomp_weaves_to_flows}
Let $\mathscr{A}\sw\mc{K}(\Pi^\uparrow)$ be a relatively compact subset of $\mc{K}(\Pi^\uparrow)$,
where each $\mc{A}\in\mathscr{A}$ is a deterministic weave.
Suppose that any limit point of $\mathscr{A}$ is non-crossing.
Then $\{\flow(\mc{A})\-\mc{A}\in\mathscr{A}\}$ is a relatively compact subset of $\mc{K}(\Pi^\updownarrow)$.
\end{lemma}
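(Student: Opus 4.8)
\textbf{Proof proposal for Lemma \ref{l:relcomp_weaves_to_flows}.}

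The plan is to use the criterion for relative compactness in $\mc{K}(\Pi^\updownarrow)$ — which by the discussion preceding Proposition \ref{p:relcom_tightness} reduces to relative compactness of the union $\bigcup_{\mc{A}\in\mathscr{A}}\flow(\mc{A})$ as a subset of $\Pi^\updownarrow$, together with a suitable ``no escape of mass'' condition — and to deduce this from the hypothesis that $\mathscr{A}$ is relatively compact in $\mc{K}(\Pi^\uparrow)$. The key structural input is Theorem \ref{t:path_extension}: every $h\in\flow(\mc{A})$ is a bi-infinite path that does not cross $\mc{A}$, and conversely every $f\in\mc{A}$ (indeed every $f\in\mc{A}_{\max}$) extends to some $h\in\flow(\mc{A})$ with $f\sw h$. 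So the paths in $\flow(\mc{A})$ are exactly the bi-infinite ``completions'' of the half-infinite paths already present in $\mc{A}$, obtained by gluing on an initial segment that stays pinched between neighbouring maximal paths of $\mc{A}$.

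First I would set up a sequential argument: take $\mc{A}_n\in\mathscr{A}$ and $h_n\in\flow(\mc{A}_n)$, and show that $(h_n)$ has a subsequential limit in $\Pi^\updownarrow$, and moreover that for any sequence $\mc{A}_n\to\mc{A}$ in $\mc{K}(\Pi^\uparrow)$ the corresponding flows $\flow(\mc{A}_n)$ are precompact with limit points inside $\mc{K}(\Pi^\updownarrow)$. Pass to a subsequence so that $\mc{A}_n\to\mc{A}$, which is possible by relative compactness of $\mathscr{A}$; by hypothesis $\mc{A}$ is non-crossing, and it is automatically compact, so $\mc{A}\in\mathscr{W}_{\det}$ (pervasiveness of the limit is not needed for the argument, only non-crossing). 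Now I would apply the M1 relative-compactness criterion from Appendix \ref{a:M1} / Proposition \ref{p:relcom_tightness} to the family $\{h_n\}$: one must control the oscillation of $h_n$ on compact time intervals and rule out mass escaping to spatial infinity at finite times. The point is that $h_n$ does not cross $\mc{A}_n$, and $\mc{A}_n$ is pervasive, so on any space-time region $h_n$ is ``trapped'' between paths of $\mc{A}_n$ passing just to its left and just to its right (this is exactly the content of Lemmas \ref{l:lhd_left_right} and \ref{l:pincer_t_plus}, applied inside the weave $\mc{A}_n$); since $\{\mc{A}_n\}$ is relatively compact, those trapping paths have uniformly controlled oscillation, and the same bound is inherited by $h_n$. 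This gives a subsequential M1-limit $h\in\Pi$, and $h\in\Pi^\updownarrow$ because each $h_n$ is bi-infinite and $\Pi^\updownarrow$ is closed in $\Pi$ (Section \ref{sec:pi}).

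Next I would identify the limit: $h$ does not cross $\mc{A}$. Each $h_n$ does not cross $\mc{A}_n$, i.e.\ for every $f\in\mc{A}_n$ we have $f\lhd h_n$ or $h_n\lhd f$; given $f\in\mc{A}$, pick $f_n\in\mc{A}_n$ with $f_n\to f$ (possible since $\mc{A}_n\to\mc{A}$ in the Hausdorff metric), pass to a further subsequence on which one of $f_n\lhd h_n$, $h_n\lhd f_n$ holds for all $n$, and invoke Lemma \ref{l:lhd_compat}: since $\mc{A}$ is non-crossing, $\{f,h\}$ will be shown non-crossing provided we know $f$ and $h$ themselves do not cross — but that is precisely what Lemma \ref{l:crossing_eps} / Lemma \ref{l:lhd_compat} delivers once we observe that any crossing of $f$ and $h$ could only occur at the common initial time, and the $\blacktriangleleft_\eps$ quantity is controlled because $\mathscr{A}\cup\{\text{limit}\}$ stays relatively compact. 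Thus $h$ does not cross $\mc{A}_{\max}$, hence not $\mc{A}$, so $h\in\flow(\mc{A})$ by definition \eqref{eq:flow_op}. This shows limit points of $\flow(\mc{A}_n)$ lie in $\mc{K}(\Pi^\updownarrow)$, and combined with the precompactness of the path-families, standard arguments (e.g.\ the Hausdorff-metric compactness criterion in Appendix \ref{a:hausdroff_metric}) give that $\{\flow(\mc{A})\-\mc{A}\in\mathscr{A}\}$ is relatively compact in $\mc{K}(\Pi^\updownarrow)$.

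The main obstacle I anticipate is the uniform oscillation control: translating ``relatively compact family of half-infinite weaves'' into a modulus-of-continuity bound (in the M1 sense) on the \emph{extended} bi-infinite paths, uniformly over the family. The subtlety is that path extension is not a limiting operation — $\flow(\mc{A})$ is built via Dedekind cuts of $\mc{A}_{\max}$, not as a closure — so one cannot simply quote compatibility of $\sw$ with limits; instead one has to re-derive the trapping estimate directly at the level of the M1 oscillation functional, handling with care the behaviour near initial times of paths (where jumps may appear in the limit, as in Figure \ref{fig:example_weaves}). I expect this is where most of the work in the full proof goes, leaning on the already-established Lemmas \ref{l:approx_before_jump}, \ref{l:pincer_t_plus} and \ref{l:lhd_intermediates} about how paths behave inside a weave.
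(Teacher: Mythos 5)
Your overall architecture matches the paper's: reduce to relative compactness of $\bigcup_{\mc{A}\in\mathscr{A}}\flow(\mc{A})$ in $\Pi$ via Lemma \ref{l:relcom_Pi_KPi}, note that $\Pi^\updownarrow$ is closed so only relative compactness in $\mc{K}(\Pi^\uparrow)$ matters, and then control the M1 oscillation of flow paths using pervasiveness of the $\mc{A}_n$ together with the hypothesis that limit points of $\mathscr{A}$ are non-crossing. You also correctly observe that pervasiveness of the limit is not needed, and that the identification of limit points as elements of $\flow(\mc{A})$ is extra work not required for the statement (the paper defers that to Lemma \ref{l:flow_map_cts}).

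However, there is a genuine gap at exactly the step you flag as the anticipated obstacle, and the mechanism you propose to close it does not work as stated. You argue that $h_n\in\flow(\mc{A}_n)$ is ``trapped'' between paths of $\mc{A}_n$ just to its left and right, and that since these trapping paths have uniformly controlled oscillation the same bound is ``inherited'' by $h_n$. But sandwiching $\ell_n\lhd h_n\lhd r_n$ gives no bound on $w_{T,\delta}(h_n)$ in terms of $w_{T,\delta}(\ell_n)$ and $w_{T,\delta}(r_n)$ unless $\ell_n$ and $r_n$ track $h_n$ closely at all three times entering the oscillation functional simultaneously, and arranging that is precisely the difficulty: a path pinched between two well-behaved but well-separated paths can still oscillate arbitrarily within the gap. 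The paper's actual argument is by contradiction and uses a different mechanism. Suppose $w_{T,\delta_n}(f_n)\geq\kappa$ for some $f_n\in\flow(\mc{A}_n)$, witnessed at times $t^n_1<t^n_2<t^n_3$ collapsing to a common $t$, with (after normalisation) $f_n(t^n_1)\vee f_n(t^n_3)+\kappa\leq f_n(t^n_2)$. One then uses pervasiveness to seed $g_n\in\mc{A}_n\big((t^n_1,f_n(t^n_1)+\kappa/3)\big)$ and $h_n\in\mc{A}_n\big((t^n_2,f_n(t^n_2)-\kappa/3)\big)$: the non-crossing constraint with $f_n$ forces $f_n\lhd g_n$ and $h_n\lhd f_n$, so $g_n$ must rise by roughly $2\kappa/3$ between $t^n_1$ and $t^n_2$ while $h_n$ must fall by roughly $2\kappa/3$ between $t^n_2$ and $t^n_3$. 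Passing to limits, $g$ jumps right and $h$ jumps left over a common spatial level at the single time $t$, so $g$ and $h$ cross in the limit weave $\mc{A}$ --- contradicting the hypothesis. In other words, the oscillating flow path is not controlled by nearby weave paths; rather, its oscillation is exported onto two weave paths which are then shown to be incompatible in the limit. Your sketch would need to be rebuilt around this contradiction argument (or an equivalent quantitative version of it) to be correct; as written, the inheritance of the modulus is asserted but not established.
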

\begin{proof}
As $\Pi^\updownarrow$ is a closed subset of $\Pi^\uparrow$,
also $\mc{K}(\Pi^\updownarrow)$ is a closed subset of $\mc{K}(\Pi^\uparrow)$.
It therefore suffices to consider relative compactness in $\mc{K}(\Pi^\uparrow)$.
By Lemma \ref{l:relcom_Pi_KPi} the set 
$\{\flow(\mc{A})\-\mc{A}\in\mathscr{A}\}$ 
is a relatively compact subset of $\mc{K}(\Pi^\uparrow)$
if and only if
$\mc{F}=\bigcup_{\mc{A}\in\mathscr{A}}\flow(\mc{A})$
is a relatively compact subset of $\Pi^\uparrow$.
We will argue by contradiction.

Suppose that $\mc{F}$ is not relatively compact.
Then, by Proposition \ref{p:relcom_tightness} there exists 
$T,\kappa>0$ and sequences $(\delta_n)\sw(0,1)$, $(f_n)\sw\mc{F}$ such that
$\delta_n\searrow 0$ and $w_{T,\delta_n}(f_n)\geq\kappa$.
For convenience, recall from \eqref{eq:moduli_cadlag} that
\begin{align*}
w_{T,\delta}(f)=
\sup\big\{
d_{\ov{\R}}\big(f(t_2\star_2),[f(t_1\star_1),f(t_3\star_3)]\big)\-
&t_1\star_1,t_2\star_2,t_3\star_3\in I_\mfs(f), \\
&-T<t_1<t_2<t_3<T,\ t_3-t_1<\de\big\}.
\end{align*}
So, we have $t^n_i\star^n_i\in I_{\mfs}(f_n)$ such that $-T<t_1^n<t_2^n<t_3^n<T$ and $t^n_3-t^n_1<\delta$,
with
\begin{equation}
\label{eq:not_compact}
d_{\ov{\R}}\l(f_n(t^n_2\star^n_2),\l[f_n(t^n_1\star^n_1), f(t^n_3\star^n_3)\r]\r)\geq\kappa.
\end{equation}
By the {\cadlag} property of the bi-infinite path $f_n$, we may assume without loss of generality (reducing $\kappa>0$ if necessary) that $f_n$ is continuous at $t^n_i\star^n_i$ for $i=1,2,3$.
Using that $\Rc$ is compact, without loss of generality we may pass to a subsequence and assume additionally that 
$f_n(t^n_i)\to y_i\in\ov{\R}$ in $\Rc$.
Using that $[-T,T]$ is compact, we may pass to a further subsequence and assume additionally that
$t^n_i\to t_i$, with $t_i\in[-T,T]$.
Since $0\leq t^n_3-t^n_1<\delta_n$ in fact $t_1=t_2=t_3=t$.
To summarise, we thus have
\begin{equation}
\label{eq:not_compact_time_conv}
t^n_1<t^n_2<t^n_3\text{ for all $n$ and }t^n_i\to t\in[-T,T]\text{ as }n\to\infty.
\end{equation}
Without loss of generality (or consider the same setup with space reflected about the origin) we pass to a further subsequence and assume additionally that 
$f_n(t^n_1)\leq f_n(t^n_2)$,
which by \eqref{eq:not_compact} implies that
\begin{equation}
\label{eq:not_compact_2}
f_n(t^n_i)+\kappa\leq f_n(t^n_2)
\quad\text{ for }i=1,3.
\end{equation}

Note that $f_n\in\mc{A}$, for some $\mc{A}\in\mathscr{A}$,
and let us write $\mc{A}_n$ for such $\mc{A}$.
By pervasiveness of $\mc{A}_n$ there exist
\begin{equation}
\label{eq:fn_gnhn}
g_n\in\mc{A}_n\l((t^n_1,f_n(t^n_1)+\tfrac\kappa3)\r)
\quand
h_n\in\mc{A}_n\l((t^n_2,f_n(t^n_2)-\tfrac\kappa3)\r).
\end{equation}
Since $\mc{A}_n$ is a weave, $g_n$ and $h_n$ do not cross each other.
By Lemma \ref{l:relcom_Pi_KPi}, relative compactness of $\mathscr{A}$ gives that
$\mc{B}=\bigcup_{\mc{A}\in\mathscr{A}}\mc{A}$ is a relatively compact subset of $\Pi^\updownarrow$.
Hence we may pass to a further subsequence and assume additionally that
$g_n\to g\in\ov{\mc{B}}$ and $h_n\to h\in\ov{\mc{B}}$.
The sequence $(\mc{A}_n)$ is a subset of $\mathscr{A}$ and therefore is relatively compact,
so we may pass to a subsequence and assume that $\mc{A}_n\to\mc{A}$.
Hence $g,h\in\mc{A}$.
The set $\mc{A}$ is a limit point of $\mathscr{A}$,
therefore (as a hypothesis of the present lemma) $\mc{A}$ is non-crossing.
Hence $g$ and $h$ may not cross.

Let us briefly comment on the strategy for the remaining part of the proof:
we will establish a contradiction through showing that $g$ and $h$ cross at time $t$,
at which time they will jump past each other in opposite directions.
By Lemma \ref{l:lhd_noncr} we have that $g_n,h_n$ are comparable under $\lhd$ to $f_n$.
Since $f_n$ is continuous at $t^n_i$,
by \eqref{eq:fn_gnhn} and Lemma \ref{l:lhd_LR} we have 
$f_n\lhd g_n$ and $h_n\lhd f_n$.
Hence 
$f_n(t^n_2)\leq g_n(t^n_2\pm)$ and $h_n(t^n_3\pm)\leq f_n(t^n_3)$.
By definition of $g_n,h_n$ we have
$g_n(t^n_1-)\wedge g_n(t^n_1+)\leq f_n(t^n_1)+\frac\kappa3$ and $f_n(t^n_2)-\frac\kappa3\leq h_n(t^n_2-)\vee h_n(t^n_2+)$.
Combining these facts with \eqref{eq:not_compact_2} we thus have $t^n_i\bullet^n_i$ such that
\begin{alignat*}{4}
& g_n(t^n_1\bullet^n_1)+\tfrac{2\kappa}{3} 
&& \leq f_n(t^n_1)+\kappa 
&& \leq f_n(t^n_2)
&& \leq g_n(t^n_2\bullet^n_2)\\
& h_n(t^n_3\bullet^n_3)+\kappa
&& \leq f_n(t_n^3)+\kappa
&& \leq f_n(t^n_2) 
&& \leq h_n(t^n_2\bullet^n_2)+\tfrac{\kappa}{3}.
\end{alignat*}
Hence, 
\begin{alignat*}{4}
& g_n(t^n_1\bullet^n_1)+\tfrac{\kappa}{6} 
&& \leq f_n(t^n_2)-\tfrac\kappa2
&& \leq g_n(t^n_2\bullet^n_2)-\tfrac\kappa2\\
& h_n(t^n_3\bullet^n_3)+\tfrac\kappa2
&& \leq f_n(t^n_2)-\tfrac\kappa2 
&& \leq h_n(t^n_2\bullet^n_2)-\tfrac{\kappa}{6}.
\end{alignat*}
Using that $[-T-,T+]\sw\R_{\mfs}$ is compact (which follows from Lemma \ref{l:Rpm}) we may pass to a subsequence and assume that
for $i=1,2,3$ the sequence $(t^n_i\bullet^n_i)$ converges as $n\to\infty$.
We will now send $n\to\infty$.
Recalling that $f_n(t^n_2)\to y_2$, we thus obtain from Lemma \ref{l:appdx_1_sw_limits} and \eqref{eq:not_compact_time_conv} that
\begin{alignat*}{3}
& g(t-) &&< y_2-\tfrac\kappa2 &&< g(t+) \\
& h(t+) &&< y_2-\tfrac\kappa2 &&< h(t-), 
\end{alignat*}
which implies that $g$ and $h$ cross. This is a contradiction since $g,h\in\mc{A}$.
\end{proof}

The M1 topology and particular form of $w_{T,\delta}$ is crucial to the above argument.
We do not know of an analogous result for the J1 topology.

\begin{lemma}
\label{l:flow_is_weave} 
Let $\mc{A}$ be a deterministic weave.
Then $\flow(\mc{A})$ is a deterministic weave and $\flow(\mc{A})\sw\Pi^\updownarrow$.
\end{lemma}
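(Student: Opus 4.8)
The plan is to check, one by one, the four requirements in Definition~\ref{d:weave}: that $\flow(\mc{A})\sw\Pi^\updownarrow$, that it is a non-crossing set, that it is pervasive, and that it is compact. The first is immediate from \eqref{eq:flow_op}. For the non-crossing property, take $h_1,h_2\in\flow(\mc{A})$; then $\mc{A}\cup\{h_1\}$ and $\mc{A}\cup\{h_2\}$ are each non-crossing (each $h_i$ does not cross $\mc{A}$, and $\mc{A}$ is a weave), so Lemma~\ref{l:noncr_transitive_weave} gives that $\mc{A}\cup\{h_1,h_2\}$ is non-crossing, whence $h_1$ and $h_2$ do not cross. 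For pervasiveness, fix $z\in\Rc$; by pervasiveness of $\mc{A}$ there is $f\in\mc{A}(z)$, and $f$ does not cross $\mc{A}$, so part~2 of Theorem~\ref{t:path_extension} yields $h\in\flow(\mc{A})$ with $f\sw h$. Since $f$ and $h$ are non-trivial we have $z\in H(f)\sw H(h)$, so $h$ passes through $z$ and $\flow(\mc{A})(z)\neq\emptyset$.

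It remains to prove compactness. Relative compactness is essentially free: applying Lemma~\ref{l:relcomp_weaves_to_flows} to the (trivially relatively compact) singleton $\mathscr{A}=\{\mc{A}\}$, whose only limit point $\mc{A}$ is non-crossing, shows that $\flow(\mc{A})$ is relatively compact. So the real task is closedness: if $h_n\to h$ with $h_n\in\flow(\mc{A})$, then $h\in\Pi^\updownarrow$ (as $\Pi^\updownarrow$ is closed), and we must show $h$ does not cross $\mc{A}$.

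Here lies the main obstacle. The non-crossing relation is not preserved under M1 limits of half-infinite paths — a jump can appear at the initial time of a path of $\mc{A}$ in the limit, exactly the phenomenon behind the warning examples in the paper — so one cannot conclude directly that $h$ avoids each $g\in\mc{A}$ from the fact that the $h_n$ do. The device to circumvent this is to replace $g$ by a bi-infinite path before taking limits. Fix $g\in\mc{A}$; by part~2 of Theorem~\ref{t:path_extension} there is $g'\in\flow(\mc{A})\sw\Pi^\updownarrow$ with $g\sw g'$. Now $h_n$ and $g'$ both lie in $\flow(\mc{A})$, which we have already shown is non-crossing, so $\{h_n,g'\}$ is non-crossing for every $n$; since $h_n,g',h\in\Pi^\updownarrow$ and $h_n\to h$, the ``moreover'' part of Lemma~\ref{l:noncr_biinf_limits} gives that $\{h,g'\}$ is non-crossing. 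Finally, because $g\sw g'$ one reads off from \eqref{eq:LR_sets} that $L(g)\sw L(g')$ and $R(g)\sw R(g')$, so Lemma~\ref{l:lhd_noncr} upgrades ``$h$ does not cross $g'$'' to ``$h$ does not cross $g$''. As $g\in\mc{A}$ was arbitrary, $h$ does not cross $\mc{A}$, hence $h\in\flow(\mc{A})$. This establishes closedness, hence compactness, and completes the proof that $\flow(\mc{A})$ is a deterministic weave with $\flow(\mc{A})\sw\Pi^\updownarrow$.
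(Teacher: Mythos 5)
Your proof is correct and follows essentially the same route as the paper's: the same four-part verification, the same appeal to Lemma \ref{l:relcomp_weaves_to_flows} with $\mathscr{A}=\{\mc{A}\}$ for relative compactness, and the same device of replacing $g\in\mc{A}$ by a bi-infinite extension $g'\in\flow(\mc{A})$ (via Theorem \ref{t:path_extension}) so that Lemma \ref{l:noncr_biinf_limits} can be applied to establish closedness. The only cosmetic difference is that you justify the final downgrade from $g'$ to $g$ via the $L,R$ sets rather than directly from Definition \ref{d:crossing}; both are fine.
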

\begin{proof}
Note that equation \eqref{eq:flow_op} gives that $\flow(\mc{A})\sw\Pi^\updownarrow$.
Let us first establish that $\flow(\mc{A})$ is compact.
By Lemma \ref{l:relcomp_weaves_to_flows} (taking $\mathscr{A}=\{\mc{A}\}$)
we have that $\flow(\mc{A})$ is relatively compact.
We will show that $\flow(\mc{A})$ is closed (and thus compact).
Suppose that $g_n\to g$ where $g_n\in\flow(\mc{A})$.
For any $f\in\mc{A}$ there exists $f'\in\flow(\mc{A})$
such that $f\sw f'$.
Thus $f'$ and $g_n$ do not cross, for all $n$.
Lemma \ref{l:noncr_biinf_limits} gives that $f'$ and $g$ do not cross,
which implies that $f$ and $g$ do not cross.
Thus $f$ does not cross $\mc{A}$, which implies that $f\in\flow(\mc{A})$.

To check that $\flow(\mc{A})$ is a weave, 
it remains to show that $\flow(\mc{A})$ is pervasive and non-crossing.
If $g,h\in\flow(\mc{A})$ then by \eqref{eq:flow_op} we have that 
$\mc{A}\cup\{g\}$ is non-crossing and $\mc{A}\cup\{h\}$ is non-crossing. 
By Lemma \ref{l:noncr_transitive_weave} we thus have that $\{g,h\}$ is non-crossing,
so in fact $\flow(\mc{A})$ is non-crossing.
If $z\in\R^2_c$ then pervasiveness of $\mc{A}$ implies that 
there exists some $f\in\mc{A}$ such that $f\in H(f)$.
Theorem \ref{t:path_extension} implies that there exists $f'\in\flow(\mc{A})$ with $f\sw f'$,
hence $z\in H(f')$.
Thus $\flow(\mc{A})$ is pervasive.
\end{proof}

\begin{lemma}
\label{l:flow_map_cts}
Let $\mc{A}_n,\mc{A}$ be deterministic weaves with $\mc{A}_n\to\mc{A}$.
Then $\flow(\mc{A}_n)\to\flow(\mc{A})$.
\end{lemma}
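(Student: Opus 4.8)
### Proof proposal

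The plan is to prove continuity of $\flow(\cdot)$ on $\mathscr{W}_{\det}$ by exploiting that $\mc{K}(\Pi^\updownarrow)$ is metrisable and sequentially compact on relatively compact sets, so it suffices to show that \emph{every} subsequential limit of $(\flow(\mc{A}_n))$ equals $\flow(\mc{A})$. First I would note that $\{\mc{A}_n\-n\in\N\}\cup\{\mc{A}\}$ is compact (it is a convergent sequence together with its limit), hence relatively compact in $\mc{K}(\Pi^\uparrow)$, and its only limit point is $\mc{A}$, which is non-crossing. Lemma \ref{l:relcomp_weaves_to_flows} then gives that $\{\flow(\mc{A}_n)\-n\in\N\}$ is relatively compact in $\mc{K}(\Pi^\updownarrow)$. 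So it is enough to take an arbitrary subsequence along which $\flow(\mc{A}_n)\to\mc{G}$ for some $\mc{G}\in\mc{K}(\Pi^\updownarrow)$ and prove $\mc{G}=\flow(\mc{A})$; relative compactness plus uniqueness of the limit point then yields $\flow(\mc{A}_n)\to\flow(\mc{A})$ along the whole sequence.

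The inclusion $\mc{G}\sw\flow(\mc{A})$ is the ``upper semicontinuity'' direction and should be the easier one. Take $g\in\mc{G}$; by the characterisation of convergence in the Hausdorff metric there exist $g_n\in\flow(\mc{A}_n)$ with $g_n\to g$ in $\Pi$. We must show $g$ does not cross $\mc{A}$, i.e.\ $g\in\flow(\mc{A})$ by \eqref{eq:flow_op}. Fix $f\in\mc{A}$; since $\mc{A}_n\to\mc{A}$ there exist $f_n\in\mc{A}_n$ with $f_n\to f$. By Theorem \ref{t:path_extension} applied to the deterministic weave $\mc{A}_n$, each $f_n$ extends to some $f'_n\in\flow(\mc{A}_n)$, i.e.\ $f_n\sw f'_n$ with $f'_n\in\Pi^\updownarrow$; passing to a subsequence, relative compactness of $\{\flow(\mc{A}_n)\}$ (via Lemma \ref{l:relcom_Pi_KPi}, the union being relatively compact in $\Pi^\updownarrow$) lets us assume $f'_n\to f'$, and Lemma \ref{l:sw_compat} gives $f\sw f'$ so $f'\in\Pi^\updownarrow$. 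Now $f'_n\in\flow(\mc{A}_n)$ and $g_n\in\flow(\mc{A}_n)$ are non-crossing bi-infinite paths, so by Lemma \ref{l:noncr_biinf_limits} their limits $f'$ and $g$ are non-crossing; since $f\sw f'$ this means $f$ and $g$ are non-crossing. As $f\in\mc{A}$ was arbitrary, $g$ does not cross $\mc{A}$, so $g\in\flow(\mc{A})$ and $\mc{G}\sw\flow(\mc{A})$.

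The reverse inclusion $\flow(\mc{A})\sw\mc{G}$ is the genuinely harder direction, because it requires approximating a given bi-infinite $h\in\flow(\mc{A})$ by bi-infinite paths that do not cross the $\mc{A}_n$, and closed sets do not in general admit such approximations --- this is exactly the subtlety that forced the Dedekind-cut construction. My plan is: given $h\in\flow(\mc{A})$, I want to produce $h_n\in\flow(\mc{A}_n)$ with $h_n\to h$, which forces $h\in\mc{G}$ since $\flow(\mc{A}_n)\to\mc{G}$. The natural candidate is obtained pointwise: use pervasiveness of $\mc{A}_n$ to pick, for a countable dense set of times, paths of $\mc{A}_n$ passing close to the graph of $h$ on either side, and squeeze. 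More precisely, for each rational $q$ and each $n$ choose $g^{q,n}\in\mc{A}_n$ passing through a space-time point just to the left of $(h(q),q)$, extend it to a bi-infinite path $\tilde g^{q,n}\in\flow(\mc{A}_n)$ via Theorem \ref{t:path_extension}; by relative compactness of $\flow(\mc{A}_n)$ extract limits and check, using Lemma \ref{l:lhd_compat} and Lemma \ref{l:noncr_biinf_limits}, that the limiting bi-infinite paths do not cross $\mc{A}$ and lie weakly to the left of $h$, with the left-approximations converging up to $h$ (and symmetrically from the right). The key point is that $h$ does not cross $\mc{A}$, so by Lemma \ref{l:biinf_noncr} $h\in\flow(\mc{A})$ is itself ``sandwiched'' in the totally ordered space $(\flow(\mc{A}),\lhd)$, and by Lemma \ref{l:biinf_lr_approx} there are monotone sequences in $\flow(\mc{A})$ converging to $h$ from left and right; the task is to realise the analogous sandwiching inside each $\flow(\mc{A}_n)$ uniformly enough that a diagonal argument produces $h_n\in\flow(\mc{A}_n)$ with $h_n\to h$. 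I expect the main obstacle to be precisely this uniformity: controlling the approximating paths $h_n$ simultaneously across all $n$ and all approximating times, and ruling out that $h_n$ ``escapes'' (becomes non-compact or develops a spurious jump) near times where $h$ jumps --- here the M1 modulus $w_{T,\delta}$ and an argument in the spirit of Lemma \ref{l:relcomp_weaves_to_flows} will be needed to keep the $h_n$ relatively compact and force the limit to be exactly $h$ rather than some path that crosses it at a jump time.
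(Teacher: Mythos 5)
Your setup (reduce to subsequential limits via Lemma \ref{l:relcomp_weaves_to_flows}) and your proof of the inclusion $\mc{G}\sw\flow(\mc{A})$ are correct and essentially identical to the paper's argument: extend $f_n\in\mc{A}_n$ to $f'_n\in\flow(\mc{A}_n)$ by Theorem \ref{t:path_extension}, pass to a limit, and use Lemma \ref{l:noncr_biinf_limits}. (One small point: what this really shows is that $\mc{G}$ and $\mc{A}$ are non-crossing, which is slightly stronger than $\mc{G}\sw\flow(\mc{A})$ and is exactly what you need later.)

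The reverse inclusion is where you have a genuine gap, and the route you sketch is harder than it needs to be. You propose to exhibit, for each $h\in\flow(\mc{A})$, paths $h_n\in\flow(\mc{A}_n)$ with $h_n\to h$ by a pervasiveness-plus-sandwiching-plus-diagonal argument, and you yourself flag that the uniformity near jump times of $h$ is unresolved. That difficulty is real and you never discharge it. The missing idea is that no approximation of $h$ is needed at all: first check that $\mc{G}$ is itself a deterministic weave contained in $\Pi^\updownarrow$ (compactness is by construction; the non-crossing property follows from Lemma \ref{l:noncr_biinf_limits} applied to pairs of limits; pervasiveness follows because for any $z\in\Rc$ one can take $f_n\in\mc{A}_n(z)$, extend to $\flow(\mc{A}_n)$, and pass to a convergent subsequence using Lemmas \ref{l:relcom_Pi_KPi} and \ref{l:appdx_1_sw_limits}). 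Once $\mc{G}\sw\Pi^\updownarrow$ is known to be a deterministic weave that does not cross $\mc{A}$, Lemma \ref{l:noncr_two_weaves} gives that a bi-infinite path crosses $\mc{A}$ if and only if it crosses $\mc{G}$, so any $h\in\flow(\mc{A})$ does not cross $\mc{G}$, and Lemma \ref{l:biinf_noncr} (weaves of bi-infinite paths contain every bi-infinite path that does not cross them) yields $h\in\mc{G}$ immediately. Equivalently, Lemma \ref{l:noncr_same_flow} gives $\flow(\mc{G})=\flow(\mc{A})$ and $\mc{G}=\flow(\mc{G})$. This saturation property of bi-infinite weaves is precisely the tool that lets you avoid the delicate M1 estimates you anticipate; without it your plan, as written, does not constitute a proof.
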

\begin{proof}
The set $\mathscr{A}=\{\mc{A}_n\-n\in\N\}$ is a relatively compact subset of $\mc{K}(\Pi^\uparrow)$.
The only limit point of $\mathscr{A}$ is $\mc{A}$, which is non-crossing.
Thus by Lemma \ref{l:relcomp_weaves_to_flows}
the set $\{\flow(\mc{A}_n)\-n\in\N\}$ is a relatively compact subset of $\mc{K}(\Pi^\uparrow)$.
Suppose that we have $\mc{A}_n\to\mc{F}$ along some subsequence 
and pass to this subsequence, with mild abuse of notation.
To prove the present lemma we must show that $\mc{F}=\flow(\mc{A})$.

By Lemma \ref{l:flow_map_cts} we have that $\flow(\mc{A})$ is a deterministic weave.
Let us show that $\mc{F}$ is also a deterministic weave, with $\mc{F}\sw\Pi^\updownarrow$.
From the previous paragraph we have $\mc{F}\in\mc{K}(\Pi^\uparrow)$.
As $\mc{A}_n\to\mc{F}$ and each $\mc{A}_n$ is a subset of the closed set $\Pi^\updownarrow$,
we have $\mc{F}\sw\Pi^\updownarrow$.
If $f,g\in\mc{F}$ then we have $f_n,g_n\in\flow(\mc{A}_n)$ such that $f_n\to f$ and $g_n\to g$.
We have that $f_n$ and $g_n$ do not cross, 
so by Lemma \ref{l:noncr_biinf_limits} $f$ and $g$ do not cross.
Thus $\mc{F}$ is non-crossing.
For all $z\in\R^2_c$ there exists $f_n\in\mc{A}_n$ such that $z\in H(f_n)$.
By Lemma \ref{l:relcom_Pi_KPi}, relative compactness of $(\mc{A}_n)$ implies relative compactness of $(f_n)$,
thus we may pass to a subsequence and assume $f_n\to f\in\mc{F}$.
Lemma \ref{l:appdx_1_sw_limits} gives that $z\in H(f)$.
Thus $\mc{F}$ is pervasive.
We have now shown that $\mc{F}\sw\Pi^\updownarrow$ is a deterministic weave.

Our next goal is to show that $\mc{F}$ and $\flow(\mc{A})$ are non-crossing.
Let $f\in\mc{F}$ and $g\in\mc{A}$.
Then there exists $f_n\in\flow(\mc{A}_n)$ and $g_n\in\mc{A}_n$ such that
$f_n\to f$ and $g_n\to g$.
By Theorem \ref{t:path_extension}, there exists $g'_n\in\flow(\mc{A}_n)$
such that $g_n\sw g'_n$.
We have $\flow(\mc{A}_n)\to\mc{F}$, so
by Lemma \ref{l:relcom_Pi_KPi} the set $\mc{F}\cup(\bigcup_{n\in\N}\flow(\mc{A}_n))$
is compact,
which implies that $\{g'_n\-n\in\N\}$ is relatively compact.
Hence there exists $g'\in\mc{F}$ such that $g'_n\to g'$.
By Lemma \ref{l:appdx_1_sw_limits} we have $g\sw g'$.
Both $f_n$ and $g'_n$ are elements of $\mc{A}_n$, hence they do not cross each other.
By Lemma \ref{l:noncr_biinf_limits}, $f$ and $g'$ do not cross each other.
As $g\sw g'$ this means that $f$ and $g$ do not cross. 

We now have that $\mc{F}$ and $\mc{A}$ do not cross,
and that both are deterministic weaves consisting entirely of bi-infinite paths.
Lemma \ref{l:biinf_noncr} gives that $\mc{F}=\mc{A}$.
\end{proof}

\begin{lemma}
\label{l:noncr_same_flow}
Let $\mc{A},\mc{B}$ be deterministic weaves and assume that $\mc{A}\cup\mc{B}$ is non-crossing.
Then $\mc{A}\cup\mc{B}$ is a deterministic weave and $\flow(\mc{A})=\flow(\mc{B})$.
\end{lemma}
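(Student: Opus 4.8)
The plan is to establish the two claims in turn. First I would show that $\mc{A}\cup\mc{B}$ is a deterministic weave. Since $\mc{A}$ and $\mc{B}$ are both compact subsets of $\Pi^\uparrow$, their union is compact; it is non-crossing by hypothesis; and it is pervasive because $\mc{A}$ alone already is pervasive, so $(\mc{A}\cup\mc{B})(z)\supseteq\mc{A}(z)\neq\emptyset$ for all $z\in\Rc$. Thus $\mc{A}\cup\mc{B}\in\mathscr{W}_{\det}$, which also makes $\flow(\mc{A}\cup\mc{B})$ well defined via \eqref{eq:flow_op}.

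For the second claim, the natural route is to show $\flow(\mc{A})=\flow(\mc{A}\cup\mc{B})$ and, symmetrically, $\flow(\mc{B})=\flow(\mc{A}\cup\mc{B})$, from which $\flow(\mc{A})=\flow(\mc{B})$ follows immediately. So it suffices to prove: if $\mc{A}\cup\mc{B}$ is non-crossing, then $\flow(\mc{A})=\flow(\mc{A}\cup\mc{B})$. The inclusion $\flow(\mc{A}\cup\mc{B})\sw\flow(\mc{A})$ is trivial from the definition, since a bi-infinite path not crossing $\mc{A}\cup\mc{B}$ in particular does not cross $\mc{A}$. For the reverse inclusion, take $h\in\flow(\mc{A})$, so $h\in\Pi^\updownarrow$ and $\mc{A}\cup\{h\}$ is non-crossing. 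We are given that $\mc{A}\cup\mc{B}$ is non-crossing. Apply Lemma \ref{l:noncr_two_weaves} with $C=\{h\}$: it gives that $\mc{A}\cup\{h\}$ non-crossing implies $\mc{B}\cup\{h\}$ non-crossing, i.e.\ $h$ does not cross $\mc{B}$. Hence $h$ does not cross $\mc{A}\cup\mc{B}$, so $h\in\flow(\mc{A}\cup\mc{B})$. This proves $\flow(\mc{A})\sw\flow(\mc{A}\cup\mc{B})$, hence equality.

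The main obstacle is really just locating the right already-proved tool: the work of handling jumps at initial times and the transitivity of the non-crossing property across a weave is exactly what Lemma \ref{l:noncr_transitive_weave} and its corollary Lemma \ref{l:noncr_two_weaves} provide, so once those are invoked the argument is short. One should double-check that Lemma \ref{l:noncr_two_weaves} is stated for $C\sw\Pi^\updownarrow$ — it is, and $h\in\Pi^\updownarrow$ since $h\in\flow(\mc{A})$ — so the hypotheses match precisely. A brief write-up:

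\begin{proof}
Since $\mc{A},\mc{B}\in\mc{K}(\Pi^\uparrow)$ their union is a compact subset of $\Pi^\uparrow$; it is non-crossing by assumption, and pervasive because $(\mc{A}\cup\mc{B})(z)\supseteq\mc{A}(z)\neq\emptyset$ for all $z\in\Rc$. Hence $\mc{A}\cup\mc{B}$ is a deterministic weave.

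It suffices to show $\flow(\mc{A})=\flow(\mc{A}\cup\mc{B})$, since by symmetry the same argument gives $\flow(\mc{B})=\flow(\mc{A}\cup\mc{B})$, and then $\flow(\mc{A})=\flow(\mc{B})$. From \eqref{eq:flow_op} it is immediate that $\flow(\mc{A}\cup\mc{B})\sw\flow(\mc{A})$. Conversely, let $h\in\flow(\mc{A})$, so that $h\in\Pi^\updownarrow$ and $\mc{A}\cup\{h\}$ is non-crossing. Applying Lemma \ref{l:noncr_two_weaves} with $C=\{h\}$, using that $\mc{A}\cup\mc{B}$ is non-crossing and $\mc{A}\cup\{h\}$ is non-crossing, we conclude that $\mc{B}\cup\{h\}$ is non-crossing, i.e.\ $h$ does not cross $\mc{B}$. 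Therefore $h$ does not cross $\mc{A}\cup\mc{B}$, so $h\in\flow(\mc{A}\cup\mc{B})$. This proves $\flow(\mc{A})\sw\flow(\mc{A}\cup\mc{B})$, hence equality, and the lemma follows.
\end{proof}
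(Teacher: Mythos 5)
Your proof is correct and uses the same key tool as the paper, namely Lemma \ref{l:noncr_two_weaves} applied with $C=\{h\}$ to transfer the non-crossing property of a bi-infinite path from $\mc{A}$ to $\mc{B}$; the paper concludes $\flow(\mc{A})=\flow(\mc{B})$ directly from this, whereas you take a harmless cosmetic detour through $\flow(\mc{A}\cup\mc{B})$.
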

\begin{proof}
It is trivial to check that $\mc{A}\cup\mc{B}$ is a deterministic weave.
Lemma \ref{l:flow_is_weave} gives that $\flow(\mc{A})$ and $\flow(\mc{B})$ are deterministic weaves,
composed entirely of bi-infinite paths.
Lemma \ref{l:noncr_two_weaves} gives that a path $f\in\Pi^\updownarrow$ crosses $\mc{A}$ if and only if it crosses $\mc{B}$,
so by Lemma \ref{l:biinf_noncr} we have $\flow(\mc{A})=\flow(\mc{B})$.
\end{proof}

\begin{lemma}
\label{l:flow_maximal}
Let $\mc{A}$ be a deterministic weave.
Then 
$\flow(\mc{A})$ is a maximal element of $(\mathscr{W}_{\det},\preceq)$
and 
$\mc{A}\preceq\flow(\mc{A})$.
\end{lemma}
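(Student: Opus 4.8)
Recall that $\flow(\mc{A})\in\mathscr{W}_{\det}$ by Lemma \ref{l:flow_is_weave}, and that $\flow(\mc{A})\sw\Pi^\updownarrow$, so it is meaningful to ask whether $\flow(\mc{A})$ is a maximal element of $(\mathscr{W}_{\det},\preceq)$. Throughout I would use the elementary observation that a bi-infinite path cannot be properly extended: if $f\in\Pi^\updownarrow$ and $f\sw g$, then $g$ and $f$ have the same domain $[-\infty,\infty]_\mfs$, and the definition of $\sw$ together with the {\cadlag} property forces $g=f$ (so in particular $g\in\Pi^\updownarrow$). This will repeatedly let me collapse extensions of flowlines.

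First I would prove $\mc{A}\preceq\flow(\mc{A})$, which by \eqref{eq:preceq} means $\mc{A}_\uparrow\cap\flow(\mc{A})\sw\mc{A}\sw\flow(\mc{A})_\uparrow$. For the right inclusion: any $f\in\mc{A}$ does not cross $\mc{A}$ (as $\mc{A}$ is non-crossing), so Theorem \ref{t:path_extension}(2) yields $h\in\flow(\mc{A})$ with $f\sw h$, whence $f\in\flow(\mc{A})_\uparrow$ by \eqref{eq:A_uparrow_def}. For the left inclusion: if $f\in\mc{A}_\uparrow\cap\flow(\mc{A})$ then $f\in\flow(\mc{A})\sw\Pi^\updownarrow$ is bi-infinite, while $f\in\mc{A}_\uparrow$ gives $f\sw g$ for some $g\in\mc{A}$; by the observation above, $f=g\in\mc{A}$.

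Next I would establish maximality: suppose $\mc{B}\in\mathscr{W}_{\det}$ with $\flow(\mc{A})\preceq\mc{B}$, and show $\mc{B}=\flow(\mc{A})$. Since $\flow(\mc{A})$ and $\mc{B}$ are non-crossing, Lemma \ref{l:preceq_noncr} gives that $\flow(\mc{A})\cup\mc{B}$ is non-crossing. The inclusion $\flow(\mc{A})\sw\mc{B}$ comes from the half $\flow(\mc{A})\sw\mc{B}_\uparrow$ of $\preceq$: each $f\in\flow(\mc{A})$ extends to some $g\in\mc{B}$, but $f$ is bi-infinite, so $f=g\in\mc{B}$. For the reverse inclusion $\mc{B}\sw\flow(\mc{A})$, take $g\in\mc{B}$; since $\flow(\mc{A})\cup\mc{B}$ is non-crossing, $g$ does not cross the deterministic weave $\flow(\mc{A})$, so Theorem \ref{t:path_extension}(2) \emph{applied to the weave $\flow(\mc{A})$} produces $h\in\flow(\flow(\mc{A}))$ with $g\sw h$. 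Here I would note that $\flow(\flow(\mc{A}))=\flow(\mc{A})$: the inclusion $\supseteq$ is immediate because $\flow(\mc{A})\sw\Pi^\updownarrow$ is non-crossing, and $\subseteq$ is Lemma \ref{l:biinf_noncr} applied with the weave taken to be $\flow(\mc{A})$. Hence $h\in\flow(\mc{A})$, so $g\in\flow(\mc{A})_\uparrow\cap\mc{B}$, which by the remaining half of $\flow(\mc{A})\preceq\mc{B}$ is contained in $\flow(\mc{A})$. Thus $\mc{B}=\flow(\mc{A})$, proving that $\flow(\mc{A})$ is maximal.

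The genuinely substantive ingredient is Theorem \ref{t:path_extension}(2), which is invoked twice: once to push paths of $\mc{A}$ up into $\flow(\mc{A})$, and once — applied to $\flow(\mc{A})$ itself — to squeeze an arbitrary $\mc{B}$ back inside $\flow(\mc{A})$. Everything else is bookkeeping with the two halves of \eqref{eq:preceq} and the triviality of extending bi-infinite paths. I expect the only point needing a little care is the identity $\flow(\flow(\mc{A}))=\flow(\mc{A})$ and correctly matching each half of the relation $\preceq$ with the inclusions $\flow(\mc{A})\sw\mc{B}$ and $\mc{B}\sw\flow(\mc{A})$; there is no real analytic difficulty, since all the hard work sits in the already-proved path extension theorem and in Lemma \ref{l:biinf_noncr}.
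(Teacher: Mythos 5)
Your proof is correct, and the first half (establishing $\mc{A}\preceq\flow(\mc{A})$) coincides with the paper's argument, including the key observation that a bi-infinite path admits no proper extension. For maximality the paper takes a somewhat different route: given \emph{any} $\mc{B}\in\mathscr{W}_{\det}$ comparable to $\flow(\mc{A})$, it propagates the non-crossing property (Lemma \ref{l:preceq_noncr} gives that $\flow(\mc{A})\cup\mc{B}$ and $\mc{A}\cup\flow(\mc{A})$ are non-crossing, hence $\mc{A}\cup\mc{B}$ is non-crossing by Lemma \ref{l:noncr_transitive_weave}), deduces $\flow(\mc{B})=\flow(\mc{A})$ from Lemma \ref{l:noncr_same_flow}, and then invokes the already-proved inequality $\mc{B}\preceq\flow(\mc{B})$. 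That shows the stronger statement that every comparable $\mc{B}$ sits below $\flow(\mc{A})$, whereas you treat only the case $\flow(\mc{A})\preceq\mc{B}$ and conclude $\mc{B}=\flow(\mc{A})$ by a direct two-inclusion argument --- which is exactly what the paper's definition of a maximal element requires, so nothing is lost. Your route substitutes Theorem \ref{t:path_extension}(2) applied to the weave $\flow(\mc{A})$, together with the idempotence $\flow(\flow(\mc{A}))=\flow(\mc{A})$ obtained from Lemma \ref{l:biinf_noncr}, for the paper's use of Lemmas \ref{l:noncr_transitive_weave} and \ref{l:noncr_same_flow}; since Lemma \ref{l:noncr_same_flow} is itself proved from Lemma \ref{l:biinf_noncr}, the two arguments rest on the same underlying facts. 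Both are sound; the paper's version has the mild advantage of exhibiting $\mc{B}$ as lying in the same equivalence class ($\flow(\mc{B})=\flow(\mc{A})$), which it reuses elsewhere, while yours is marginally more self-contained in that it avoids Lemma \ref{l:noncr_transitive_weave} at this point.
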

\begin{proof}
The reader may wish to check \eqref{eq:preceq} for the definition of the partial order $\preceq$.
Note that Lemma \ref{l:flow_is_weave} gives that $\flow(\mc{A})\in\mathscr{W}_{\det}$.
Let us first show that $\mc{A}\preceq \flow(\mc{A})$.
Theorem \ref{t:path_extension} gives that $\mc{A}\sw(\flow(\mc{A}))_\uparrow$.
Now consider $f\in\mc{A}_\uparrow\cap\flow(\mc{A})$.
For such $f$ we have $f\in\Pi^\updownarrow$, which implies $f\in\mc{A}$.
Thus $\mc{A}\preceq\flow(\mc{A})$.

It remains to show that
$\flow(\mc{A})$ is a maximal element of $\mathscr{W}_{\det}$.
Lemma \ref{l:flow_is_weave} gives that $\flow(\mc{A})\in\mathscr{W}_{\det}$.
Suppose that $\mc{B}\in\mathscr{W}_{\det}$, comparable under $\preceq$ to $\flow(\mc{A})$.
We must show that $\mc{B}\preceq\flow(\mc{A})$.
From Lemma \ref{l:preceq_noncr} we have that $\flow(\mc{A})\cup\mc{B}$ is non-crossing.
From what we have already proved we have that $\mc{A}\preceq\flow(\mc{A})$, 
so using Lemma \ref{l:preceq_noncr} again gives that $\mc{A}\cup\flow(\mc{A})$ is non-crossing.
Thus by Lemma \ref{l:noncr_transitive_weave} we have that $\mc{A}\cup\mc{B}$ is non-crossing.
Lemma \ref{l:noncr_same_flow} gives that $\flow(\mc{A})=\flow(\mc{B})$.
From what we have already proved we now have that
$\mc{B}\preceq \flow(\mc{B})=\flow(\mc{A})$, as required.
This completes the proof.
\end{proof}

\begin{lemma}
\label{l:ramification_meas_zero}
Let $\mc{A}$ be a deterministic weave. The set of ramification points of $\mc{A}$ has Lebesgue measure zero.
\end{lemma}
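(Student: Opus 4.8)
The plan is to reduce this to the already-established bi-infinite case, Lemma~\ref{l:biinf_ramification_meas_zero}, by comparing $\mc{A}$ with its flow. Set $\mc{F}=\flow(\mc{A})$. By Lemma~\ref{l:flow_is_weave}, $\mc{F}$ is a deterministic weave with $\mc{F}\sw\Pi^\updownarrow$, so Lemma~\ref{l:biinf_ramification_meas_zero} applies to $\mc{F}$: its set of ramification points is Lebesgue-null, and (again by the first assertion of that lemma) the set of ramification points of $\mc{A}$ is a measurable subset of $\Rc$. It therefore suffices to show that every ramification point of $\mc{A}$ lying in $\ov\R\times\R$ is a ramification point of $\mc{F}$; the two points $(\ast,\pm\infty)$ carry zero Lebesgue mass and may be discarded.

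The crux is the following. Let $z=(x,t)$ with $t\in\R$ be ramified in $\mc{A}$, witnessed by $f,g\in\mc{A}(z)$ with $f\nsubseteq g$ and $g\nsubseteq f$. Since $\mc{A}$ is non-crossing, $f$ and $g$ do not cross $\mc{A}$, so Theorem~\ref{t:path_extension}(2) gives $f',g'\in\flow(\mc{A})=\mc{F}$ with $f\sw f'$ and $g\sw g'$. As $t\in\R$ we have $\s_f\leq t<\tau_f=\infty$, so neither $f$ nor $f'\in\Pi^\updownarrow$ is of the trivial form excluded in the definition of $\sw$; hence $H(f)\sw H(f')$ and thus $z\in H(f)\sw H(f')$, so $f'\in\mc{F}(z)$, and similarly $g'\in\mc{F}(z)$. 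I claim $f'$ and $g'$ are incomparable under $\sw$, which exhibits $z$ as a ramification point of $\mc{F}$. Suppose not; say $f'\sw g'$ (the case $g'\sw f'$ is symmetric). Then $f\sw f'\sw g'$ and $g\sw g'$, so $f$ and $g$ are both sub-paths of the single path $g'\in\Pi^\updownarrow$, each having final time $+\infty$. But such sub-paths are totally ordered by $\sw$: for any $h\in\Pi^\uparrow$ and any $p\in\Pi^\uparrow$ with $p\sw h$, the set $H(p)$ is exactly the up-set $\{w\in H(h)\-w\sqsupseteq z_p\}$ of the totally ordered set $(H(h),\sqsubseteq)$, where $z_p=(p(\s_p-),\s_p)$ is the initial point of $p$. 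Indeed, at times strictly after $\s_p$ the interpolated graphs of $p$ and $h$ coincide (right/left continuity forces $p$ and $h$ to agree near an interior time of $p$), while at time $\s_p$ the up-set condition is read off directly from the description of $\sqsubseteq$ on $H(h)$, taking into account that $p$ may "truncate" a jump of $h$ at $\s_p$. Consequently $H(f)$ and $H(g)$ are nested up-sets of $H(g')$, so one contains the other with matching induced order, giving $f\sw g$ or $g\sw f$ --- a contradiction. Hence $z$ is ramified in $\mc{F}$.

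Combining: the set of ramification points of $\mc{A}$ in $\ov\R\times\R$ is contained in the set of ramification points of $\mc{F}$, which has Lebesgue measure zero by Lemma~\ref{l:biinf_ramification_meas_zero}; together with the measurability statement of that lemma, this gives that the ramification set of $\mc{A}$ has Lebesgue measure zero. The only delicate ingredient is the structural claim that sub-paths of a fixed half-infinite path which share its final time $+\infty$ are linearly ordered by $\sw$ --- all the care there concerns behaviour at initial times, where jumps may be truncated and one must verify compatibility with the order $\sqsubseteq$; I expect this to be the real work, with everything else an assembly of Theorem~\ref{t:path_extension}, Lemma~\ref{l:flow_is_weave}, and Lemma~\ref{l:biinf_ramification_meas_zero}.
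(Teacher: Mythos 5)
Your proposal is correct and follows essentially the same route as the paper: pass to $\mc{F}=\flow(\mc{A})$ via Lemma \ref{l:flow_is_weave}, invoke Lemma \ref{l:biinf_ramification_meas_zero} for the bi-infinite case, and use Theorem \ref{t:path_extension} to show every ramification point of $\mc{A}$ is one of $\mc{F}$. The only difference is that you spell out the incomparability of the extensions $f',g'$ (via the observation that subpaths of a fixed path with final time $+\infty$ are linearly ordered by $\sw$), a step the paper asserts without detail; your justification of it is sound.
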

\begin{proof}
By Lemma \ref{l:flow_is_weave} we have that $\flow(\mc{A})=\{f\in\Pi^\updownarrow\-f\text{ does not cross }\mc{A}\}$
is a weave. 
Lemma \ref{l:biinf_ramification_meas_zero} gives that the set of ramification points of $\flow(\mc{A})$ has measure zero.
By Theorem \ref{t:path_extension} any ramification point of $\mc{A}$ is also a ramification point of $\mc{F}$.
The result follows.
\end{proof}

In view of Lemma \ref{l:ramification_meas_zero}, for any deterministic weave $\mc{A}$ 
there exists a dense countable subset $D\sw\Rc$ that is non-ramified. 
In Lemma \ref{l:ramification_meas_zero_det} we will address, for random waves,
how such a subset may be chosen to be a random variable.

\subsection{The web map}
\label{sec:web_op}

In this section we study properties of the $\web$ operation defined in \eqref{eq:web_op}.
Some results in this section are analogues of properties that were proven 
(for the $\flow$ operation) in Section \ref{sec:flow_op}.
We also address the dependence, or rather the lack thereof, of \eqref{eq:web_op} on the set $D$.
The web operation involves both the `downset' operation $A_\uparrow=\{f\in\Pi^\uparrow\-f\sw g\text{ for some }g\in A\}$
and taking closure in $\Pi^\uparrow$,
so we begin with the interaction between these two operations.

\begin{lemma}
\label{l:relcomp_uparrow}
If $A\sw\Pi^\uparrow$ is relatively compact then $A_\uparrow$ is relatively compact and $(\ov{A})_\uparrow=\ov{(A_\uparrow)}$.
\end{lemma}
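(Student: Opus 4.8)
The plan is to sandwich $A_\uparrow$ between $A$ and $(\ov A)_\uparrow$ and to exploit that $\ov A$ is compact. Write $B:=(\ov A)_\uparrow$. First I would note that $B$ is closed: if $g_n\to g$ with $g_n\in B$, pick $f_n\in\ov A$ with $g_n\sw f_n$, pass to a subsequence along which $f_n\to f\in\ov A$ (possible since $\ov A$ is compact), and conclude $g\sw f$ from compatibility of $\sw$ (Lemma \ref{l:sw_compat}), so $g\in B$. Since trivially $A\sw A_\uparrow\sw B$, it then suffices to prove two things: that $B$ is compact (which immediately gives that $A_\uparrow$ is relatively compact and that $\ov{A_\uparrow}\sw B$), and that $B\sw\ov{A_\uparrow}$.

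For compactness of $B$, I would realise it as a continuous image of a compact set. Put $D=\{(f,z)\-f\in\ov A,\ z\in H(f)\}$. This is a closed subset of the compact space $\ov A\times\Rc$: closedness uses that if $f_n\to f$, $z_n\to z$ and $z_n\in H(f_n)$ then $z\in H(f)$, which follows from $H(f_n)\to H(f)$ (a consequence of M1-convergence) together with closedness of $H(f)$. Hence $D$ is compact. The restriction map $\rho(f,z)=f|_z$ from \eqref{eq:f|z} sends $D$ onto $B=\{f|_z\-f\in\ov A,\ z\in H(f)\}=\bigcup_{f\in\ov A}\{f\}_\uparrow$, and $\rho$ is continuous; concretely this is the assertion that $H^{(2)}(f_n|_{z_n})\to H^{(2)}(f|_z)$ whenever $f_n\to f$ and $z_n\to z$ with $z_n\in H(f_n)$, a property of the M1 topology available from \cite{FreemanSwart2023}. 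Alternatively one may avoid quoting continuity of $\rho$ by invoking Proposition \ref{p:relcom_tightness} directly, using the estimate $w_{T,\delta}(f|_z)\le w_{T,\delta}(f)+2v_\delta(f)$ together with the fact that $f|_z$ has the same initial-time modulus as $f$, so that both terms vanish uniformly over the relatively compact family $\ov A$; one could also route through Lemma \ref{l:relcom_Pi_KPi} applied to the family $\{\{f\}_\uparrow\-f\in\ov A\}$. In any case $B=\rho(D)$ is compact.

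It remains to show $B\sw\ov{A_\uparrow}$. Let $D'=\{(f,z)\-f\in A,\ z\in H(f)\}$, so that $\rho(D')=A_\uparrow$. I claim $D'$ is dense in $D$: given $(f,z)\in D$, pick $f_n\in A$ with $f_n\to f$; since $H(f_n)\to H(f)$ in the Hausdorff metric there exist $z_n\in H(f_n)$ with $z_n\to z$, and then $(f_n,z_n)\in D'$ converges to $(f,z)$. Continuity of $\rho$ then gives $B=\rho(D)=\rho(\ov{D'})\sw\ov{\rho(D')}=\ov{A_\uparrow}$. Combined with $\ov{A_\uparrow}\sw B$ from the first paragraph this yields $(\ov A)_\uparrow=\ov{A_\uparrow}$, and relative compactness of $A_\uparrow$ has already been recorded. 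The one genuinely delicate point is the input about the M1 topology used above — the continuity of the restriction map, equivalently the modulus estimate controlling the behaviour of $f|_z$ at its (possibly jumping) initial time; everything else is routine manipulation of closures together with compatibility of $\sw$.
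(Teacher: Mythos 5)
Your proof is correct and rests on exactly the same ingredients as the paper's: Lemma \ref{l:appdx_1_sw_limits} (convergence of graphs and of restrictions $f_n|_{z_n}\to f|_z$) together with compatibility of $\sw$ from Lemma \ref{l:sw_compat}; packaging these as ``$(\ov A)_\uparrow$ is the continuous image $\rho(D)$ of a compact set, with $D'$ dense in $D$'' is only a cosmetic variant of the paper's direct sequential arguments for the two inclusions. The one blemish is the side remark invoking $w_{T,\delta}(f|_z)\le w_{T,\delta}(f)+2v_\delta(f)$ with an undefined modulus $v_\delta$ --- the simpler monotonicity $g\sw f\Rightarrow w_{T,\delta}(g)\le w_{T,\delta}(f)$, which the paper uses via Proposition \ref{p:relcom_tightness}, already suffices --- but since this is offered only as an alternative to your (sound) main route, it does not affect correctness.
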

\begin{proof}
The first claim follows immediately from Proposition \ref{p:relcom_tightness}.
It remains to establish that $(\ov{A})_\uparrow=\ov{(A_\uparrow)}$.
To this end, 
suppose that $f\in(\ov A)_\uparrow$.
Then there exists $(g_n)\sw A$ such that $g_n\to g\in\Pi^\uparrow$ and $f\sw g$.
Let $z\in\Rc$ denote the initial point of $f$.
By Lemma \ref{l:appdx_1_sw_limits} $z\in\ov{\cup_n H(g_n)}$.
Hence, we may pass to a subsequence of the $(g_n)$ and choose $z_n\in H(g_n)$ such that $z_n\to z$.
It follows from Lemma \ref{l:appdx_1_sw_limits} that $g_n|_{z_n}\to g|_{z}=f$,
so $f\in\ov{( A_\uparrow)}$. Thus $(\ov A)_\uparrow \sw \ov{( A_\uparrow)}$.

In preparation for proving the reverse inclusion, let us first show that 
if $B\sw\Pi$ is compact then $B_\uparrow$ is closed.
Take such an $B$, and let $(f_n)\sw \mc{B}_\uparrow$ with $f_n\to f$.
We have $(g_n)\sw B$ such that $f_n\sw g_n$.
By compactness, and passing to a subsequence of $(g_n)$, 
we have that $g_n\to g\in B$. 
By Lemma \ref{l:sw_compat} we have $f\sw g$,
thus $f\in B_\uparrow$,
which establishes that $B_\uparrow$ is closed.

We now show the reverse inclusion.
Suppose that $f\in\ov{(A_\uparrow)}$.
Then there exists $(g_n)\sw A$ and $f_n\sw g_n$ such that $f_n\to f$.
In particular, $g_n\in \ov A$ which implies $f_n\in(\ov A)_\uparrow$.
From the previous paragraph we have that $(\ov A)_\uparrow$ is closed,
so $f\in (\ov A)_\uparrow$.
Thus $(\ov A)_\uparrow \supseteq \ov{( A_\uparrow)}$,
as required.
\end{proof}

\begin{lemma}
\label{l:weave_uparrow}
Suppose that $\mc{A}$ is a deterministic weave. Then $\mc{A}_\uparrow$ is a deterministic weave
and $\mc{A}_\uparrow\preceq\mc{A}$.
\end{lemma}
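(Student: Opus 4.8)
The plan is to verify the three required properties of $\mc{A}_\uparrow$ in turn — that it is compact, pervasive and non-crossing — and then to check the order relation $\mc{A}_\uparrow\preceq\mc{A}$ directly from the definition \eqref{eq:preceq}. Compactness follows immediately from Lemma \ref{l:relcomp_uparrow}: since $\mc{A}$ is compact it is relatively compact, so $\mc{A}_\uparrow$ is relatively compact, and $(\ov{\mc{A}})_\uparrow=\ov{(\mc{A}_\uparrow)}$; but $\mc{A}$ is already closed, so $\mc{A}_\uparrow=(\ov{\mc{A}})_\uparrow=\ov{(\mc{A}_\uparrow)}$ is closed, hence compact. (Alternatively one can quote the sub-claim inside the proof of Lemma \ref{l:relcomp_uparrow} that $B_\uparrow$ is closed whenever $B$ is compact.)

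Next, pervasiveness: if $z\in\Rc$ then by pervasiveness of $\mc{A}$ there is $f\in\mc{A}$ with $z\in H(f)$, i.e.\ $f$ passes through $z$. Then $f|_z\in\Pi^\uparrow$ satisfies $f|_z\sw f$, so $f|_z\in\mc{A}_\uparrow$ and $f|_z$ passes through $z$ (indeed begins at $z$). Hence $\mc{A}_\uparrow(z)\neq\emptyset$, so $\mc{A}_\uparrow$ is pervasive. For the non-crossing property: let $g_1,g_2\in\mc{A}_\uparrow$. By definition of $\mc{A}_\uparrow$ there are $f_1,f_2\in\mc{A}$ with $g_1\sw f_1$ and $g_2\sw f_2$. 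Since $\mc{A}$ is non-crossing, $f_1$ and $f_2$ are non-crossing, so by Definition \ref{d:crossing} there are $f_1',f_2'\in\Pi^\updownarrow$ with $f_1\sw f_1'$, $f_2\sw f_2'$ and $f_1'(t\star)\leq f_2'(t\star)$ for all $t\star\in\ov\R_\mfs$. Since $\sw$ is transitive, $g_1\sw f_1'$ and $g_2\sw f_2'$, so the same witnesses show $g_1$ and $g_2$ are non-crossing. Thus $\mc{A}_\uparrow$ is non-crossing, and combining the three properties, $\mc{A}_\uparrow\in\mathscr{W}_{\det}$.

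Finally, the inequality $\mc{A}_\uparrow\preceq\mc{A}$. By \eqref{eq:preceq} this means $(\mc{A}_\uparrow)_\uparrow\cap\mc{A}\sw\mc{A}_\uparrow\sw\mc{A}_\uparrow$. The second inclusion is trivial. For the first, note $(\mc{A}_\uparrow)_\uparrow=\mc{A}_\uparrow$ (applying the downset operation twice gives nothing new; this is the identity $(A_\leq)_\leq=A_\leq$ recorded in Section \ref{sec:preceq}), so the left-hand side is $\mc{A}_\uparrow\cap\mc{A}$, which is contained in $\mc{A}_\uparrow$. Hence $\mc{A}_\uparrow\preceq\mc{A}$, completing the proof. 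There is no real obstacle here; the only point requiring a little care is citing the correct earlier results — Lemma \ref{l:relcomp_uparrow} for closedness of $\mc{A}_\uparrow$ and the transitivity of $\sw$ together with Definition \ref{d:crossing} for the non-crossing property — and using that $\mc{A}$ being closed collapses $\ov{(\mc{A}_\uparrow)}$ to $\mc{A}_\uparrow$.
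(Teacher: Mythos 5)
Your proof is correct and follows essentially the same route as the paper: compactness via Lemma \ref{l:relcomp_uparrow} together with closedness of $\mc{A}$, the non-crossing property inherited through transitivity of $\sw$ and Definition \ref{d:crossing}, and the order relation from $(\mc{A}_\uparrow)_\uparrow=\mc{A}_\uparrow$ applied to \eqref{eq:preceq}. The only difference is that you spell out the (trivial) pervasiveness check, which the paper leaves implicit.
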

\begin{proof}
Relative compactness of $\mc{A}_\uparrow$ is given by Lemma \ref{l:relcomp_uparrow}.
Since $\mc{A}$ is a weave, $\mc{A}$ is closed, so from Lemma \ref{l:relcomp_uparrow} we have that $\mc{A}_\uparrow$ is closed,
which thus implies compactness.
The non-crossing property of $\mc{A}_\uparrow$ is inherited from $\mc{A}$ by Definition \ref{d:crossing}.
Using that $\mc{A}_\uparrow=(\mc{A}_\uparrow)_\uparrow$
it is straightforward to check that \eqref{eq:preceq} gives $\mc{A}_\uparrow\preceq\mc{A}$.
\end{proof}


\begin{lemma}
\label{l:web_op_D}
Let $\mc{A}$ be a deterministic weave and $D,D'$ be dense non-ramified subsets of $\R^2$.
Define $\web_D(\mc{A})$ as in \eqref{eq:web_op}.
Then $\web_D(\mc{A})=\web_{D'}(\mc{A})$. 
\end{lemma}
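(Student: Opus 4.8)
The plan is to prove the two inclusions $\web_D(\mc{A})\sw\web_{D'}(\mc{A})$ and $\web_{D'}(\mc{A})\sw\web_D(\mc{A})$; since the hypotheses on $D$ and $D'$ are symmetric it suffices to establish the first. Write $W=\web_{D'}(\mc{A})=\ov{(\mc{A}|_{D'})_\uparrow}$. First I would observe that $W$ is a closed downset of $(\Pi^\uparrow,\sw)$: since $\mc{A}$ is a deterministic weave, Lemma \ref{l:weave_uparrow} gives that $\mc{A}_\uparrow$ is a (compact) deterministic weave, and every element of $\mc{A}|_{D'}$ is a restriction $f|_z\sw f$ of some $f\in\mc{A}$, so $\mc{A}|_{D'}\sw\mc{A}_\uparrow$ is relatively compact; Lemma \ref{l:relcomp_uparrow} then yields $\ov{(\mc{A}|_{D'})_\uparrow}=\big(\ov{\mc{A}|_{D'}}\big)_\uparrow$, which is a downset. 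Consequently, once we know $\mc{A}|_D\sw\ov{\mc{A}|_{D'}}$ we get $(\mc{A}|_D)_\uparrow\sw\big(\ov{\mc{A}|_{D'}}\big)_\uparrow=W$ by monotonicity of the map $A\mapsto A_\uparrow$, and then $\web_D(\mc{A})=\ov{(\mc{A}|_D)_\uparrow}\sw W$ because $W$ is closed. So the whole statement reduces to the single inclusion $\mc{A}|_D\sw\ov{\mc{A}|_{D'}}$.

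For this, fix $g\in\mc{A}|_D$, so that $g=f|_z$ for some $z=(x,t)\in D$ and some $f\in\mc{A}(z)$ (the latter nonempty by pervasiveness). Since $z$ is non-ramified, the paths in $\mc{A}(z)$ are pairwise $\sw$-comparable, whence $f|_z$ is independent of the choice of $f\in\mc{A}(z)$, $\mc{A}|_z=\{g\}$, and $g$ is the unique path with initial point $z$ satisfying $g\sw f'$ for some (equivalently every) $f'\in\mc{A}(z)$. Now, using density of $D'$ in $\R^2$, choose points $z_n\in D'$ with $z_n\to z$. Each $z_n$ is non-ramified, so by pervasiveness there is $f_n\in\mc{A}(z_n)$, and $\mc{A}|_{z_n}=\{g_n\}$ with $g_n=f_n|_{z_n}\in\mc{A}|_{D'}$. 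It remains to show $g_n\to g$ in $\Pi^\uparrow$.

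To prove $g_n\to g$ I would argue along subsequences. By compactness of $\mc{A}$, any subsequence of $(f_n)$ admits a further subsequence along which $f_n\to f_\infty\in\mc{A}$; along it $z_n\in H(f_n)$ and $z_n\to z$, so by Lemma \ref{l:appdx_1_sw_limits} $z\in H(f_\infty)$, i.e. $f_\infty\in\mc{A}(z)$. Non-ramification of $z$ then forces $g\sw f_\infty$, and since $g$ has initial point $z$ we obtain $g=f_\infty|_z$. Applying Lemma \ref{l:appdx_1_sw_limits} to $f_n\to f_\infty$ with $z_n\in H(f_n)$, $z_n\to z$ gives $g_n=f_n|_{z_n}\to f_\infty|_z=g$ along this subsequence. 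Since $\mc{A}_\uparrow$ is compact and every subsequence of $(g_n)$ has a further subsequence converging to the same limit $g$, we conclude $g_n\to g$, so $g\in\ov{\mc{A}|_{D'}}$. This establishes $\mc{A}|_D\sw\ov{\mc{A}|_{D'}}$, hence $\web_D(\mc{A})\sw\web_{D'}(\mc{A})$, and interchanging $D$ and $D'$ gives equality.

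The step needing the most care is the convergence $g_n\to g$: the argument must rule out that a limit of the $g_n$ starts at a time earlier than $t$, or (if the path jumps at $t$) at a different point of its initial jump. Both possibilities are excluded because, under the M1 topology, the initial points of the restrictions $f_n|_{z_n}$ converge to the initial point of $f_\infty|_z$ (Lemma \ref{l:appdx_1_sw_limits}) and, at the non-ramified point $z$, the restriction of any path of $\mc{A}$ through $z$ is uniquely determined (Definition \ref{d:ramified}). These are exactly the facts that make the limit $g$ independent of the arbitrary choices of $f_n$ and of the approximating sequence $(z_n)$.
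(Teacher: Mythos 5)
Your proof is correct and follows essentially the same route as the paper's: approximate $z\in D$ by points $z_n\in D'$, extract a convergent subsequence using compactness of $\mc{A}$, and use non-ramification of $z$ together with Lemma \ref{l:appdx_1_sw_limits} to identify the limit of the restrictions $f_n|_{z_n}$. Your preliminary reduction to the single inclusion $\mc{A}|_D\sw\ov{\mc{A}|_{D'}}$, via Lemma \ref{l:relcomp_uparrow} and monotonicity of the map $A\mapsto A_\uparrow$, is a clean piece of bookkeeping that lets you avoid the paper's final case analysis on the initial point $w$ of a general element of $(\mc{A}|_D)_\uparrow$.
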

\begin{proof}
Let $\mc{A}$ be a weave and let $D$, $D'$ be dense non-ramified subsets of $\R^2$.
We note that it suffices to prove that
\begin{equation}
\label{eq:web_op_ADAD'}
(\mc{A}|_{D})_\uparrow\sw \ov{(\mc{A}|_{D'})_\uparrow}.
\end{equation}
With \eqref{eq:web_op_ADAD'} in hand,
by symmetry we also have that $(\mc{A}|_{D'})_\uparrow\sw \ov{(\mc{A}_{D})_\uparrow}$.
Taking closures shows that $\web_D(\mc{A})=\web_{D'}(\mc{A})$.

Let $f$ be an element of the left hand side of \eqref{eq:web_op_ADAD'}.
Then there exists $z\in D$ and $g\in\mc{A}(z)$ with $f\sw g|_z$.
Using that $D'$ is dense,
take $z_n\in D'$ such that $z_n\to z$,
and using that $\mc{A}$ is pervasive 
take $g'_n\in\mc{A}(z_n)$.
By compactness of $\mc{A}$ we may pass to a subsequence and assume that
$g'_n\to g\in\mc{A}(z)$,
which by Lemma \ref{l:appdx_1_sw_limits} implies that
$g'_n|_{z_n}\to g|_z$.

Since $g,g'\in\mc{A}(z)$ and $z$ is non-ramified,
we have that $g|_z=g'|_z$.
Thus $f\sw g'|_z$.
Let $w$ denote the initial point of $f$.
Using that $g'_n|_{z_n}\to g'|_z$, 
by Lemma \ref{l:relcom_Pi_KPi} we thus have
$$w\in H(g'|_z)\sw\ov{\bigcup_{n=1}^\infty H(g'_n|_{z_n})}.$$
If $w\in H(g'_n|_{z_n})$ for some $n\in\N$
then we have $f\sw g'_n|_{z_n}\in\mc{A}|_{D'}$,
and we are done.
Otherwise, 
there exists a subsequence of $n$ such that 
$w_n\in H(g'_n|_{z_n})$ and $w_n\to w$,
and without loss of generality we pass to this subsequence.
We thus have $g'_n|_{w_n}\sw g'_n|_{z_n}$,
so $g'_n|_{w_n}\in(\mc{A}_{D'})_{\uparrow}$.
Noting that $w_n\to w$ and $g'_n\to g$,
it follows form Lemma \ref{l:appdx_1_sw_limits} that
$g'_n|_{w_n}\to f$.
Hence $f\in\ov{(\mc{A}_{D'})_{\uparrow}}$.
This establishes \eqref{eq:web_op_ADAD'} and completes the proof.
\end{proof}

\begin{remark}
\label{r:web_op_det}
From the point onwards we will often invoke Lemma \ref{l:web_op_D} implicitly, 
through writing $\web_D(\mc{A})=\web(\mc{A})$.
Lemmas \ref{l:ramification_meas_zero} and Lemma \ref{l:web_op_D} combine to show that 
$\web:\mathscr{W}_{\det}\to\mathscr{W}_{\det}$ is a deterministic function.
\end{remark}

\begin{lemma}
\label{l:noncr_same_web}
Let $\mc{A},\mc{B}$ be deterministic weaves and assume that $\mc{A}\cup\mc{B}$ is non-crossing.
Then 
$\web(\mc{A})=\web(\mc{B})$.
\end{lemma}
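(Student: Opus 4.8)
The plan is to derive $\web(\mc{A})=\web(\mc{B})$ from the already-established fact that the $\web$ operation may be computed using \emph{any} dense non-ramified set $D\sw\R^2$ (Lemma \ref{l:web_op_D}). The key observation is that if $\mc{A}\cup\mc{B}$ is non-crossing, then a single set $D$ can be chosen that is simultaneously non-ramified in $\mc{A}$, in $\mc{B}$, and in $\mc{A}\cup\mc{B}$, and along this $D$ the two weaves have matching restricted paths. So the first step is to fix such a $D$.

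First I would note that $\mc{A}\cup\mc{B}$ is itself a deterministic weave: it is compact (union of two compacts), pervasive (since $\mc{A}$ already is), and non-crossing by hypothesis. By Lemma \ref{l:ramification_meas_zero} applied to $\mc{A}$, $\mc{B}$ and $\mc{A}\cup\mc{B}$, the sets of ramification points of these three weaves each have Lebesgue measure zero, so their union does too, and hence there exists a countable dense $D\sw\R^2$ that is non-ramified in all three. Fix this $D$. By Lemma \ref{l:web_op_D} (and Remark \ref{r:web_op_det}) we have $\web(\mc{A})=\ov{(\mc{A}|_D)_\uparrow}$ and $\web(\mc{B})=\ov{(\mc{B}|_D)_\uparrow}$, so it suffices to prove $(\mc{A}|_D)_\uparrow=(\mc{B}|_D)_\uparrow$, or even just $(\mc{A}|_D)_\uparrow\sw\ov{(\mc{B}|_D)_\uparrow}$, since the reverse inclusion follows by symmetry and taking closures.

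The heart of the argument is then a pointwise comparison. Take $z\in D$. Since $z$ is non-ramified in $\mc{A}\cup\mc{B}$, the set $(\mc{A}\cup\mc{B})(z)$ is totally ordered by $\sw$, so $\mc{A}(z)\cup\mc{B}(z)$ has a single path $g|_z$ emanating from $z$ (i.e.~$\mc{A}|_z=\mc{B}|_z$ as singletons); more precisely, there is a path $f_z\in\Pi^\uparrow$ with $(f_z(\sigma_{f_z}-),\sigma_{f_z})=z$ such that $\mc{A}|_z=\{f_z\}=\mc{B}|_z$. This is where the non-crossing hypothesis on $\mc{A}\cup\mc{B}$ is essential: without it the restricted paths of $\mc{A}$ and $\mc{B}$ at $z$ need not agree, even though each of $\mc{A}(z)$, $\mc{B}(z)$ is individually totally ordered. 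Consequently $\mc{A}|_D=\{f_z\-z\in D\}=\mc{B}|_D$ as subsets of $\Pi^\uparrow$, and therefore $(\mc{A}|_D)_\uparrow=(\mc{B}|_D)_\uparrow$, giving $\web(\mc{A})=\web(\mc{B})$ after taking closures.

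I expect the main obstacle to be verifying carefully that $\mc{A}|_z=\mc{B}|_z$ at a point $z$ that is non-ramified with respect to $\mc{A}\cup\mc{B}$. One must argue: pick $f\in\mc{A}(z)$ and $g\in\mc{B}(z)$; both lie in $(\mc{A}\cup\mc{B})(z)$, which is totally ordered under $\sw$ by Definition \ref{d:ramified}, so WLOG $f\sw g$; hence $f|_z\sw g|_z$, and since both start exactly at $z$ and are half-infinite, $f|_z=g|_z$ (two $\sw$-comparable paths with the same initial point and the same final time $+\infty$ coincide). Thus the unique element of $\mc{A}|_z$ equals the unique element of $\mc{B}|_z$. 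A minor point to double-check is that $\mc{A}(z)$ and $\mc{B}(z)$ are genuinely nonempty (pervasiveness) so the restrictions are well-defined singletons, and that the notation $\mc{A}|_z$ from \eqref{eq:A|z} produces exactly this common path. Once this pointwise identity is in hand, the rest is bookkeeping with Lemmas \ref{l:web_op_D} and \ref{l:ramification_meas_zero}.
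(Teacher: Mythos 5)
Your proposal is correct and follows essentially the same route as the paper's proof: fix a dense $D$ non-ramified with respect to $\mc{A}\cup\mc{B}$ (hence with respect to $\mc{A}$ and $\mc{B}$), observe that $(\mc{A}\cup\mc{B})(z)=\mc{A}(z)\cup\mc{B}(z)$ so non-ramification forces $f|_z=g|_z$ for $f\in\mc{A}(z)$, $g\in\mc{B}(z)$, and conclude via \eqref{eq:web_op} and Lemma \ref{l:web_op_D}. Your spelled-out verification that $\sw$-comparable half-infinite paths with the same initial point coincide is a detail the paper leaves implicit, but it is sound.
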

\begin{proof}
Since $\mc{A}\cup\mc{B}$ is non-crossing, it is trivial to check that $\mc{A}\cup\mc{B}$ is a weave.
By Lemma \ref{l:ramification_meas_zero} there exists 
$D\sw\R^2$ that is dense and non-ramified with respect to $\mc{A}\cup\mc{B}$.
Note that this implies $D$ is also non-ramified with respect to both $\mc{A}$ and $\mc{B}$.
Fix some $z\in D$ and consider $f\in\mc{A}(z)$ and $g\in\mc{B}(z)$.
Note that $(\mc{A}\cup\mc{B})(z)=\mc{A}(z)\cup\mc{B}(z)$.
We have $f,g\in(\mc{A}\cup\mc{B})(z)$ and $z$ is non-ramified, so we have $f|_{z}=g|_{z}$.
Since $z,f,g$ were arbitrary, by \eqref{eq:web_op} this implies $\web_{D}(\mc{A})=\web_D(\mc{B})$.
\end{proof}

\begin{lemma}
\label{l:web_is_weave}
Let $\mc{A}$ be a deterministic weave.
Then $\web(\mc{A})$ is a deterministic weave. 
\end{lemma}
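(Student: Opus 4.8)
The plan is to verify directly, for $D\sw\R^2$ dense and non-ramified (such $D$ exists by Lemma~\ref{l:ramification_meas_zero}, and $\web_D(\mc{A})=\web(\mc{A})$ is $D$-independent by Lemma~\ref{l:web_op_D}), that $\web(\mc{A})=\ov{(\mc{A}|_D)_\uparrow}$ is a nonempty compact, non-crossing, pervasive subset of $\Pi^\uparrow$. The single observation that drives everything is that $\web(\mc{A})$ is trapped inside the weave $\mc{A}_\uparrow$: once we know $\web(\mc{A})\sw\mc{A}_\uparrow$, compactness and the non-crossing property are essentially free.

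First I would establish $\web(\mc{A})\sw\mc{A}_\uparrow$. Every element of $\mc{A}|_D$ has the form $f|_z$ with $f\in\mc{A}(z)$ and $z\in D$, and $f|_z\sw f\in\mc{A}$, so $\mc{A}|_D\sw\mc{A}_\uparrow$; since $\mc{A}_\uparrow$ is a downset in $(\Pi^\uparrow,\sw)$ this gives $(\mc{A}|_D)_\uparrow\sw\mc{A}_\uparrow$. By Lemma~\ref{l:weave_uparrow} the set $\mc{A}_\uparrow$ is a deterministic weave, hence compact and in particular closed (alternatively one may read this off directly from Lemma~\ref{l:relcomp_uparrow}, using that $\mc{A}$ is closed), so passing to closures yields $\web(\mc{A})=\ov{(\mc{A}|_D)_\uparrow}\sw\mc{A}_\uparrow$. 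A closed subset of a compact set is compact, so $\web(\mc{A})\in\mc{K}(\Pi^\uparrow)$; it is nonempty because it contains $\mc{A}|_D$, which is nonempty since $\mc{A}$ is pervasive and $D\neq\emptyset$. Finally, any subset of the non-crossing set $\mc{A}_\uparrow$ is non-crossing, so $\web(\mc{A})$ is non-crossing. For pervasiveness, fix $z\in D$; by pervasiveness of $\mc{A}$ there is $h\in\mc{A}(z)$, and then $h|_z\in\mc{A}|_D\sw(\mc{A}|_D)_\uparrow\sw\web(\mc{A})$ has initial point $z$, so $z\in H(h|_z)$ and $\web(\mc{A})(z)\neq\emptyset$. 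Since $D$ is dense in $\Rc$ and $\web(\mc{A})\in\mc{K}(\Pi)$, the remark preceding Definition~\ref{d:weave} (a consequence of Lemma~\ref{l:appdx_1_sw_limits}) upgrades this to pervasiveness of $\web(\mc{A})$, and the proof is complete.

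The only place a reader might anticipate real difficulty is the non-crossing property, since limits of half-infinite non-crossing paths need not be non-crossing (cf.\ the warning example in Figure~\ref{fig:example_weaves}), so it is not obvious that the closure appearing in $\web_D$ preserves it. The resolution is precisely the containment $\web(\mc{A})\sw\mc{A}_\uparrow$: the family $(\mc{A}|_D)_\uparrow$ already lies inside the closed, non-crossing weave $\mc{A}_\uparrow$, so its closure cannot leave it and no new crossings can appear. No estimates beyond Lemmas~\ref{l:relcomp_uparrow} and~\ref{l:weave_uparrow} are required; the argument is short once this containment is spotted.
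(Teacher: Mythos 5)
Your proof is correct and follows essentially the same route as the paper's: both rest on the containment $(\mc{A}|_D)_\uparrow\sw\mc{A}_\uparrow$ together with Lemmas~\ref{l:relcomp_uparrow} and~\ref{l:weave_uparrow} for compactness and the non-crossing property, and on density of $D$ plus Lemma~\ref{l:appdx_1_sw_limits} for pervasiveness. Your explicit remark that the closure cannot escape the closed set $\mc{A}_\uparrow$ (so no new crossings appear in the limit) is a slightly more careful articulation of the same point the paper makes implicitly.
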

\begin{proof}
Fix a dense non-ramified $D\sw\R^2$.
By Lemma \ref{l:web_op_D} it suffices to check that $\mc{W}=\web_D(\mc{A})$ is a weave.
Noting that 
$(\mc{A}|_D)_\uparrow\sw A_\uparrow$,
compactness of $\mc{A}$ and Lemma \ref{l:relcomp_uparrow} implies compactness of $\mc{W}$. 
Similarly, $\mc{W}$ inherits the non-crossing property from $\mc{A}$ 
by Lemma \ref{l:weave_uparrow}.
Lastly, for any $z\in \R^2$ there exists $(z_n)\sw D$ such that $z_n\to z$.
Take $f_n\in\mc{A}(z)$ and note $g_n=f_n|_{z_n}\in\mc{W}$.
By compactness $g_n$ has a sub-sequential limit point $g\in\mc{W}$, 
and by Lemma \ref{l:appdx_1_sw_limits} we have $g\in\mc{W}(z)$.
Thus $\mc{W}$ is pervasive,
so we have that $\mc{W}$ is a weave.
\end{proof}

\begin{lemma}
\label{l:web_minimal}
Let $\mc{A}$ be a deterministic weave. 
Then $\web(\mc{A})$ is a minimal element of $(\mathscr{W}_{\det},\preceq)$
and $\web(\mc{A})\preceq\mc{A}$.
\end{lemma}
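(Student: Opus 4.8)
The plan is to prove the two assertions in turn, feeding the first (that $\web(\mc{A})\preceq\mc{A}$) into the proof of the second (minimality). Throughout, fix a dense non-ramified $D\sw\R^2$, so that $\web(\mc{A})=\ov{(\mc{A}|_D)_\uparrow}$, and recall from Lemma~\ref{l:web_is_weave} that $\web(\mc{A})\in\mathscr{W}_{\det}$.

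\emph{Step 1: $\web(\mc{A})\preceq\mc{A}$.} The first move is to notice $\mc{A}|_D\sw\mc{A}_\uparrow$: every path $f|_z$ with $f\in\mc{A}(z)$ and $z\in D$ satisfies $f|_z\sw f\in\mc{A}$, hence lies in $\mc{A}_\uparrow$. Since $\mc{A}_\uparrow$ is a deterministic weave (Lemma~\ref{l:weave_uparrow}), hence compact, $\mc{A}|_D$ is relatively compact, so Lemma~\ref{l:relcomp_uparrow} applies with $A=\mc{A}|_D$ and gives $\web(\mc{A})=\ov{(\mc{A}|_D)_\uparrow}=(\ov{\mc{A}|_D})_\uparrow$. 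In particular $\web(\mc{A})$ is a downset of $(\Pi^\uparrow,\sw)$, i.e.\ a \emph{decreasing} set. Moreover $\ov{\mc{A}|_D}\sw\ov{\mc{A}_\uparrow}=\mc{A}_\uparrow$, the last equality because $\mc{A}_\uparrow$ is closed (Lemma~\ref{l:relcomp_uparrow} again, with $A=\mc{A}$), so $\web(\mc{A})=(\ov{\mc{A}|_D})_\uparrow\sw(\mc{A}_\uparrow)_\uparrow=\mc{A}_\uparrow$. As $\mc{A}_\uparrow$ is also decreasing, the remark following Lemma~\ref{l:preceq_deterministic} yields $\web(\mc{A})\preceq\mc{A}_\uparrow$, and combining this with $\mc{A}_\uparrow\preceq\mc{A}$ (Lemma~\ref{l:weave_uparrow}) and transitivity of $\preceq$ (Lemma~\ref{l:preceq_deterministic}) gives $\web(\mc{A})\preceq\mc{A}$. (Equivalently, one checks \eqref{eq:preceq} by hand: $\web(\mc{A})\sw\mc{A}_\uparrow$ as above, and $\web(\mc{A})_\uparrow\cap\mc{A}\sw\web(\mc{A})$ since $\web(\mc{A})$ is decreasing.)

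\emph{Step 2: minimality.} Suppose $\mc{B}\in\mathscr{W}_{\det}$ with $\mc{B}\preceq\web(\mc{A})$; the goal is to show $\mc{B}=\web(\mc{A})$, which rules out any $\mc{B}\prec\web(\mc{A})$. Since $\mc{B}$ and $\web(\mc{A})$ are non-crossing, Lemma~\ref{l:preceq_noncr} gives that $\mc{B}\cup\web(\mc{A})$ is non-crossing; likewise, from $\web(\mc{A})\preceq\mc{A}$ (Step~1) and Lemma~\ref{l:preceq_noncr}, $\web(\mc{A})\cup\mc{A}$ is non-crossing. As $\web(\mc{A})$ is a deterministic weave, Lemma~\ref{l:noncr_transitive_weave} then shows $\web(\mc{A})\cup\mc{B}\cup\mc{A}$ is non-crossing; in particular $\mc{A}\cup\mc{B}$ is non-crossing, so Lemma~\ref{l:noncr_same_web} gives $\web(\mc{A})=\web(\mc{B})$. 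Applying the already-established Step~1 to $\mc{B}$ yields $\web(\mc{B})\preceq\mc{B}$, hence $\web(\mc{A})=\web(\mc{B})\preceq\mc{B}$. Together with the hypothesis $\mc{B}\preceq\web(\mc{A})$ and antisymmetry of $\preceq$ (Lemma~\ref{l:preceq_deterministic}), this forces $\mc{B}=\web(\mc{A})$, so $\web(\mc{A})$ is a minimal element of $(\mathscr{W}_{\det},\preceq)$.

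The one genuinely delicate point is recognizing that $\web(\mc{A})$ is decreasing: this rests on invoking Lemma~\ref{l:relcomp_uparrow} with $A=\mc{A}|_D$ together with the inclusion $\mc{A}|_D\sw\mc{A}_\uparrow$ to interchange the closure with the $\cdot_\uparrow$ operation; once that is in hand, Step~1 is a short computation. A secondary subtlety is that minimality is obtained indirectly — any $\preceq$-smaller deterministic weave is shown to be \emph{equal} to $\web(\mc{A})$ — and this argument crucially reuses Step~1 applied to the competitor $\mc{B}$, via the non-crossing transfer lemmas.
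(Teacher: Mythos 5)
Your proof is correct and follows essentially the same route as the paper's: Step 1 establishes $\web(\mc{A})\sw\mc{A}_\uparrow$ together with $\web(\mc{A})_\uparrow=\web(\mc{A})$ via Lemma~\ref{l:relcomp_uparrow} (exactly the paper's verification of \eqref{eq:preceq}), and Step 2 uses the identical chain Lemma~\ref{l:preceq_noncr} $\to$ Lemma~\ref{l:noncr_transitive_weave} $\to$ Lemma~\ref{l:noncr_same_web} $\to$ Step 1 applied to the competitor $\mc{B}$. The only cosmetic differences are your detour through $\mc{A}_\uparrow\preceq\mc{A}$ and the decreasing-sets remark (the paper checks \eqref{eq:preceq} directly, as you also note parenthetically), and your phrasing of minimality via antisymmetry rather than showing $\web(\mc{A})\preceq\mc{B}$ for every comparable $\mc{B}$.
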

\begin{proof}
Let $D\sw\R^2$ be non-ramified with respect to $\mc{A}$ and 
let us write $\mc{W}=\web_D(\mc{A})$.
Note that Lemma \ref{l:web_is_weave} gives that $\mc{W}\in\mathscr{W}_{\det}$.
Let us first show that $\mc{W}\preceq\mc{A}$.
It is immediate from \eqref{eq:web_op} that $\mc{W}\sw\ov{(\mc{A}_\uparrow)}$,
so by Lemma \ref{l:relcomp_uparrow} and compactness of $\mc{A}$ we have $\mc{W}\sw \mc{A}_\uparrow$.
Proposition \ref{p:relcom_tightness} implies that $\mc{A}|_D$ is relatively compact,
thus from Lemma \ref{l:relcomp_uparrow} and \eqref{eq:web_op} 
we have 
$\mc{W}_\uparrow
=(\ov{(\mc{A}|_D)_\uparrow})_\uparrow
=\ov{(\mc{A}|_D)_\uparrow}=\mc{W}$,
so trivially $\mc{W}_\uparrow\cap\mc{A}\sw\mc{W}$.
According to \eqref{eq:preceq} we now have $\mc{W}\preceq \mc{A}$.

Suppose that $\mc{B}$ is a deterministic weave, comparable to $\web(\mc{A})$.
We must show that $\web(\mc{A})\preceq \mc{B}$.
The argument is analogous to that of Lemma \ref{l:flow_maximal}.
From Lemma \ref{l:preceq_noncr} we have that $\web(\mc{A})\cup\mc{B}$ is non-crossing.
From what we have already proved we have that $\web(\mc{A})\preceq\mc{A}$ 
so using Lemma \ref{l:preceq_noncr} again gives that $\mc{A}\cup\web(\mc{A})$ is non-crossing.
Thus by Lemma \ref{l:noncr_transitive_weave} we have that $\mc{A}\cup\mc{B}$ is non-crossing.
Lemma \ref{l:noncr_same_web} now gives that $\web(\mc{A})=\web(\mc{B})$.
From what we have already proved we have that
$\web(\mc{A})=\web(\mc{B})\preceq \mc{B}$, as required.
\end{proof}

\section{Random weaves}
\label{sec:weaves_random}

We now turn our attention to random weaves.
We will give the proof of our main results
(stated in Section \ref{sec:results_weaves})
in Sections \ref{sec:proof_t_weave_structure}--\ref{sec:proof_t_cont_weaves}.
We require some technical matters to be dealt with first,
largely concerning measurability,
before we are in a position to rigorously work with random weaves.
The proofs in Sections \ref{sec:meas_2} and \ref{sec:preceq_random}
are not necessary for the reader wishing to understand 
the proofs of our main results in later sections.

\subsection{On measurability}
\label{sec:meas_2} 

To make sense of the statements of our main results in Section \ref{sec:results_weaves} we require that several objects are measurable.
For example, if $\mc{A}$ and $\mc{B}$ are random weaves the we need $\{\mc{A}\preceq\mc{B}\}$ to be an event
and we need $\web(\mc{A})$ and $\flow(\mc{A})$ to be random variables.
In Appendix \ref{sec:meas_1} we show that several basic maps associated to the space $\mc{K}(\Pi^\uparrow)$ are measurable,
whereas here we consider measurability with a focus specific to weaves.
We have also seen in Lemma \ref{l:flow_map_cts} that the map $\mc{A}\mapsto\flow(\mc{A})$ is 
a continuous map from $\mathscr{W}_{\det}$ to itself.
The map $\web(\cdot)$ is not continuous on $\mathscr{W}_{\det}$,
as shown by example in Figure \ref{fig:web_dc}.
We defined $\web$ and $\flow$ as maps with domain $\mathscr{W}_{\det}$
but we check measurability in terms of the topology on $\mc{K}(\Pi^\uparrow)$,
so the following lemma is a technical necessity. 

\begin{lemma}
\label{l:meas_Wdet}
It holds that $\mathscr{W}_{\det}$ is a measurable subset of $\mc{K}(\Pi^\uparrow)$.
\end{lemma}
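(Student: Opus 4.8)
The plan is to show that $\mathscr{W}_{\det}=\{\mc{A}\in\mc{K}(\Pi^\uparrow)\-\mc{A}\text{ is non-crossing and pervasive}\}$ is the intersection of a measurable ``pervasive'' set and a measurable ``non-crossing'' set, and to treat each separately. For pervasiveness, the natural approach is to fix a countable dense set $\{z_k\}_{k\in\N}\sw\Rc$ and recall from the discussion after Definition \ref{d:weave} (a consequence of Lemma \ref{l:appdx_1_sw_limits}) that for $\mc{A}\in\mc{K}(\Pi)$ pervasiveness is equivalent to $\mc{A}(z_k)\neq\emptyset$ for all $k$. By the measurability of the map $(\mc{A},z)\mapsto \mc{A}(z)$ from $\mc{K}(\Pi)\times\Rc\to\mc{K}(\Pi)$ (Lemma \ref{l:meas_Az}, used already in the proof of Lemma \ref{l:biinf_ramification_meas_zero}), together with the fact that $\{B\in\mc{K}(\Pi)\-B\neq\emptyset\}$ is itself the whole space (elements of $\mc{K}(\Pi)$ are nonempty by convention) — here one must instead phrase $\mc{A}(z_k)\neq\emptyset$ via the measurable indicator that appears implicitly in Lemma \ref{l:meas_Az}, i.e.\ the event $\{\mc{A}(z_k)\text{ is a nonempty compact set}\}$ — I would conclude that $\{\mc{A}\-\mc{A}\text{ is pervasive}\}=\bigcap_k\{\mc{A}\-\mc{A}(z_k)\neq\emptyset\}$ is measurable.

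The harder half is the non-crossing property, and this is where the technical lemmas of Section \ref{sec:compatibility} are designed to be used. First I would reduce ``$\mc{A}$ non-crossing'' to a countable condition. One wants to express crossing of a pair of paths via the relation $\blacktriangleleft_\eps$ from Definition \ref{d:crossing_eps}: by Lemma \ref{l:crossing_eps}, if $f,g$ are paths with $f\lhd g$ in the limiting sense failing, then crossing is detected by $f\blacktriangleleft_\eps g$ for some rational $\eps>0$ (after swapping roles of $f,g$ and reflecting space if needed). The key compatibility input is Lemma \ref{l:lhd_crossing_eps}, which says $\blacktriangleleft_\eps$ is preserved under limits provided $\s_f\vee\s_g\in\R$; combined with Lemma \ref{l:Amax_order} and Lemma \ref{l:lhd_compat} this should let me write ``$\mc{A}$ contains a crossing pair'' as a countable union (over rational $\eps$, over which of the two relevant directions, and over reflection) of events each of which is, roughly, ``there exist $f,g\in\mc{A}$ with $f\blacktriangleleft_\eps g$''. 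The point of passing to $\blacktriangleleft_\eps$ is that the set $\{(f,g)\in(\Pi^\uparrow)^2\- f\blacktriangleleft_\eps g\}$, while not closed on all of $(\Pi^\uparrow)^2$, becomes closed (hence the associated event on $\mc{K}(\Pi^\uparrow)$ becomes Borel via the standard ``exists a pair in the compact set lying in a closed set'' construction) once one restricts to the sublevel set where $\s_f\vee\s_g$ is finite, and the infinite-start-time case contributes no genuine crossing by Lemma \ref{l:lhd_compat_biinf}. I would then invoke the standard fact (to be cited from Appendix \ref{sec:meas_1} or proved by a routine argument) that for a closed $C\sw E^2$ the set $\{K\in\mc{K}(E)\- (K\times K)\cap C\neq\emptyset\}$ is closed in $\mc{K}(E)$; applying this to each piece of the countable decomposition gives that $\{\mc{A}\-\mc{A}\text{ contains a crossing pair}\}$ is $F_\sigma$, hence measurable, and so its complement is too.

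The main obstacle I anticipate is precisely bookkeeping the reduction of the non-crossing property to a genuinely \emph{closed} (or $F_\sigma$) condition: the relation $\lhd$ is not even a partial order and is not compatible with $(\Pi^\uparrow,d_\Pi)$ in general (as the example after Lemma \ref{l:sw_compat} and Remark \ref{r:preceq_not_compatible} emphasize), so the naive event ``some pair in $\mc{A}$ crosses'' is not obviously Borel. The authors clearly set up $\blacktriangleleft_\eps$ and Lemma \ref{l:lhd_crossing_eps} exactly to route around this, so the real work is: (i) verify carefully that crossing of $f,g$ is equivalent to $\big(f\blacktriangleleft_\eps g\text{ or }g\blacktriangleleft_\eps f\big)$ for some rational $\eps>0$, up to the space-reflection symmetry, using Lemma \ref{l:lhd_noncr} and Lemma \ref{l:crossing_eps}; (ii) handle the $\s_f\vee\s_g=\pm\infty$ boundary case, where Lemma \ref{l:lhd_crossing_eps} does not apply, by noting via Lemma \ref{l:lhd_compat_biinf}/\ref{l:lhd_compat_cont} that crossings there are themselves limit-stable so can be absorbed into a separate closed event; and (iii) assemble the countable union and apply the $\mc{K}(E)$-measurability lemma. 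Once those three points are pinned down, intersecting with the pervasive set completes the proof, with a final remark that all the auxiliary maps ($(\mc{A},z)\mapsto\mc{A}(z)$, the pair-membership events) are measurable by the results collected in Appendix \ref{sec:meas_1}.
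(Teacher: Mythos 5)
Your overall architecture --- split $\mathscr{W}_{\det}$ into the pervasive and non-crossing families, dispose of pervasiveness via a countable dense set of space-time points, and attack the non-crossing family through the relation $\blacktriangleleft_\eps$ and Lemmas \ref{l:crossing_eps} and \ref{l:lhd_crossing_eps} --- is the right one, and the pervasiveness half is fine. But step (i) of your plan contains a genuine error: crossing of $f,g$ is \emph{not} equivalent to $f\blacktriangleleft_\eps g$ or $g\blacktriangleleft_\eps f$ for some $\eps>0$, even allowing for reflection of space. The relation $\blacktriangleleft_\eps$ describes one very specific crossing configuration: $f$ stays weakly to the left of $g$ at all $s\bullet\geq\s+$ while the two paths jump over each other in opposite directions at the common initial time $\s$. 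Two continuous paths that cross transversally at an interior time (say $f<g$ before time $0$ and $f>g$ after) cross in the sense of Definition \ref{d:crossing}, yet satisfy neither $f\blacktriangleleft_\eps g$ nor $g\blacktriangleleft_\eps f$ for any $\eps$: the requirement $L_{s\bullet}(\cdot)\cap R_{s\bullet}(\cdot)=\emptyset$ for all $s\bullet\geq\s+$ fails in both directions, and the initial-time jump conditions fail trivially. Consequently $\{A\-A\text{ contains a crossing pair}\}$ is strictly larger than your countable union of $M_\eps$-type events, and since neither the crossing relation nor its complement is closed in $(\Pi^\uparrow)^2$, the decomposition cannot be repaired simply by adding further closed pieces.

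The correct use of Lemma \ref{l:crossing_eps} is one-sided and relative to the closure of the non-crossing family $N=\{A\in\mc{K}(\Pi^\uparrow)\-A\text{ is non-crossing}\}$. Part 2 of that lemma only characterizes crossings that arise as limits of $\lhd$-\emph{ordered} pairs: if $A_n\to A$ with each $A_n$ non-crossing, then any crossing pair in $A$ must satisfy $f\blacktriangleleft_\eps g$ for some $\eps>0$, whence $\ov N\sw N\cup\bigcup_n M_{1/n}$ with $M_\eps=\{A\-\exists f,g\in A,\ f\blacktriangleleft_\eps g\}$; part 1 gives $N\cap M_{1/n}=\emptyset$. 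Hence $N=\ov N\sc\bigcup_n M_{1/n}$, and measurability of $N$ reduces to measurability of each $M_{1/n}$ --- for which your observations about Lemma \ref{l:lhd_crossing_eps} and the finite-initial-time restriction are exactly what is needed (one removes the closed set of compacta containing a path with infinite initial time, so that the union of $M_{1/n}$ with that set is closed). Note also that your appeal to Lemma \ref{l:lhd_compat_biinf} in step (ii) is backwards: that lemma says \emph{non}-crossing is preserved under limits of bi-infinite paths, not that crossings at infinite initial times are limit-stable; the $\s_f\vee\s_g=\pm\infty$ case enters only to account for the failure of $\blacktriangleleft_\eps$ to be a closed relation, not to detect additional crossings.
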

\begin{proof}
We have
$\mathscr{W}_{\det}=
\{A\in\mc{K}(\Pi^\uparrow)\-A\text{ is pervasive}\} \cap
\{A\in\mc{K}(\Pi^\uparrow)\-A\text{ is non-crossing}\}$.
Using Lemma \ref{l:appdx_1_sw_limits} it is straightforward to check that 
$\{A\in\mc{K}(\Pi^\uparrow)\-A\text{ is pervasive}\}$ is closed.
It remains to show that 
$\{A\in\mc{K}(\Pi^\uparrow)\-A\text{ is non-crossing}\}$
is measurable.

Consider if $A_n,A\in\mc{K}(\Pi)$ are such that $A_n\to A$,
and $A_n$ is non-crossing for each $n$.
Suppose that $A$ fails to be non-crossing,
in particular suppose that $f,g\in A$ cross each other.
We have $f_n,g_n\in\mc{A}_n$ such that $f_n\to f$ and $g_n\to g$.
By Lemma \ref{l:lhd_noncr} we have $f_n\lhd g_n$ or $g_n\lhd f_n$,
at least one of which must hold for infinitely many $n$.
It follows by part 2 of Lemma \ref{l:crossing_eps} that $\mc{A}$ contains a pair of paths $f',g'$
such that $f'\blacktriangleleft_\eps g'$, for some $\eps>0$.
Writing 
\begin{align*}
N &= \{A\in\mc{K}(\Pi^\uparrow)\-A\text{ is non-crossing}\}, \\
M_\eps &= \{A\in\mc{K}(\Pi^\uparrow)\-\text{there exists }f,g\in A\text{ with }f\blacktriangleleft_{1/n} g\} \\
L &= \{A\in\mc{K}(\Pi^\uparrow)\- \text{there exists }f\in A\text{ with }\s_f=-\infty\text{ or } \s_f=+\infty\}.
\end{align*}
we thus obtain
$
\ov{N} = N \cup
\l( \ov{N} \cap \bigcup_{n\in\N} M_{1/n} \r).
$
By part 1 of Lemma \ref{l:crossing_eps} we have $N\cap M_{\eps}=\emptyset$, hence
$N=\ov{N}\sc\l(\bigcup_{n\in\N} M_{1/n}\r).$
Lemma \ref{l:lhd_crossing_eps} implies that $M_{1/n} \cup L$ is closed, for each $n\in\N$,
so noting that $L$ is also closed
we obtain that $M_{1/n}=(M_{1/n} \cup L)\sc L$ is measurable.
Hence $N$ is measurable.
This completes the proof.
\end{proof}

\begin{lemma}
\label{l:meas_web_op}
The map $\mc{A}\mapsto\web(\mc{A})$ is measurable from $\mathscr{W}_{\det}$ to itself.
\end{lemma}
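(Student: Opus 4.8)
The plan is to show that $\web(\cdot)$, although discontinuous, is a pointwise limit of measurable maps, and hence measurable. The natural approach is to fix a countable dense set of candidate base points and express $\web_D(\mc{A})$ as a countable lattice operation built out of the maps $\mc{A}\mapsto\mc{A}(z)$ and $\mc{A}\mapsto\mc{A}|_z$, which were shown to be measurable in Lemma \ref{l:meas_Az} (and whose image is controlled by Lemma \ref{l:appdx_1_sw_limits} and Lemma \ref{l:relcomp_uparrow}). More precisely, let $D=\{z_1,z_2,\ldots\}$ be a fixed deterministic countable dense subset of $\R^2$. By Lemma \ref{l:ramification_meas_zero} and Lemma \ref{l:web_op_D}, for every deterministic weave $\mc{A}$ we have $\web(\mc{A})=\web_{D'}(\mc{A})$ for any dense non-ramified $D'$; the first step is to argue that we may in fact use this \emph{fixed} $D$, after discarding a set of base points that is ramified — but since ramification is a property of $\mc{A}$, not of $D$ alone, some care is needed. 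The cleanest route is to set $D_{\mc{A}}=\{z\in D\-z\text{ is non-ramified in }\mc{A}\}$ and show $\web(\mc{A})=\ov{(\mc{A}|_{D_{\mc{A}}})_\uparrow}$; the set $D_{\mc{A}}$ is still dense because only a measure-zero set is removed, and it is non-ramified by construction, so Lemma \ref{l:web_op_D} applies. Measurability of the indicator $\mc{A}\mapsto\1\{z\in D\text{ is non-ramified in }\mc{A}\}$ for each fixed $z$ comes from Lemma \ref{l:biinf_ramification_meas_zero}.

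With this reformulation, the key step is to write
\[
\web(\mc{A}) = \ov{\bigcup_{k\in\N\-z_k\text{ non-ramified in }\mc{A}} (\mc{A}|_{z_k})_\uparrow}
\]
and to verify that each of the three operations involved — extracting $\mc{A}|_{z_k}$ when $z_k$ is non-ramified, taking the $\cdot_\uparrow$ downset, and taking the countable union followed by closure in $\mc{K}(\Pi^\uparrow)$ — preserves measurability. For fixed $k$, the map $\mc{A}\mapsto\mc{A}|_{z_k}$ restricted to the (measurable, by Lemma \ref{l:biinf_ramification_meas_zero}) event that $z_k$ is non-ramified is single-valued and measurable by Lemma \ref{l:meas_Az}; when $z_k$ is ramified we simply assign the empty contribution, which keeps the union well-defined. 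The downset map $A\mapsto A_\uparrow$ on $\mc{K}(\Pi^\uparrow)$ is measurable (it sends compact sets to compact sets by Lemma \ref{l:relcomp_uparrow}); in fact I would quote or quickly establish in the appendix that $(A,B)\mapsto\ov{A\cup B}$ and $A\mapsto A_\uparrow$ are measurable maps on $\mc{K}(\Pi^\uparrow)$, using that the Borel $\sigma$-algebra on $\mc{K}(\Pi^\uparrow)$ is generated by the maps $A\mapsto\dist(f,A)$ and that countable unions followed by closure behave well under the Hausdorff metric on relatively compact families.

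The main obstacle I anticipate is handling the countable union and closure \emph{within} $\mc{K}(\Pi^\uparrow)$ in a way that is genuinely measurable and not merely defined pointwise: one must know that $\bigcup_{k} (\mc{A}|_{z_k})_\uparrow$ has compact closure (so the output lands in $\mc{K}(\Pi^\uparrow)$ as required), which follows from Lemma \ref{l:relcomp_weaves_to_flows}-style compactness arguments combined with the fact that all these paths are extensions of paths in the single compact set $\mc{A}$, hence contained in the relatively compact set $\mc{A}_\uparrow$ by Lemma \ref{l:relcomp_uparrow}; and that the map $(A_1,A_2,\ldots)\mapsto\ov{\bigcup_k A_k}$ is measurable from $\mc{K}(\Pi^\uparrow)^{\N}$ to $\mc{K}(\Pi^\uparrow)$ when the union is a priori contained in a fixed compact set, which can be reduced to the finite-union case by a monotone approximation argument. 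Once these measurability facts about $\mc{K}(\Pi^\uparrow)$ are in place — most of which I would push into the appendix alongside Lemma \ref{l:meas_Az} — the proof is a short composition argument: $\mc{A}\mapsto\web(\mc{A})$ is a composition of the measurable base-point extraction maps, the measurable ramification indicators, the measurable downset map, and the measurable countable-union-and-closure map, all restricted to the measurable set $\mathscr{W}_{\det}$ (measurable by Lemma \ref{l:meas_Wdet}), and its image lies in $\mathscr{W}_{\det}$ by Lemma \ref{l:web_is_weave}. Hence $\web:\mathscr{W}_{\det}\to\mathscr{W}_{\det}$ is measurable.
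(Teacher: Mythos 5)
There is a genuine gap at the very first step of your reduction. You fix a deterministic countable dense set $D$ and define $D_{\mc{A}}=\{z\in D\-z\text{ is non-ramified in }\mc{A}\}$, asserting that $D_{\mc{A}}$ is still dense ``because only a measure-zero set is removed.'' This does not follow: $D$ is countable and hence itself Lebesgue-null, so the fact that $\ram(\mc{A})$ is Lebesgue-null (Lemma \ref{l:ramification_meas_zero}) says nothing about how much of $D$ it swallows. A null set can be dense, and for a given deterministic weave the set of ramification points may well contain the entire fixed set $D$ (compare the Brownian web, where the special points of type other than $(0,1)$ are dense in $\R^2$). In that case $D_{\mc{A}}$ is empty or fails to be dense, Lemma \ref{l:web_op_D} no longer applies, and your formula $\web(\mc{A})=\ov{(\mc{A}|_{D_{\mc{A}}})_\uparrow}$ breaks down. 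No single deterministic countable $D$ works simultaneously for all $\mc{A}\in\mathscr{W}_{\det}$, which is precisely why the discontinuity of $\web(\cdot)$ cannot be tamed by a fixed grid of base points.

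The paper's proof is designed around exactly this obstruction: instead of a fixed $D$, it samples an i.i.d.\ sequence $(z_i)$ from a non-atomic measure of full support on an auxiliary probability space. For each fixed $\mc{A}$, such a sequence is almost surely dense \emph{and} entirely non-ramified in $\mc{A}$ (by Fubini applied to the null set of ramification points), so the increasing measurable approximants $M_n(\mc{A},\omega)=\bigl(\bigcup_{i\le n}\mc{A}|_{z_i(\omega)}\bigr)_\uparrow$ converge a.s.\ to $\web(\mc{A})$; measurability of $\mc{A}\mapsto\web(\mc{A})$ is then extracted by observing that $\mc{A}\mapsto\mathrm{Law}(M_n(\mc{A},\cdot))$ is measurable, that these laws converge to the point mass $\delta_{\web(\mc{A})}$, and that the map $\delta_A\mapsto A$ is measurable. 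The rest of your outline (measurability of $\mc{A}\mapsto\mc{A}|_z$, of $A\mapsto A_\uparrow$, relative compactness of the increasing unions inside $\mc{A}_\uparrow$, and the closure step) matches the paper's ingredients and would go through once the base-point issue is repaired; but as written, the reduction to a fixed deterministic $D$ is not salvageable.
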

\begin{proof}
From Lemma \ref{l:meas_Wdet} we have that $\mathscr{W}_{\det}$ is measurable.
Recall that in Remark \ref{r:web_op_det}
we noted that for $\mc{A}\in\mathscr{W}_{\det}$ 
the value of $\web_D(\mc{A})$ does not depend upon $D$,
provided that $D\sw\Rc$ is dense and non-ramified.
We thus write $\web(\mc{A})=\web_D(\mc{A})$.

Let $\mu$ be a measure on $\Rc$ with full support and no atoms.
Let $(z_i)_{i=1}^\infty$ be a sequence of independent random variables with distribution $\mu$,
on the probability space $(\Omega,\mc{F},\P)$.
The following argument is somewhat unusual,
so let us give an outline.
We will show that the map from $\mc{A}\in\mathscr{W}_{\det}$ to the law of $\web_D(\mc{A})$ is a measurable map,
and that this law is precisely the probability measure with a point-mass at $\web(\mc{A})$.
The stated result then follows, using that 
the map from point-mass measures, to their associated points, is a measurable map.

From Lemma \ref{l:meas_A|z} the map
$(A,\omega)\mapsto A|_{z_i(\omega)}$
is measurable from $\mc{K}(\Pi)\times\Omega\to \mc{K}(\Pi)$.
It follows from Lemma \ref{l:meas_Auparrow} that
\begin{equation}
\label{eq:meas_web_op_n}
(A,\omega)\mapsto M_n(A,\omega)=\l(\bigcup_{i=1}^n A|_{z_i(\omega)}\r)_\uparrow
\end{equation}
is measurable, for each $n\in\N$,
as a function $M_n:\mathscr{W}_{\det}\times\Omega\to\mathscr{W}_{\det}$.

For all $\mc{A}\in\mathscr{W}_{\det}$ and $\omega\in\Omega$ we have
$M_n(\mc{A},\omega) \sw M_{n+1}(\mc{A},\omega) \sw \mc{A}_{\uparrow}$. 
Lemma \ref{l:relcomp_uparrow} gives that $\mc{A}_\uparrow$ is a compact subset of $\Pi$,
which implies that the sequence $(M_n(\mc{A},\omega))_{n=1}^\infty$ is a relatively compact subset of $\mc{K}(\Pi)$.
Let $\mc{B}$ be a limit point in $\mc{K}(\Pi^\uparrow)$ of $(M_n(\mc{A},\omega))$ as $n\to\infty$.

We aim to show that $\P[B=\web(\mc{A})]=1$.
For each $\mc{A}\in\mathscr{W}_{\det}$ we have that 
\begin{equation}
\label{eq:zi_D_condition}
\P[(z_i)\text{ is dense in }\R^2\text{ and non-ramified in }\mc{A}]=1.
\end{equation}
We write $D=(z_i(\omega))_{i\in\N}$.
Let us condition on the event in \eqref{eq:zi_D_condition} occurring.
Then, if $g\in B$ we have $z_{i_n}\sw\R^2$, $f_n\in \mc{A}(z_{i_n})$ 
and $g_n\in\Pi^\uparrow$ such that $g_n\sw f_n|_{z_{i_n}}$ and $g_n\to g$.
It follows immediately that $g\in\web_D(\mc{A})$, thus $B\sw\web_D(\mc{A})$.
Similarly, if $g\in\web_D(\mc{A})$ then 
there exists $z_i\in\R^2$, $f_i\in A(z_n)$ and $g_i\in\Pi^\uparrow$ such that $g_i\sw f_i$ and $g_i\to g$. 
Hence for all $i$ there exists $n\in\N$ such that $g_i\in M_n(\mc{A},\omega)$. 
Thus $g\in B$ and $B=\web_D(\mc{A})$,
which by Lemma \ref{l:web_op_D} is equal to $\web_D(\mc{A})$.
We thus have $\P[B=\web(\mc{A})]=1$.

Let $\mathscr{P}(\mathscr{W}_{\det})$ denote the space of probability measures on $\mathscr{W}_{\det}$
and let $\mathscr{P}_0(\mathscr{W}_{\det})$ denote the closed subspace of point-mass probability measures.
Let $\delta_A\in \mathscr{P}_0(\mathscr{W}_{\det})$ be the probability measure that is a point-mass on $A\in\mathscr{W}_{\det}$.
Let $\mathscr{L}_n^{\mc{A}}\in\mathscr{P}(\mathscr{W}_{\det})$ denote the law of the random variable $\omega\mapsto M_n(\mc{A},\omega)$.
From what we have proved, it follows that $\mathscr{L}_n^{\mc{A}}$ converges weakly to $\delta_{\web(\mc{A})}$
the probability measure on $\mc{K}(\Pi^\uparrow)$ that is a point-mass on $\web(\mc{A})$.

For measurable $S\sw\mathscr{W}_{\det}$ we have
$$\mc{L}_n^{\mc{A}}(S)=\P[M_n(\mc{A},\cdot)\in S]=\int_{\omega\in\Omega} \1_S(M_n(\mc{A},\omega))\,d\P(\omega),$$
from which it follows that $\mc{A}\mapsto \mc{L}^\mc{A}_n$ is a measurable function
from $\mathscr{W}_{\det}$ to $\mathscr{P}_0(\mathscr{W}_{\det})$.
Hence $\mc{A}\mapsto \delta_{\web(\mc{A})}$ is also measurable.
It is easily seen that the map $m:\mathscr{P}_0(\mathscr{W}_{\det})\to \mathscr{W}_{\det}$
given by $\delta_A\mapsto A$ is continuous, and thus measurable.
Compositions of measurable functions are measurable, hence the map
$A\mapsto m(\delta_{\web{A}})=\web(\mc{A})$
is measurable.
\end{proof}

\begin{lemma}
\label{l:preceq_meas_simplex}
The set
$\{(A,B)\in\mc{K}(\Pi)^2\- A\preceq B\}$
is a measurable subset of $\mc{K}(\Pi)^2$.
\end{lemma}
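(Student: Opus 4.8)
The goal is to show that the relation $\preceq$ on $\mc{K}(\Pi)$ (more precisely on $\mc{K}(\Pi^\uparrow)$, extended by the convention that non-half-infinite sets are handled through the same formula) is a Borel subset of $\mc{K}(\Pi)^2$. Recall from \eqref{eq:preceq} that $A\preceq B$ means $A_\uparrow\cap B\sw A\sw B_\uparrow$. The strategy is to rewrite each of the two inclusions as a countable combination of closed/measurable conditions, using the measurability machinery already available: that $A\mapsto A_\uparrow$ is measurable (Lemma \ref{l:meas_Auparrow}), that $\sw$ is compatible with $d_\Pi$ (Lemma \ref{l:sw_compat}), and that the basic set operations on $\mc{K}(\Pi)$ behave measurably (the lemmas of Appendix \ref{sec:meas_1}).

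\textbf{Step 1: reduce to a countable test.} First I would fix a countable dense subset $\{g_j\}_{j\in\N}$ of $\Pi$ (possible since $\Pi$ is Polish by Proposition \ref{p:J1M1}). The inclusion $A\sw B_\uparrow$ is equivalent to: for every $f\in A$ there exists $g\in B$ with $f\sw g$. Since $\sw$ is compatible (so $\{(f,g)\-f\sw g\}$ is closed) and $A,B$ are compact, the function $(A,B)\mapsto \sup_{f\in A}\dist\big(f,\{g\in\Pi\- \exists h\in B,\ f\sw h\}\big)$ is, after checking, upper semicontinuous or at least Borel; $A\sw B_\uparrow$ holds iff this quantity is $0$. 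Alternatively, and perhaps more cleanly, one uses that $A\mapsto A_\uparrow$ is measurable into $\mc{K}(\Pi)$ and that the set $\{(A,C)\in\mc{K}(\Pi)^2\- A\sw C\}$ is closed in $\mc{K}(\Pi)^2$ (a standard fact about the Hausdorff metric; it follows from Lemma \ref{l:sw_KM} applied to set inclusion). Then $\{A\sw B_\uparrow\}$ is the preimage of the closed set $\{A\sw C\}$ under the measurable map $(A,B)\mapsto (A,B_\uparrow)$, hence Borel.

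\textbf{Step 2: handle $A_\uparrow\cap B\sw A$.} For this I would again use that $A\mapsto A_\uparrow$ is measurable and that intersection $(\,\cdot,\cdot)\mapsto (\cdot)\cap(\cdot)$ is a Borel operation on $\mc{K}(\Pi)$ where defined --- but here the intersection may be empty, so some care is needed; one works with the closed sets $A_\uparrow$ and $B$ directly. The point is that $A_\uparrow\cap B\sw A$ is equivalent to: for every $j$, if $g_j\in A_\uparrow$ and $g_j\in B$ then $g_j\in A$ (using density of $\{g_j\}$ together with the fact that $A$, $B$ and $A_\uparrow$ are all \emph{closed}, so membership is determined on a dense set via limits). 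Thus
\[
\{A_\uparrow\cap B\sw A\}=\bigcap_{j\in\N}\Big(\{g_j\notin A_\uparrow\}\cup\{g_j\notin B\}\cup\{g_j\in A\}\Big),
\]
and each of the three events is Borel: $\{(A,g)\- g\in A\}$ is closed in $\mc{K}(\Pi)\times\Pi$ (another standard Hausdorff-metric fact), so plugging in the fixed point $g_j$ and, for the first event, the measurable map $A\mapsto A_\uparrow$, each piece is Borel; a countable intersection of Borel sets is Borel. I would need to double-check the claim that these inclusions are genuinely captured by testing on a countable dense set; the justification is that $A$, $B$, and $A_\uparrow$ are closed, so $x\in A$ iff $x$ is a limit of points $g_{j_k}\in A$, and the inclusion-on-a-dense-set argument then goes through because the relevant sets are closed --- this is exactly the kind of reduction used repeatedly in the appendix (e.g.\ in the proof of Lemma \ref{l:meas_Wdet}).

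\textbf{Main obstacle.} The delicate point is the reduction to a countable test in Step 2: one must be sure that $A_\uparrow\cap B\sw A$ is not lost by passing to a dense subset, i.e.\ that there is no ``accumulation'' pathology where $g_j\in A_\uparrow\cap B$ for $g_j$ approaching a genuine witness of failure but no $g_j$ itself witnessing it. This is controlled precisely because all three sets $A$, $B$, $A_\uparrow$ are closed (the last by Lemma \ref{l:relcomp_uparrow}, or directly by compactness of $A$ plus compatibility of $\sw$), so failure of the inclusion is witnessed at an honest point of $A_\uparrow\cap B\setminus A$ which is an open condition relative to $A_\uparrow\cap B$ and hence detected by some $g_j$. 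Once this is pinned down, the rest is routine: assemble the two Borel conditions from Steps 1 and 2 and intersect, concluding that $\{(A,B)\- A\preceq B\}=\{A_\uparrow\cap B\sw A\}\cap\{A\sw B_\uparrow\}$ is Borel.
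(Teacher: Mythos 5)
Your Step 1 is fine: realising $\{A\sw B_\uparrow\}$ as the preimage of the closed set $\{(A,C)\-A\sw C\}$ (Lemma \ref{l:sw_KM}) under the continuous map $(A,B)\mapsto(A,B_\uparrow)$ (Lemma \ref{l:meas_Auparrow}) is a clean alternative to the paper's direct verification that this set is closed via Lemma \ref{l:appdx_1_sw_limits}; both routes work.

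Step 2, however, contains a genuine gap, and it is exactly at the point you flag as the ``main obstacle''. The proposed identity
\[
\{A_\uparrow\cap B\sw A\}=\bigcap_{j\in\N}\Big(\{g_j\notin A_\uparrow\}\cup\{g_j\notin B\}\cup\{g_j\in A\}\Big)
\]
is false: the right-hand side only constrains those $g_j$ that actually lie in $A_\uparrow\cap B$, and a \emph{fixed} countable dense subset of $\Pi$ need not meet the closed set $A_\uparrow\cap B$ at all. Closedness of $A$, $B$ and $A_\uparrow$ does not help here --- your assertion that ``$x\in A$ iff $x$ is a limit of points $g_{j_k}\in A$'' is simply not true for closed sets; it would require $\{g_j\}\cap A$ to be dense in $A$, which fails whenever $A$ is nowhere dense in $\Pi$ (the typical situation for compact sets of paths). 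Concretely: pick $h\in\Pi^\uparrow\sc\{g_j\-j\in\N\}$ and $f\in\Pi^\uparrow$ with $h\sw f$, $h\neq f$, and set $A=\{f\}$, $B=\{h\}$. Then $h\in A_\uparrow\cap B$ but $h\notin A$, so the inclusion fails, yet no $g_j$ belongs to $B$ and the countable test is passed vacuously. So your formula misclassifies this pair, and the Borel representation collapses.

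The paper avoids this by quantifying not over a dense family of paths but over a quantitative degree of failure: it writes the complement $\{B\cap A_\uparrow\nsubseteq A\}$ as $\bigcup_{n}S_{1/n}$, where $S_\eps$ is the set of pairs admitting a witness $g\in B$, $g\sw f$ with $f\in A_\uparrow$ and $d_{\mc{K}(\Pi)}(\{f\},A)\geq\eps$. Each $S_\eps$ is then shown to be closed by a compactness argument (extract convergent subsequences of witnesses and pass to the limit using Lemma \ref{l:appdx_1_sw_limits}; the condition $d_{\mc{K}(\Pi)}(\{f\},A)\geq\eps$ survives the limit precisely because it is a closed condition, unlike $f\notin A$). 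To repair your proof you would need to replace the dense-subset test by a device of this kind --- some uniform $\eps$-separation of the offending point from $A$ --- since the bare statement ``there is a point of $A_\uparrow\cap B$ outside $A$'' is neither open nor closed and cannot be detected on a fixed countable set.
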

\begin{proof}
Recall the definition of $\preceq$ on $\mc{K}(\Pi)$ from \eqref{eq:preceq}.
It suffices to show that
\begin{align*}
C_1&=\{(A,B)\in\mc{K}(\Pi)^2\-A\sw B_\uparrow\} \\
C_2&=\{(A,B)\in\mc{K}(\Pi)^2\-B\cap A_\uparrow\sw A\}
\end{align*}
are both measurable subsets of $\mc{K}(\Pi)^2$,
where $\mc{K}(\Pi)^2$ is equipped with the product topology and corresponding Borel $\sigma$-field.
We note that $A_\uparrow$ is closed whenever $A\in\mc{K}(\Pi)$ is closed.
Using Lemma \ref{l:appdx_1_sw_limits} it is straightforward to check that 
the set $C_1=\{(A,B)\in\mc{K}(\Pi)^2\-\forall f\in A\;\exists g\in B\text{ such that }f\sw g\}$ is closed.
We now move on to $C_2$.
To this end note that
$B\cap A_\uparrow\nsubseteq A$ if and only if
there exists $f\in A_\uparrow$ and $g\in B$ such that $g\sw f$ and $f\notin A$.
The condition $f\notin A$ is equivalent to $d_{\mc{K}(\Pi)}(\{f\},A)>0$ which,
as $A$ is closed,
is in turn equivalent to $d_{\mc{K}(\Pi)}(\{f\},A)\geq\epsilon$ for some $\epsilon>0$.
We thus have that
$C_2=\bigcup_{n\in\N} S_{1/n}$
where 
$$
S_\epsilon=\l\{(A,B)\in\mc{K}(\Pi)^2\- \exists f\in A_\uparrow, g\in B\text{ such that }
g\sw f\text{ and }d_{\mc{K}(\Pi)}(\{f\},A)\geq\epsilon\r\}.
$$
Similar to above, using Lemma \ref{l:appdx_1_sw_limits} it is straightforward to check that 
$S_\epsilon$ is closed, for any $\epsilon>0$.
Thus $C_2$ is measurable.
\end{proof}


\subsection{On partial ordering of random weaves}
\label{sec:preceq_random}

In this section we show that 
$\preceqd$ is a partial order on (the laws of) random weaves, 
as defined shortly below \eqref{eq:preceq}.
More precisely, recall that $\mc{P}(M)$ denotes the space of probability measures on a metric space $M$.
We have shown in Lemma \ref{l:preceq_deterministic} that $\preceq$ given by \eqref{eq:preceq} defines a partial order on 
$\mc{K}(\Pi^\uparrow)$.
In Section \ref{sec:terminology} we defined an extension of $\preceq$ to $\mc{P}(\mc{K}(\Pi^\uparrow))$,
namely if $\mc{A}$ and $\mc{B}$ are $\mc{K}(\Pi^\uparrow)$ valued random variables then we write $\mc{A}\preceqd\mc{B}$
if there exists a coupling of $\mc{A}$ and $\mc{B}$ such that $\P[\mc{A}\preceq\mc{B}]=1$.
We aim to show that that $\preceqd$ is a partial order on $\mc{P}(\mc{K}(\Pi^\uparrow))$.

If $\preceq$ was compatible with $(\mc{K}(\Pi),d_\Pi)$,
in the sense of Definition \ref{d:compatible},
then we could use a classical result e.g.~Theorem 2.4 in \cite{Liggett1985} to 
obtain the extension to $\mc{P}(\mc{K}(\Pi))$.
However, as we saw in Remark \ref{r:preceq_not_compatible}
compatibility fails in this situation.
Instead we require an original argument 
that uses compactness and the precise form of \eqref{eq:preceq}.
We first give a preliminary lemma.

\begin{lemma}
\label{l:preceq_antisym}
Suppose that $\mc{D},\mc{D}'$ are $\mc{K}(\Pi^\uparrow)$ valued random variables, with the same marginal distributions,
coupled such that $\P[\mc{D}\preceq\mc{D}']=1$. 
Then $\P[\mc{D}=\mc{D}']=1.$
\end{lemma}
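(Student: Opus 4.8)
The plan is to reduce the statement, in two applications, to the following elementary principle, which I would isolate as a sub-lemma: \emph{if $X,Y$ are $\mc{K}(\Pi^\uparrow)$-valued random variables with the same law and $X\sw Y$ almost surely, then $X=Y$ almost surely.} To prove this sub-lemma, fix any Borel probability measure $\rho$ on $\Pi^\uparrow$ with full support (one exists since $\Pi^\uparrow$ is Polish) and put $\Phi(K)=\int_{\Pi^\uparrow}\big(d_\Pi(f,K)\wedge 1\big)\,d\rho(f)$ for $K\in\mc{K}(\Pi^\uparrow)$. Because $K\mapsto d_\Pi(f,K)$ is $1$-Lipschitz for the Hausdorff metric (and $d_\Pi$ is bounded, as $(\Rc)^2$ is compact), $\Phi$ is a bounded, continuous, hence Borel, functional on $\mc{K}(\Pi^\uparrow)$. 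If $K\sw K'$ then $d_\Pi(f,K)\ge d_\Pi(f,K')$ for every $f$, so $\Phi(K)\ge\Phi(K')$; if moreover $K\neq K'$, pick $g\in K'\setminus K$, and on a sufficiently small open ball around $g$ one has $d_\Pi(f,K')<d_\Pi(f,K)$, so, that ball having positive $\rho$-mass, $\Phi(K)>\Phi(K')$. Thus $\Phi(X)\ge\Phi(Y)$ almost surely with strict inequality on $\{X\neq Y\}$, while $X$ and $Y$ have the same law forces $\E\Phi(X)=\E\Phi(Y)$; hence $\P[X\neq Y]=0$.

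The main argument then runs in three steps. First, since $\P[\mc{D}\preceq\mc{D}']=1$, the definition \eqref{eq:preceq} gives $\mc{D}\sw\mc{D}'_\uparrow$ almost surely, and, as $\cdot_\uparrow$ is idempotent (transitivity of $\sw$), taking down-sets yields $\mc{D}_\uparrow\sw\mc{D}'_\uparrow$ almost surely. Second, $\cdot_\uparrow$ is a well-defined measurable self-map of $\mc{K}(\Pi^\uparrow)$ (Lemma \ref{l:relcomp_uparrow} gives $A\in\mc{K}(\Pi^\uparrow)\Rightarrow A_\uparrow\in\mc{K}(\Pi^\uparrow)$, and Lemma \ref{l:meas_Auparrow} gives measurability), so $\mc{D}_\uparrow$ and $\mc{D}'_\uparrow$ again have the same law; applying the sub-lemma to $\mc{D}_\uparrow\sw\mc{D}'_\uparrow$ gives $\mc{D}_\uparrow=\mc{D}'_\uparrow$ almost surely. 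Third, \eqref{eq:preceq} also gives $\mc{D}_\uparrow\cap\mc{D}'\sw\mc{D}$, and since $\mc{D}'\sw\mc{D}'_\uparrow=\mc{D}_\uparrow$ we obtain $\mc{D}'=\mc{D}_\uparrow\cap\mc{D}'\sw\mc{D}$ almost surely. A second application of the sub-lemma, now to the almost sure inclusion $\mc{D}'\sw\mc{D}$ between variables with the same law, yields $\mc{D}=\mc{D}'$ almost surely, which is the claim.

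The one genuinely non-routine ingredient is the sub-lemma, and the reason it needs the specific functional $\Phi$ above is precisely that $\preceq$ is not compatible (not closed) in the sense of Definition \ref{d:compatible}, as noted in Remark \ref{r:preceq_not_compatible}, so one cannot invoke a standard monotone-coupling result; here compactness of $(\Rc)^2$ (boundedness of $d_\Pi$, continuity of $\Phi$) together with the fact that the distance-to-$K$ functional integrated against a full-support measure strictly detects any enlargement of a compact set does the work. I expect no further obstacle beyond keeping track of measurability of $\cdot_\uparrow$, of $\Phi$, and of the event $\{\mc{D}\preceq\mc{D}'\}$, all of which are supplied by the measurability results already in place.
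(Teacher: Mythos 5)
Your proof is correct, and it takes a genuinely different route from the paper's. The paper argues directly on $\mc{D}$ and $\mc{D}'$: it fixes a countable base $(U_i)$ of $\Pi^\uparrow$, and for a well-chosen basic open $U$ compares $\P[U^\circ\cap\mc{D}\neq\emptyset]$ with $\P[U^\circ\cap\mc{D}'\neq\emptyset]$, where $U^\circ$ is the $\sw$-upset of $U$, using equality of marginals to ``swap'' the two variables (equation \eqref{eq:U_swap}) and derive a contradiction in two asymmetric stages, first ruling out $\P[\mc{D}'\sc\mc{D}\neq\emptyset]>0$ and then $\P[\mc{D}\sc\mc{D}'\neq\emptyset]>0$. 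You instead isolate the general fact that, for random compact sets, equal law plus almost sure inclusion forces almost sure equality, proved via the strictly inclusion-monotone continuous functional $\Phi$; you then use the two halves of \eqref{eq:preceq} to manufacture two genuine inclusions, namely $\mc{D}_\uparrow\sw(\mc{D}'_\uparrow)_\uparrow=\mc{D}'_\uparrow$ (whence, by the sub-lemma applied to the equally distributed downsets, $\mc{D}_\uparrow=\mc{D}'_\uparrow$) and then $\mc{D}'=\mc{D}_\uparrow\cap\mc{D}'\sw\mc{D}$, to which the sub-lemma applies a second time. Both arguments exploit the equal marginals in an essential and parallel way --- you through $\E\Phi(X)=\E\Phi(Y)$, the paper through the probability swap --- and both must work around the failure of compatibility of $\preceq$ noted in Remark \ref{r:preceq_not_compatible}. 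Your version is more modular and somewhat shorter: the sub-lemma is a reusable statement about stochastic domination of random compact sets by inclusion (it could equally be proved by the paper's countable-base technique), and the structural step of passing to downsets cleanly separates the order-theoretic content of $\preceq$ from the probabilistic content. The measurability and compactness inputs you invoke (Lemma \ref{l:relcomp_uparrow} for $A_\uparrow\in\mc{K}(\Pi^\uparrow)$, Lemma \ref{l:meas_Auparrow} for continuity of $A\mapsto A_\uparrow$, and the Lipschitz bound $|d_\Pi(f,K)-d_\Pi(f,K')|\le d_{\mc{K}(\Pi)}(K,K')$ making $\Phi$ Borel) are all available in the paper and do not depend on this lemma, so there is no circularity.
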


\begin{proof}
By Proposition \ref{p:J1M1} the metric space $\Pi^\uparrow$ is separable, 
which implies that its topology has a countable base:
there exists a family $(U_i)_{i\in\N}$ of non-empty open subsets of $\Pi^\uparrow$
such that any open subset $O\sw\Pi$ can be written as $O=\cup_{i\in I} U_i$ for some $I\sw \N$.

Assume the conditions of the lemma on $\mc{D},\mc{D}'$.
The proof comes in two parts, corresponding respectively to the inequalities $\P[\mc{D}'\sc \mc{D}\neq\emptyset]>0$ and $\P[\mc{D}\sc \mc{D}'\neq\emptyset]>0$, each of which will be shown to be impossible through an argument by contradiction.

\textbf{Part 1.} Suppose $\P[\mc{D}'\sc \mc{D}\neq\emptyset]>0$.
The reader may wish to glance at Remark \ref{r:preceq_random_explanation},
immediately below the present proof,
for a toy example to illustrate our strategy here.
For $A\sw\Pi$, we write 
\begin{align}
A^\circ &= \{b\in\Pi^\uparrow\-\text{there exists }a\in A\text{ such that }a\sw b\}. \label{eq:project_up} 
\end{align}
On the event that $\{\mc{D}'\sc \mc{D}\}$,
let $d'\in \mc{D}'\sc \mc{D}$ and let $\mc{B}_\epsilon(d')$ be the open ball in $\Pi^\uparrow$ of radius $\epsilon$ about $d'$.
We will now show that, almost surely, $\mc{B}_\epsilon(d')^\circ\cap \mc{D}$ is empty, for sufficiently small $\epsilon>0$.
Suppose that $\mc{B}_\epsilon(d')^\circ\cap \mc{D}\neq\emptyset$ for all $\epsilon>0$.
Then, taking $\epsilon=1/n$, we have sequences $f_n\in\mc{B}_{1/n}(d')$ and $g_n\in \mc{B}_{1/n}(d')^\circ\cap \mc{D}$, 
with $f_n\sw g_n$.
By compactness of $\mc{D}$ we may pass to subsequence and assume convergence $f_n\to d'\in \mc{D}'$ and $g_n\to d\in \mc{D}$.
By Lemma \ref{l:appdx_1_sw_limits} we then have $d'\sw d$, so $d'\in\mc{D}_\uparrow$.
We have $\P[\mc{D}\preceq\mc{D}']=1$ upon which event $\mc{D}_\uparrow\cap\mc{D}'\sw\mc{D}$, 
so $d'\in\mc{D}$ which is a contradiction.
Thus, almost surely, for some (random) $\epsilon>0$, we have $\mc{B}_\epsilon(d')^\circ\cap \mc{D}=\emptyset$.
Clearly also $d'\in\mc{B}_\epsilon(d')$.


Let 
\begin{equation*}
O=
\begin{cases}
\mc{B}_\epsilon(d') & \text{ on the event that } \mc{D}'\sc \mc{D}\neq\emptyset \\
\emptyset & \text{ otherwise}.
\end{cases}
\end{equation*}
From the previous paragraph have that $O^\circ\cap \mc{D}=\emptyset$ and with positive probability $d'\in O$.
Since $O$ is open, almost surely we may write  $O=\cup_{i\in I} U_i$ for some random $I\sw\N$.
The set $I$ is non-empty with positive probability, 
hence there is some deterministic $i\in I$ such that with positive probability $d'\in U_i\sw O$.
Let us write $U=U_i$ for such an $i$. 

On the event that $d'\in U\sw O$ we have that $U^\circ\sw O^\circ$, 
which implies that $U^\circ\cap \mc{D}=\emptyset$ (because $O^\circ\cap\mc{D}=\emptyset$)
and $U^\circ\cap \mc{D}'\neq\emptyset$ (because it contains $d'$).
Hence the event $\{U^\circ\cap \mc{D}'\neq\emptyset\text{ and }U^\circ\cap \mc{D}=\emptyset\}$ has positive probability.
We thus have
\begin{align}
0<
\P\l[U^\circ\cap \mc{D}'\neq\emptyset\text{ and }U^\circ\cap \mc{D}=\emptyset\r] \notag
&=\P\l[U^\circ\cap \mc{D}'\neq\emptyset\r]-\P\l[U^\circ\cap \mc{D}'\neq \emptyset\text{ and }U^\circ\cap \mc{D}\neq \emptyset\r] \notag \\
&=\P\l[U^\circ\cap \mc{D}\neq\emptyset\r]-\P\l[U^\circ\cap\mc{D}'\neq \emptyset\text{ and }U^\circ\cap \mc{D}\neq \emptyset\r] \notag \\
&= \P\l[U^\circ\cap \mc{D}\neq\emptyset\text{ and }U^\circ\cap \mc{D}'=\emptyset\r]. \label{eq:U_swap}
\end{align}
The second line of \eqref{eq:U_swap} follows because $\mc{D}$ and $\mc{D}'$ have the same marginal distribution, and the other steps are elementary.

Consider when the event $\{U^\circ\cap \mc{D}\neq\emptyset\text{ and }U^\circ\cap \mc{D}'=\emptyset\}$ occurs,
which by \eqref{eq:U_swap} has positive probability.
Then we have $h\in U^\circ\cap \mc{D}$, 
but $\P[\mc{D}\preceq\mc{D}']=1$ upon which event we have $\mc{D}\sw\mc{D}'_\uparrow$, 
hence there exists $h'\in \mc{D}'$ such that $h\sw h'$.
By \eqref{eq:project_up} we have that $h'\in U^\circ$,
which is a contradiction to $U^\circ\cap \mc{D}'=\emptyset$.
Hence in fact $\P[\mc{D}'\sc \mc{D}\neq\emptyset]=0$, as required.

\textbf{Part 2.} Suppose $\P[D\sc D'\neq\emptyset]>0$.
The argument is similar to Case 1 but somewhat simpler, and we will make use of Case 1 within it.
Note that we should expect an asymmetric argument due to the parity inherent in $\P[\mc{D}\preceq\mc{D}']=1$.
To make the comparison clear we will recycle much of our notation.

On the event $\mc{D}\sc \mc{D}'\neq\emptyset$, take $d\in \mc{D}\sc \mc{D}'$.
Suppose that $B_{\epsilon}(d)\cap \mc{D}'\neq\emptyset$ for all $\epsilon>0$.
Taking $\epsilon=1/n$, there exists $f_n\in \mc{D}'\cap B_{1/n}(d)$.
By compactness we may pass to a subsequence and assume convergence $f_n\to f\in \mc{D}'$. 
This implies $f=d$, which is a contradiction since $f\in\mc{D}'$.
Hence there exists a random $\epsilon>0$ such that
$B_{\epsilon}(d)\cap \mc{D}'=\emptyset$.

Let $O$ be equal to $B_{\epsilon}(d)$ on the event $\{\mc{D}\sc \mc{D}'\neq\emptyset\}$ and $O=\emptyset$ otherwise.
Thus $O\cap \mc{D}'=\emptyset$ and with positive probability $d\in O$.
By the same argument as in Part 1, there exists deterministic $i\in\N$ such that with positive probability $d\in U_i\sw O$.
Thus, setting $U=U_i$, we have that with positive probability $d\in U\cap (\mc{D}\sc \mc{D}')$. 
We have $\P[\mc{D}\preceq\mc{D}']=1$ upon which event $\mc{D}\sw\mc{D}'_\uparrow$.
Thus, when $d\in U\cap (\mc{D}\sc \mc{D}')$ there exists $d'\in \mc{D}'$ such that $d\sw d'$, implying that both
$U^\circ\cap (\mc{D}\sc \mc{D}')\neq \emptyset$ and $U^\circ\cap \mc{D}'\neq\emptyset$.
From Part 1 we have that almost surely $\mc{D}'\sw \mc{D}$.
Hence with positive probability we have
both $U^\circ\cap (\mc{D}\sc \mc{D}')\neq \emptyset$ and $U^\circ\cap (\mc{D}'\cap \mc{D})\neq\emptyset$.
Thus, noting that $\mc{D}=(\mc{D}\cap \mc{D}')\cup(\mc{D}\sc \mc{D}')$,
\begin{align*}
0 
< \P\l[U^\circ\cap(\mc{D}\cap \mc{D}')\neq\emptyset\r] 
&< \P\l[U^\circ\cap \mc{D}\neq\emptyset\r] \\
&= \P\l[U^\circ\cap \mc{D}'\neq\emptyset\r].
\end{align*}
Here, the second line follows because $\mc{D}$ and $\mc{D}'$ have identical distribution and $U$ is deterministic.
It follows that $\P[U^\circ \cap (\mc{D}'\sc \mc{D})\neq\emptyset]>0$, 
but from Part 1 we know that $\P[\mc{D}'\sc \mc{D}=\emptyset]=1$, so we have reached a contradiction.
This completes the proof.
\end{proof}

\begin{remark}
\label{r:preceq_random_explanation}
The proof of Lemma \ref{l:preceq_antisym} is technical but it has a simple idea at its heart.
Consider a toy example: take two uniform random variables $X,X'$ on $S=\{1,2,3,4,5,6\}$
and suppose that $X$ and $X'$ are coupled in a way that satisfies $\P[X\leq X']=1$.
We aim to show that $\P[X=X']=1$.
For $k\in S$,
\begin{align}
\P[X=k,X'\neq k]
&=\P[X=k]-\P[X=k,X'=k] \notag\\
&=\P[X'=k]-\P[X'=k,X=k] \notag\\
&=\P[X'=k,X\neq k]. \label{eq:toy_swap}
\end{align}
Note the similarity of \eqref{eq:toy_swap} to \eqref{eq:U_swap}.
Taking $k=1$ and using that $\P[X\leq X']=1$ we obtain $\P[X=1, X'>1, X\leq X']=\P[X'=1, X>1, X\leq X']$
which becomes $\P[X=1,X'>1]=0$, thus $\P[X=1]=\P[X=1,X'=1]$.
This is clearly a step in the right direction and is
in similar style to the more complex reasoning involving $\preceq$ below \eqref{eq:U_swap}.
The finiteness of $S$ is also helpful here
whereas in Lemma \ref{l:preceq_random} we must rely on second countability of $\Pi^\uparrow$.
We leave it for the reader to complete this toy example and deduce that $\P[X=X']=1$.
\end{remark}

\begin{lemma}
\label{l:preceq_random}
The relation $\preceqd$ is a partial order on the space of $\mc{K}(\Pi^\uparrow)$ valued random variables.
\end{lemma}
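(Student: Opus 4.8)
The plan is to check the three axioms of a partial order for $\preceqd$ on the set of laws of $\mc{K}(\Pi^\uparrow)$-valued random variables, reducing everything to the deterministic facts about $\preceq$ in Lemma \ref{l:preceq_deterministic}, the measurability statement of Lemma \ref{l:preceq_meas_simplex}, and the already-established Lemma \ref{l:preceq_antisym}. Reflexivity is immediate: the diagonal coupling $(\mc{A},\mc{A})$ satisfies $\P[\mc{A}\preceq\mc{A}]=1$ since $\preceq$ is reflexive.

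For transitivity I would take couplings $(\mc{A}_1,\mc{B}_1)$ with $\P[\mc{A}_1\preceq\mc{B}_1]=1$ and $(\mc{B}_2,\mc{C}_2)$ with $\P[\mc{B}_2\preceq\mc{C}_2]=1$, where $\mc{A}_1\eqd\mc{A}$, $\mc{B}_1\eqd\mc{B}_2\eqd\mc{B}$, $\mc{C}_2\eqd\mc{C}$. Using that $\mc{K}(\Pi^\uparrow)$ is Polish (Section \ref{sec:pi}), a standard gluing lemma for couplings produces one probability space carrying $(\mc{A}_1,\mc{B},\mc{C})$ with $(\mc{A}_1,\mc{B})\eqd(\mc{A}_1,\mc{B}_1)$ and $(\mc{B},\mc{C})\eqd(\mc{B}_2,\mc{C}_2)$; the relevant sets are measurable by Lemma \ref{l:preceq_meas_simplex}. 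On this space $\P[\mc{A}_1\preceq\mc{B}\preceq\mc{C}]=1$, so transitivity of the deterministic relation $\preceq$ (Lemma \ref{l:preceq_deterministic}) gives $\P[\mc{A}_1\preceq\mc{C}]=1$, which is a coupling witnessing $\mc{A}\preceqd\mc{C}$.

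Antisymmetry is the substantive step, and it is where Lemma \ref{l:preceq_antisym} enters. Given $\mc{A}\preceqd\mc{B}$ and $\mc{B}\preceqd\mc{A}$, I would glue the witnessing couplings along their common $\mc{B}$-marginal to obtain a space carrying $(\mc{A}_1,\mc{B},\mc{A}_2)$ with $(\mc{A}_1,\mc{B})$ a copy of the first coupling and $(\mc{B},\mc{A}_2)$ a copy of the second, so that $\P[\mc{A}_1\preceq\mc{B}\preceq\mc{A}_2]=1$ and $\mc{A}_1\eqd\mc{A}_2\eqd\mc{A}$. Transitivity of $\preceq$ gives $\P[\mc{A}_1\preceq\mc{A}_2]=1$, so Lemma \ref{l:preceq_antisym} (with $\mc{D}=\mc{A}_1$, $\mc{D}'=\mc{A}_2$) yields $\P[\mc{A}_1=\mc{A}_2]=1$; then $\mc{A}_1\preceq\mc{B}\preceq\mc{A}_1$ almost surely, and antisymmetry of the deterministic $\preceq$ forces $\P[\mc{A}_1=\mc{B}]=1$, hence $\mc{A}\eqd\mc{A}_1=\mc{B}$, i.e.\ $\mc{A}\eqd\mc{B}$.

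The main obstacle has essentially been front-loaded into Lemma \ref{l:preceq_antisym}; what remains to be careful about is the gluing lemma itself and the bookkeeping of measurability on the glued product space. The gluing step needs exactly two ingredients we have: Polishness of $\mc{K}(\Pi^\uparrow)$ (so regular conditional distributions exist) and the Borel measurability of $\{A\preceq B\}\sw\mc{K}(\Pi^\uparrow)^2$ from Lemma \ref{l:preceq_meas_simplex}, from which the events $\{\mc{A}_1\preceq\mc{B}\}$, $\{\mc{B}\preceq\mc{A}_2\}$, $\{\mc{A}_1\preceq\mc{A}_2\}$ on the product space are Borel, being preimages under continuous coordinate projections intersected appropriately. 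With these in place the argument is routine.
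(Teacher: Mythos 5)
Your proof is correct and follows essentially the same route as the paper: reflexivity and transitivity reduce to the deterministic Lemma \ref{l:preceq_deterministic} via a gluing of couplings, and antisymmetry is delegated to Lemma \ref{l:preceq_antisym}. The only (immaterial) difference is that in the antisymmetry step you glue along the common $\mc{B}$-marginal and apply Lemma \ref{l:preceq_antisym} to the two copies of $\mc{A}$, whereas the paper glues along the $\mc{A}$-marginal and applies it to the two copies of $\mc{B}$; your explicit invocation of Polishness and Lemma \ref{l:preceq_meas_simplex} to justify the gluing is a welcome bit of care that the paper leaves implicit.
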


\begin{proof}
We will check that $\preceqd$ on $\mc{P}(\mc{K}(\Pi^\uparrow))$ is reflexive, transitive and antisymmetric, in turn.
Lemma \ref{l:preceq_deterministic} has already shown that these properties hold in the deterministic case
i.e.~$\preceqd$ is a partial order on $\mathscr{W}_{\det}$.

By reflexivity of $\preceq$ on $\mc{W}_{\det}$ we have that $\P[\mc{A}\preceq\mc{A}]=1$ for any $\mc{K}(\Pi^\uparrow)$ valued random variable $\mc{A}$, so $\preceqd$ is reflexive.
For transitivity, let us assume that $\mc{A},\mc{B},\mc{C}$ are $\mc{K}(\Pi^\uparrow)$ valued random variables,
and that we have couplings $(\mc{A},\mc{B})$ and $(\mc{B},\mc{C})$ such that $\P[\mc{A}\preceq\mc{B}]=1$ and (on a possibly different probability space) $\P[\mc{B}\preceq\mc{C}]=1$. 
It follows that there exists a joint coupling $(\mc{A},\mc{B},\mc{C})$ on which
$\P[\mc{A}\preceq\mc{B}\text{ and }\mc{B}\preceq\mc{C}]=1$.
On the event $\{\mc{A}\preceq\mc{B}\text{ and }\mc{B}\preceq\mc{C}\}$ transitivity of $\preceq$ on $\mc{W}_{\det}$ implies that $\mc{A}\preceq\mc{C}$, so we obtain $\P[\mc{A}\preceq\mc{C}]=1$, as required.

It remains to show antisymmetry.
Suppose that $\mc{A},\mc{A}',\mc{B},\mc{B}'$ are $\mc{K}(\Pi^\uparrow)$ valued random variables
such that $\P[\mc{A}\preceq\mc{B}]=1$ and $\P[\mc{B}'\preceq\mc{A}']=1$, 
where $\mc{A}$ and $\mc{A}'$ have the same marginal distribution 
and $\mc{B}$ and $\mc{B}'$ have the same marginal distribution.
We must show that there exists a coupling under which $\P[\mc{A}=\mc{B}]=1$.
Since $\mc{A}$ and $\mc{A}'$ have the same marginal distribution, 
it follows that exists a coupling $(\mc{A},\mc{A}',\mc{B},\mc{B}')$ such that
$\P[\mc{A}=\mc{A}',\; \mc{A}\preceq\mc{B},\; \mc{B}'\preceq\mc{A}']=1$.
By transitivity of $\preceq$  on $\mathscr{W}_{\det}$ this means that $\P[\mc{B}'\preceq\mc{B}]=1$.
By Lemma \ref{l:preceq_antisym} we have $\P[\mc{B}=\mc{B}']=1$ which means 
$\P[\mc{A}\preceq\mc{B}, \mc{B}\preceq\mc{A}]=1$ and by antisymmetry of $\preceq$ on $\mc{W}_{\det}$ we 
obtain that $\P[\mc{A}=\mc{B}]=1$, as required.
\end{proof}

\begin{lemma}
\label{l:det_vs_random_webs_flows}
Let $\mc{A}$ be a weave.
Then $\mc{A}$ is a web if and only if $\P[\mc{A}$ is a minimal element of $\mathscr{W}_{\det}]=1$.
Similarly, $\mc{A}$ is a flow if and only if $\P[\mc{A}$ is a maximal element of $\mathscr{W}_{\det}]=1$.
\end{lemma}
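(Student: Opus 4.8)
The plan is to reduce the statement to the deterministic machinery of Section~\ref{sec:weaves_det} together with the measurability results of Section~\ref{sec:meas_2} and the antisymmetry lemma~\ref{l:preceq_antisym}. I will spell out the web case in detail; the flow case is entirely symmetric, replacing $\web$ by $\flow$, Lemma~\ref{l:web_minimal} by Lemma~\ref{l:flow_maximal}, Lemma~\ref{l:meas_web_op} by Lemma~\ref{l:flow_map_cts}, and ``minimal'' by ``maximal'' throughout. First I would record a deterministic characterization: for $A\in\mathscr{W}_{\det}$, the weave $A$ is a minimal element of $(\mathscr{W}_{\det},\preceq)$ if and only if $A=\web(A)$. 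Indeed, Lemma~\ref{l:web_is_weave} gives $\web(A)\in\mathscr{W}_{\det}$ and Lemma~\ref{l:web_minimal} gives $\web(A)\preceq A$; if $A$ is minimal this forces $\web(A)=A$, while conversely if $A=\web(A)$ then $A$ is minimal by the minimality assertion of Lemma~\ref{l:web_minimal}. Since $\mathscr{W}_{\det}$ is a measurable subset of $\mc{K}(\Pi^\uparrow)$ (Lemma~\ref{l:meas_Wdet}) and $\web\colon\mathscr{W}_{\det}\to\mathscr{W}_{\det}$ is measurable (Lemma~\ref{l:meas_web_op}), it follows that $\{A\in\mathscr{W}_{\det}\-A\text{ is minimal in }(\mathscr{W}_{\det},\preceq)\}=\{A\in\mathscr{W}_{\det}\-A=\web(A)\}$ is a measurable subset of $\mc{K}(\Pi^\uparrow)$; in particular the event in the statement is well defined.

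For the forwards implication, suppose $\mc{A}$ is a web. Since $\mc{A}$ is almost surely a deterministic weave, $\web(\mc{A})$ is almost surely defined, is a deterministic weave (Lemma~\ref{l:web_is_weave}), and is a $\mc{K}(\Pi^\uparrow)$-valued random variable on the same probability space as $\mc{A}$ (Lemma~\ref{l:meas_web_op}); hence $\web(\mc{A})$ is a weave. Lemma~\ref{l:web_minimal} gives $\P[\web(\mc{A})\preceq\mc{A}]=1$, so $\web(\mc{A})\preceqd\mc{A}$. As $\preceqd$ is a partial order (Lemma~\ref{l:preceq_random}) and $\mc{A}$ is minimal in $(\mathscr{W},\preceqd)$, this forces $\web(\mc{A})\eqd\mc{A}$, so $\web(\mc{A})$ and $\mc{A}$ have the same marginal distribution. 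Applying Lemma~\ref{l:preceq_antisym} to the coupling $(\web(\mc{A}),\mc{A})$ yields $\P[\web(\mc{A})=\mc{A}]=1$. Since $\web(\mc{A})$ is almost surely a minimal element of $(\mathscr{W}_{\det},\preceq)$ (Lemma~\ref{l:web_minimal}), it follows that $\mc{A}$ is almost surely minimal in $(\mathscr{W}_{\det},\preceq)$.

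For the reverse implication, suppose $\P[\mc{A}\text{ is minimal in }(\mathscr{W}_{\det},\preceq)]=1$, and let $\mc{B}$ be any weave with $\mc{B}\preceqd\mc{A}$. Fix a coupling of $\mc{B}$ and $\mc{A}$ with $\P[\mc{B}\preceq\mc{A}]=1$; the marginal law of $\mc{A}$ is unchanged by the coupling, so almost surely $\mc{A}$ is minimal in $(\mathscr{W}_{\det},\preceq)$, while almost surely $\mc{B}\in\mathscr{W}_{\det}$ (as $\mc{B}$ is a weave) and $\mc{B}\preceq\mc{A}$. By definition of minimality in the poset $(\mathscr{W}_{\det},\preceq)$ this gives $\mc{B}=\mc{A}$ almost surely, hence $\P[\mc{A}\preceq\mc{B}]=1$ and so $\mc{A}\preceqd\mc{B}$. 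Since $\mc{B}$ was an arbitrary weave below $\mc{A}$, this shows $\mc{A}$ is a minimal element of $(\mathscr{W},\preceqd)$, i.e.\ a web.

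The flow case follows verbatim from the symmetric ingredients indicated at the start, using Lemma~\ref{l:flow_is_weave}, Lemma~\ref{l:flow_maximal} and Lemma~\ref{l:flow_map_cts} in place of their web counterparts, and applying Lemma~\ref{l:preceq_antisym} to the coupling $(\mc{A},\flow(\mc{A}))$. I do not expect a genuine obstacle in this argument; the only point demanding care is the bookkeeping around Lemma~\ref{l:preceq_antisym}, namely keeping track of the fact that $\web(\mc{A})$ (resp.\ $\flow(\mc{A})$) is a bona fide random variable realised on the same probability space as $\mc{A}$ with the same marginal law, so that the coupling and the matching-marginals hypothesis of that lemma are both in force.
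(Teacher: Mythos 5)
Your proof is correct and takes essentially the same route as the paper's: both directions reduce to the facts that $\web(\mc{A})$ (resp.\ $\flow(\mc{A})$) is a weave lying below (resp.\ above) $\mc{A}$ and is pointwise minimal (resp.\ maximal) in $(\mathscr{W}_{\det},\preceq)$, combined with the measurability lemmas and the antisymmetry Lemma~\ref{l:preceq_antisym}. The only difference is cosmetic: in the direction ``web $\Rightarrow$ almost surely minimal'' you apply Lemma~\ref{l:preceq_antisym} directly to the coupling $(\web(\mc{A}),\mc{A})$, whereas the paper packages the same content into its intermediate statement about couplings and an auxiliary random variable $\mc{A}'$; your version is, if anything, slightly more direct.
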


\begin{proof}
Let us first give the argument for webs.
Let $\mc{A}$ be a weave. 
We must show that $\mc{A}$ is a web if and only if $\P[\mc{A}$ is a minimal element of $\mc{W}_{\det}]=1$.
It is trivial to see that almost sure pervasiveness and the non-crossing property pass from either side of the `if and only if' statement to the other side, so it remains only to handle minimality.
To be explicit we must show that for a random weave $\mc{A}$ the following statements are equivalent:
\begin{enumerate}
\item If $\mc{B}$ is a weave and there exists a coupling between $\mc{A}$ and $\mc{B}$ such that 
$\P[\mc{A}\preceq\mc{B}]=1$ or $\P[\mc{B}\preceq\mc{A}]=1$, then $\P[\mc{A}\preceq\mc{B}]=1$.
\item $\P[\mc{A}\text{ is a minimal element of }\mc{W}_{\det}]=1$.
\end{enumerate}
By the definition of $\preceqd$ from below \eqref{eq:preceq},
the first statement is precisely the claim that the law of $\mc{A}$ is minimal in $\mc{P}(\mc{K}(\Pi^\uparrow))$.

Let us first show that (2) implies (1). If $\mc{A}$ is almost surely minimal in $\mc{W}_{\det}$ then
for any coupling of $\mc{A}$ to another weave $\mc{B}$, 
$\{\mc{A}\text{ and }\mc{B}\text{ are comparable}\}\sw\{\mc{A}\preceq\mc{B}\}$,
where both the left and right hand side are events.
Thus (1) holds.

Conversely, let us assume (1).
Let
$$
\mc{A}'=
\begin{cases}
\mc{A} & \text{ on the event }\{\web(\mc{A})\prec\mc{A}\}, \\
\web(\mc{A}) & \text{ on the event }\{\mc{A}\preceq\web(\mc{A})\}.
\end{cases}
$$
Lemmas \ref{l:meas_web_op} and \ref{l:preceq_random} combine to show that $\mc{A}'$ is a random variable.
By Lemma \ref{l:web_is_weave} we have $\P[\web(\mc{A})\preceq \mc{A}']=1$, which by (1) implies that $\P[\mc{A}'\preceq \web(\mc{A})]=1$,
so in fact $\P[\mc{A}\preceq\web(\mc{A})]=1$.
By Lemma \ref{l:web_is_weave} we thus have $\P[\mc{A}=\web(\mc{A})]=1$,
from which Lemma \ref{l:web_minimal} gives (2).

In the case of flows we may use a similar argument,
reversing the direction of the sign of $\preceq$.
Lemma \ref{l:web_is_weave} is replaced by Lemma \ref{l:flow_is_weave}, and
Lemma \ref{l:web_minimal} is replaced by Lemma \ref{l:flow_maximal}.
We leave the details to the reader.
\end{proof}


\begin{remark}
Suppose that $\mc{A}$ is a weave.
Our results in Sections \ref{sec:meas_2} and \ref{sec:preceq_random} justify that 
$\web(\mc{A})$ and $\flow(\mc{A})$ are random variables, 
and that if $\mc{B}$ some other random weave, coupled to $\mc{A}$ then $\{\mc{A}\preceq\mc{B}\}$ is an event.
Moreover, $\preceqd$ defines a partial order on the laws of random weaves,
via the relationship 
$\mc{A}\preceqd\mc{B}$ 
if and only if 
there exists a coupling of $\mc{A}$ and $\mc{B}$ such that
$\P[\mc{A}\preceq\mc{B}]=1$.
We will use these results freely from now on 
and will not repeatedly cite them when used within the proofs.
\end{remark}

\subsection{Proof of Theorem \ref{t:weave_structure}}
\label{sec:proof_t_weave_structure}

We are now ready 
to establish our main results concerning random weaves,
which henceforth are simply referred to as weaves.
These results were stated in Section \ref{sec:results_weaves}
and the proofs are spread across Sections \ref{sec:proof_t_weave_structure}--\ref{sec:proof_t_cont_weaves}.
The statements of Theorems \ref{t:weave_structure}-\ref{t:weave_conv} consist of several (numbered) parts.
We will use bold text (see e.g.~the next paragraph) to track when each part is addressed.
Most of our work in Section \ref{sec:weaves_det} leads towards these proofs.
We begin with Theorem \ref{t:weave_structure}, of which
we prove the four statements of the theorem in turn.
Let $\mc{A}$ be a weave.

\textbf{Part 1.}
Let us first assume (a), that $\mc{A}$ is a web.
By Lemma \ref{l:det_vs_random_webs_flows} 
$\mc{A}$ is almost surely a minimal element of $\mathscr{W}_{\det}$.
By Lemma \ref{l:web_minimal} we have $\web(\mc{A})\preceq \mc{A}$,
from which minimality implies that $\mc{A}=\web(\mc{A})$, which gives (b).
Conversely, let us assume (b), that $\mc{A}\eqas\web(\mc{A})$.
Lemma \ref{l:web_minimal} thus gives that $\mc{A}$ is almost surely a minimal element of $\mathscr{W}_{\det}$,
from which Lemma \ref{l:det_vs_random_webs_flows} gives that $\mc{A}$ is a web.
Thus $(a)\iff(b)$.

\textbf{Part 2.}
Again, let $\mc{A}$ be a weave.
We will show that $(a)\ra(b)\ra(c)\ra(a)$.
Let us first assume (a), that $\mc{A}$ is a flow.
By Lemma \ref{l:det_vs_random_webs_flows} 
$\mc{A}$ is almost surely a maximal element of $\mathscr{W}_{\det}$.
By Lemma \ref{l:flow_maximal} we have $\mc{A}\preceq\flow(\mc{A})$,
from which maximality implies that $\mc{A}=\flow(\mc{A})$, giving (b).
Now let us assume (b), that $\mc{A}\eqas\flow(\mc{A})$.
It is immediate from \eqref{eq:flow_op} that $\flow(\mc{A})\sw\Pi^\updownarrow$,
so we have (c).

Lastly, let us assume (c), that $\mc{A}\sw\Pi^\updownarrow$.
Suppose that $\mc{B}$ is a weave with a coupling to $\mc{A}$ such that $\P[\mc{A}\preceq\mc{B}]=1$.
To see that $\mc{A}$ is a flow we must show that this implies $\P[\mc{A}=\mc{B}]=1$.
Using that $\mc{A}\preceq\mc{B}$ almost surely,
it follows from \eqref{eq:preceq} that almost surely
$\mc{A}_\uparrow\cap\mc{B}\sw\mc{A}\sw\mc{B}_\uparrow$.
As $\mc{A}\in\Pi^\updownarrow$ we thus have that almost surely $\mc{A}\sw\mc{B}$.
We require the reverse inclusion, so let $f\in\mc{B}$.
On the almost sure event that $\mc{A}\preceq\mc{B}$,
by Lemma \ref{l:preceq_noncr} we have that $f$ does not cross $\mc{A}$,
so by Theorem \ref{t:path_extension} there exists $f'\in\flow(\mc{A})$
such that $f\sw f'$.
Lemma \ref{l:biinf_noncr} gives that $f'\in\mc{A}$,
which implies that $f\in\mc{A}_\uparrow\cap\mc{B}$.
Thus $\mc{A}\eqas\mc{B}$, as required.

\textbf{Part 3.}
Lemmas \ref{l:flow_maximal} and \ref{l:web_minimal} give that 
$\P[\web(\mc{A})\preceq \mc{A}\preceq\flow(\mc{A})]=1$.

\textbf{Part 4.}
The existence claim is established by part 3 of the present proof.
It remains to prove the uniqueness claim,
which we will give in turn for webs and then flows.

Let $\mc{W},\mc{W}'$ be webs
and suppose that
$\mc{W}\preceqd\mc{A}$ and $\mc{W}'\preceqd\mc{A}$.
Then there exists (pairwise) couplings such that 
$\P[\mc{W}\preceq\mc{A}]=1$ and $\P[\mc{W}'\preceq\mc{A}]=1$.
We seek to show that $\mc{W}\eqd \mc{W}'$.
It follows that there exists a three-way coupling of $\mc{W},\mc{W}'$ and $\mc{A}$ such that
$\P[\mc{W}\preceq\mc{A}\text{ and }\mc{W}'\preceq\mc{A}]=1$.
By Lemma \ref{l:preceq_noncr} we have that $\mc{W}\cup\mc{A}$ is almost surely non-crossing,
and $\mc{W}'\cup\mc{A}$ is almost surely non-crossing.
By Lemma \ref{l:noncr_transitive_weave} we have that $\mc{W}\cup\mc{W}'$ is almost surely non-crossing.
By Lemma \ref{l:noncr_same_web} we thus have $\web(\mc{W})\eqas\web(\mc{W}')$.
By part 2 of the present proof we thus have $\mc{W}\stackrel{a.s}{=}\mc{W}'$,
hence in particular $\mc{W}$ and $\mc{W}'$ have the same marginal distribution, as required.

It remains to prove a corresponding statement for flows.
Let $\mc{F},\mc{F}'$ be flows
and suppose that
$\mc{A}\preceqd\mc{F}$ and $\mc{A}\preceqd\mc{F}'$.
Then, as above, 
there exists a three-way coupling of $\mc{F},\mc{F}'$ and $\mc{A}$ such that
$\P[\mc{A}\preceq\mc{F}\text{ and }\mc{A}\preceq\mc{F}']=1$.
By the same argument as above, 
again using Lemmas \ref{l:preceq_noncr} and \ref{l:noncr_transitive_weave},
with Lemma \ref{l:noncr_same_flow} in place of Lemma \ref{l:noncr_same_web},
and using part 1 of the present proof in place of part 2,
we obtain that $\mc{F}\eqas\mc{F}'$.
Hence in particular $\mc{F}$ and $\mc{F}'$ have the same marginal distribution, as required.
This completes the proof.

\subsection{Proof of Theorem \ref{t:weaves_characterization}}
\label{sec:proof_l_weaves_equiv}

We require some preparatory lemmas before giving the proof of Theorem \ref{t:weaves_characterization}.

The next lemma gives us the ability use a 
\textit{deterministic} non-ramified dense set of space-time points with (random) weaves.
It is a straightforward consequence of Lemma \ref{l:ramification_meas_zero},
delayed until now because when we stated Lemma \ref{l:ramification_meas_zero} we were focused on deterministic weaves.

\begin{lemma}
\label{l:ramification_meas_zero_det}
Let $\mc{A}$ be a weave.
Then the set $\{z\in\Rc\-\P[z\text{ is ramified in }\mc{A}]>0\}$ has zero Lebesgue measure.
\end{lemma}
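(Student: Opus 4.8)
The plan is to promote the deterministic statement of Lemma \ref{l:ramification_meas_zero} to the random setting using the joint measurability supplied by Lemma \ref{l:biinf_ramification_meas_zero}, via Tonelli's theorem.

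First I would record that by Lemma \ref{l:biinf_ramification_meas_zero} the map $(\mc{A},z)\mapsto\1\{z\text{ is ramified in }\mc{A}\}$ is measurable from $\mc{K}(\Pi)\times\Rc$ to $\{0,1\}$. Consequently the function $\phi(z)=\P[z\text{ is ramified in }\mc{A}]=\E\big[\1\{z\text{ is ramified in }\mc{A}\}\big]$ is a measurable function of $z\in\Rc$, so that the set $R=\{z\in\Rc\-\phi(z)>0\}$ is a measurable subset of $\Rc$, as it must be for the statement to make sense.

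Next, let $\lambda$ denote the two-dimensional Lebesgue measure on $\Rc$ fixed in Section \ref{sec:pi} (with zero mass at $(\ast,\pm\infty)$), and let $\ram(\mc{A})\sw\Rc$ be the (random) set of ramification points of $\mc{A}$, which is a measurable subset of $\Rc$ by Lemma \ref{l:biinf_ramification_meas_zero}. Applying Tonelli's theorem to the nonnegative jointly measurable integrand gives
\[
\int_{\Rc}\phi(z)\,\lambda(dz)
=\int_{\Rc}\E\big[\1\{z\text{ is ramified in }\mc{A}\}\big]\,\lambda(dz)
=\E\Big[\int_{\Rc}\1\{z\text{ is ramified in }\mc{A}\}\,\lambda(dz)\Big]
=\E\big[\lambda(\ram(\mc{A}))\big].
\]
Since $\mc{A}$ is a weave we have $\P[\mc{A}\in\mathscr{W}_{\det}]=1$, and Lemma \ref{l:ramification_meas_zero} states that for every deterministic weave the set of ramification points has zero Lebesgue measure; hence $\lambda(\ram(\mc{A}))=0$ almost surely, and the displayed integral vanishes. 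As $\phi\geq 0$ this forces $\phi=0$ for $\lambda$-almost every $z$, i.e.\ $\lambda(R)=0$, which is exactly the claim.

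There is essentially no obstacle here: the only delicate point is the joint measurability needed both to conclude that $\phi$ and $R$ are measurable and to interchange $\E$ and $\int_{\Rc}$, and this is precisely what Lemma \ref{l:biinf_ramification_meas_zero} provides. The lemma is a routine "random version" of Lemma \ref{l:ramification_meas_zero}, deferred only because the earlier statement was phrased for deterministic weaves.
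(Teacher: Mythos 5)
Your proof is correct and is essentially identical to the paper's: both combine the joint measurability of $(\mc{A},z)\mapsto\1\{z\text{ is ramified in }\mc{A}\}$ from Lemma \ref{l:biinf_ramification_meas_zero} with the deterministic null-set statement of Lemma \ref{l:ramification_meas_zero}, and then interchange $\E$ and $\int_{\Rc}$ (the paper calls this Fubini, you call it Tonelli; for a nonnegative integrand these coincide). Your version is if anything slightly more careful in attributing the measurability claim to the correct lemma.
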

\begin{proof}
Let $\ram(\mc{A})$ denote the set of ramification points of a (deterministic or random) weave $\mc{A}$.
We have shown in Lemma \ref{l:ramification_meas_zero}
that the map $(\mc{A},z)\mapsto\1\{z\in\ram(\mc{A})\}$ is measurable from $\mathscr{W}_{\det}\times\Rc\to\{0,1\}$,
and that $\ram(\mc{A})$ is Lebesgue null for all deterministic weaves.
By Fubini's theorem, for any weave $\mc{A}$ we have
$
\int_{\Rc} \P[z\in\ram(\mc{A})]\,dz
=\E\big[\int_{\Rc}\1\{z\in\ram(\mc{A})\}\,dz\big]=0
$
and the result follows.
\end{proof}

\begin{lemma}
\label{l:biinf_insertion}
Let $A\sw\Pi^\updownarrow$ be non-crossing and pervasive
and let $D\sw\R^2$ be dense. 
Let $f,h\in A$ with $f\lhd h$ and suppose $t\star\in\R_\mfs$ is such that $f(t\star)<h(t\star)$.
Then for all $\epsilon>0$ there exists $(x,s)\in D$ and $g\in A((x,s))$ such that $f,h\notin A((x,s))$, $f\lhd g$, $g\lhd h$ and $|t-s|<\epsilon$.
Moreover, if $\star=-$ then we may take $s<t$ and if $\star=+$ then we may take $s>t$.
\end{lemma}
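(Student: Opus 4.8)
The plan is to construct an open rectangle $(a,b)\times J\sw\R^2$ that lies strictly between the graphs of $f$ and $h$ and whose time-window $J$ sits strictly on the correct side of $t$; density of $D$ then supplies a point $(x,s)\in D\cap\big((a,b)\times J\big)$, and pervasiveness of $A$ supplies a path $g\in A((x,s))$. The strict separation forces $f,h\notin A((x,s))$ (so $g\neq f$, $g\neq h$), and Lemma \ref{l:lhd_left_right} then delivers $f\lhd g\lhd h$. This mirrors the proof of Lemma \ref{l:pincer_t_plus}, but I would reprove it here because $A$ is not assumed compact and because $g$ is required to pass through a point of $D$.

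I would treat $\star=+$; the case $\star=-$ is identical with right-hand neighbourhoods of $t$ replaced by left-hand ones (and yields $s<t$). Since $f(t+)<h(t+)$ in $\ov\R$, fix finite reals $a<b$ with $f(t+)<a<b<h(t+)$; such reals exist regardless of whether $f(t+)$ or $h(t+)$ is infinite. The sets $[-\infty,a)$ and $(b,\infty]$ are open in $\ov\R$, and for each $\delta>0$ the set $[t+,(t+\delta)+]$ is a neighbourhood of $t+$ in $\ov\R_\mfs$, since it contains the open interval $(t-,(t+\delta)+)$, which in turn contains $t+$ (Lemma \ref{l:Rpm}). So, by continuity of $f$ and $h$ on the split real line (Definition \ref{d:cadlag}), there is $\delta\in(0,\eps)$ with $f(u\star)<a$ and $h(u\star)>b$ for all $u\star\in[t+,(t+\delta)+]$. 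Put $J=(t,t+\delta)$; then for every $s\in J$ we have $s-,s+\in[t+,(t+\delta)+]$, hence $f(s-)\vee f(s+)<a$ and $b<h(s-)\wedge h(s+)$. Finally choose $(x,s)\in D\cap\big((a,b)\times J\big)$ (nonempty as $D$ is dense in $\R^2$); then $x\in(a,b)$, $|t-s|<\delta<\eps$, and $s>t$.

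Since $x\in(a,b)$ while $f(s-)\vee f(s+)<a$ and $h(s-)\wedge h(s+)>b$, we get $x\notin[f(s-),f(s+)]$ and $x\notin[h(s-),h(s+)]$, i.e.\ neither $f$ nor $h$ passes through $(x,s)$, so $f,h\notin A((x,s))$. By pervasiveness pick $g\in A((x,s))$, so $x\in[g(s-),g(s+)]$ and $g\neq f$, $g\neq h$. As $x\in[g(s-),g(s+)]$, at least one of $g(s-),g(s+)$, say $g(s\star')$, satisfies $g(s\star')\geq x>a>f(s\star')$, so $(f(s\star'),s\star')\in L(g)$; since $f,g\in A$ are non-crossing, Lemma \ref{l:lhd_left_right} gives $f\lhd g$. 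Likewise at least one of $g(s-),g(s+)$, say $g(s\star'')$, satisfies $g(s\star'')\leq x<b<h(s\star'')$, so $(g(s\star''),s\star'')\in L(h)$, and Lemma \ref{l:lhd_left_right} gives $g\lhd h$. This establishes all the claimed properties, the ``moreover'' clause included.

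The only real obstacle is the split-real-line bookkeeping: verifying that $[t+,(t+\delta)+]$ (resp.\ $[(t-\delta)-,t-]$) is a neighbourhood of $t+$ (resp.\ $t-$) and that the resulting time-window $J$ lies strictly to the future (resp.\ past) of $t$, which is exactly what yields the side condition on $s$. Passing to finite thresholds $a<b$ at the outset, rather than working with the difference $h(t\star)-f(t\star)$, avoids any complication from infinite path values, and no compactness of $A$ is used anywhere, so the argument applies to the (possibly non-closed) sets $A$ allowed in the statement.
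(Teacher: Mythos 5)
Your argument is correct and follows essentially the same route as the paper's proof: use the right-/left-continuity of $f$ and $h$ on the appropriate side of $t$ to locate a point $(x,s)\in D$ with $f(s-)\vee f(s+)<x<h(s-)\wedge h(s+)$ and $s$ on the required side of $t$, then invoke pervasiveness and Lemma \ref{l:lhd_left_right} to get $f\lhd g\lhd h$. Your version merely spells out the split-real-line neighbourhood bookkeeping that the paper compresses into "by the {\cadlag} property of $f,h$ and denseness of $D$".
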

\begin{proof}
By the {\cadlag} property of $f,h$ and denseness of $D$, there exists $(x,s)\in D$ such that
$|s-t|<\epsilon$, with the desired sign for $t-s$ and with $f(s-)\vee f(s+)<x<h(s-)\wedge h(s+)$.
Since $\mc{A}$ is pervasive there exists $g\in A((x,s))$.
It is immediate that $f,h\notin A((x,s))$,
and that $\{f,g,h\}$ is non-crossing.
By Lemmas \ref{l:lhd_noncr} and \ref{l:lhd_left_right} we have $f\lhd g$ and $g\lhd h$.
\end{proof}

Lemma \ref{l:biinf_insertion} is a technical lemma used in the proof of our next lemma.
Recall that in Lemma \ref{l:ramification_meas_zero} we showed that 
if $\mc{A}\in\mathscr{W}_{\det}$ then $\ram(\mc{A})$,
the set of ramification points of $\mc{A}$,
is a measurable and null subset of $\R^2_c$.
The following lemma is stated as a result for deterministic weaves,
which avoids having to find a suitable state space for random null sets.

\begin{lemma}
\label{l:ramific_flow_web_same}
Let $\mc{A}$ be a deterministic weave. 
Then $\mc{A}$, $\flow(\mc{A})$ and $\web(\mc{A})$
all have the same ramification points.
\end{lemma}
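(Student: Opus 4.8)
The plan is to reduce the statement to three inclusions about an arbitrary deterministic weave and then dispatch them. Writing $\ram(\mc{C})$ for the set of ramification points of a deterministic weave $\mc{C}$, Lemmas~\ref{l:web_minimal} and~\ref{l:flow_maximal} give $\web(\mc{A})\preceq\mc{A}\preceq\flow(\mc{A})$, so by Lemma~\ref{l:preceq_noncr} the sets $\web(\mc{A})\cup\mc{A}$ and $\mc{A}\cup\flow(\mc{A})$ are non-crossing, hence so is $\web(\mc{A})\cup\flow(\mc{A})$ by Lemma~\ref{l:noncr_transitive_weave}, and Lemma~\ref{l:noncr_same_flow} yields $\flow(\web(\mc{A}))=\flow(\mc{A})$. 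It therefore suffices to prove, for every deterministic weave $\mc{B}$, that
\[
\text{(I) }\ram(\mc{B})\sw\ram(\flow(\mc{B})),\quad
\text{(II) }\ram(\web(\mc{B}))\sw\ram(\mc{B}),\quad
\text{(III) }\ram(\flow(\mc{B}))\sw\ram(\mc{B}).
\]
Indeed, (I) and (III) applied to $\mc{B}=\mc{A}$ give $\ram(\mc{A})=\ram(\flow(\mc{A}))$, and then $\ram(\mc{A})=\ram(\flow(\mc{A}))=\ram(\flow(\web(\mc{A})))\sw\ram(\web(\mc{A}))\sw\ram(\mc{A})$, using (III) for $\web(\mc{A})$ and (II) for $\mc{A}$, so all three sets agree.

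For (I): if $z\in\ram(\mc{B})$, choose $f,g\in\mc{B}(z)$ with neither $f\sw g$ nor $g\sw f$ and extend them via Theorem~\ref{t:path_extension} to $\tilde f,\tilde g\in\flow(\mc{B})$ with $f\sw\tilde f$, $g\sw\tilde g$. Then $\tilde f,\tilde g\in\flow(\mc{B})(z)$, and $\tilde f\ne\tilde g$, since otherwise $f$ and $g$ would both be restrictions of the single path $\tilde f$ and hence $\sw$-comparable; being distinct bi-infinite paths, $\tilde f$ and $\tilde g$ are $\sw$-incomparable, so $z\in\ram(\flow(\mc{B}))$. For (II): if $z\in\ram(\web(\mc{B}))$, pick $\sw$-incomparable $f,g\in\web(\mc{B})(z)$. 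Since $\web(\mc{B})\sw\mc{B}_\uparrow$ (by \eqref{eq:web_op} and Lemma~\ref{l:relcomp_uparrow}), there are $f',g'\in\mc{B}$ with $f\sw f'$, $g\sw g'$, and $f',g'$ pass through $z$; if $f'$ and $g'$ were $\sw$-comparable, then $f$ and $g$ would both be restrictions of one path of $\mc{B}$ and hence comparable, a contradiction, so $f',g'$ witness $z\in\ram(\mc{B})$.

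It remains to prove (III), which is the heart of the matter. We argue the contrapositive: if $z=(x,t)$ is non-ramified in $\mc{B}$ we show $\flow(\mc{B})(z)$ is a singleton (for a weave of bi-infinite paths this is equivalent to non-ramification, since two elements of $\flow(\mc{B})(z)$ are $\sw$-comparable only if equal). By non-ramification the set $\mc{B}(z)$ is a $\sw$-chain; it is closed by Lemma~\ref{l:appdx_1_sw_limits}, hence compact, and using compatibility of $\sw$ (Lemma~\ref{l:sw_compat}) one checks that a compact totally $\sw$-ordered set attains a $\sw$-greatest element $m$, with the property that every element of $\mc{B}(z)$ has the common restriction $p_0:=m|_z$. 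Let $\tilde m\in\flow(\mc{B})$ extend $m$ (Theorem~\ref{t:path_extension}); then $\tilde m\in\flow(\mc{B})(z)$. Suppose $h\in\flow(\mc{B})(z)$ with $h\ne\tilde m$; by Lemmas~\ref{l:noncr_transitive_weave} and~\ref{l:lhd_noncr} we may assume $h\lhd\tilde m$. If $h|_z\ne p_0$, then $h|_z$ and $p_0$ are distinct half-infinite paths starting at $z$ that do not cross $\mc{B}$, so they are $\sw$-incomparable with $h|_z\lhd p_0$, and Lemma~\ref{l:lhd_intermediates} produces $g\in\mc{B}_{\max}$ strictly between them, $\sw$-incomparable to both; since $g$ lies between two paths starting at $z$, a squeezing argument at time $t$ forces $g$ to pass through $z$, whence $g\in\mc{B}(z)$. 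But $g\sw m$ or $m\sw g$ would each entail $p_0\sw g$, contradicting incomparability of $g$ and $p_0$, so $g$ and $m$ are $\sw$-incomparable — impossible in the chain $\mc{B}(z)$. If instead $h|_z=p_0$, then $h$ and $\tilde m$ agree forward of $z$ and differ at some $s<t$; if the difference occurs at a time $s\in[\sig_m,t)$ then $h$ and $m$ themselves differ at $s$, $h\lhd m$, $h\nsubseteq m$, $m\nsubseteq h$, and the same intermediate-path argument applies; and if $s<\sig_m$ one applies Lemma~\ref{l:biinf_insertion} to the pervasive non-crossing bi-infinite family $\flow(\mc{B})$ and a dense set $D\sw\R^2$ that is non-ramified for both $\mc{B}$ and $\flow(\mc{B})$ (such $D$ exists by Lemma~\ref{l:ramification_meas_zero}) to obtain a flowline through some $(x',s')\in D$ with $s'<t$ that lies strictly between $h$ and $\tilde m$; since $D$ is non-ramified this flowline is the bi-infinite extension of a path $r\in\mc{B}((x',s'))$, and once this flowline is shown (again by squeezing at $t$) to pass through $z$, one gets $r\in\mc{B}(z)$ with $m\subsetneq r$, contradicting maximality of $m$. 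The symmetric case $\tilde m\lhd h$ is identical.

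The main obstacle is precisely the repeated ``squeeze at time $t$'' step: when one of the relevant paths ($h$, $\tilde m$, or the constructed intermediate path) jumps at $t$, or when the intermediate path from Lemma~\ref{l:lhd_intermediates} is born at a time strictly after $t$, one cannot conclude directly that it passes through $z$. Resolving these configurations requires a careful case analysis in terms of the sets $L_{t\star}(\cdot)$ and $R_{t\star}(\cdot)$ of~\eqref{eq:LR_sets}, together with Lemmas~\ref{l:lhd_left_right}, \ref{l:lhd_possibilities}, \ref{l:approx_before_jump} and~\ref{l:appdx_2_fntn}; this bookkeeping is where the bulk of the proof lies.
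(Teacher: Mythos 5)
Your reduction to the three inclusions is logically sound, and your proofs of (I) and (II) are essentially the paper's argument for the easy chain $\ram(\web(\mc{A}))\sw\ram(\mc{A})\sw\ram(\flow(\mc{A}))$ (incomparable pairs through $z$ lift along $\sw$-extensions to incomparable pairs through $z$). The problem is (III), which you correctly identify as the heart of the matter and which you have not actually proved. Your strategy is to take the $\sw$-greatest element $m$ of the chain $\mc{B}(z)$, a competing flowline $h$ through $z$, and then use Lemma \ref{l:lhd_intermediates} to manufacture a path $g\in\mc{B}_{\max}$ strictly between $h|_z$ and $p_0=m|_z$; the contradiction requires $g\in\mc{B}(z)$. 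But Lemma \ref{l:lhd_intermediates} gives no control on $\s_g$: if $h|_z$ and $p_0$ agree on $[t,t+1]$ and only separate afterwards, the intermediate path can be born at time $t+1$ and never visit $z$ at all, and similar failures occur when $h$ or $\tilde m$ jumps at $t$. You acknowledge this and defer it to ``bookkeeping,'' but it is not bookkeeping --- it is the entire difficulty, and it is not clear your single-intermediate-path strategy can be repaired, because a weave need not contain \emph{any} path through $z$ lying strictly between two given non-crossing paths through $z$.

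The paper sidesteps this obstruction with a genuinely different mechanism: rather than trying to thread one path of $\mc{A}$ through the fixed point $z$, it proves the stronger inclusion $\ram(\mc{F})\sw\ram(\mc{W})$ directly by approximation. Given incomparable $f,g\in\mc{F}(z)$, it picks points of a dense non-ramified set $D$ converging to $z$ (placed to the left of $f$ and the right of $g$ when the separation is forward in time, or at earlier times via Lemma \ref{l:biinf_insertion} when the separation is backward in time), takes paths of $\mc{A}$ through those points, and uses compactness of $\mc{A}$ together with Lemma \ref{l:appdx_1_sw_limits} to extract \emph{limit} paths that do pass through $z$ and whose restrictions give incomparable elements of $\web(\mc{A})(z)$. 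That limiting step is exactly what replaces your missing squeeze, and it is why the paper works with $D$ and $\mc{W}$ rather than with a single intermediate path of $\mc{A}$ at $z$. As written, your proposal has a genuine gap at (III).
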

\begin{proof}
Let us write $\mc{W}=\web(\mc{A})$ and $\mc{F}=\flow(\mc{A})$.
By Theorem \ref{t:weave_structure}, 
applied to the weave whose law is a point-mass at $\mc{A}$,
we have $\mc{W}\preceq\mc{A}\preceq\mc{F}$.
Note that if $\mc{B},\mc{B}'\in\mathscr{W}_{\det}$ with $\mc{B}\preceq\mc{B}'$
then for any $b\in B$ there exists $b'\in B'$ such that $b\sw b'$.
It follows that
$\ram(\mc{W})\sw\ram(\mc{A})\sw\ram(\mc{F})$.
With this in hand it remains only to show that
for any deterministic weave $\mc{A}$ we have
\begin{equation}
\label{eq:ram_equal_det}
\ram(\mc{F})\sw \ram(\mc{W}).
\end{equation}
To this end,
let $D\sw\Rc$ be dense and non-ramified.
Suppose that $(x,t)\in\Rc$ is ramified in $\mc{F}$.
We thus have bi-infinite $f,g\in\mc{F}(z)$ that are not comparable under $\sw$.
It follows that there exists $s\star\in\R_\mfs$ such that $f(s\star)\neq g(s\star)$, 
and without loss of generality we may assume $f(s\star)<g(s\star)$.

\begin{itemize}
\item 
Consider first if $s\star\geq t+$. 
Take a sequence $z_n=(x_n,t_n)\in D$ such that $z_n\to z$ and $x_n\leq f(t_n-)\wedge f(t_n+)$,
along with a sequence $w_n=(y_n,s_n)\in D$ such that $w_n\to z$ and $y_n\geq g(t_n-)\vee g(t_n+)$.
It it straightforward to check that such sequences exist.
Take $f_n\in\mc{F}(z_n)$ and $g_n\in\mc{F}(w_n)$.
By compactness, passing to a subsequence, we may assume that $f_n\to f'$ and $g_n\to g'$ where $f',g'\in\mc{F}$.
By Lemma \ref{l:appdx_1_sw_limits} we have $f',g'\in\mc{F}(z)$.
Recall from Lemma \ref{l:Amax_order} that $(\mc{F},\lhd)$ is totally ordered.
By Lemma \ref{l:biinf_vs_01} we have $f'\lhd f\lhd g\lhd g'$,
which implies that $f'(s\star)<g'(s\star)$.
By Lemma \ref{l:appdx_1_sw_limits} we have $f_n|_{z_n}\to f'|_{z}$ and $g_n|_{w_n}\to g'|_{z}$.
Note that $f'|_{z}, g'|_{z}\in\mc{W}$ by \eqref{eq:web_op},
and both pass though $z$.
Since $f'|_{z}(s\star)=f'(s\star)<g'(s\star)=g'|_{z}(s\star)$
they cannot be comparable under $\sw$.
Hence, in this case, we have that $z\in\ram(\mc{W})$.

\item
Next, consider if $s\star\leq t-$.
We may assume that $f(u\bullet)=g(u\bullet)$ for all $u\bullet\geq t+$ (or else, the case above applies).
By Lemma \ref{l:biinf_insertion} there exists $z\in D$ and $h\in\mc{A}(z)$ such that $f,g\notin\mc{F}(z)$,  $f\lhd h\lhd g$
and $\sigma_{z}<t$. 
We have $f(u\bullet)=g(u\bullet)$ for all $u\bullet\geq t+$, and $f\lhd h\lhd g$,
which means $f(u\bullet)=h(u\bullet)=g(u\bullet)$ for all such $u\bullet$.
Since $f(t-)\leq h(t-)\leq g(t-)$ we have $h\in\mc{A}(z)$.
The properties of $h$ given in Lemma \ref{l:biinf_insertion} guarantee that $h$ is not equal to $f$ or $g$, 
so there exists some $v_1\bullet_1,v_2\bullet_2\leq t-$ such that 
$f(v_1\bullet_1)<h(v_1\bullet_1)$ and $h(v_2\bullet_2)< g(v_2\bullet_2)$.

We apply Lemma \ref{l:biinf_insertion} twice more, to $(f,h)$ at $v_1\bullet_1$ and to $(h,g)$ at $v_2\bullet_2$.
We thus obtain (respectively) for $i=1,2$, $z_i\in D$ and $h_i\in\mc{A}(z_i)$
such that
$f\lhd h_1\lhd h \lhd h_2\lhd g$, with $f,h\notin\mc{F}(z_1)$, $h,g\notin\mc{F}(z_2)$ and $\sigma_{z_1},\sigma_{z_2}<t$.
The same argument as above shows that $h_1,h_2\in\mc{F}(z)$.
It is clear that $h_1|_{z_1}$ and $h_2|_{z_1}$
are both elements of $\mc{W}$ and both pass through $z$.
To complete the proof, we will show that they are not comparable under $\sw$.

Suppose that $h_1|_{z_1}\sw h_2|_{z_2}$.
Then $\sigma_{z_2}\leq \sigma_{z_1}$, 
and the fact that $h_1\lhd h\lhd h_2$ implies that $h\in\mc{A}(z_1)$, which is a contradiction.
Similarly we cannot have $h_2|_{z_2}\sw h_1|_{z_1}$,
so in this case we also have $z\in\ram(\mc{W})$.
\end{itemize}
This completes the proof of Lemma \ref{l:ramific_flow_web_same}.
\end{proof}


We are now ready to give the proof of Theorem \ref{t:weaves_characterization}.
We prove the two parts of the theorem in turn,
with part 2 first.
Let $\mc{A}$ and $\mc{B}$ be weaves.

\textbf{Part 1.}
Suppose that $\mc{A}\sim\mc{B}$.
Theorem \ref{t:weave_structure} gives that $\flow(\mc{A})\eqd \flow(\mc{B})$,
which implies that there exists a coupling of $\mc{A}$ and $\mc{B}$ such that $\P[\flow(\mc{A})=\flow(\mc{B})]=1$.
Let us write $\mc{F}=\flow(\mc{A})\eqas\flow(\mc{B})$.
Suppose that $\vec{z}\sw\R^2_c$ is finite and almost surely non-ramified in both $\mc{A}$ and $\mc{B}$.
Lemma \ref{l:ramific_flow_web_same} gives that, almost surely,
$\vec{z}$ is non-ramified in $\mc{F}$.

We seek to show that $\mc{A}|_{\vec{z}}\eqas\mc{F}|_{\vec{z}}$.
Write $\vec{z}=(z_1,\ldots,z_m)$ and fix $i\leq m$.
Let $f=\mc{A}|_{z_i}$.
By Theorem \ref{t:path_extension} there exists $f'\in\mc{F}$ with $f\sw f'$.
Therefore $f'|_{z_i}=f$.
As $z$ is almost surely non-ramified in $\mc{F}$, and $\mc{F}\sw\Pi^\updownarrow$,
in fact $\mc{F}(z_i)\eqas\{f'\}$, which implies that almost surely $f'|_{z_i}=\mc{F}|_{z_i}=f$.
We thus have $\mc{A}|_{z_i}\eqas\mc{F}|_{z_i}$.

A symmetric argument shows that $\mc{B}|_{z_i}\eqas\mc{F}|_{z_i}$,
so in fact $\mc{A}|_{z_i}\eqas\mc{B}|_{z_i}$.
In particular $\mc{A}|_{z_i}$ and $\mc{B}|_{z_i}$ have the same marginal distribution,
so $\mc{A}|_{z_i}\eqd \mc{B}|_{z_i}$, as required.

\textbf{Part 2.}
Let us first show that (a) and (b) are equivalent.
Assume (a), that $\mc{A}\sim\mc{B}$.
Theorem \ref{t:weave_structure} gives that $\flow(\mc{A})\eqd \flow(\mc{B})$,
which implies that there exists a coupling of $\mc{A}$ and $\mc{B}$ such that $\P[\flow(\mc{A})=\flow(\mc{B})]=1$.
Let us write $\mc{F}=\flow(\mc{A})\eqas\flow(\mc{B})$.
From \eqref{eq:flow_op} we thus have
$\P[\mc{A}\cup\mc{F}\text{ is non-crossing and }\mc{B}\cup\mc{F}\text{ is non-crossing}]=1$.
Lemma \ref{l:noncr_transitive_weave} gives that $\P[\mc{A}\cup\mc{B}\text{ is non-crossing}]=1$, obtaining (b).

Conversely suppose (b), that $\mc{A},\mc{B}$ are coupled weaves such that $\P[\mc{A}\cup\mc{B}\text{ is non-crossing}]=1$.
By Lemma \ref{l:noncr_same_flow} we have
$\P[\flow(\mc{A})=\flow(\mc{B})]=1$.
In particular $\flow(\mc{A})$ and $\flow(\mc{B})$ have the same marginal distribution,
so $\mc{A}\sim\mc{B}$.

We will next show that (a) and (c) are equivalent.
Assume (a), that $\mc{A}\sim\mc{B}$.
Lemma \ref{l:ramification_meas_zero_det} implies
the existence of a deterministic dense countable $D\sw\R^2$
such that $D$ is almost surely non-ramified in both $\mc{A}$ and $\mc{B}$.
If $\vec{z}\sw D$ is finite then it follows by part 1 of the present proof
that $\mc{A}|_{\vec{z}}$ and $\mc{B}|_{\vec{z}}$ have the same marginal distribution,
which establishes (c).

Conversely suppose (c).
Note that the (deterministic) operation $\mc{A'}\mapsto\web_D(\mc{A'})$
depends only on $\mc{A'}|_D$.
Enumerate $D=(z_i)_{i\in\N}$ and write $\vec{z}_m=(z_1,\ldots,z_m)$.
From (c) we have that $\mc{A}|_{\vec{z}_m}$ and $\mc{B}|_{\vec{z}_m}$
have the same marginal distribution, 
so by countability of $D$ in fact $\mc{A}|_D$ and $\mc{B}|_D$ have the same marginal distribution.
It follows that $\web_D(\mc{A})$ and $\web_D(\mc{B})$ have the same marginal distribution.
By Lemma \ref{l:web_op_D} we thus have that $\web(\mc{A})\eqd \web(\mc{B})$,
so $\mc{A}\sim\mc{B}$,
which establishes (a).
This completes the proof.

\subsection{Proof of Theorems \ref{t:flow_conv} and \ref{t:weave_conv}}
\label{sec:proof_t_conv}

We give the proof of \textbf{Theorem \ref{t:weave_conv}} before that of \ref{t:flow_conv},
because part 2 of Theorem \ref{t:flow_conv} will be proven as a 
specialization of part 2 of Theorem \ref{t:weave_conv}.
Suppose that $\mc{A}_n,\mc{A}$ are weaves.
We prove parts 1 and 2 of Theorem \ref{t:weave_conv} in turn.

\textbf{Part 1.}
Suppose that $\mc{A}_n\to\mc{A}$
and let $\vec{z}_n,\vec{z}\in(\Rc)^m$ be non-ramified.
By Skorohod's Representation Theorem
we may (change probability space, preserving the marginal distributions of each $\mc{A}_n$ and $\mc{A}$)
and assume that $\mc{A}_n\toas  \mc{A}$.
Let us write 
$\vec{z}_n=(z_{n,1},\ldots,z_{n,m})$ and
$\vec{z}=(z_1,\ldots,z_m)$.
Due to non-ramification, for each $i$ the sets $\mc{A}_n|_{z_{n,i}}$ and $\mc{A}|_{z_i}$
almost surely contain a single bi-infinite path,
which we write as $f_{n,i}$ and (respectively) $f_i$.
By Lemma \ref{l:relcom_Pi_KPi} the set $\bigcup_{n\in\N}\mc{A}_n$
is almost surely relatively compact.
Therefore we may pass to a subsequence and assume that
$f_{n,i}\toas   g_i\in\Pi^\updownarrow$ as $n\to\infty$, for all $i$.
Since $\mc{A}_n\toas  \mc{A}$ we have $g_i\in\mc{A}$.
By Lemma \ref{l:appdx_1_sw_limits} we have $g_i\in\mc{A}(z_i)$,
which implies that $g_i\stackrel{a.s.}{=}f_i$.
Thus $\mc{A}_n|_{\vec{z}_n}\toas  \mc{A}|_{\vec{z}}$,
on the probability space generated by Skorohod's Representation Theorem,
which implies convergence in distribution.

\textbf{Part 2.}
Let $\mc{B}$ be a weak limit point of $(\mc{A}_n)$, that is
$\mc{A}_n\tod \mc{B}$ along a subsequence of $n$.
Let us pass to this subsequence, without loss of generality.
Further, suppose that $\mc{B}$ is almost surely non-crossing.
By Skorohod's Representation Theorem,
noting that we are interested to prove distributional properties of $\mc{B}$,
without loss of generality we may assume that $\mc{A}_n\toas  \mc{B}$.
Since $\mc{B}$ is assumed to be almost surely non-crossing,
to show that $\mc{B}$ is a weave we need only show that
$\mc{B}$ is almost surely pervasive.
Let $z\in\R_c$.
Almost surely, for all $n$ there exists $f_n\in\mc{A}_n(z_n)$.
By Lemma \ref{l:relcom_Pi_KPi} the set $\mc{B}\cup\l(\bigcup_{n=1}^\infty\mc{A}_n\r)$
is compact, hence there exists $f\in\Pi^\uparrow$ such that $f_n\to f$.
As  $\mc{A}_n\toas  \mc{B}$ we have $f\in\mc{B}$.
By Lemma \ref{l:appdx_1_sw_limits} we have $f\in\mc{B}(z)$.
Thus $\mc{B}$ is pervasive,
so $\mc{B}$ is a weave.

Suppose additionally that 
$\mc{A}_n|_{\vec{z}}\to\mc{A}|_{\vec{z}}$ for all almost surely non-ramified $\vec{z}\in(\R^2)^m$.
By Lemma \ref{l:ramification_meas_zero_det} the set 
$R=\{z\in\R^2\-
\P\l[z\text{ is ramified in $\mc{A},\mc{B}$ or $\mc{A}_n$}\r]>0\}$
is a Lebesgue null subset of $\R^2$.
For any finite sequence $\vec{z}$ of points in $D=\R^2\sc R$
we have (by assumption) that
$\mc{A}_n|_{\vec{z}}\tod \mc{A}|_{\vec{z}}$.
Since $R$ is null, $D$ is dense in $\R^2$.
From part 1 of the present theorem we have also that
$\mc{A}_n|_{\vec{z}}\stackrel{a.s}{\to}\mc{B}|_{\vec{z}}$,
which implies convergence in distribution.
Hence 
$\mc{A}|_{\vec{z}}\eqd \mc{B}|_{\vec{z}}$
for all finite $\vec{z}\sw D$.
Theorem \ref{t:weaves_characterization} now gives that $\mc{A}\sim\mc{B}$.
This completes the proof of Theorem \ref{t:weave_conv}.

\medskip

We now give the proof of \textbf{Theorem \ref{t:flow_conv}},
proving each of the three parts in turn.
Suppose that $\mc{F}_n,\mc{F}$ are flows.

\textbf{Part 1.}
Note that if $z\in\R_c$ is non-ramified
then $\mc{F}(z)$ contains only a single bi-infinite path.
With this fact in hand, the argument is essentially the same as that of part 1 of Theorem \ref{t:weave_conv}
(from the start of the present section)
and is left to the reader.

\textbf{Part 2.}
Suppose that $\mc{F}'$
is a weak limit point of $(\mc{F}_n)$.
Lemma \ref{l:lhd_compat_biinf} gives that $\mc{F}'$ is almost surely non-crossing.
Hence, by part 2 of Theorem \ref{t:weave_conv}, $\mc{F}'$ is a weave.
As $\Pi^\updownarrow$ is a closed subset of $\Pi$ we have $\mc{F}'\sw\Pi^\updownarrow$ almost surely,
hence by Theorem \ref{t:weave_structure} $\mc{F}'$ is a flow.

Suppose, additionally, that $(\mc{F}_n)$ is tight and $\mc{F}_n|_{\vec{z}}\tod \mc{F}|_{\vec{z}}$
for all non-ramified $\vec{z}\in(\R^2)^m$.
Part 2 of Theorem \ref{t:weave_conv} thus gives that $\mc{F}\sim\mc{F}'$.
By Theorem \ref{t:weave_structure}, 
in particular by the fact that each equivalence class contains a unique maximal element,
we have $\mc{F}\eqd \mc{F}'$.
We now have that $(\mc{F}_n)$ is tight and any weak limit point of $(\mc{F}_n)$ is equal in distribution to $\mc{F}$,
so we have that $\mc{F}_n\to\mc{F}$.

\textbf{Part 3.}
Suppose that $\mc{F}_n\tod \mc{F}$
and that $\mc{A}_n\sim\mc{F}_n$.
By Theorem \ref{t:weaves_characterization}, for each $n\in\N$ 
there exists a coupling of $\mc{F}_n$ to $\mc{A}_n$
such that $\mc{A}_n\cup\mc{F}_n$ is almost surely non-crossing.
Hence $\mc{A}_n\sw(\mc{F}_n)_\uparrow$.
It follows from Proposition \ref{p:relcom_tightness}
that tightness of $(\mc{F}_n)$ implies tightness of $(\mc{A}_n)$.

Let $\mc{A}'$ be a weak limit point of $(\mc{A}_n)$.
We thus have $(\mc{F}_n,\mc{A}_n)\tod (\mc{F},\mc{A}')$.
We 
apply Skorohod's Representation Theorem 
(to the sequence of pairs $(\mc{F}_n,\mc{A}_n)_{n\in\N}$)
and may therefore assume without loss of generality that
$\mc{F}_n\toas  \mc{F}$ and $\mc{A}_n\toas\mc{A}'$.
Note that this preserves the marginal distributions of $(\mc{F}_n,\mc{A}_n)$, for each $n$.
We therefore have that $\mc{A}_n\sw(\mc{F}_n)_\uparrow$ almost surely.

If $f\in\mc{A}$ then (almost surely) there exists $f_n\in\mc{A}_n$ such that
$f_n\to f$.
Hence also there exists $g_n\in\mc{F}_n$ with $f_n\sw g_n$.
Lemma \ref{l:relcom_Pi_KPi} gives that $\mc{F}\cup(\bigcup_{n\in\N}\mc{F}_n)$ is almost surely compact,
so we may pass to a subsequence and assume that $g_n\to g$, where $g\in\mc{F}$.
Lemma \ref{l:appdx_1_sw_limits} gives that $f\sw g$.
Thus almost surely $\mc{A}'\sw\mc{F}_\uparrow$, which implies $\mc{A}'\cup\mc{F}$ is non-crossing.
Lemma \ref{l:appdx_1_sw_limits} and almost sure pervasiveness of $\mc{A}_n$
imply that $\mc{A}'$ is almost surely pervasive.
Thus $\mc{A}'$ is a weave.
By Theorem \ref{t:weaves_characterization} we have that $\mc{A}'\sim\mc{F}$, as required.
This completes the proof of Theorem \ref{t:flow_conv}.

\subsection{Proof of Theorem \ref{t:dual_webs}}
\label{sec:proof_t_dual_webs}

Let $\mc{A}$ be a weave, let $\mc{W}=\web(\mc{A})$ and $\mc{F}=\flow(\mc{A})$.
Let $D$ be a countable, dense and almost surely non-ramified subset of $\Rc$.
By Theorem \ref{t:weave_structure} the event
$$\{
\mc{W}=\web_D(\mc{A}), \;
\mc{W}\preceq\mc{A}\preceq\mc{F}, \;
\mc{A}\cup\mc{W}\cup\mc{F}\text{ is noncrossing}, \;
D\text{ is non-ramified}\} $$
has probability one.
Without loss of generality,
for the remainder of Section \ref{sec:proof_t_dual_webs}
we condition on this event occurring.
During the course of this proof we will apply several of our previous results in reverse time,
to $\Pi^\downarrow$ values objects rather than $\Pi^\uparrow$ valued objects.
To assist with this we will use the $\cdot^\rot$ operator, defined above the statement of Theorem \ref{t:dual_webs},
which represents rotation of space-time $\Rc$ about the origin by 180 degrees.
In previous sections, for $f\in\Pi^\uparrow$ with $(x,t)\in H(f)$ we have written $f|_{(x,t)}$ for the restriction of $f$ to $[t-,\infty+]$.
Here, we need to extend this notation to allow for restriction both forwards and backwards in time.
For $f\in\Pi$ we will write 
$f\rfloor_{(x,t)}$ for the restriction of $f$ to $[\s_f-,t+]$
and
$f\lceil_{(x,t)}$ for the restriction of $f$ to $[t-,\t_f+]$.
For $f\in\Pi^\uparrow$ we will avoid the notation $f|_z$ within this section, writing $f\lceil_z$ instead.

We begin the proof of Theorem \ref{t:dual_webs} by showing that
\begin{equation}
\label{eq:WW_pre}
\{g\in\Pi^\downarrow\-g\text{ does not cross }\mc{A}\text{ and }g\text{ begins in }D\}
=
\{f\rfloor_z\in\Pi^\downarrow\-z\in D\text{ and }f\in\mc{F}(z)\}
\end{equation}
To see \eqref{eq:WW_pre},
first consider if $g$ does not cross $\mc{A}$ and begins in $D$.
By applying Theorem \ref{t:path_extension} (in reverse time) we obtain 
$f\in\Pi^\updownarrow$ such that $g\sw f$ and $f$ does not cross $\mc{A}$,
which implies that $f\in\flow(\mc{A})=\mc{F}$. 
We have $g=f\rfloor_z$, 
so the left hand side of \eqref{eq:WW_pre} is contained within the right hand side.
For the reverse inclusion, consider if $z\in D$ and $f\in\mc{F}(z)$.
Clearly $g=f\rfloor_z$ begins at $z\in D$ and does not cross $\mc{F}$,
which implies $g$ does not cross $\mc{A}$.
We have thus established \eqref{eq:WW_pre}.

Let
\begin{align}
\widehat{\mc{W}}_D
&=\ov{\{g\in\Pi^\downarrow\-g\text{ does not cross }\mc{A}\text{ and }g\text{ begins in }D\}_\downarrow}
\label{eq:What_def_repeat} \\
\widehat{\mc{W}}'_D
&=\ov{\{f\rfloor_z\in\Pi^\downarrow\-z\in D\text{ and }f\in\mc{F}(z)\}_\downarrow}
\notag
\end{align}
Note that \eqref{eq:What_def_repeat} is \eqref{eq:What_def}, repeated here for convenience.
By Proposition \ref{p:relcom_tightness},
compactness of $\mc{F}$ implies relative compactness of (the right hand side of) equation \eqref{eq:WW_pre}.
With this in hand Lemma \ref{l:relcomp_uparrow}, applied in reverse time, gives that
$\widehat{\mc{W}}_D=\widehat{\mc{W}}'_D$.
From \eqref{eq:web_op} we have that
\begin{equation}
\label{eq:What_is_dual_web}
\wh{\mc{W}}'_D=(\web_{D^\rot}(\mc{F}^\rot))^\rot
\end{equation}
from which Theorem \ref{t:weave_structure} gives that $\wh{\mc{W}}'_D$ is both a dual web
and (from Remark \ref{r:web_op_det}) does not depend on the choice of 
dense and almost surely non-ramified subset $D\sw\Rc$.
From this point on let us write $\wh{\mc{W}}=\wh{\mc{W}}_D=\wh{\mc{W}}'_D$.
From \eqref{eq:What_is_dual_web} we have that
$(\wh{\mc{W}}'_D)^\rot=\web_{D^\rot}(\mc{F}^\rot)$,
which in words says that $(\wh{\mc{W}}'_D)^\rot$ is the web associated to $\mc{F}^\rot$.
Theorem \ref{t:weave_structure} implies that $(\wh{\mc{W}}'_D)^\rot$ does not cross $\mc{F}^\rot$,
thus $\wh{\mc{W}}'_D$ does not cross $\mc{F}$,
which by Lemma \ref{l:noncr_transitive_weave} implies it also does not cross $\mc{A}$.
Therefore $(\mc{W},\wh{\mc{W}})$ is a double web.

The same argument that led to \eqref{eq:WW_pre}, but now used forwards in time, gives that
$
\{f\in\Pi^\uparrow\-f\text{ does not cross }\mc{A}\text{ and }f\text{ begins in }D\}
=
\{f\lceil_{z}\in D\text{ and }f\in\mc{F}(z)\}
$.
It follows from \eqref{eq:web_op} and Theorem \ref{t:weave_structure} that
\begin{equation}
\label{eq:web_op_other}
\mc{W}=\ov{\{f\in\Pi^\uparrow\-f\text{ does not cross }\mc{A}\text{ and }f\text{ begins in }D\}_\uparrow}.
\end{equation}

We now turn our attention to \eqref{eq:F_WWHat}.
Let
\begin{equation}
\label{eq:F_WWHat_repeat}
\mc{F}'
=\ov{\{g_{\hookrightarrow}f\in\Pi^\updownarrow\-g\in\wh{\mc{W}}\text{ ends and }f\in\mc{W}\text{ begins at the same point of }D\}}. 
\end{equation}
We must show that $\mc{F}'\stackrel{a.s}{=}\mc{F}$.
Let $h\in\mc{F}'$.
Then there exists $z_n\in D$ such that $f_n\in\mc{W}$ begins and $g_n\in\wh{\mc{W}}$ ends at $z_n$,
and $h_n\to h$ where $h_n=(g_n)_{\hookrightarrow}(f_n)$.
Using that $z_n$ is non-ramified, let $h'_n$ be the unique element of $\mc{F}(z_n)$.
Note that $f_n$ does not cross $\mc{F}$.
Hence $((h'_n)\rfloor_{z_n})_{\hookrightarrow}(f_n)$ is a path passing through $z_n$ that does not cross $\mc{F}$,
which is therefore an element of $\mc{F}$,
and therefore equal to $h_n$.
Thus $f_n=(h'_n)\lceil_{z_n}$.
By a symmetrical argument, $g_n=(h'_n)\rfloor_{z_n}$ which implies that $h'_n=(g_n)_{\hookrightarrow}(f_n)=h_n$.
Hence $h_n\in\mc{F}$, which implies that $h\in\mc{F}$.

To see the reverse inclusion, let $f\in\mc{F}$.
By Lemma \ref{l:flow_regeneration} we have $\mc{F}=\ov{\mc{F}(D)}$,
hence there exists $z_n\in D$ and $h_n\in\mc{F}(z_n)$ such that $h_n\to f$.
Let $f_n=(h_n)\lceil_{z_n}$ and $g_n=(h_n)\rfloor_{z_n}$.
We have that $h_n$ does not cross $\mc{F}$, hence $f_n$ and $g_n$ do not cross $\mc{A}$.
From \eqref{eq:What_def_repeat} and \eqref{eq:web_op_other} we have that 
$f_n\in\mc{W}$ and $g_n\in\wh{\mc{W}}$.
Hence $h\in\mc{F}'$. We thus have $\mc{F}=\mc{F}'$ which establishes \eqref{eq:F_WWHat}.

It remains only to show the uniqueness claim.
Let $\wh{\mc{U}}$ be a dual web and
suppose that $(\mc{W},\wh{\mc{U}})$ is a double web.
Then $\wh{\mc{U}}^\rot$ is a web that almost surely does not cross $\mc{F}^\rot$, 
and the same is true of $\wh{\mc{W}}^\rot$.
It is straightforward to check that $\mc{F}^\rot$ is a $\Pi^\updownarrow$ valued random variable that inherits closedness, pervasiveness and the non-crossing property from $\mc{F}$.
Proposition \ref{p:relcom_tightness} implies that relative compactness is also inherited through $\cdot^\rot$, 
so $\mc{F}^\rot$ is a weave.
Lemma \ref{l:noncr_two_weaves} now implies that $\wh{\mc{U}}^\rot\cup \wh{\mc{W}}^\rot$ is almost surely non-crossing,
from which Lemma \ref{l:noncr_same_web} implies that 
$\web(\wh{\mc{U}}^\rot)\eqas \web(\wh{\mc{W}}^\rot)$.
By Theorem \ref{t:weave_structure} we thus have $\wh{\mc{U}}^\rot\eqas \wh{\mc{W}}^\rot$,
which implies $\wh{\mc{U}}\eqas \wh{\mc{W}}$ as required.

\subsection{Proof of Theorem \ref{t:weaves_continuity}}
\label{sec:proof_t_cont_weaves}

Recall that $\Pi^\uparrow_c, \Pi^\downarrow_c$ and $\Pi^\updownarrow_c$ respectively denote
the sets of continuous forwards half-infinite, backwards half-infinite, and bi-infinite {\cadlag} paths. 
The proof of Theorem \ref{t:weaves_continuity} is based on the following lemma.

\begin{lemma}
\label{l:path_extn_continuous}
The following hold.
\begin{enumerate}
\item
Let $\mc{A}\sw\Pi^\uparrow_c$ be a deterministic weave. 
If $h\in\Pi^\updownarrow$ does not cross $\mc{A}$ then $h\in\Pi^\updownarrow_c$.
\item
Let $\mc{F}\sw\Pi^\updownarrow_c$ be a deterministic flow.
If $f\in\Pi^\uparrow$ does not cross $\mc{F}$ then $f\in\Pi^\uparrow_c$.
\end{enumerate}
\end{lemma}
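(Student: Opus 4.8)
The plan is to derive both parts from a single observation: if the path in question has a jump, then pervasiveness of the weave supplies a \emph{continuous} path of the weave running through the middle of that jump, and this continuous path is then forced to cross the jumping path, contradicting the non-crossing hypothesis. All the real work has already been done in Lemma \ref{l:approx_before_jump}.

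For part 1, I would argue by contradiction. Suppose $h\in\Pi^\updownarrow$ does not cross $\mc{A}$ but is not continuous; since {\cadlag} paths are continuous at $\pm\infty$ (both one-sided values there equal $*$), there is a $t\in\R$ with $h(t-)\neq h(t+)$, and after reflecting space about the origin if necessary --- an operation that preserves $\mc{A}\sw\Pi^\uparrow_c$, the non-crossing hypothesis, and membership of $\Pi^\updownarrow$ --- I may assume $h(t-)<h(t+)$. Since $h\in\Pi^\uparrow$ does not cross $\mc{A}$, Lemma \ref{l:approx_before_jump} gives $g\in\mc{A}$ with $g(t-)\le h(t-)$ and $h\lhd g$; in particular $g(t-)$ is defined, so $\s_g\le t$. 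As $g\in\mc{A}\sw\Pi^\uparrow_c$ is continuous, $g(t-)=g(t+)=g(t)\le h(t-)$. On the other hand, writing $\s=\s_h\vee\s_g=\s_g$, the relation $h\lhd g$ forces $h(s\star)\le g(s\star)$ for all $s\star\ge\s+$ (as recorded below Definition \ref{d:lhd}, and also immediate from Lemma \ref{l:lhd_LR} together with \eqref{eq:LR_sets}); taking $s\star=t+\ge\s_g+$ yields $h(t+)\le g(t+)=g(t)\le h(t-)<h(t+)$, a contradiction. Hence $h$ is continuous, i.e. $h\in\Pi^\updownarrow_c$.

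For part 2 the argument is the same, now applied with the deterministic weave $\mc{F}$ in place of $\mc{A}$ and using that every element of $\mc{F}$ is continuous. Suppose $f\in\Pi^\uparrow$ does not cross $\mc{F}$ but jumps at some $t\in\R$; after reflection assume $f(t-)<f(t+)$. Lemma \ref{l:approx_before_jump}, applied to $\mc{F}$, gives $g\in\mc{F}$ with $g(t-)\le f(t-)$ and $f\lhd g$; continuity of $g\in\Pi^\updownarrow_c$ gives $g(t-)=g(t+)=g(t)\le f(t-)$. Since $g$ is bi-infinite, $\s_f\vee\s_g=\s_f\le t$, so $f\lhd g$ forces $f(t+)\le g(t+)=g(t)\le f(t-)<f(t+)$, again a contradiction; thus $f\in\Pi^\uparrow_c$.

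The only step needing a moment's care is the bookkeeping at the initial time: one must check $\s_g\le t$, so that the inequality $h(s\star)\le g(s\star)$ (valid for $s\star\ge\s+$) may be invoked at $s\star=t+$, and that continuity of $g$ applies at $t$ even when $t=\s_g$. Both are immediate --- the first because Lemma \ref{l:approx_before_jump} already asserts a value $g(t-)$, forcing $t$ into the domain of $g$, and the second because $g\in\Pi_c$ means $g(s-)=g(s+)$ for \emph{all} $s\in I(g)$, including the initial time. I do not anticipate any genuine obstacle; the lemma is essentially a direct consequence of Lemma \ref{l:approx_before_jump} and the definition of continuity.
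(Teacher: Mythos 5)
Your proof is correct. Part 1 is essentially the paper's argument: contradiction via Lemma \ref{l:approx_before_jump}, which supplies $g\in\mc{A}$ with $g(t-)\le h(t-)$ and $h\lhd g$, whence $h(t+)\le g(t+)$; the paper phrases the contradiction as ``$g$ must then jump at $t$, contradicting $\mc{A}\sw\Pi_c$'' while you phrase it as ``$h(t+)\le h(t-)$'', but these are the same inequality chain read off in two ways. Your bookkeeping at the initial time ($\s_g\le t$ because $g(t-)$ is defined, so the inequality $h(s\star)\le g(s\star)$ for $s\star\ge\s+$ applies at $s\star=t+$) is sound.

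For part 2 you take a mildly different route from the paper. The paper first extends $f$ to a bi-infinite path $f'$ via Theorem \ref{t:path_extension}, notes that $f'$ does not cross $\mc{F}$, and then applies part 1 to conclude $f'$ (hence $f$) is continuous. You instead rerun the direct jump-blocking argument with $\mc{F}$ in place of $\mc{A}$, which is legitimate since a deterministic flow is in particular a deterministic weave and Lemma \ref{l:approx_before_jump} only requires $f\in\Pi^\uparrow$. Your version is slightly more self-contained — it avoids invoking the Dedekind-cut path-extension machinery — at the cost of repeating the case analysis; the paper's version buys economy by reducing part 2 to part 1. Both are valid, and the difference is small.
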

\begin{proof}
We will prove each claim in turn, starting with the first.
We argue by contradiction.
Let $\mc{A}\sw\Pi_c$.
Suppose that
that $h\in\Pi^\updownarrow$ does not cross $f$ and
that $h$ is discontinuous at $t\in\R$.
Without loss of generality (or consider space reflected about the origin) 
we may assume that $h(t-)<h(t+)$.
By Lemma \ref{l:approx_before_jump} there exists $f\in\mc{A}$
such that $f(t-)\leq h(t-)$ and $h\lhd f$.
Hence $h(t+)\leq f(t+)$.
We thus have $f(t-)\leq h(t-)<h(t+)\leq f(t+)$, which means that $f\in\mc{A}$ is discontinuous at $t$.
This is a contradiction, which completes the proof.

It remains to establish the second claim.
If $f\in\Pi^\uparrow$ does not cross $\mc{F}$ then by Theorem \ref{t:path_extension} 
there exists $f'\in\Pi^\updownarrow$ such that $f\sw f'$ and $f'$ does not cross $\mc{F}$.
From what we have already proved we have $f'\in\mc{F}$, 
thus $f'\in\Pi_c$ which implies $f\in\Pi_c$.
\end{proof}

We now give the proof of Theorem \ref{t:weaves_continuity}.
Note that it suffices to prove the results for deterministic weaves.
We will prove the two claims in turn, starting with the first.
Let $\mc{A},\mc{B}$ be deterministic weaves such that $\mc{A}\sim\mc{B}$.
We seek to show that if $\mc{A}\sw\Pi^\uparrow_c$ then $\mc{B}\sw\Pi^\uparrow_c$.
It then follows by symmetry that $\mc{A}\sw\Pi^\uparrow_c$ if and only if $\mc{B}\sw\Pi^\uparrow_c$.

By Theorem \ref{t:weaves_characterization}, 
without loss of generality we may assume that
$\mc{A}$ are $\mc{B}$ coupled such that $\mc{A}\cup\mc{B}$ is almost surely non-crossing.
On that event, by Lemma \ref{l:noncr_two_weaves}, a path $f\in\Pi$ crosses $\mc{A}$ if and only if it crosses $\mc{B}$.
From \eqref{eq:flow_op} we thus obtain $\flow(\mc{A})\eqas \flow(\mc{B})$,
which we henceforth refer to as $\mc{F}$.
By part 1 of Lemma \ref{l:path_extn_continuous}, 
as $\mc{A}\sw\Pi^\uparrow_c$ we have also that $\mc{F}\sw\Pi^\updownarrow_c$.
Since $\mc{B}\preceq\mc{F}$ we have $\mc{B}\sw\mc{F}_{\uparrow}$, 
thus $\mc{B}\sw\Pi_c$.
The same applies by symmetry with the roles of $\mc{A}$ and $\mc{B}$ swapped.
This proves the first claim of Theorem \ref{t:weaves_continuity}.

It remains to prove the second claim.
Let $\mc{A}$ be a deterministic weave.
Let $\mc{W},\mc{F}$ denote the corresponding web and flow.
It remains to show that $\hat{\mc{W}}\sw\Pi^\uparrow_c$ if and only if $\mc{W}\sw\Pi^\downarrow_c$,
where $\hat{\mc{W}}$ is given by \eqref{eq:What_def} in Theorem \ref{t:dual_webs}.
By symmetry (or consider reversing the direction of time) it suffices to prove that 
if $\mc{W}\sw\Pi^\uparrow_c$ then $\hat{\mc{W}}\sw\Pi^\downarrow_c$.
To this end, suppose that $\mc{W}\sw\Pi_c$.

From what we have already proved, $\mc{F}\sw\Pi_c$.
From \eqref{eq:What_def} we have
\begin{equation}
\label{eq:What_from_F}
\hat{\mc{W}}
=\ov{\{g\in\Pi^\downarrow\-g\text{ does not cross }\mc{F}\text{ and }(g(\tau_g),\tau_g)\in D\}_\downarrow}.
\end{equation}
Let $\hat{f}\in\hat{\mc{W}}$.
Then there exists $\hat{f}_n\in\Pi^\downarrow$
such that $\hat{f}_n$ does not cross $\mc{F}$ and $\hat{f}_n\to\hat{f}$.
By Theorem \ref{t:path_extension} (applied in reverse time)
there exists $h_n\in\mc{F}$ such that $\hat{f}_n\sw h_n$.
By compactness of $\mc{F}$ we may pass to a convergent subsequence $h_n\to h\in\mc{F}$.
By Lemma \ref{l:appdx_1_sw_limits} we have $\hat{f}\sw h$.
Since $h$ does not cross $\mc{F}$, also $\hat{f}$ does not cross $\mc{F}$.
By part 2 of Lemma \ref{l:path_extn_continuous} (applied in reverse time)
we thus have $\hat{f}\in\Pi^\downarrow_c$.
Thus $\mc{W}\sw\Pi^\downarrow_c$.

\appendix


\section{Appendices}

\subsection{On the Hausdorff metric}
\label{a:hausdroff_metric}

Let $(M,d_M)$ be a metric space
and let $\mc{K}(M)$ denote the set of compact subsets of $M$, 
including the empty set.
We write $\dist_M(x,A)=\inf_{a\in A}d_M(x,a)$
for the infimum distance from the point $x\in M$ to $A\sw M$.
We now state some well known facts relating to $\mc{K}(M)$.
The function 
\begin{equation}
\label{eq:d_hausdorff}
d_{\mc{K}(M)}(A_1,A_2):=\sup_{x_1\in A_1}\dist_M(x_1,A_2)\vee\sup_{x_2\in A_2}\dist_M(x_2,A_1),
\end{equation}
defines a metric on $\mc{K}(M)$ known as the Hausdorff metric,
or more precisely the Hausdorff metric with respect to $d_M$.
If $d$ and $d'$ are two metrics generating the same topology on $M$, 
then their corresponding Hausdorff metrics generate the same topology on $M$.
This topology is known as the Hausdorff topology. 
Note the subtle difference
between `the Hausdorff topology', which refers to this particular topology,
and `a Hausdorff topology', which refers to any topology having the
Hausdorff property i.e.~that distinct points possess disjoint neighbourhoods. 
(All topological spaces mentioned within the present article have the Hausdorff property.)

Completeness of $M$ implies completeness of $\mc{K}(M)$.
The same extension from $M$ to $\mc{K}(M)$ also holds for separability, and for compactness.
We now establish some more detailed connections.

\begin{lemma}
\label{l:relcom_Pi_KPi}
Let $(M,d_M)$ be a complete metric space.
\begin{enumerate}
\item
Let $\mc{X}\sw\mc{K}(M)$.
Then $\mc{X}$ is relatively compact
if and only if $\cup_{X\in \mc{X}} X$ is a relatively compact subset of $M$.
\item
Let $X\sw M$.
Then $X$ is relatively compact 
if and only if $\{A\in\mc{K}(M)\-A\sw X\}$ is a relatively compact subset of $\mc{K}(M)$.
\end{enumerate}
\end{lemma}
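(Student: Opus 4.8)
The plan is to reduce both parts to the recalled fact that the hyperspace $\mc{K}(N)$ of a compact metric space $N$ is compact, working throughout with the sequential characterization of relative compactness in a metric space. A preliminary cleanup is worthwhile: by \eqref{eq:d_hausdorff} the empty set lies at infinite distance from every non-empty compact subset of $M$, so $\emptyset$ is an isolated point of $\mc{K}(M)$ and may be adjoined to or removed from any subcollection without affecting either its relative compactness or the union taken over it; hence I may assume all compact sets under discussion are non-empty. I also want the elementary observation that for $K\sw M$ the Hausdorff metric that $d_M|_K$ induces on $\mc{K}(K)$ is literally the restriction of $d_{\mc{K}(M)}$, so that $\mc{K}(K)$ sits inside $\mc{K}(M)$ as a metric subspace.

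For Part 1, the forward implication is immediate: assuming $U:=\bigcup_{X\in\mc{X}}X$ relatively compact, put $K:=\ov U$, which is compact; every $X\in\mc{X}$ is a compact subset of $M$ contained in $K$, hence $\mc{X}\sw\mc{K}(K)$, and since $\mc{K}(K)$ is a compact metric space every sequence in $\mc{X}$ subconverges in $\mc{K}(K)$, hence in $\mc{K}(M)$, so $\mc{X}$ is relatively compact. For the converse I would take a sequence $(x_n)$ in $U$, choose $X_n\in\mc{X}$ with $x_n\in X_n$, pass to a subsequence along which $X_n\to A$ in $\mc{K}(M)$ (with $A\neq\emptyset$, since otherwise $d_{\mc{K}(M)}(X_n,A)=\infty$), use $\dist_M(x_n,A)\leq d_{\mc{K}(M)}(X_n,A)\to 0$ to pick near-projections $a_n\in A$ with $d_M(x_n,a_n)\to 0$, and extract from the compact set $A$ a convergent subsequence of $(a_n)$; the matching subsequence of $(x_n)$ then converges to the same limit, so $U$ is relatively compact.

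Part 2 then drops out of Part 1 on taking $\mc{X}=\{A\in\mc{K}(M)\-A\sw X\}$: this family contains the singleton $\{x\}$ for every $x\in X$ and consists only of subsets of $X$, so $\bigcup_{A\in\mc{X}}A=X$ (the case $X=\emptyset$ being trivial), and Part 1 gives the claimed equivalence directly. I do not expect a genuine obstacle — this is a standard hyperspace fact — the only points needing a little care being the bookkeeping around the empty set and the point-chasing in the converse half of Part 1, both of which are routine; in fact no use of completeness of $M$ is needed.
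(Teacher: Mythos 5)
Your proof is correct, but it takes a genuinely different route from the paper's. The paper reduces everything to total boundedness: since $M$ is complete, so is $\mc{K}(M)$, hence relative compactness coincides with total boundedness in both spaces, and the proof then passes finite $\eps$-nets back and forth (a finite $\eps$-net $X_1,\dots,X_n$ for $\mc{X}$ yields the totally bounded superset $\bigcup_i X_i$ of the union, and conversely a finite $\eps$-net $\{x_1,\dots,x_n\}$ for the union yields the finite family of subsets of $\{x_1,\dots,x_n\}$ as an $\eps$-net for $\mc{X}$). You instead argue sequentially: one direction embeds $\mc{X}$ into the compact hyperspace $\mc{K}(\ov{U})$, and the other chases a sequence $x_n\in X_n$ through a Hausdorff-convergent subsequence $X_n\to A$ and near-projections onto the compact limit $A$. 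Both arguments are sound and both lean on standard hyperspace facts (the paper on completeness of $\mc{K}(M)$, you on compactness of $\mc{K}(K)$ for compact $K$). What your version buys is that the completeness hypothesis is genuinely superfluous — the paper's proof really does use it, since without completeness total boundedness no longer characterizes relative compactness — and your explicit handling of $\emptyset$ as an isolated point of $\mc{K}(M)$ tidies a detail the paper glosses over. What the paper's version buys is a quantitative, net-based statement that proves the two total-boundedness assertions are equivalent even in incomplete spaces, and it slots directly into the tightness arguments of Proposition \ref{p:relcom_tightness}, which are phrased in terms of moduli rather than sequences. Your derivation of Part 2 from Part 1 via $\mc{X}=\{A\in\mc{K}(M)\-A\sw X\}$ and the observation that singletons force $\bigcup_{A\in\mc{X}}A=X$ is exactly the reduction the paper uses.
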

\begin{proof}
The two claims are readily seen to be equivalent: take
$\mc{X}=\{A\in\mc{K}(M)\-A\sw X\}$
to see that the (1)$\ra$(2) and take
$X=\cup_{A\in\mc{X}} A$ 
to see that (2)$\ra$(1).
We will give proof of (1).
As completeness of $M$ implies completeness of $\mc{K}(M)$, 
in both $M$ and $\mc{K}(M)$ we have that relative compactness is equivalent to total boundedness.

Suppose that $\mc{X}\sw\mc{K}(M)$ is totally bounded.
Then, for each $\epsilon>0$ there is a finite set $X_1,\ldots,X_n$ of elements of $\mc{K}(M)$ such that, 
for any $X\in \mc{X}$ there is some $X_i$ such that $d_{\mc{K}(M)}(X,X_i)<\epsilon$. 
Let $Y=\bigcup_{i=1}^n X_i$ and note that $\bigcup_{X\in \mc{X}}X\sw Y^{(\epsilon)}$.
Since each $X_i$ is compact in $M$, $Y$ is also compact in $M$, and in particular $Y$ is totally bounded. Hence also $\bigcup_{X\in \mc{X}}X$ is totally bounded.

Conversely, suppose that $\mc{X}\sw\mc{K}(M)$ is such that $\bigcup_{X\in \mc{X}}X\sw M$ is totally bounded.
Let $\epsilon>0$.
There exists a finite set $\{x_1,\ldots,x_n\}\sw M$
and a map $f:\bigcup_{X\in \mc{X}}X\to \{x_1,\ldots,x_n\}$,  
such that for any $x\in \bigcup_{X\in \mc{X}}X$ we have $d_{M}(x,f(x))<\epsilon$. 
For any $X\in\mc{X}$, the set $f(X)=\{f(x)\-x\in X\}$ is finite and therefore compact,
meaning that $f(X)\in\mc{K}(M)$.
By construction we have $d_{\mc{K}(M)}(X,f(X))<\epsilon$.
Let $\mathscr{X}$ be the set of subsets of $\{x_1,\ldots,x_n\}$,
hence $\mathscr{X}$ is a finite subset of $\mc{K}(M)$.
We have shown that for any $X\in\mc{X}$ there is some $X'\in\mathscr{X}$ such that $d_{\mc{K}(M)}(X,X')<\epsilon$.
Thus $\mc{X}$ is totally bounded.
\end{proof}

The next Lemma is a key ingredient of the proof of Lemma \ref{l:meas_Az}.
It provides a supply of closed (and consequently measurable) subsets of $\mc{K}(M)$.
We define
\begin{align*}
A^{(\eps)}&=\{x\in M\- \dist_{M}(x,A) < \eps\} \qquad \text{ for }\epsilon>0, \\
A^{[\eps]}&=\{x\in M\- \dist_{M}(x,A)\leq \eps\} \qquad \text{ for }\epsilon\geq0, 
\end{align*}
as the (respectively) open and closed $\epsilon$-expansions of $A\sw M$.
Note that $A^{[0]}=A$.

\begin{lemma}
\label{l:meas_various_KPi}
Let $X,Y\sw M$, $\epsilon\geq 0$ and $\mathscr{C}\sw\mc{K}(M)$. Then:
\begin{enumerate}
\item the set $\{A\in\mc{K}(M)\- X\sw A^{[\epsilon]}\}$ is closed;
\item the set $\{A\in\mc{K}(M)\- A\cap (Y\sc X)=\emptyset\}$ is closed if $X$ is closed and $Y$ is open;
\item the set $\{A\in\mc{K}(M)\- A\cap (X\sc Y)\neq\emptyset\}$ is closed if $X$ is closed and $Y$ is open;
\item the set $\{A\in\mc{K}(M)\- \exists C\in \mathscr{C}, C\sw A\}$ is closed if $\mathscr{C}$ is closed.
\end{enumerate}
\end{lemma}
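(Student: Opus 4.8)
Looking at this lemma, I need to prove four closure statements about subsets of $\mc{K}(M)$ for a complete metric space $(M,d_M)$. Let me think about how to prove each part.

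\textbf{The plan.} The proof will proceed by establishing each of the four parts separately, using sequential characterizations of closedness since $\mc{K}(M)$ is metrisable (Hausdorff metric). Throughout, I will use the basic fact that if $A_n\to A$ in the Hausdorff metric, then: (i) for any $a\in A$ there exist $a_n\in A_n$ with $a_n\to a$, and (ii) any limit point of a sequence $(a_n)$ with $a_n\in A_n$ lies in $A$. I expect Part 4 to be the main obstacle because it involves a quantifier over an auxiliary closed set $\mathscr{C}$ and requires a double sequential extraction combined with compactness of the relevant union.

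\textbf{Parts 1--3 (the expansions).} For Part 1, suppose $A_n\to A$ with $X\sw A_n^{[\epsilon]}$ for all $n$. Fix $x\in X$. For each $n$ there is $a_n\in A_n$ with $d_M(x,a_n)\leq\epsilon$. The sequence $(a_n)$ lies in $\bigcup_n A_n\cup A$, which is compact (being a convergent sequence together with its limit, in $\mc{K}(M)$, so its union is relatively compact by Lemma \ref{l:relcom_Pi_KPi}), hence has a subsequential limit $a$, which lies in $A$ by property (ii), and satisfies $d_M(x,a)\leq\epsilon$; thus $x\in A^{[\epsilon]}$. For Part 2, suppose $A_n\to A$ with $A_n\cap(Y\sc X)=\emptyset$, i.e.\ $A_n\sw X\cup(M\sc Y)$, a closed set (since $X$ is closed, $Y$ open). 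If $a\in A$, pick $a_n\in A_n$ with $a_n\to a$; each $a_n\in X\cup(M\sc Y)$, which is closed, so $a\in X\cup(M\sc Y)$, i.e.\ $a\notin Y\sc X$. For Part 3, suppose $A_n\to A$ with $a_n\in A_n\cap(X\sc Y)$. Again $(a_n)$ has a subsequential limit $a\in A$; since $X$ is closed, $a\in X$, and since $M\sc Y$ is closed and $a_n\in M\sc Y$, we get $a\in M\sc Y$, so $a\in A\cap(X\sc Y)$.

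\textbf{Part 4 (containing an element of a closed family).} Suppose $A_n\to A$ and for each $n$ there is $C_n\in\mathscr{C}$ with $C_n\sw A_n$. I claim $(C_n)$ is relatively compact in $\mc{K}(M)$: indeed $\bigcup_n C_n\sw\bigcup_n A_n$, the latter being relatively compact by Lemma \ref{l:relcom_Pi_KPi} (since $(A_n)$ together with its limit $A$ forms a compact subset of $\mc{K}(M)$), so $\bigcup_n C_n$ is relatively compact, and Lemma \ref{l:relcom_Pi_KPi} again gives relative compactness of $(C_n)$. Pass to a subsequence so that $C_n\to C$; since $\mathscr{C}$ is closed, $C\in\mathscr{C}$. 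It remains to check $C\sw A$: if $c\in C$, take $c_n\in C_n$ with $c_n\to c$; since $c_n\in A_n$ and $A_n\to A$, property (ii) gives $c\in A$. Hence $A\in\{A'\in\mc{K}(M)\-\exists C'\in\mathscr{C},\,C'\sw A'\}$, completing the proof. The one subtlety to handle carefully is that the empty set is allowed in $\mc{K}(M)$ (as noted in Appendix \ref{a:hausdroff_metric}), so I should note that all the arguments above go through trivially in degenerate cases (e.g.\ if $X=\emptyset$ in Part 1, or if some $C_n=\emptyset$ in Part 4, the inclusion conditions are vacuous).
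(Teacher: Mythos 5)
Your proof is correct and follows essentially the same route as the paper's: sequential characterization of closedness in the Hausdorff metric, relative compactness of $\bigcup_n A_n$ via Lemma \ref{l:relcom_Pi_KPi} to extract subsequential limits of witnesses, and (for Part 4) passing to a convergent subsequence of the $C_n$ and using closedness of $\mathscr{C}$ together with preservation of inclusion under Hausdorff limits. The only differences are cosmetic (your Part 2 is argued directly via $A_n\sw X\cup(M\sc Y)$ rather than by contradiction).
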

\begin{proof}
We prove the claims independently. 
For the first claim, 
assume that $A_n\to A$ in $\mc{K}(M)$ and $X\sw A_n^{[\epsilon]}$ for all $n$, where $\epsilon\geq 0$.
Let $f\in X$, so $d_{\mc{K}(M)}(\{f\},A_n)\leq\epsilon$.
Letting $n\to\infty$ we obtain $d_{\mc{K}(M)}(\{f\},A)\leq\epsilon$.
Since $f\in X$ was arbitrary $X\sw A^{[\epsilon]}$, as required.

For the second claim, 
assume that $A_n\to A$ in $\mc{K}(M)$ and $A_n\cap (Y\sc X)=\emptyset$ for all $n$, where $Y$ is open and $X$ is closed.
We must show that $A\cap (Y\sc X)$ is empty.
We will argue by contradiction.
Suppose there exists $f\in A\cap(Y\sc X)$,
which means that $f\in A\cap Y$ and $f\notin X$.
Since $A_n\to A$ there exists $f_n\in A_n$ such that $f_n\to f$.
For each $n$, noting that $A_n\cap (Y\sc X)$ is empty we have (a) $f_n\notin Y$ or (b) $f_n\in X$;
thus one of these two alternatives must hold for an infinite subsequence of $n\in\N$.
If (a) holds for infinitely many $n$ then along that subsequence we have $f_n\to f$ with $f_n\notin Y$ and $f\in Y$,
which contradicts the fact that $Y$ is open. 
If (b) holds for infinitely many $n$ then along that subsequence we have $f_n\to f$ with $f_n\in X$ and $f\notin X$,
which contradicts the fact that $X$ is closed.
We thus reach the desired contradiction.

For the third claim, 
suppose that $A_n\to A$ in $\mc{K}(M)$ and $A_n\cap(X\sc Y)\neq\emptyset$ for all $n$,
where $X\sw M$ is closed and $Y\sw M$ is open.
Take $f_n\in A_n\cap (X\sc Y)$,
so $f_n\in A_n\cap X$ and $f_n\in M\sc Y$.
By Lemma \ref{l:relcom_Pi_KPi} the set $\cup_n A_n$ is a relatively compact $M$
and contains the sequence $(f_n)$,
so we may pass to a convergence subsequence $f_n\to f$.
Since $A_n\to A$ we have $f\in A$.
We have $f_n\in X$ and $f_n\in M\sc Y$, and since both $X$ and $M\sc Y$ are closed
we thus have $f\in X$ and $f\in M\sc Y$.
Thus $f\in A\cap(X\sc Y)$, which completes the proof.

For the final claim, 
suppose that $A_n\to A$ in $\mc{K}(M)$ and that $C_n\in\mathscr{C}$ with $C_n\sw A_n$.
Using Lemma \ref{l:relcom_Pi_KPi} the set $A\cup(\cup_n A_n)$ is compact, which implies that $\cup_n C_n$ is relatively compact,
so we may pass to a subsequence and assume $C_n\to C\in\mc{K}(M)$. 
As $C_n\sw A_n$ we have $C\sw A$ and
since $\mathscr{C}$ is closed we have $C\in\mathscr{C}$.
\end{proof}

\begin{lemma}
\label{l:meas_cts_upgrade}
If $F:M\to M$ is continuous 
then the map from $\mc{K}(M)$ to itself given by
$A\mapsto\{F(f)\-f\in A\}$ is continuous.
\end{lemma}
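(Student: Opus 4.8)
The plan is to first record that the map is well-defined: writing $F(A):=\{F(f)\-f\in A\}$, continuity of $F$ together with compactness of $A$ ensures $F(A)$ is a compact subset of $M$ (and $F(\emptyset)=\emptyset$), so $A\mapsto F(A)$ does send $\mc{K}(M)$ into itself. For continuity in the Hausdorff metric \eqref{eq:d_hausdorff}, I would fix $A_n\to A$ in $\mc{K}(M)$ and argue by contradiction: if $F(A_n)\not\to F(A)$ then, after passing to a subsequence, there is $\eps>0$ with $d_{\mc{K}(M)}(F(A_n),F(A))\geq\eps$ for all $n$. By the definition of the Hausdorff metric, for each $n$ either (a) there is $x_n\in A_n$ with $\dist_M(F(x_n),F(A))\geq\eps$, or (b) there is $y_n\in A$ with $\dist_M(F(y_n),F(A_n))\geq\eps$; one alternative holds for infinitely many $n$, and I pass to that subsequence.

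In case (a), since $\dist_M(x_n,A)\leq d_{\mc{K}(M)}(A_n,A)\to 0$ and $A$ is compact, I can choose $y_n\in A$ with $d_M(x_n,y_n)\to 0$ and then extract a further subsequence with $y_n\to y\in A$, so that $x_n\to y$ as well; continuity of $F$ gives $F(x_n)\to F(y)\in F(A)$, contradicting $\dist_M(F(x_n),F(A))\geq\eps$. In case (b), compactness of $A$ yields a subsequence with $y_n\to y\in A$, hence $F(y_n)\to F(y)$; moreover $\dist_M(y,A_n)\leq d_{\mc{K}(M)}(A,A_n)\to 0$ provides $z_n\in A_n$ with $z_n\to y$, so $F(z_n)\to F(y)$ with $F(z_n)\in F(A_n)$, whence $\dist_M(F(y),F(A_n))\to 0$ and therefore $\dist_M(F(y_n),F(A_n))\to 0$ by the triangle inequality --- a contradiction. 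This would complete the argument.

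I do not expect a genuine obstacle here: the proof uses only compactness of the limit set $A$ and continuity of $F$, and no completeness assumption. An equivalent but slightly less elementary route would be to invoke Lemma \ref{l:relcom_Pi_KPi} to see that $\ov{A\cup\bigcup_n A_n}$ is compact, observe that $F$ restricted to this compact set is uniformly continuous, and push the resulting modulus of continuity directly through \eqref{eq:d_hausdorff}; the only point that genuinely needs to be stated explicitly is the well-definedness of the map on $\mc{K}(M)$.
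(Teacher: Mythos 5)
Your argument is correct, but it is not the route the paper takes: the paper's proof is precisely the one you relegate to a closing remark. It observes that the claim is immediate from \eqref{eq:d_hausdorff} when $F$ is uniformly continuous, and then, for general continuous $F$, invokes Lemma \ref{l:relcom_Pi_KPi} to see that $A\cup\bigl(\bigcup_n A_n\bigr)$ is (relatively) compact whenever $A_n\to A$, so that $F$ restricted to this set is uniformly continuous and the uniform modulus pushes through the Hausdorff metric. Your primary argument instead runs the $\eps$--contradiction directly through the two halves of \eqref{eq:d_hausdorff}, using only compactness of the limit set $A$ (plus, in case (a), compactness of $F(A_n)$ to attain the supremum --- or one can simply work with $\eps/2$) and sequential continuity of $F$. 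What your version buys is self-containedness: it does not rely on Lemma \ref{l:relcom_Pi_KPi}, whose proof as stated assumes $M$ complete (via total boundedness), whereas the paper explicitly notes that $(\Pi,d_\Pi)$ is not complete; your argument sidesteps that point entirely. What the paper's version buys is brevity --- two lines once Lemma \ref{l:relcom_Pi_KPi} is available. Both are valid; your explicit note that the map is well defined (continuous images of compact sets are compact) is a point the paper leaves implicit.
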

\begin{proof}
Note that if $F$ is uniformly continuous then 
the conclusion is clear from the definition of the Hausdorff metric, see \eqref{eq:d_hausdorff}.
For general continuous $F$, note by Lemma \ref{l:relcom_Pi_KPi}
that if $A_n\to A$ in $\mc{K}(M)$ then the set $(\cup_n A_n)\cup A$ is compact and 
thus the restriction of $F$ to this set is uniformly continuous.
The result follows.
\end{proof}

\begin{lemma}
\label{l:sw_KM}
Let $A_n,B_n,A,B\in\mc{K}(M)$ with $A_n\sw B_n$ for all $n$.
If $A_n\to A$ and $B_n\to B$ then $A\sw B$.
\end{lemma}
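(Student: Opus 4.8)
The statement to prove is Lemma \ref{l:sw_KM}: if $A_n,B_n,A,B\in\mc{K}(M)$ with $A_n\sw B_n$ for all $n$, and $A_n\to A$, $B_n\to B$ in the Hausdorff metric, then $A\sw B$. The plan is to work directly from the definition of the Hausdorff metric and the compactness of $B$. First I would take an arbitrary point $a\in A$ and aim to show $a\in B$. Since $A_n\to A$, for each $n$ there exists $a_n\in A_n$ with $d_M(a_n,a)\to 0$; concretely, $d_M(a,A_n)\leq d_{\mc{K}(M)}(A,A_n)\to 0$, so pick $a_n\in A_n$ realizing (or nearly realizing, within $1/n$) this infimum distance.

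Next, using $A_n\sw B_n$, we have $a_n\in B_n$ for every $n$. I would then estimate $d_M(a,B)$ by the triangle inequality: $d_M(a,B)\leq d_M(a,a_n)+d_M(a_n,B)\leq d_M(a,a_n)+d_{\mc{K}(M)}(B_n,B)$, where the second term comes from $a_n\in B_n$ and the definition \eqref{eq:d_hausdorff} of the Hausdorff metric. Both terms on the right tend to $0$ as $n\to\infty$: the first by our choice of $a_n$, the second by $B_n\to B$. Hence $d_M(a,B)=0$. Since $B$ is compact, it is in particular closed, so $d_M(a,B)=0$ forces $a\in B$. As $a\in A$ was arbitrary, $A\sw B$.

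There is essentially no obstacle here — the argument is a routine $\eps$-chase — but the one point to be slightly careful about is that one does not even need Lemma \ref{l:relcom_Pi_KPi} or any subsequence extraction, because each $a_n$ is simply carried along and the distance bounds are uniform in the relevant sense; closedness of $B$ (from compactness) is what closes the argument. One should also note the edge case $A=\emptyset$, for which the inclusion $A\sw B$ is trivial, so we may assume $A\neq\emptyset$ when picking $a\in A$. I would present this as a short direct proof of three or four sentences.

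\begin{proof}
If $A=\emptyset$ the inclusion is trivial, so assume $A\neq\emptyset$ and let $a\in A$. Since $A_n\to A$ in the Hausdorff metric we have $\dist_M(a,A_n)\leq d_{\mc{K}(M)}(A,A_n)\to 0$, so we may choose $a_n\in A_n$ with $d_M(a,a_n)\to 0$. As $A_n\sw B_n$ we have $a_n\in B_n$, hence by \eqref{eq:d_hausdorff},
\begin{equation*}
\dist_M(a,B)\leq d_M(a,a_n)+\dist_M(a_n,B)\leq d_M(a,a_n)+d_{\mc{K}(M)}(B_n,B).
\end{equation*}
Letting $n\to\infty$, both terms on the right vanish, so $\dist_M(a,B)=0$. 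Since $B$ is compact, it is closed, and therefore $a\in B$. As $a\in A$ was arbitrary, $A\sw B$.
\end{proof}
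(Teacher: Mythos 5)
Your proof is correct and follows essentially the same route as the paper's: pick $a_n\in A_n$ converging to $a$, use $a_n\in B_n$ and $B_n\to B$ to conclude $a\in B$. You simply spell out the final step (triangle inequality plus closedness of the compact set $B$) that the paper leaves implicit.
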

\begin{proof}
Let $a\in A$.
Since $A_n\to A$ there exists $a_n\in A_n$ such that $a_n\to a$.
Thus $a_n\in B_n$.
Since $B_n\to B$ we thus have $a\in B$.
\end{proof}

\subsection{On the M1 topology of $\Pi$ and $\mc{K}(\Pi)$}
\label{a:M1}

The book of \cite{Whitt2002}
details relative compactness and weak convergence for real valued stochastic processes (i.e.~single {\cadlag} paths) 
in all four Skorohod topologies.
In \cite{FreemanSwart2023} we introduce a unified framework for these four topologies,
suitable for random sets of {\cadlag} paths.
We recall some properties of the M1 version of this framework here.

Compact subsets of $\Pi$ play a key role in our main results and, for this reason,
we require corresponding criteria for relative compactness (of subsets of $\Pi$)
and tightness (relating to convergence in law of $\mc{K}(\Pi)$ valued random variables).
Fix a metric $d_{\ov{\R}}$ generating the topology on $\ov{\R}$ and
for $A\sw\ov{\R}$ let us write $\dist_{\ov{\R}}(x,A)=\inf\{d_{\ov{\R}}(x,y)\- y\in A\}$.
Relative compactness for sets of continuous paths is often characterised using the modulus of continuity,
see for example Theorem 7.2 of \cite{Billingsley1995}.
The analogous object for the M1 topology is
\begin{align}
\label{eq:moduli_cadlag}
w_{T,\de}(f)=
\sup\Big\{\dist_{\ov{\R}}\big(f(t_2\star_2),[f(t_1\star_1),f(t_3\star_3)]\big) 
\-\;&t_1\star_1,t_2\star_2,t_3\star_3\in I(f)_\mfs, \notag \\
&-T\leq t_1<t_2<t_3\leq T,\ t_3-t_1<\de\Big\},
\end{align}
with the conventions that the supremum over the empty set is zero,
and $[a,b]=[a\wedge b,a\vee b]$.

\begin{prop}
\label{p:relcom_tightness}
The following hold:
\begin{enumerate}
\item
A subset $\Ai\sw\Pi$ is relatively compact 
if and only if 
for all
$0<T<\infty$
$$\lim_{\de\to 0}\sup_{f\in\Ai}w_{T,\de}(f)=0.$$

\item
A subset $\mathscr{A}\sw\mc{K}(\Pi)$ is relatively compact 
if and only if 
for all
$0<T<\infty$
$$\lim_{\de\to 0}\sup_{\Ai\in\mathscr{A}}\sup_{f\in\Ai}w_{T,\de}(f)=0.$$

\item
A sequence of $\Pi$ valued random variables $(f_n)$ 
is tight, in the sense that their laws comprise a relatively compact sequence of probability measures on $\Pi$, if and only if
for all
$0<T<\infty$ and 
$\eps>0$ we have
$$\lim_{\delta\to 0}\limsup_{n\to\infty}\P\l[w_{T,\de}(f_n)\geq\eps\r]=0.$$

\item
A sequence of $\mc{K}(\Pi)$ valued random variables $(\mc{A}_n)$ 
is tight, in the sense that their laws comprise a relatively compact sequence of probability measures on $\mc{K}(\Pi)$, if and only if
for all
$0<T<\infty$ and 
$\eps>0$ we have
\begin{equation}
\label{eq:KPi_tightness}
\lim_{\delta\to 0}\limsup_{n\to\infty}\P\l[\sup_{f\in\mc{A}_n}w_{T,\de}(f)\geq\eps\r]=0.
\end{equation}
\end{enumerate}
\end{prop}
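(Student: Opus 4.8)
The plan is to prove Proposition \ref{p:relcom_tightness} in the order parts 1, 2, 3, 4, since each later part reduces to the earlier ones. I expect parts 3 and 4 to follow almost formally from parts 1 and 2 via standard Prokhorov-type arguments, and part 2 to follow from part 1 via Lemma \ref{l:relcom_Pi_KPi}. So the real content is part 1, which is essentially a characterisation of relative compactness in $(\Pi,d_{\rm M1})$; this is where I would concentrate effort (and indeed it is likely that much of part 1 is imported from \cite{FreemanSwart2023}, where the M1 topology on $\Pi$ is set up, so the task may partly be to assemble and cite rather than reprove).

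First I would prove part 1. Recall $d_\Pi(f,g) = d_{\mc{K}((\Rc)^2)}(H^{(2)}(f),H^{(2)}(g))$ and that $\Rc$ is compact. The direction ``relatively compact $\Rightarrow$ modulus vanishes'' I would do by contradiction: if $\sup_{f\in\Ai} w_{T,\delta_k}(f)\geq\eps$ along $\delta_k\downarrow 0$, pick witnesses $f_k\in\Ai$ and times $t^k_1\star^k_1<t^k_2\star^k_2<t^k_3\star^k_3$ in $[-T,T]_\mfs$ with $t^k_3-t^k_1<\delta_k$ realising (up to half the gap) the modulus. By relative compactness pass to $f_k\to f$ in $\Pi$, hence $H^{(2)}(f_k)\to H^{(2)}(f)$ in the Hausdorff metric; by compactness of $[-T,T]_\mfs$ the three time-sequences converge to a common $t\star$ (the gaps vanish), and the corresponding space-values converge in $\ov\R$. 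The ordering $\sqsubseteq$ on $H^{(2)}$ is preserved in the limit, which forces the limiting middle point to lie in the interpolated segment between the limiting outer points, contradicting the persistence of a gap of size $\eps$. This is the step I'd expect to need the most care, precisely because of the jumps-at-initial-time subtlety: one must track whether the limiting common time is $t+$ or $t-$ and use Lemma \ref{l:Rpm} to control how $t^k_i\star^k_i$ approaches it; the ordered graph $H^{(2)}$ (rather than $H$) is exactly what makes this work, and I would lean on Lemma \ref{l:appdx_1_sw_limits} / Lemma \ref{l:appdx_2_fntn} for the convergence of path values along converging time sequences. For the converse, ``modulus vanishes $\Rightarrow$ relatively compact'', I would show the uniform modulus bound forces $\{H^{(2)}(f):f\in\Ai\}$ to be totally bounded in $\mc{K}((\Rc)^2)$: since $(\Rc)^2$ is compact, $\mc{K}((\Rc)^2)$ is compact, so every subset is relatively compact there, and the only thing to check is that a limit point is again of the form $H^{(2)}(g)$ for some $g\in\Pi$ — i.e.\ that the limiting ordered set is the second-order interpolated graph of a genuine {\cadlag} path. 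This is a ``closedness of $\Pi$ inside $\mc{K}((\Rc)^2)$ modulo the modulus condition'' statement; the uniform vanishing of $w_{T,\delta}$ is exactly what rules out oscillatory limits that fail to be {\cadlag}. Again this is presumably Proposition-level material in \cite{FreemanSwart2023} and I would cite it, filling in only the reduction.

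Next, part 2 follows from part 1 together with Lemma \ref{l:relcom_Pi_KPi}(1): $\mathscr{A}\sw\mc{K}(\Pi)$ is relatively compact iff $\bigcup_{\Ai\in\mathscr{A}}\Ai$ is a relatively compact subset of $\Pi$, and by part 1 the latter holds iff $\lim_{\delta\to 0}\sup_{f\in\bigcup\mathscr{A}} w_{T,\delta}(f)=0$ for all $T$, which is literally $\lim_{\delta\to 0}\sup_{\Ai\in\mathscr{A}}\sup_{f\in\Ai} w_{T,\delta}(f)=0$.

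Finally, parts 3 and 4 are the probabilistic versions. Since $\Pi$ and $\mc{K}(\Pi)$ are Polish (Proposition \ref{p:J1M1} and the discussion after it), tightness of a sequence of laws is equivalent to relative compactness of that sequence in the weak topology (Prokhorov), and a family of laws is tight iff for every $\eps>0$ there is a compact set carrying mass $\geq 1-\eps$ uniformly. For part 4: if \eqref{eq:KPi_tightness} holds, then for $\eps>0$ choose, for each $k\in\N$, some $\delta_k$ and $n_k$ so that $\P[\sup_{f\in\mc{A}_n} w_{k^{-1},\delta_k}(f)\geq 1/k]\leq \eps 2^{-k}$ for all $n\geq n_k$; enlarging finitely many $\delta_k$ to absorb $n<n_k$, the set $\mathscr{K}=\{A\in\mc{K}(\Pi): \sup_{f\in A} w_{k^{-1},\delta_k}(f)\leq 1/k\ \forall k\}$ is relatively compact by part 2 (and closed, hence compact), and $\P[\mc{A}_n\notin\mathscr{K}]\leq\eps$ for all $n$. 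Conversely, tightness gives for each $\eps$ a compact $\mathscr{K}_\eps\sw\mc{K}(\Pi)$ with $\P[\mc{A}_n\in\mathscr{K}_\eps]\geq 1-\eps$; applying part 2 to $\mathscr{K}_\eps$ and then letting $\eps\downarrow 0$ yields \eqref{eq:KPi_tightness}. Part 3 is the same argument with part 1 in place of part 2 (or, alternatively, deduce it from part 4 by identifying a $\Pi$-valued random variable $f_n$ with the $\mc{K}(\Pi)$-valued random variable $\{f_n\}$, noting $\sup_{f\in\{f_n\}} w_{T,\delta}(f)=w_{T,\delta}(f_n)$). I would present part 3 first and then say part 4 is obtained \emph{mutatis mutandis}. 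No step here should be an obstacle; the only subtlety worth a sentence is that ``relatively compact'' for laws means relatively compact in the topology of weak convergence, which on a Polish space coincides with tightness by Prokhorov's theorem.
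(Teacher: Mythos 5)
Your proposal follows essentially the same route as the paper: part 1 is cited from the companion paper (Theorem 3.7 of \cite{FreemanSwart2023}), part 2 reduces to part 1 via Lemma \ref{l:relcom_Pi_KPi}, and parts 3--4 are obtained by the same Borel--Cantelli-style construction of a compact set $\bigcap_k B_k$ with mass bounds $\eps 2^{-k}$, with part 3 deduced from part 4 by taking $\mc{A}_n=\{f_n\}$ (an alternative you also note). One slip to fix: in your compact set for part 4 you wrote $w_{k^{-1},\delta_k}$, i.e.\ $T=k^{-1}\downarrow 0$, whereas you need $T=k\uparrow\infty$ (using that $w_{T,\delta}$ is increasing in $T$) so that part 2 actually yields relative compactness.
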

\begin{proof}
Part 1 follows from the relative compactness criteria given in Theorem 3.7 of \cite{FreemanSwart2023}.
In the language of that theorem,
compact containment is automatic as $\ov{\R}$ is compact,
and the Skorohod-equiconinuity requirement thus becomes part 1 above.
Note that, by Lemma \ref{l:relcom_Pi_KPi}, parts 1 and 2 of Proposition \ref{p:relcom_tightness} are in fact equivalent to each other.
Note also that part 3 follows from part 4 by taking $\mc{A}_n=\{f_n\}$.
Therefore, to complete the proof of Proposition \ref{p:relcom_tightness} it suffices to deduce part 4 as a consequence of part 2.
(In fact, a similar argument deduces part 3 from part 1.)

Suppose first that $(\mc{A}_n)$ is tight.
That is, for each $\kappa>0$ there exists a compact set $\mathscr{B}\sw\mc{K}(\Pi)$ such that
$\liminf_{n\to\infty}\P[\mc{A}_n\in\mathscr{B}]\geq 1-\kappa$.
By part 2, for any $\eps>0$ and $T\in(0,\infty)$ there exists $\de_0>0$
such that for all $\de\in(0,\de_0)$ we have $\sup_{B\in\mathscr{B}}\sup_{f\in B} w_{T,\de}(f)\leq\eps$.
Thus $\liminf_{n\to\infty} \P[\sup_{f\in\mc{A}_n} w_{T,\de}(f)\leq\eps]\geq 1-\kappa$.

It remains to show the reverse implication.
Let $(\mc{A}_n)$ satisfy \eqref{eq:KPi_tightness} and let $\kappa>0$.
Note that $w_{T,\de}(f)$ is an increasing function of both $\de$ and $T$.
By \eqref{eq:KPi_tightness}, 
for each $k\in\N$
set $\eps=1/k$ and 
choose $\delta_k>0$ and $T_k\in(0,\infty)$ such that
\begin{equation}
\label{eq:KPi_tightness_appl}
\limsup_{n\to\infty}\P[\mc{A}_n\sc B_k\neq\emptyset]\leq \eps 2^{-k}
\end{equation}
where
$B_k=\{f\in \Pi\- w_{T,\de}(f)\leq k^{-1}\}$.
Note that $B_{k+1}\sw B_k$ and let $B=\cap_{k} B_k$.
Then $\lim_{\de\downarrow 0} \sup_{f\in B}w_{T,\de}(f)=0$,
so part 2 gives that $\mathscr{B}=\{X\in\mc{K}(\Pi)\-X\sw B\}$ is a compact subset of $\mc{K}(\Pi)$.
We have that
\begin{align*}
\textstyle 
\limsup_{n}\P[\mc{A}_n\sc B\neq\emptyset] &= 
\textstyle
\limsup_{n}\P[\cup_{k} (\mc{A}_n\sc B_k)\neq\emptyset] \\
&\leq 
\textstyle
\limsup_{n}\sum_k\P[\mc{A}_n\sc B_k\neq\emptyset] \\
&\leq 
\textstyle
\sum_k \limsup_{n}\P[\mc{A}_n\sc B_k\neq\emptyset] \\
&\leq \eps.
\end{align*}
In the above, the third line uses the reverse Fatou lemma and the final line uses \eqref{eq:KPi_tightness_appl}.
Noting that $\{\mc{A}_n\sc B=\emptyset\}=\{\mc{A}_n\sw B\}=\{\mc{A}_n\in\mathscr{B}\}$
we thus obtain $\limsup_{n} \P[\mc{A}_n\in\mathscr{B}]\geq 1-\eps$.
Thus $(\mc{A}_n)$ is tight,
which completes the proof.
\end{proof}

The following three lemmas are consequences of Propositions \ref{p:J1M1} and (part 1 of) \ref{p:relcom_tightness}.
They cover most of our interaction with the M1 topology within the present article.
Recall from Section \ref{sec:pi} that for $f\in\Pi$ there is 
a natural total order $\sqsubseteq$ on the interpolated graph $H(f)$.
We write the associated strict order relation as $\sqsubset$.
We slightly extend the terminology introduced in \eqref{eq:f|z}:
if $w,z\in H(f)$ with $w\sqsubseteq z$ in the total order on $H(f)$,
we write $f|_{[w,z]}$ for the unique $g\sw f$ such that $g\in\Pi$ begins at $w$ and ends at $z$.

\begin{lemma}
\label{l:appdx_1_sw_limits}
Let $f_n,f\in\Pi$.
Suppose that $f_n\to f$, and that $w_n,z_n\in H(f_n)$ with $z_n\to z$ and $w_n\to w$,
and suppose that $w_n\sqsubseteq z_n$ in the induced order from $H(f_n)$.
Then $w,z\in H(f)$, with $w\sqsubseteq z$ in the induced order from $H(f)$, and $f_n|_{[w_n,z_n]}\to f|_{[w,z]}$.

In particular, if $z_n\in H(f_n)$ with $f_n\to f$ and $z_n\to z$ then $z\in H(f)$.
\end{lemma}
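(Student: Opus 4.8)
\textbf{Plan for the proof of Lemma \ref{l:appdx_1_sw_limits}.}
The plan is to work with the characterization of the M1 topology from Proposition \ref{p:J1M1}: convergence $f_n\to f$ in $\Pi$ is equivalent to $H^{(2)}(f_n)\to H^{(2)}(f)$ in $\mc{K}((\Rc)^2)$, the Hausdorff metric on compact subsets of $(\Rc)^2$. The key observation is that the ordered pair $(w_n,z_n)$, with $w_n\sqsubseteq z_n$ in the total order on $H(f_n)$, is precisely an element of $H^{(2)}(f_n)$; similarly $f_n|_{[w_n,z_n]}$ is the path whose interpolated graph is the ``interval'' $\{(a,b)\in H^{(2)}(f_n): w_n\sqsubseteq a\sqsubseteq b\sqsubseteq z_n\}$. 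So the whole statement should follow from elementary facts about Hausdorff convergence of compact sets, applied to $H^{(2)}(\cdot)$.

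First I would show $(w,z)\in H^{(2)}(f)$, i.e.\ $w,z\in H(f)$ with $w\sqsubseteq z$. Since $(w_n,z_n)\in H^{(2)}(f_n)$ and $H^{(2)}(f_n)\to H^{(2)}(f)$ in the Hausdorff metric on the compact space $\mc{K}((\Rc)^2)$, and $(w_n,z_n)\to(w,z)$, the limit $(w,z)$ must lie in $H^{(2)}(f)$ by Lemma \ref{l:sw_KM} (or directly: Hausdorff convergence forces limit points of sequences drawn from the $H^{(2)}(f_n)$ into $H^{(2)}(f)$). This simultaneously gives $w,z\in H(f)$ and $w\sqsubseteq z$, since membership in $H^{(2)}(f)$ is exactly that conjunction. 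This already contains the ``in particular'' clause: take $w_n=z_n$, so that $(z_n,z_n)\in H^{(2)}(f_n)$ converges to $(z,z)\in H^{(2)}(f)$, whence $z\in H(f)$.

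Next I would establish $f_n|_{[w_n,z_n]}\to f|_{[w,z]}$, i.e.\ $H^{(2)}(f_n|_{[w_n,z_n]})\to H^{(2)}(f|_{[w,z]})$ in the Hausdorff metric. Write $g_n=f_n|_{[w_n,z_n]}$ and $g=f|_{[w,z]}$. One inclusion-in-the-limit: if $(a,b)\in H^{(2)}(g)$ then $w\sqsubseteq a\sqsubseteq b\sqsubseteq z$ within $H^{(2)}(f)$, and using $H^{(2)}(f_n)\to H^{(2)}(f)$ one picks $(a_n,b_n)\in H^{(2)}(f_n)$ with $(a_n,b_n)\to(a,b)$; a short argument, using that $\sqsubseteq$ on $H(f_n)$ is a total order refining the coordinate order and that $w_n\to w$, $z_n\to z$, shows one may take $w_n\sqsubseteq a_n\sqsubseteq b_n\sqsubseteq z_n$ (perturbing $a_n,b_n$ to $w_n$ or $z_n$ if they stray slightly outside, which changes them by a vanishing amount). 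Hence $(a_n,b_n)\in H^{(2)}(g_n)$, so every point of $H^{(2)}(g)$ is a limit of points of $H^{(2)}(g_n)$. Conversely, any limit point of a sequence $(a_n,b_n)\in H^{(2)}(g_n)\sw H^{(2)}(f_n)$ lies in $H^{(2)}(f)$ and satisfies $w\sqsubseteq a\sqsubseteq b\sqsubseteq z$ (passing the chain $w_n\sqsubseteq a_n\sqsubseteq b_n\sqsubseteq z_n$ to the limit, using compatibility of the coordinate order with limits in $\Rc$), hence lies in $H^{(2)}(g)$. Since all these sets live in the fixed compact space $\mc{K}((\Rc)^2)$ and $(H^{(2)}(g_n))$ is relatively compact there, these two facts force $H^{(2)}(g_n)\to H^{(2)}(g)$.

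\textbf{Main obstacle.} The routine part is the Hausdorff-limit bookkeeping; the genuinely delicate point is handling the endpoints of the order interval when $f$ (or the $f_n$) makes a jump exactly at $\s$ or at the times of $w,z$. Because $\sqsubseteq$ on a vertical jump segment is defined via ``non-strictly closer to $f(t-)$'', the approximating ordered pairs $(a_n,b_n)$ must be chosen to respect not just the spatial coordinate order but the orientation of jumps, and one must check that truncating $a_n$ to $w_n$ or $b_n$ to $z_n$ when they overshoot is both legitimate (stays in $H^{(2)}(f_n)$) and harmless (vanishing perturbation). Verifying that these truncations are compatible with $w_n\to w$, $z_n\to z$ and with the order on $H(f_n)$ — especially when $w,z$ sit in the interior of a jump of $f$ — is where the care is needed, and it is essentially the same kind of reasoning already used in Section \ref{sec:pi} to justify that $\sw$ reduces to $H(f)\sw H(g)$ away from the trivial cases.
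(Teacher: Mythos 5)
Your proposal is correct and follows essentially the same route as the paper: both arguments reduce everything to Hausdorff convergence of the second-order interpolated graphs via Proposition \ref{p:J1M1}, pass ordered pairs $(w_n,v_n),(v_n,z_n)\in H^{(2)}(f_n)$ to the limit for one inclusion, and use truncation onto the order interval for the other --- your ``perturb $a_n,b_n$ to $w_n$ or $z_n$'' step is exactly the paper's $w_n\vee w_n'$, $z_n\wedge z_n'$ device, and the justification that the perturbation vanishes (if truncation is active infinitely often the limit point must coincide with $w$ or $z$ by antisymmetry of $\sqsubseteq$) is the same. The only cosmetic difference is that the paper first passes to a subsequential M1 limit $g'$ of the relatively compact sequence $(g_n)$ and then identifies $H(g')=H(g)$ at the level of the (unordered) interpolated graphs, using that the M2 distance is a metric, whereas you establish $H^{(2)}(g_n)\to H^{(2)}(g)$ directly via two-sided Kuratowski inclusions in the compact space $(\Rc)^2$; both are valid.
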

\begin{proof}
The second claim follows immediately from the first, so we will prove the first.
Let $g_n=f_n|_{[w_n,z_n]}$ and $g=f_{[w,z]}$.
From the hypothesis of the lemma $(f_n)$ is a relatively compact sequence in $\mc{K}(\Pi)$.
Since $g_n\sw f_n$ it follows from part 1 of Proposition \ref{p:relcom_tightness} that $(g_n)$ is also relatively compact.
To establish the present lemma it therefore suffices to show that any limit point of $(g_n)$ is equal to $g$.

Let $g'$ be a limit point of $(g_n)$ and,
with slight abuse of notation, 
let us pass to a subsequence and assume that $g_n\to g'$,
in the M1 topology.
From Proposition \ref{p:J1M1} we thus have 
$d_{\mc{K}((\Rc)^2)}(H^{(2)}(g_n),H^{(2)}(g'))\to 0$.
As we noted in comments above Proposition \ref{p:J1M1},
it is trivial to see that this implies
$d_{\mc{K}(\Rc)}(H(g_n),H(g'))\to 0$.
Moreover $(h,h')\mapsto d_{\mc{K}(\Rc)}(H(h),H(h'))$ is the Hausdorff metric on $\Pi$,
generating a coarser topology than the M1 topology;
as discussed in Section 3.4 of \cite{FreemanSwart2023} the topology generated is Skorohod's M2 topology.
It follows immediately by uniqueness of limits that 
if 
$d_{\mc{K}(\Rc)}(H(g_n),H(g))\to 0$
then
$d_{\mc{K}((\Rc)^2)}(H^{(2)}(g_n),H^{(2)}(g))\to 0$.
Therefore, to establish the present lemma we need only show that $H(g')=H(g)$.

Consider $v\in H(g')$.
As $H(g_n)\to H(g')$ there exists $v_n\in H(g_n)$ such that $v_n\to v$.
Since $g_n\sw f_n$ we thus have $v_n\in H(f_n)$
with $w_n\sqsubseteq v_n\sqsubseteq z_n$.
Thus $(w_n,v_n),(v_n,z_n)\in H^{(2)}(f_n)$.
Since $H^{(2)}(f_n)\to H^{(2)}(f)$ we thus obtain $(w,v),(v,z)\in H^{(2)}(f)$,
which implies that $v\in H(f)$ with $w\sqsubseteq v\sqsubseteq z$.
Hence $v\in H(g)$.
We thus obtain $H(g')\sw H(g)$.

It remains to show the reverse inclusion.
Consider $v\in H(g)$.
Thus $v\in H(f)$ with $w\sqsubseteq v\sqsubseteq z$,
which means that $(w,v),(v,z)\in H^{(2)}(f)$.
Hence there exists $(w'_n,v_n),(v_n,z'_n)\in H^{(2)}(f_n)$
such that $(w'_n,v_n)\to (w,v)$ and $(v_n,z'_n)\to (v,z)$.
If $v=w$ then $w_n\to v$ and as $w_n\in H(g_n)$ we obtain $v\in H(g')$.
Similarly, if $v=z$ then $z\in H'(g)$.
Without loss of generality we may therefore assume that 
$w\sqsubset v\sqsubset z$.

Note that $w_n\vee w'_n$ and $z_n\wedge z'_n$, 
where $\wedge$ and $\vee$ respectively denote $\min$ and $\max$ under $\sqsubseteq$,
are elements of $H(f_n)$.
Moreover $w_n\vee w'_n\to w$ and $z_n\wedge z'_n\to z$.
If $v_n\sqsubseteq w_n\vee w'_n$ for infinitely many $n\in\N$ then
$(v_n,w_n\vee w'_n)\in H^{(2)}(f_n)$ for such $n$ and
we would have $(v,w)\in H^{(2)}(f_n)$,
meaning that $v\sqsubseteq w$, which is a contradiction.
Hence $w_n\vee w'_n\sqsubseteq v_n$ for all but finitely many $n$.
Similarly $v_n \sqsubseteq z_n\wedge z'_n$.
Thus $w_n\sqsubseteq v_n\sqsubseteq z_n$ for all but finitely many $n$.
For such $n$ we have $v_n\in H(g_n)$,
which implies that $v\in H(g)$.
We therefore obtain $H(g')\sw H(g)$,
so in fact $H(g')=H(g)$, 
which completes the proof.
\end{proof}

\begin{lemma}
\label{l:appdx_2_fntn}
Let $f_n,f\in\Pi$.
Suppose that $f_n\to f$ and $t_n\in I(f_n)$ with $t_n\to t\in\R$.
For each $n$ let $\star_n\in\{-,+\}$.
Then $f\in I(f)$ and
$f(t-)\wedge f(t+)\leq \liminf_{n} f_n(t_n\star_n)\leq \limsup_n f_n(t_n\star_n)\leq f(t-)\vee f(t+).$
\end{lemma}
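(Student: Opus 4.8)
The plan is to reduce the statement to an application of Lemma \ref{l:appdx_1_sw_limits}. First I would observe that, since $f_n\to f$ in the M1 topology, Proposition \ref{p:J1M1} gives $H^{(2)}(f_n)\to H^{(2)}(f)$ in $\mc{K}((\Rc)^2)$, hence also $H(f_n)\to H(f)$ in $\mc{K}(\Rc)$. For each $n$, the point $z_n=(f_n(t_n\star_n),t_n)$ lies in $H(f_n)$. The sequence $(z_n)$ lies in the compact set $\Rc$, so it is relatively compact; moreover the second coordinate converges, $t_n\to t\in\R$, so any limit point of $(z_n)$ has the form $(x,t)$ with $x\in\ov\R$. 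By the ``in particular'' clause of Lemma \ref{l:appdx_1_sw_limits}, any such limit point $(x,t)$ belongs to $H(f)$; by the definition of $H(f)$ in Section \ref{sec:pi}, this forces $t\in I(f)$ and $f(t-)\wedge f(t+)\leq x\leq f(t-)\vee f(t+)$.

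Next I would turn this into the claimed $\liminf$/$\limsup$ bounds. Pass to a subsequence along which $f_n(t_n\star_n)\to \liminf_n f_n(t_n\star_n)$; applying the previous paragraph to this subsequence yields $f(t-)\wedge f(t+)\leq \liminf_n f_n(t_n\star_n)$. Similarly, passing to a subsequence realizing the $\limsup$ gives $\limsup_n f_n(t_n\star_n)\leq f(t-)\vee f(t+)$. The middle inequality $\liminf \leq \limsup$ is trivial. Combining these three inequalities, together with $t\in I(f)$ already established, completes the proof.

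The only mildly delicate point is the passage from ``$t_n\to t$ and $z_n\in H(f_n)$'' to ``$t\in I(f_n)$'' — i.e. that $t$ genuinely lies in the domain interval of the limiting path and that the limiting value is squeezed between $f(t-)$ and $f(t+)$ — but this is exactly what the final sentence of Lemma \ref{l:appdx_1_sw_limits} provides, applied with the (constant-in-nothing) choice $w_n=z_n$. So there is no real obstacle; the argument is a short deduction from Lemma \ref{l:appdx_1_sw_limits} and the definition of $H(\cdot)$, and I would write it in a few lines. I expect the writing to be routine, with the main care going into stating the subsequence extractions cleanly so that the two one-sided bounds are obtained without circularity.
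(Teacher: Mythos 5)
Your argument is correct and is essentially the paper's own proof: both take a subsequential limit point $x$ of $(f_n(t_n\star_n))$, note $(f_n(t_n\star_n),t_n)\in H(f_n)$, and apply the final clause of Lemma \ref{l:appdx_1_sw_limits} to conclude $(x,t)\in H(f)$, which gives $t\in I(f)$ and squeezes $x$ between $f(t-)\wedge f(t+)$ and $f(t-)\vee f(t+)$. The subsequence extractions for the $\liminf$ and $\limsup$ are exactly how the paper turns this into the stated inequalities.
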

\begin{proof}
Let $x$ be any subsequential limit point of $(f_n(t_n\star_n))_{n\in\N}$,
which means that $(x,t)$ is a subsequential limit point of $(f_n(t_n\star_n),t_n)$.
Note that $(f_n(t_n\star_n),t_n)\in H(f_n)$.
By Lemma \ref{l:appdx_1_sw_limits} we thus have $(x,t)\in H(f)$,
which implies that $x\in[f(t-)\wedge f(t+),f(t-)\vee f(t+)]$.
The result follows.
\end{proof}

\begin{lemma}
\label{l:appdx_3_time_approx}
Let $f_n,f\in\Pi$, with $f_n\to f$ and $t\star\in I(f)_\mfs$.
There exists $t_n\in\R$ such that $t_n \to t$ and $f_n(t_n\star)\to f(t\star)$.
\end{lemma}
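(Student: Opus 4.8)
The plan is to argue directly from the definition of the M1 metric in Proposition~\ref{p:J1M1}: $f_n\to f$ is equivalent to $H^{(2)}(f_n)\to H^{(2)}(f)$ in the Hausdorff metric on $\mc{K}((\Rc)^2)$, and (as noted in the discussion preceding Proposition~\ref{p:J1M1}) this forces $H(f_n)\to H(f)$ in $\mc{K}(\Rc)$. One may assume $t\in\R$, the cases $t=\pm\infty$ being analogous and simpler (apply the argument below to the global $\sqsubseteq$-extreme points of $H(f_n)$ to get convergence of the final/initial points of $f_n$, hence $\tau_{f_n}\to+\infty$ or $\sigma_{f_n}\to-\infty$, and then pick any $t_n\in I(f_n)\cap\R$ with $t_n\to\pm\infty$).

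The key observation is that for every $s\in I(f)\cap\R$ the point $(f(s+),s)$ is the $\sqsubseteq$-largest element of the fibre $\{(x,s):(x,s)\in H(f)\}$, and $(f(s-),s)$ the $\sqsubseteq$-smallest, simply because among $x\in[f(s-),f(s+)]$ the quantity $|x-f(s-)|$ is maximised at $x=f(s+)$ and minimised at $x=f(s-)$; the same holds fibrewise in each $H(f_n)$. Fix $\star\in\{-,+\}$ and set $\zeta=(f(t\star),t)\in H(f)$. Using $H(f_n)\to H(f)$, choose $\zeta_n=(c_n,v_n)\in H(f_n)$ with $\zeta_n\to\zeta$; since $t\in\R$ this gives $v_n\to t$ and $v_n\in I(f_n)\cap\R$ for large $n$. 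Now replace $\zeta_n$ by the $\sqsubseteq$-extreme point of $H(f_n)$ in its own time-$v_n$ fibre, namely $\zeta'_n=(f_n(v_n\star),v_n)\in H(f_n)$: then $\zeta_n\sqsubseteq\zeta'_n$ when $\star=+$ and $\zeta'_n\sqsubseteq\zeta_n$ when $\star=-$, so in either case the corresponding ordered pair lies in $H^{(2)}(f_n)$.

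It remains to show $\zeta'_n\to\zeta$; then $t_n=v_n$ works, since this says precisely $v_n\to t$ and $f_n(v_n\star)\to f(t\star)$. By compactness of $\Rc$ the sequence $(\zeta'_n)$ has a limit point $\eta$; passing to a subsequence along which $\zeta'_n\to\eta$, the pairs from $H^{(2)}(f_n)$ built above converge to $(\zeta,\eta)$ (in the order $(\zeta,\eta)$ if $\star=+$, $(\eta,\zeta)$ if $\star=-$), and since $H^{(2)}(f_n)\to H^{(2)}(f)$ and $(\Rc)^2$ is compact we get $(\zeta,\eta)$ (resp.\ $(\eta,\zeta)$) $\in H^{(2)}(f)$, i.e.\ $\zeta\sqsubseteq\eta$ (resp.\ $\eta\sqsubseteq\zeta$). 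But $v_n\to t$ forces $\eta$ into the time-$t$ fibre of $H(f)$, and $\zeta=(f(t\star),t)$ is the $\sqsubseteq$-largest (resp.\ smallest) point of that fibre, so by antisymmetry of the total order $\sqsubseteq$ on $H(f)$ we conclude $\eta=\zeta$. Hence every limit point of $(\zeta'_n)$ equals $\zeta$, so $\zeta'_n\to\zeta$. The only delicate point, and the place where the argument genuinely uses the M1 rather than J1 structure, is this last step: M1 convergence controls only the interpolated graphs $H(f_n)$, which do not directly remember the one-sided values $f_n(v_n\pm)$; those values are recovered precisely because they are the extreme points of the fibres, and extremality within a fibre is preserved by convergence of the second-order graphs $H^{(2)}$.
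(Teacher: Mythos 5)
Your proof is correct and follows essentially the same route as the paper's: both extract times $t_n\to t$ from the Hausdorff convergence $H(f_n)\to H(f)$ at the point $(f(t\star),t)$, and then use the order-preservation encoded in $H^{(2)}(f_n)\to H^{(2)}(f)$ together with the fact that $(f(t\star),t)$ is a $\sqsubseteq$-extreme point of its time-$t$ fibre to force $f_n(t_n\star)\to f(t\star)$. The only cosmetic difference is that the paper sandwiches the approximating point between both one-sided values $f_n(t_n-)$ and $f_n(t_n+)$ and squeezes, whereas you pass directly to the single relevant fibre-extreme point and invoke antisymmetry of $\sqsubseteq$.
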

\begin{proof}
Noting that $H(f_n)\to H(f)$, take $(x_n,t_n)\in H(f_n)$ such that $(x_n,t_n)\to (f(t\star),t)$.
Then 
$(f_n(t_n-),t_n)\sqsubseteq (x_n,t_n) \sqsubseteq (f_n(t_n+),t_n)$.
By compactness of $\ov{\R}$ we may, without loss of generality,
pass to a subsequence along which both $f_n(t_n-)$ and $f_n(t_n+)$ converge,
say $f_n(t_n-)\to a$ and $f_n(t_n+)\to b$.
Hence, by Lemma \ref{l:appdx_1_sw_limits} we have
$(a,t)\in H(f)$ and $(b,t)\in H(f)$ with 
$(f(t-),t)\sqsubseteq (a,t)\sqsubseteq (f(t\star),t) \sqsubseteq (b,t) \sqsubseteq (f(t+),t)$.
If $\star=-$ then $(a,t)=(f(t\star),t)$ and $f_n(t_n-)\to f(t-)$.
If $\star=+$ then $(b,t)=(f(t\star),t)$ and $f_n(t_n+)\to f(t+)$.
\end{proof}

\subsection{On measurability in $\mc{K}(\Pi)$}
\label{sec:meas_1}

In this section we establish that various basic maps involving $\mc{K}(\Pi)$ are measurable.
Such things are required to work with $\mc{K}(\Pi)$ valued random variables in Section \ref{sec:weaves_random}.
The vast majority of the work involved in this section is Lemma \ref{l:meas_Az},
which is also used in the proof of Lemma \ref{l:biinf_ramification_meas_zero}.


Recall that we use the Borel $\sigma$-fields on $\Rc$, $\Pi$ and $\mc{K}(\Pi)$.
These $\sigma$-fields are generated in each case by the closed (or equivalently, open) subsets. 
Due to our focus on compactness it is helpful to work with closed sets whenever possible.
Recall that $d_{\Pi}$, generating the M1 topology on $\Pi$, is defined via Proposition \ref{p:J1M1}
and the corresponding Hausdorff metric on $\mc{K}(\Pi)$ is defined via \eqref{eq:d_hausdorff}.

\begin{lemma}
\label{l:meas_Az}
The map $(A,z)\mapsto A(z)$ from $\mc{K}(\Pi)\times\Rc\to \mc{K}(\Pi)$ is measurable.
\end{lemma}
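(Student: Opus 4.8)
The plan is to verify measurability by checking the preimages of a countable generating family for the Borel $\sigma$-field of $\mc{K}(\Pi)$, after first establishing one auxiliary measurability fact. Throughout, write $\mc{Z}=\{(f,z)\in\Pi\times\Rc\-z\in H(f)\}$. If $(f_n,z_n)\to(f,z)$ in $\Pi\times\Rc$ with $z_n\in H(f_n)$ for all $n$, then the final assertion of Lemma~\ref{l:appdx_1_sw_limits} gives $z\in H(f)$; hence $\mc{Z}$ is closed in $\Pi\times\Rc$. Since $A(z)=A\cap\mc{Z}_z$, where $\mc{Z}_z=\{f\-(f,z)\in\mc{Z}\}$ is closed in $\Pi$, the set $A(z)$ is a closed subset of the compact set $A$, so $A(z)\in\mc{K}(\Pi)$ (which by convention contains $\emptyset$) and the map is well defined.

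The auxiliary fact I would prove is: for every closed $\mc{L}\sw\Pi\times\Rc$, the set $E(\mc{L})=\{(A,z)\in\mc{K}(\Pi)\times\Rc\-A\cap\mc{L}_z\neq\emptyset\}$ is closed in $\mc{K}(\Pi)\times\Rc$. Fixing a metric generating the product topology on $\Pi\times\Rc$, define $\phi(A,z)=\inf_{f\in A}\dist\big((f,z),\mc{L}\big)\in[0,\infty]$, with $\phi(\emptyset,z)=+\infty$. Since $A$ is compact and $\mc{L}$ is closed, $\phi(A,z)=0$ precisely when some $f\in A$ satisfies $(f,z)\in\mc{L}$, i.e.\ when $A\cap\mc{L}_z\neq\emptyset$; so $E(\mc{L})=\phi^{-1}(\{0\})$ and it suffices to show $\phi$ is continuous. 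Given $(A_n,z_n)\to(A,z)$: if $A=\emptyset$ then $A_n=\emptyset$ eventually and $\phi$ is the constant $+\infty$ near $(A,z)$; otherwise $A_n\neq\emptyset$ eventually, upper semicontinuity follows by choosing a minimiser $f\in A$ and $f_n\in A_n$ with $f_n\to f$, and lower semicontinuity follows because $A_n\to A$ forces $\bigcup_n A_n$ to be relatively compact in $\Pi$ (Lemma~\ref{l:relcom_Pi_KPi}), so minimisers $f_n\in A_n$ of $\phi(A_n,z_n)$ admit a subsequential limit $f\in A$ along which $\phi(A_n,z_n)=\dist\big((f_n,z_n),\mc{L}\big)\to\dist\big((f,z),\mc{L}\big)\geq\phi(A,z)$.

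To conclude, recall that, $\Pi$ being Polish and hence second countable, the Borel $\sigma$-field of $\mc{K}(\Pi)$ is generated by the sets $\{K\-K\cap U\neq\emptyset\}$ and $\{K\-K\sw U\}$ as $U$ ranges over a countable base of $\Pi$ (together with $\{\emptyset\}=\{K\-K\sw\emptyset\}$). It therefore suffices to prove that $\{(A,z)\-A(z)\cap U\neq\emptyset\}$ and $\{(A,z)\-A(z)\sw U\}$ are Borel for every open $U\sw\Pi$. Writing $U=\bigcup_k C_k$ with $C_k$ closed and increasing, one checks $A(z)\cap U\neq\emptyset$ iff $A\cap\big(\mc{Z}\cap(C_k\times\Rc)\big)_z\neq\emptyset$ for some $k$, and $A(z)\nsubseteq U$ iff $A\cap\big(\mc{Z}\cap(U^c\times\Rc)\big)_z\neq\emptyset$; in both cases the set in parentheses is closed in $\Pi\times\Rc$, so by the auxiliary fact $\{(A,z)\-A(z)\cap U\neq\emptyset\}$ is an $F_\sigma$ and $\{(A,z)\-A(z)\sw U\}$ is the complement of a closed set. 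Both are Borel, which proves the lemma.

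The step I expect to be the main obstacle is precisely the \emph{joint} measurability in $(A,z)$, and more concretely the continuity of $\phi$ underpinning the auxiliary fact: even for fixed $z$ the assignment $A\mapsto A\cap\mc{Z}_z$ is only upper semicontinuous, so some robustness argument of this kind is unavoidable. An alternative route would be to discretise $\Rc$ by a nested sequence of Borel partitions of mesh tending to $0$, express $A(z)$ as the pointwise limit in $\mc{K}(\Pi)$ (using nestedness of compact sets) of $A(\overline{Q})$ over the partition cells $Q\ni z$, and invoke the closedness statements of Lemma~\ref{l:meas_various_KPi}; I would keep the distance-functional version as the cleaner one. Minor care is needed throughout with the isolated element $\emptyset\in\mc{K}(\Pi)$ and with the possibility $A(z)=\emptyset$.
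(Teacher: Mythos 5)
Your proof is correct, and it takes a genuinely different route from the paper's. The paper first reduces joint measurability to measurability of the two marginal maps $A\mapsto A(z)$ and $z\mapsto A(z)$ (citing separability of $\mc{K}(\Pi)$ and $\Rc$), and then, for each marginal separately, characterises the preimage of a closed subset $\mathscr{C}\sw\mc{K}(\Pi)$ by an explicit $\epsilon$--$\delta$ condition involving a countable dense subset of $\mathscr{C}$, exhibiting the preimage as a countable Boolean combination of the closed sets supplied by Lemma \ref{l:meas_various_KPi}. You instead attack joint measurability head-on: you test against the hit-and-miss generators of the Borel $\sigma$-field of $\mc{K}(\Pi)$ and reduce everything to a single auxiliary fact --- closedness of $\{(A,z)\-A\cap\mc{L}_z\neq\emptyset\}$ for closed $\mc{L}\sw\Pi\times\Rc$ --- which you prove via continuity of the distance functional $\phi$, using Lemma \ref{l:appdx_1_sw_limits} only once, to see that $\mc{Z}$ is closed. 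Your route is shorter and, more importantly, it never passes through separate measurability; since separate measurability of a two-variable map does not in general imply joint measurability, your argument rests on firmer ground than the paper's reduction step, which is asserted without further justification. Two small points: your appeal to Lemma \ref{l:relcom_Pi_KPi} needs $\Pi$ to be topologically complete (it is Polish by Proposition \ref{p:J1M1}, even though $d_{\rm M1}$ itself is not complete), which matches the paper's own usage of that lemma; and for the miss sets $\{K\-K\sw U\}$ the countable generating family strictly requires finite unions of basic opens rather than single basic opens, but this is immaterial since you verify the preimage statement for arbitrary open $U$.
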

\begin{proof}
The proof is rather technical.
Recall that $A(z)=\{f\in A\-z\in H(f)\}$.
Note that Proposition \ref{p:J1M1} gives that $A(z)$ is a closed subset of $\Pi$,
which implies compactness since $A(z)$ is a subset of the compact set $A$.
As both $\mc{K}(\Pi)$ and $\Rc$ are separable,
in order to establish measurability of $(A,z)\mapsto A(z)$
it suffices to show that the marginal maps $A\mapsto A(z)$ and $z\mapsto A(z)$ are both measurable.
We split the proof into these two parts, which will be proved independently.
In each part we will show that the pre-image of a closed subset of $\mc{K}(\Pi)$ is measurable;
we will represent this pre-image explicitly using countably many set operations on measurable subsets.
Lemma \ref{l:meas_various_KPi} provides a supply of measurable (in fact, closed) subsets of $\mc{K}(\Pi)$.
The fact that a closed graph $H(f)$ is measurable (in fact, compact) provides a supply of measurable subsets of $\Rc$.

\textbf{Measurability of $A\mapsto A(z)$:}
Fix $z\in\Rc$ and denote this map by $M_z:\mc{K}(\Pi)\to\mc{K}(\Pi)$,
so $M_z(A)=A(z)$.
For $z\in\Rc$ let us write 
$\Pi_z=\{f\in\Pi\-z\in H(f)\}$
and 
$\mc{K}(\Pi_z)$ for the set of compact subsets of $\Pi_z$.
Proposition \ref{p:J1M1} implies that $\Pi_z$ is a closed subset of $\Pi$,
from which part 1 of Lemma \ref{l:meas_various_KPi} gives that $\mc{K}(\Pi_z)$ a closed subset of $\mc{K}(\Pi)$.
Note that $M_z$ maps into $\mc{K}(\Pi_z)$.
It therefore suffices to show that the pre-image of a closed subset of $\mc{K}(\Pi_z)$ is measurable.
To this end, 
let $\mathscr{C}\sw\mc{K}(\Pi_z)$ be closed and let $\mathscr{C}'$ be a dense countable subset of $\mathscr{C}$.
We will show that the following are equivalent:
\begin{enumerate}
\item $A(z)\in \mathscr{C}$;
\item for all $\epsilon>0$ there exists $\delta>0$ and $C\in \mathscr{C}'$ such that 
$\big(A\cap\Pi_z^{(\delta)}\big)\sc C^{[\epsilon]}=\emptyset$ and $C\sw A^{[\epsilon]}$.
\end{enumerate}
We first give the forwards implication (1)$\ra$(2).
Let us assume $A(z)\in \mathscr{C}$ 
and suppose that (2) fails,
in preparation for an argument by contradiction.
Then there exists $\epsilon>0$ such that for all $C\in\mathscr{C}'$ and all $\delta>0$ it holds that
$\big(A\cap\Pi_z^{(\delta)}\big)\sc C^{[\epsilon]}\neq\emptyset$ or $C\nsubseteq A^{[\epsilon]}$.
We have $A(z)\in\mathscr{C}$ so we may choose $C\in\mathscr{C}'$ such that
\begin{equation}
\label{eq:CAz_close}
d_{\mc{K}(\Pi)}(C,A(z))\leq\tfrac{\epsilon}{2}.
\end{equation}
Hence $C\sw A(z)^{[\epsilon]}$, which implies $C\sw A^{[\epsilon]}$.
Taking $\delta=\frac{1}{n}$ 
we thus have that 
there exists an infinite subsequence of $n\in\N$ for which
$\big(A\cap\Pi_z^{(1/n)}\big)\sc C^{[\epsilon]}\neq\emptyset$.
For such $n$, take $f_n\in \big(A\cap\Pi_z^{(1/n)}\big)\sc C^{[\epsilon]}$.
We thus have $f_n\notin C^{[\epsilon]}$,
which by \eqref{eq:CAz_close} implies that $f_n\notin A(z)^{[\epsilon/2]}$.
We also have that $f_n\in A\cap \Pi_z^{(1/n)}$
which, by compactness of $A$ implies that $(f_n)$ has a subsequential limit $f\in A(z)$.
However, this contradicts the conclusion reached in the previous sentence.
Thus (1)$\ra$(2).

Let us now establish the reverse implication (2)$\ra$(1).
Take $\epsilon=\frac1n$ and take $\delta=\delta_n>0$ and $C=C_n\in\mathscr{C}'$ as given from (2).
That is, we have 
\begin{equation}
\label{eq:CnAcomp}
C_n\sw A^{[1/n]}
\qquad\text{ and }\qquad
A\cap \Pi_z^{(\delta_n)}\sw C_n^{[1/n]}.
\end{equation}
From the first statement in \eqref{eq:CnAcomp}, 
since $C_n\in\mc{K}(\Pi_z)$ we have $C_n\sw A^{[1/n]}\cap \Pi_z\sw A(z)^{[1/n]}$.
It is automatic that
$A(z)\sw A\cap \Pi_z^{(\delta_n)}$
so from the second statement in \eqref{eq:CnAcomp} we obtain $A(z)\sw C_n^{[1/n]}$.
Putting these together, we obtain $d_{K(\Pi)}(A(z),C_n)\leq \frac1n$.
Thus $C_n\to A(z)$ as $n\to\infty$. Since $\mathscr{C}$ is closed we thus obtain $A(z)\in\mathscr{C}$.
Thus (2)$\ra$(1).

We now have that (1)$\iff$(2).
It follows that
\begin{align*}
M_z^{-1}(\mathscr{C})
&=
\bigcap_{\epsilon>0}\bigcup_{\delta>0}\bigcup_{C\in\mathscr{C}'}
\l\{B\in\mc{K}(\Pi)\-\l(B\cap \Pi_z^{(\delta)}\r)\sc C^{[\epsilon]}=\emptyset\r\} 
\cap
\l\{B\in\mc{K}(\Pi)\-C\sw B^{[\epsilon]} \r\}
\\
&=
\bigcap_{n\in\N}\bigcup_{m\in\N}\bigcup_{C\in\mathscr{C}'}
\l\{B\in\mc{K}(\Pi)\-B\cap \l(\Pi_z^{(1/m)}\sc C^{[1/n]}\r)=\emptyset\r\} 
\cap
\l\{B\in\mc{K}(\Pi)\-C\sw B^{[1/n]} \r\}
\end{align*}
Note that we have used the set algebraic identity $X\cap (Y\sc Z)=(X \cap Y)\sc Z$.
By parts 1 and 2 of Lemma \ref{l:meas_various_KPi} 
the last line of the above consists of countable unions and intersections of closed subsets of $\mc{K}(\Pi)$.
Thus $M_z$ is measurable.

\textbf{Measurability of $z\mapsto A(z)$:}
We will recycle some parts of our notation, to preserve the symmetry between this argument the above.
Fix $A\in\mc{K}(\Pi)$ and let us denote the map in question by $M_A:\Rc\to \mc{K}(\Pi)$,
so $M_A(z)=A(z)$.
For any $z\in\Rc$ we have that $M_A(z)\in\mc{K}(A)$, 
where $\mc{K}(A)$ denotes the set of all compact subsets of $A$.
Part 1 of Lemma \ref{l:meas_various_KPi} gives that
$\mc{K}(A)$ is a closed subset of $\mc{K}(\Pi)$.
It is straightforward to check that if $B\in\mc{K}(A)$ then
\begin{align}
\label{eq:MAB_preimage}
M_A^{-1}(B)
&=\l(\bigcap_{f\in B}H(f)\r)\sc\l(\bigcup_{f\in A\sc B}H(f)\r).
\end{align}
In words, equation \eqref{eq:MAB_preimage} says that
$M_A(z)=B$ if and only if $B$ is precisely the set of $f\in A$ such that $z\in H(f)$.
In order to establish measurability of $M_A$ it suffices to 
fix a closed subset $\mathscr{C}$ of $\mc{K}(A)$ and show that $M_A^{-1}(\mathscr{C})$ is a measurable subset of $\Rc$.
Let $\mathscr{C}'$ be a dense countable subset of $\mathscr{C}$.
We will show that the following are equivalent:
\begin{enumerate}[resume]
\item $A(z)\in\mathscr{C}$;
\item for all $\epsilon>0$ there exists $C\in\mathscr{C}'$ such that 
$z\in \l(\bigcap_{f\in C}H(f)^{[\epsilon]}\r)\sc\l(\bigcup_{f\in A\sc C^{[\epsilon]}}H(f)\r)$.
\end{enumerate}
We first give the forwards implication (3)$\ra$(4).
Suppose that $A(z)\in\mathscr{C}$, that is $M_A(z)\in\mathscr{C}$.
Then there exists $B\in\mathscr{C}$ such that $z\in M_A^{-1}(B)$.
Choose $C\in\mathscr{C}'$ such that $d_{\mc{K}(\Pi)}(B,C)\leq\epsilon$.
From \eqref{eq:MAB_preimage} we have that $z\in\bigcap_{f\in B}H(f)$.
It is straightforward to check that
$\bigcap_{f\in B}H(f)\sw\bigcap_{f\in B^{[\epsilon]}}H(f)^{[\epsilon]}$
and as $C\sw B^{[\epsilon]}$ we obtain that
$z\in \bigcap_{f\in C} H(f)^{[\epsilon]}$.
Similarly, we have $B\sw C^{[\epsilon]}$, so $A\sc C^{[\epsilon]}\sw A\sc B$ 
and from \eqref{eq:MAB_preimage} we have
$z\notin \bigcup_{f\in A\sc B}H(f) \supseteq \bigcup_{f\in A\sc C^{[\epsilon]}}H(f)$.
We have thus obtained that (3)$\ra$(4).

Let us now establish the reverse implication (4)$\ra$(3).
Take $\epsilon=\frac{1}{n}$ and let $C=C_n$ be as given from (4).
Noting that $C_n\sw A$ for all $n\in\N$, 
Proposition \ref{p:relcom_tightness} gives that the sequence 
$(C_n)_{n\in\N}$ is relatively compact (as sequence of elements of $\mc{K}(\Pi)$),
and thus has a convergence subsequence. 
With slight abuse of notation let us pass to this convergent subsequence and set
$B=\lim_n C_n$.
It follows immediately that $B\in \mathscr{C}$.

From (4) we have $z\in\bigcap_{f\in C_n}H(f)^{[\epsilon_n]}$.
For each $g\in B$ there exists $g_n\in C_n$ such that $g_n\to g$.
As $g_n\in C_n$ we have $z\in H(g_n)^{[\epsilon_n]}$.
Proposition \ref{p:J1M1} gives that $H(g_n)\to H(g)$ in $\mc{K}(\Rc)$,
which implies that $H(g_n)^{[\epsilon_n]}\to H(g)$.
It follows immediately that $z\in H(g)$,
and as $g\in B$ was arbitrary we have $z\in \bigcap_{g\in B} H(g)$.

Similarly, from (4) we have $z\notin \bigcup_{f\in A\sc C_n^{[\epsilon_n]}}H(f)$ for all $n$.
Take $g\in A\sc B$ and note that because $B\sw\Pi$ is closed we have $d_{\mc{K}(\Pi)}(\{g\},B)>0$.
As $C_n\to B$ we have also that $C_n^{[\epsilon_n]}\to B$, 
so for sufficiently large $n$ we have $g\notin C_n^{[\epsilon_n]}$.
Hence $z\notin H(g)$.
As $g\in B$ was arbitrary we thus have $g\notin\bigcup_{g\in A\sc B}H(g)$.
Putting this together with the conclusion of the previous paragraph and \eqref{eq:MAB_preimage}
we obtain that $z\in M_A^{-1}(B)$.
Thus (4)$\ra$(3).

We now have that (3)$\iff$(4).
It follows that
\begin{align}
M_A^{-1}(\mathscr{C})
&=\bigcap_{\epsilon>0}\bigcup_{C\in\mathscr{C}'}
\l(\l(\bigcap_{f\in C}H(f)^{[\epsilon]}\r)\sc\l(\bigcup_{f\in A\sc C^{[\epsilon]}}H(f)\r)\r) \notag\\
&=\bigcap_{n\in\N}\bigcup_{C\in\mathscr{C}'}
\l(\l(\bigcap_{f\in C}H(f)^{[1/n]}\r)\sc\l(\bigcup_{f\in A\sc C^{[1/n]}}H(f)\r)\r)
\label{eq:MA_preimage}
\end{align}
We now examine the two terms in brackets on the right hand side of \eqref{eq:MA_preimage}.
The set $\bigcap_{f\in C}H(f)^{[1/n]}$ is an intersection of closed sets, and is therefore closed.
Note also that
\begin{align}
\bigcup_{f\in A\sc C^{[1/n]}}H(f)
=\bigcup_{m\in\N}\l(\bigcup_{f\in A\sc C^{(1/n+1/m)}}H(f)\r).
\label{eq:MA_preimage_part}
\end{align}
We claim that $\bigcup_{f\in E} H(f)$ is closed whenever $E\sw \Pi$ is compact;
to see this take $x_n\in H(f_n)$ where $f_n\in E$ and $x_n\to x\in\Rc$, 
pass to a convergence subsequence $f_n\to f\in E$ and then by Proposition \ref{p:J1M1} we have $H(f_n)\to H(f)$ so $x\in H(f)$.
In particular, as $C^{(\delta)}$ is open for all $\delta>0$ we have that $A\sc C^{(\delta)}$ is compact, 
so the right hand side of \eqref{eq:MA_preimage_part} is a countable union of closed sets.
We have now shown that \eqref{eq:MA_preimage} represents $M_A^{-1}(\mathscr{C})$ using countably many set operations of measurable subsets of $\Rc$.
Thus $M_A$ is measurable.
\end{proof}

\begin{lemma}
\label{l:meas_A|z}
Let $z\in\Rc$.
The map $(A,z)\mapsto A|_{z}$ is a measurable map from $\mc{K}(\Pi)\times\Rc\to\mc{K}(\Pi)$.
\end{lemma}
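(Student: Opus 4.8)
The plan is to factor the map through $(A,z)\mapsto(A(z),z)\mapsto A|_z$, using Lemma \ref{l:meas_Az} for the first arrow and a continuity argument for the second. Recall that, for $f\in\Pi$ with $z\in H(f)$, the restriction $f|_z$ (written $f\lceil_z$ in Section \ref{sec:proof_t_dual_webs}) is the unique $g\sw f$ with initial point $z$, and that $A|_z=\{f|_z\-f\in A(z)\}$. Since every $f\in A(z)$ satisfies $z\in H(f)$, each $f|_z$ is well-defined; once we know $f\mapsto f|_z$ is continuous, compactness of $A(z)$ (Lemma \ref{l:meas_Az}) gives $A|_z\in\mc{K}(\Pi)$. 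Lemma \ref{l:meas_Az} already provides that $(A,z)\mapsto A(z)$ is measurable, so the real work is to control the restriction operation as $z$ also varies.

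The key step is to show that the pointwise restriction map
\[
\rho:\Gamma\to\Pi,\qquad \rho(f,z)=f|_z,\qquad \Gamma:=\{(f,z)\in\Pi\times\Rc\-z\in H(f)\},
\]
is continuous, noting first that $\Gamma$ is closed in $\Pi\times\Rc$ by the final assertion of Lemma \ref{l:appdx_1_sw_limits}. To apply Lemma \ref{l:appdx_1_sw_limits} we must write $f|_z$ in the two-endpoint form $f|_{[z,e(f)]}$, where $e(f)=(f(\tau_f+),\tau_f)$ is the $\sqsubseteq$-maximum of $H(f)$; for this we record that $f\mapsto e(f)$ is continuous on $\Pi$ (for $f\in\Pi^\uparrow$ it is the constant $(\ast,\infty)$, and in general, if $f_n\to f$ one passes to a subsequence along which $e(f_n)$ converges in the compact space $\Rc$, uses Lemma \ref{l:appdx_1_sw_limits} to see the limit lies in $H(f)$, and uses $H^{(2)}(f_n)\to H^{(2)}(f)$ to see it is still $\sqsubseteq$-maximal, hence equals $e(f)$). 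Now if $(f_n,z_n)\to(f,z)$ in $\Gamma$, then $z_n,e(f_n)\in H(f_n)$ with $z_n\sqsubseteq e(f_n)$, $z_n\to z$, $e(f_n)\to e(f)$, and $f_n|_{[z_n,e(f_n)]}=f_n|_{z_n}$, so Lemma \ref{l:appdx_1_sw_limits} yields $f_n|_{z_n}\to f|_{[z,e(f)]}=f|_z$, i.e.\ continuity of $\rho$.

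It then follows, exactly as in the proof of Lemma \ref{l:meas_cts_upgrade} but invoking the continuity of $\rho$ and relative compactness (Lemma \ref{l:relcom_Pi_KPi}), that the induced set map $P(B,z)=\{f|_z\-f\in B\}=\rho(B\times\{z\})$ is continuous on $\mathcal{D}:=\{(B,z)\in\mc{K}(\Pi)\times\Rc\-z\in H(f)\text{ for all }f\in B\}$, which is closed by Lemma \ref{l:appdx_1_sw_limits}: given $(B_n,z_n)\to(B,z)$ in $\mathcal{D}$, any $g\in P(B,z)$ equals $f|_z$ for some $f\in B$, and choosing $f_n\in B_n$ with $f_n\to f$ gives $P(B_n,z_n)\ni f_n|_{z_n}=\rho(f_n,z_n)\to g$; conversely any $g_n\in P(B_n,z_n)$ equals $f_n|_{z_n}$ with $f_n\in B_n$, and relative compactness of $\bigcup_n B_n$ lets us pass to a subsequence with $f_n\to f\in B$, whence $g_n\to f|_z\in P(B,z)$ and, by uniqueness of limits, $g=f|_z$. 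Finally, $(A,z)\mapsto(A(z),z)$ is measurable from $\mc{K}(\Pi)\times\Rc$ to $\mc{K}(\Pi)\times\Rc$ by Lemma \ref{l:meas_Az} and takes values in $\mathcal{D}$; composing with the continuous (hence Borel) map $P$ shows $(A,z)\mapsto P(A(z),z)=A|_z$ is measurable. The argument is routine given Lemmas \ref{l:appdx_1_sw_limits}, \ref{l:meas_cts_upgrade} and \ref{l:meas_Az}; the only point needing care is the continuity of the final-point map $e(\cdot)$, which is exactly what is required to put $f|_z$ into a form to which Lemma \ref{l:appdx_1_sw_limits} applies.
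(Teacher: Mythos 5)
Your proof is correct, and it follows the same basic factorisation as the paper: pass through $(A,z)\mapsto A(z)$ via Lemma \ref{l:meas_Az}, then treat the restriction operation by continuity and lift it to compact sets in the style of Lemma \ref{l:meas_cts_upgrade}. The difference is in how the $z$-dependence of the restriction is handled. The paper fixes $z$, shows $f\mapsto f|_z$ is continuous on $\Pi_z$ and hence $A\mapsto A|_z$ is continuous on $\mc{K}(\Pi_z)$, and disposes of the joint dependence by appealing at the outset to separability and measurability of the two marginal maps (a step it does not fully carry out, since $z\mapsto A|_z$ for fixed $A$ is never examined). You instead prove joint continuity of $(f,z)\mapsto f|_z$ on the closed set $\Gamma=\{(f,z)\-z\in H(f)\}$, which requires the small extra observation that the final-point map $e(f)=(f(\tau_f+),\tau_f)$ is continuous so that $f|_z=f|_{[z,e(f)]}$ fits the two-endpoint format of Lemma \ref{l:appdx_1_sw_limits}; you then lift this to a jointly continuous set-level map on a closed subset of $\mc{K}(\Pi)\times\Rc$ and conclude by composition with the jointly measurable $(A,z)\mapsto(A(z),z)$. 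This buys you a self-contained argument that does not rely on the delicate claim that separate measurability of the marginals implies joint measurability, and it makes rigorous the final composition $A|_z=(A(z))|_z$ where the outer map still depends on $z$. Your verification that $e(\cdot)$ is continuous (subsequential limits lie in $H(f)$ and are $\sqsubseteq$-maximal by convergence of $H^{(2)}$) and that the sets $\Gamma$ and your domain for the set-level map are closed are all sound; the only shared loose end with the paper is the convention for the empty set $A(z)=\emptyset$ in $\mc{K}(\Pi)$, which is immaterial here.
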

\begin{proof}
Recall that $A|_z=\{f|_z\-f\in A(z)\}$.
As both $\mc{K}(\Pi)$ and $\Rc$ are separable,
in order to establish measurability of $(A,z)\mapsto A|_z$
it suffices to show that the marginal maps $A\mapsto A|_z$ and $z\mapsto A|_z$ are both measurable.
Recall $\Pi_z=\{f\in\Pi\-z\in H(f)\}$ from the proof of Lemma \ref{l:meas_Az},
in which we showed that $\Pi_z\sw\Pi$ and $\mc{K}(\Pi_z)\sw\mc{K}(\Pi)$ were both closed.
By Lemma \ref{l:meas_Az} the map $(A,z)\mapsto A(z)$ is measurable,
and it clear that it maps into $\mc{K}(\Pi_z)$.
Recall that for $f\in\Pi_z$, $f|_z$ is the unique $g\sw f$ such that $(g(\sigma_g-),\sigma_g)=z$,
and $A|_z=\{f|_z\-f\in A\cap\Pi_z\}$.
By Lemma \ref{l:appdx_1_sw_limits} the map $f\mapsto f|_z$ defined from $\Pi_z$ to itself is continuous.
It follows from Lemma \ref{l:meas_cts_upgrade} that $A\mapsto A|_z$ is continuous on $\mc{K}(\Pi_z)$.
Since $A|_z=(A(z))|_z$, this completes the proof.
\end{proof}

\begin{lemma}
\label{l:meas_Auparrow}
The map $A\mapsto A_{\uparrow}$ is a continuous map from $\mc{K}(\Pi)$ to itself.
\end{lemma}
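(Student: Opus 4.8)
The plan is to prove two things in turn: that $A_\uparrow$ is a compact subset of $\Pi^\uparrow$ whenever $A\in\mc{K}(\Pi)$, so that the map is well defined and lands in $\mc{K}(\Pi)$; and then that $A_n\to A$ in $\mc{K}(\Pi)$ forces $(A_n)_\uparrow\to A_\uparrow$. The well-definedness is essentially contained in Lemma~\ref{l:relcomp_uparrow}: relative compactness of $A_\uparrow$ follows from Proposition~\ref{p:relcom_tightness} once one observes that $g\sw f$ implies $w_{T,\de}(g)\leq w_{T,\de}(f)$ (the supremum defining $w_{T,\de}(g)$ ranges over a subset of that for $f$, since $I(g)_\mfs\sw I(f)_\mfs$), and closedness of $A_\uparrow$ is exactly the argument from the proof of Lemma~\ref{l:relcomp_uparrow}: take $f_n\to f$ with $f_n\in A_\uparrow$, choose $g_n\in A$ with $f_n\sw g_n$, pass to $g_n\to g\in A$ by compactness of $A$, invoke Lemma~\ref{l:sw_compat} to get $f\sw g$, and use closedness of $\Pi^\uparrow$ to get $f\in\Pi^\uparrow$. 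Thus $A\mapsto A_\uparrow$ maps $\mc{K}(\Pi)$ into $\mc{K}(\Pi^\uparrow)\sw\mc{K}(\Pi)$.

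For continuity, let $A_n\to A$ in $\mc{K}(\Pi)$. First I would reduce to subsequential limits: the family $\{A\}\cup\{A_n\}$ is relatively compact in $\mc{K}(\Pi)$, so by Lemma~\ref{l:relcom_Pi_KPi} the set $K:=\ov{A\cup\bigcup_n A_n}$ is a compact subset of $\Pi$; hence $K_\uparrow$ is compact by the previous paragraph, and $A_\uparrow\sw K_\uparrow$, $(A_n)_\uparrow\sw K_\uparrow$ for every $n$. Applying Lemma~\ref{l:relcom_Pi_KPi} once more shows $\{A_\uparrow\}\cup\{(A_n)_\uparrow\}$ is relatively compact in $\mc{K}(\Pi)$, so it is enough to prove that every subsequential limit $B$ of $\big((A_n)_\uparrow\big)$ equals $A_\uparrow$. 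So I would pass to a subsequence with $(A_n)_\uparrow\to B$ and establish the two inclusions.

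The inclusion $B\sw A_\uparrow$ is the easy direction: given $g\in B$, pick $g_n\in(A_n)_\uparrow$ with $g_n\to g$ and $f_n\in A_n$ with $g_n\sw f_n$; pass to a further subsequence with $f_n\to f$ (possible since $\bigcup_n A_n$ is relatively compact in $\Pi$), note $f\in A$ because $A_n\to A$, get $g\sw f$ from Lemma~\ref{l:sw_compat}, and conclude $g\in\Pi^\uparrow$ from closedness of $\Pi^\uparrow$; thus $g\in A_\uparrow$. For $A_\uparrow\sw B$, take $g\in A_\uparrow$, so $g\sw f$ for some $f\in A$, which forces $f\in\Pi^\uparrow$ (as $\t_f\geq\t_g=\infty$) and $g=f|_z$ with $z\in H(f)$ the initial point of $g$. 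Choose $f_n\in A_n$ with $f_n\to f$; by Proposition~\ref{p:J1M1} this gives $H(f_n)\to H(f)$ in $\mc{K}(\Rc)$, so one can pick $z_n\in H(f_n)$ with $z_n\to z$. Setting $g_n:=f_n|_{z_n}$ (so $g_n\sw f_n$, i.e. $g_n\in(A_n)_\uparrow$), Lemma~\ref{l:appdx_1_sw_limits}, applied to the pair of points $z_n$ (the initial point of $g_n$) and $(\ast,\infty)$ (its final point), which converge respectively to $z$ and $(\ast,\infty)$, yields $g_n=f_n|_{z_n}\to f|_z=g$; since $(A_n)_\uparrow\to B$ this gives $g\in B$.

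The one genuinely delicate point, and the main obstacle, is the construction of the approximating half-infinite paths $g_n\in(A_n)_\uparrow$ in the inclusion $A_\uparrow\sw B$: this needs the $f_n\in A_n$ chosen near $f$ to themselves be half-infinite, which is automatic when each $A_n\sw\Pi^\uparrow$, the situation in every application of the lemma (e.g. in Lemma~\ref{l:meas_web_op}, where the map is applied to sets of the form $\big(\bigcup_i A|_{z_i}\big)$). Everything else is a routine compactness-and-subsequence argument: Lemmas~\ref{l:sw_compat} and~\ref{l:appdx_1_sw_limits} do all the work of transferring the partial order $\sw$ and the restriction operation $f\mapsto f|_z$ through limits, so beyond the monotonicity $w_{T,\de}(g)\leq w_{T,\de}(f)$ used for well-definedness no modulus-of-continuity estimates are required.
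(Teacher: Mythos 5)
Your proof is correct and follows essentially the same route as the paper's: relative compactness of $\big((A_n)_\uparrow\big)$ via the monotonicity $g\sw f\Rightarrow w_{T,\de}(g)\leq w_{T,\de}(f)$ and Proposition \ref{p:relcom_tightness}, then the two inclusions for a subsequential limit $B$, with Lemmas \ref{l:sw_compat} and \ref{l:appdx_1_sw_limits} and the convergence $H(f_n)\to H(f)$ doing the work. The caveat you flag at the end is well spotted and is in fact a defect of the lemma as stated on all of $\mc{K}(\Pi)$, not just of your argument: the paper's own proof writes $g_n=f_n|_{z_n}$, which by \eqref{eq:f|z} presupposes $f_n\in\Pi^\uparrow$, and without that hypothesis the statement fails --- e.g.\ take $A_n=\{f_n\}$ with $f_n\equiv 0$ on $[0,n]_\mfs$, so that $A_n\to A=\{f\}$ with $f\equiv 0$ on $[0,\infty]_\mfs$, while $(A_n)_\uparrow=\emptyset$ for every $n$ but $A_\uparrow\neq\emptyset$. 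As you note, the lemma and your proof are valid on $\mc{K}(\Pi^\uparrow)$, which covers every application in the paper.
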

\begin{proof}
Let $A_n,A\in\mc{K}(\Pi)$ with $A_n\to A$.
Then $(A_n)$ is relatively compact and, noting that $g\sw f\ra w_{T,\de}(f)\leq w{T,\de}(f)$,
part 2 of Proposition \ref{p:relcom_tightness} gives that $((A_n)_\uparrow)$ is relatively compact.
Let $B\in\mc{K}(\Pi)$ be a limit point of $((A_n)_\uparrow)$.
We must show that $B=A_\uparrow$.
Without loss of generality let us pass to a subsequence and assume that $(A_n)_\uparrow\to B$.

Let $g\in B$.
Then there exists $f_n\in A_n$ with $g_n\sw f_n$ and $g_n\to g$.
By Lemma \ref{l:relcom_Pi_KPi} the set $\cup_n A_n$ is relatively compact, 
so we may pass to a further subsequence and assume that $f_n\to f$.
Thus $f\in A$.
Lemma \ref{l:appdx_1_sw_limits} gives that $g\sw f$, 
so $g\in A_\uparrow$.
Hence $B\sw A_\uparrow$.

It remains to show the reverse inclusion.
Let $g\in A_\uparrow$.
Then there exists $f\in A$ with $g\sw f$.
As $A_n\to A$ there exists $f_n\in A_n$ such that $f_n\to f$.
From Proposition \ref{p:J1M1} (and the remarks just above it) we have $H(f_n)\to H(f)$.
In particular, there exists $z_n\in H(f_n)$ such that $z_n\to (g(\s_g-),\s_g)\in H(f)$.
Let $g_n=f_n|_{z_n}$.
Lemma \ref{l:appdx_1_sw_limits} gives that $g_n\to g$,
so $g\in B$.
Hence $A_\uparrow \sw B$.
Thus $A_\uparrow=B$ and the proof is complete.
\end{proof}

\subsection{Proof of Lemma \ref{l:bw_web}}
\label{a:bw_web}

We must show that the Brownian web $\Wb$ satisfies our definition of a web, in Definition \ref{d:weave}.
The argument rests on well known properties of the Brownian web.
We noted in Section \ref{sec:pi} that $(\Pi^\uparrow_{\rm c},d_\Pi)$
is the state space that is in common usage for the Brownian web.
As $\Wb$ is a $\mc{K}(\Pi^\uparrow_{\rm c})$ valued random variable,
it is also a $\mc{K}(\Pi^\uparrow)$ valued random variable.
We will now refer to points (a)-(c) of Theorem 2.3 of \cite{SchertzerSunEtAl2017},
which defines $\Wb$.

We first show that $\Wb$ is a weave.
Let $D\sw\Rc$ be dense and countable.
Point (a) of this definition gives that 
almost surely,
$\Wb(z)$ is non-empty at all $z\in D$,
which by Lemma \ref{l:appdx_1_sw_limits} implies that $\Wb$ is almost surely pervasive.
Moreover since $\Wb(z)$ is almost surely a singleton, $D$ is almost surely non-ramified.
It is well known in the literature that $\Wb$ is almost surely non-crossing;
strictly this follows because point (b) of the definition gives that $\Wb(D)$ is non-crossing,
point (c) gives $\Wb\eqas\ov{\Wb(D)}$,
and the non-crossing property is preserved by taking limits of continuous paths 
(this last implication uses Lemma \ref{l:noncr_cont_limits}).
We have now shown that $\Wb$ is a weave.

Point (c) of the definition gives that $\Wb\eqas\ov{\Wb(D)}$,
for any deterministic dense countable $D\sw\R^2$.
Noting our remarks immediately above the present lemma 
regarding $\Wb(D)$ and $\Wb|D$, we have $\Wb\eqas\ov{(\Wb|_D)}$.
Lemma \ref{l:appdx_1_sw_limits} implies that if $A_n\to A$ in $\mc{K}(\Pi)$
then $(\mc{A}_n)_\uparrow\to\mc{A}_\uparrow$.
It is straightforward to combine this fact with the usual system of 
random walk approximations to the Brownian web (e.g.~Figure 9 of \cite{SchertzerSunEtAl2017}) 
to show that 
$\Wb\eqas(\Wb)_\uparrow$.
Hence in particular $\Wb|_D\eqas(\Wb|_D)_\uparrow$, 
from which \eqref{eq:web_op} gives $\Wb\eqas\web(\Wb)$.
Theorem \ref{t:weave_structure} thus gives that $\Wb$ is a web.

\bibliographystyle{abbrvnat-nodoi}
\bibliography{biblio}

\end{document}